\newcolumntype{C}[1]{>{\centering\let\newline\\\arraybackslash\hspace{0pt}}m{#1}}
\definecolor{Dgreen}{RGB}{2,100,64}
\newcommand{\calP}{\mathcal{P}}
\newcommand{\canakci}{{\c{C}}anak{\c{c}}{\i} }
\newcommand{\gb}{\mathbf{g}}
\newcommand{\eb}{\mathbf{e}}
\newcommand{\degx}{\deg_{\boldsymbol{x}}}
\newcommand{\low}{\text{lower}}
\newcommand{\upper}{\text{upper}}
\newcommand{\cont}{\text{cont}}
\newcommand{\highlight}[1]{\colorbox{gray!15}{\small{#1}}}
\newtheorem{definition}{Definition}
\newtheorem{lemma}{Lemma}
\newtheorem{theorem}{Theorem}
\newtheorem{cor}{Corollary}
\newtheorem{prop}{Proposition}
\theoremstyle{remark}
\newtheorem{remark}{Remark}
\newtheorem*{remark*}{Remark}
\newcommand{\innerthmname}{}% initialize
\theoremstyle{definition}
\newenvironment{upstatement}[1]
 {\renewcommand{\innerthmname}{#1}\innerdfn}
 {\endinnerdfn}
\newcommand{\brac}[2]{\textrm{Brac}_{#1}{#2}}
\newcommand{\bang}[2]{\textrm{Bang}_{#1}{#2}}
\newcommand{\calB}{\mathcal{B}}
\newcommand{\calBcirc}{\mathcal{B}^{\circ}}
\title{Skein relations for punctured surfaces}
\author{Esther Banaian,  Wonwoo Kang, Elizabeth Kelley}
\date{}
\begin{document}

\maketitle

\abstract{
We investigate skein relations in cluster algebras from punctured surfaces, extending the work of \canakci-Schiffler and Musiker-Williams on unpunctured surfaces. Using a combinatorial expansion formula by O{\u{g}}uz-Y{\i}ld{\i}r{\i}m and Pilaud-Reading-Schroll, we provide explicit formulas for these relations. This work demonstrates that the punctured analogues of the bangle and bracelet functions form spanning sets for cluster algebras associated with a punctured surfaces.  For surfaces with boundary and closed surfaces of genus 0, we further show that the bangles and bracelets form bases.}

\tableofcontents

\vspace{5mm}

\section{Introduction}\label{sec:introduction}

Fomin and Zelevinsky introduced the notion of cluster algebras in 2002, in their study of dual canonical bases \cite{FZ2002}. In the relatively short intervening time, mathematicians have discovered connections between cluster algebras and a wide range of areas of mathematics, including total positivity, quiver representations, Teichm\"{u}ller theory, tropical geometry, Lie theory, and Poisson geometry.

An important class of cluster algebras are those arising from surfaces \cite{FST-I}. Musiker, Schiffler, and Williams gave a combinatorial expansion formula for elements of cluster algebras of surface type in \cite{musiker2011positivity}, building on work for unpunctured surfaces in \cite{musiker2010cluster}.
When $\gamma$ has only plain endpoints,  the expansion of $x_\gamma$ with respect to the initial cluster determined by $T$ is given by a generating function of perfect matchings or dimer covers of a certain planar, bipartite graph $\mathcal{G}_{\gamma,T}$ called a \emph{snake graph}. When $\gamma$ has a notched endpoint, the formula in \cite{musiker2011positivity} uses a subset of the perfect matchings of the snake graph of a related plain arc.  Wilson gave an alternate construction using \emph{loop graphs} in \cite{wilson2020surface}. See Section \ref{subsec:snakegraph} for more details. 

Musiker, Schiffler, and Williams used their snake graph construction to exhibit two bases for cluster algebras coming from unpunctured surfaces \cite{musiker2013bases}. An important step of their proof involved studying \emph{skein relations}, which was done by Musiker and Williams in \cite{musiker2013}. The skein relations are multiplication formulas for products of cluster variables which correspond to curves with a point of intersection. The algebraic resolution of these products involves the variables associated to the geometric resolution of the incompatibility. While skein relations were already known in the coefficient-free case, as in \cite{fock2006moduli}, Musiker and Williams work in the context of principal coefficients.

Musiker and Williams work only with plain arcs in \cite{musiker2013}, so their formulas are not able to completely determine skein relations on punctured surfaces. Recently, partial progress has been made in extending these relations to punctured surfaces. In \cite{Mandel2023}, relations were given for punctured surfaces in the coefficient-free case while specific forms of skein relations in the principal coefficient case, referred to as ``tidy exchange relations'', were given in \cite{pilaud2023posets}. Our goal in this work is to exhibit all skein relations on punctured surfaces in the context of principal coefficients. 

It is not clear how to easily extend the methods from \cite{musiker2013} to tagged arcs, so we will use a different perspective by utilizing an expansion formula for cluster variables which is in terms of distributive lattices. The set of perfect matchings of a snake graph or loop graph can be given a partial order, and the resulting poset turns out to be a distributive lattice \cite{musiker2013bases, wilson2020surface}. By Birkhoff's Fundamental Theorem of Finite Distributive of Lattices, there exists another poset $\mathcal{P}$ such that the lattice of perfect matchings of a snake graph or loop graph is isomorphic to the lattice of order ideals of $\mathcal{P}$ \cite{birkhoff1937rings}. These underlying posets are exactly the induced partial order on the set of join-irreducibles. This allows one to give a new expansion formula for cluster variables in terms of the posets of join-irreducibles, as in \cite{ouguz2024cluster, pilaud2023posets}. We note that in \cite{pilaud2023posets}, the authors prove their formula using a geometric perspective instead, utilizing the connection to snake graphs, which allows them to work in a more general setting. Weng also uses such posets to study Donaldson-Thomas transformations in \cite{weng2023f}.

A poset of join-irreducibles in a lattice of perfect matchings of snake graph will always be a \emph{fence poset}. A fence poset is characterized by the fact that its Hasse diagram is a path graph. Fence posets were originally defined to only be posets of the form $a_1 < a_2 > a_3 < \cdots$, where $a_i$'s are elements in the poset, but we will use the term more generally, as has become common. Fence posets have been of great interest recently. Some of this activity is from a conjecture by \cite{MORIER-GENOUD_OVSIENKO_2020} that the rank generating function of a lattice of order ideals of a fence poset (or equivalently of perfect matchings of a snake graph) is unimodal. In \cite{MORIER-GENOUD_OVSIENKO_2020}, these rank generating functions are used define $q$-deformed rational numbers. The conjecture was solved in \cite{ouguz2023rank} and inspired several other articles studying fence posets, including \cite{elizalde2023partial, elizalde2023rowmotion}. 

Fence posets appear in several other contexts in mathematics. They resemble diagrams associated to \emph{strings}, a combinatorial object used to describe a set of indecomposable modules in a string algebra; see \cite{butler1987auslander} for definitions. In this setting, order ideals correspond to canonical submodules. The relationship between these two perspectives has been useful in comparing the snake graph expansion formula and the Caldero-Chapoton function for algebras arising from triangulated surfaces \cite{geiss2022schemes, geiss2023bangle} and orbifolds \cite{banaian2023snake}. Fence posets also can be viewed as heaps of Coxeter elements in the symmetric group, and order ideals of such a heap correspond to permutations less than the given Coxeter element in weak order. \canakci and Schroll unify these three perspectives of fence posets in \cite{ccanakcci2021lattice}.

Our methods are inspired by an alternate proof of skein relations in the unpunctured case by \canakci and Schiffler through ``snake graph calculus'' \cite{snakegraphcalculus1, snakegraphcalculus2, snakegraphcalculus3}. Here, the authors give bijections between sets of perfect matchings of graphs from the geometric resolution of intersecting arcs. These methods have proven useful to exhibit bases in new cases \cite{ccanakci2015cluster,ccanakcci2019bases} and to study extensions in Jacobian algebras and cluster categories associated to triangulated surfaces \cite{canakci2017extensions}. We are hopeful our proof methods can similarly be used for further study concerning punctured surfaces.

An advantage of our result is that we can explicitly describe the $y$-monomial appearing in the relations. In particular, in our proofs, it becomes immediately apparent that exactly one of the terms in the resolution is multiplied by a $y$-monomial. This is also the case in the unpunctured setting and is a key fact used to exhibit bases for cluster algebras from unpunctured surfaces \cite{musiker2013bases}. 

The $y$-monomials that appear in skein relations can be described by two (multi)sets of arcs from an initial triangulation $T$. We denote these sets $R$ and $S$; we could have $R = \emptyset$ or $S = \emptyset$, but never $R \cup S = \emptyset$.  The ``$R$'' stands for ``share'' as it records arcs that $\gamma_1$ and $\gamma_2$ both cross near the designated intersection point. The ``$S$'' stands for ``sweep'' as it records (some) arcs which the resolution sweeps past at the endpoint. 

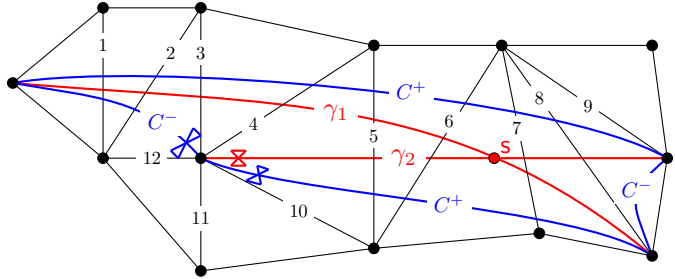
\begin{figure}[H]
\captionsetup{width=0.9\textwidth}
    \centering
    \begin{tikzpicture}[transform shape]
    
    %draw gammas
    \draw[out=-5,in=140, looseness=1, red,thick] (-2.5,1) to (6,-1.3);
    \draw[red,thick] (0,0) to (6.2,0);
    
    \draw[red,thick] (0.4,-0.1) to (0.6,-0.1);
    \draw[red,thick] (0.4,0.1) to (0.6,0.1);
    \draw[red,thick] (0.4,-0.1) to (0.6,0.1);
    \draw[red,thick] (0.4,0.1) to (0.6,-0.1);

    \node[red,fill=white] at (1.8,0.6) {$\gamma_1$};
    \node[red,fill=white] at (2.7,0) {$\gamma_2$};

    \draw (6,1.5) to (6.2,0);
    \draw (6,1.5) to (4,1.5);
    \draw (2.3,1.5) to (4,1.5);
    \draw (6,-1.3) to (4.5,-1);
    \draw (2.3,-1.2) to (4.5,-1);
    \draw (6,-1.3) to (6.2,0);
    \draw (4,1.5) to (6.2,0);
    \draw (4.5,-1) to (4,1.5);
    \draw (6,-1.3) to (4,1.5);
    \draw (2.3,-1.2) to (2.3,1.5);
    \draw (2.3,-1.2) to (4,1.5);
    \draw (0,0) to (2.3,1.5);
    \draw (0,2) to (2.3,1.5);
    \draw (0,2) to (-1.3,0);
    \draw (0,0) to (-1.3,0);
    \draw (-1.3,0) to (-1.3,2);
    \draw (-2.5,1) to (-1.3,2);
    \draw (-2.5,1) to (-1.3,0);
    \draw (0,2) to (-1.3,2);
    \draw (0,2) to (0,0);
    \draw (0,0) to (2.3,-1.2);
    \draw (0,0) to (0,-1.5);
    \draw (2.3,-1.2) to (0,-1.5);
    \draw (-1.3,0) to (0,-1.5);

    %adding edge labels
    
    \node[fill=white,scale=0.7] at (-1.3,1.5) {$1$};
    \node[fill=white,scale=0.7] at (-0.4,1.4) {$2$};
    \node[fill=white,scale=0.7] at (0,1.4) {$3$};
    \node[fill=white,scale=0.7] at (0.7,0.45) {$4$};
    \node[fill=white,scale=0.7] at (2.3,0.3) {$5$};
    \node[fill=white,scale=0.7] at (3.3,0.5) {$6$};
    \node[fill=white,scale=0.7] at (4.2,0.4) {$7$};
    \node[fill=white,scale=0.7] at (4.5,0.8) {$8$};
    \node[fill=white,scale=0.7] at (5.15,0.7) {$9$};
    \node[fill=white,scale=0.7] at (1.3,-0.7) {${10}$};
    \node[fill=white,scale=0.7] at (0,-0.8) {${11}$};
    \node[fill=white,scale=0.7] at (-0.65,0) {${12}$};

    \draw[out=-10,in=130, looseness=1, blue, thick] (-2.5,1) to (0,0);
    \draw[out=-140,in=110, looseness=1.5, blue, thick] (6.2,0) to (6,-1.3);
    \draw[out=150,in=10, looseness=0.5, blue, thick] (6.2,0) to (-2.5,1);
    \draw[out=145,in=-25, looseness=0.5, blue, thick] (6,-1.3) to (0,0);
    
    \draw[blue,thick] (0.8,-0.35) to (0.75,-0.1);
    \draw[blue,thick] (0.9,-0.15) to (0.75,-0.1);
    \draw[blue,thick] (0.9,-0.15) to (0.6,-0.3);
    \draw[blue,thick] (0.8,-0.35) to (0.6,-0.3);
    
    \draw[blue,thick] (0,0.3) to (-0.1,0.4);
    \draw[blue,thick] (-0.3,0) to (-0.4,0.1);
    \draw[blue,thick] (-0.3,0) to (-0.1,0.4);
    \draw[blue,thick] (0,0.3) to (-0.4,0.1);

    \node[blue, fill=white, scale=0.8] at (2.8,0.9) {$C^{+}$};
    \node[blue, fill=white, scale=0.8] at (3.3,-0.6) {$C^{+}$};
    \node[blue, fill=white, scale=0.8] at (5.8,-0.4) {$C^{-}$};
    \node[blue, fill=white, scale=0.8] at (-0.5,0.5) {$C^{-}$};
    
    %draw vertices
    \draw[fill=red] (3.9,0) circle [radius=2pt];
    \node[red] at (4.05,0.15) {$\mathsf{s}$};

    \draw[fill=black] (6.2,0) circle [radius=2pt];
    \draw[fill=black] (6,1.5) circle [radius=2pt];
    \draw[fill=black] (6,-1.3) circle [radius=2pt];
    \draw[fill=black] (4,1.5) circle [radius=2pt];
    \draw[fill=black] (4.5,-1) circle [radius=2pt];
    \draw[fill=black] (2.3,-1.2) circle [radius=2pt];
    \draw[fill=black] (2.3,1.5) circle [radius=2pt];
    \draw[fill=black] (0,2) circle [radius=2pt];
    \draw[fill=black] (0,0) circle [radius=2pt];
    \draw[fill=black] (-1.3,0) circle [radius=2pt];
    \draw[fill=black] (-2.5,1) circle [radius=2pt];
    \draw[fill=black] (-1.3,2) circle [radius=2pt];
    \draw[fill=black] (0,-1.5) circle [radius=2pt];
    
    \end{tikzpicture}
    \caption{A pair of intersecting tagged arcs, $\gamma_1$ and $\gamma_2$, and their resolution, $C^+ \cup C^-$}
    \label{fig:ExampleY}
\end{figure}

We describe the sets precisely. Consider two arcs $\gamma_1$ and $\gamma_2$ with a point of intersection $\mathsf{s}$. For ease, we lift to a larger surface so that each arc crosses an arc in the triangulation at most once. All arcs are considered up to isotopy, and as we range over the isotopy classes of $\gamma_1$ and $\gamma_2$, the point $\mathsf{s}$ moves. We range over the subset of each isotopy class such that at every point, intersections between $\gamma_i$ and $T$ are minimized. Let $R$ denote the set of all arcs from $T$ which $\mathsf{s}$ can lie on as we range over these subsets of the isotopy classes of $\gamma_1$ and $\gamma_2$. Note both $\gamma_1$ and $\gamma_2$ necessarily cross all arcs in $R$. We resolve the intersection $\mathsf{s}$ by replacing the pair $\{\gamma_1,\gamma_2\}$ with two pairs $\{\gamma_3,\gamma_4\}$ and $\{\gamma_5,\gamma_6\}$ which avoid $\mathsf{s}$ in two different ways. Notice that, up to indexing, $\gamma_3$ and $\gamma_4$ both cross all arcs in $R$ while $\gamma_5$ and $\gamma_6$ do not cross any arcs in $R$. In Figure \ref{fig:ExampleY}, $R = \{\tau_5,\tau_6,\tau_7\}$.

To describe $S$, we begin with $\gamma_1$ and $\gamma_2$ and replace their point of incompatibility $\mathsf{s}$ with  $\asymp$ and \rotatebox[origin=c]{90}{$\asymp$}, forming the resolution. Then, we take the resulting curves and vary through their isotopy class until we reach representatives which intersect the arcs from $T$ a minimal number of times; moreover, we do not allow ourselves to increase the number of intersections at any intermediate point in this process. During this process, we may ``sweep'' past arcs from the triangulation which share an endpoint with $\gamma_1$ or $\gamma_2$. To compute $S$, we look at each endpoint of $\gamma_1$ and $\gamma_2$ and the set of arcs swept as we reach our final representatives. If $\gamma_1$ or $\gamma_2$ is plain at this endpoint, we record any arcs which we crossed in the clockwise direction; if $\gamma_1$ or $\gamma_2$ is notched, we instead record arcs which we crossed in the counterclockwise direction. We do this for both resolutions of $\mathsf{s}$, but we only get a nonzero set $S$ for at most one of the resolutions. In Figure \ref{fig:ExampleY}, when forming the set $C^-$, we sweep past $\tau_8$ in clockwise direction at a plain endpoint and past $\tau_4$ and $\tau_3$ in counterclockwise direction at a tagged endpoint, so $S = \{\tau_3,\tau_4,\tau_8\}$.

One can also see that, if $R$ is nonempty, then the set of arcs which avoid the arcs in $R$ also are those which contribute to $S$. Thus, we can label the two sets in the resolution by $C^+$ and $C^-$ so that the arcs in $C^+$ cross all arcs in $R$ (if they exist) and do not contribute to $S$. We also note the descriptions of $R$ and $S$ can be repeated for an arc with self-intersection, for an intersection of an arc and a closed curve, or an intersection of a pair of closed curves. Naturally, in the case of a pair of closed curves with intersection, $S$ would always be trivial. 

We can now state one of our main theorems. We will set $x_{C^+} := \Pi_{\gamma \in C^+} x_\gamma, x_{C^-} := \Pi_{\gamma \in C^-} x_\gamma, Y_R := \prod_{\gamma \in R}y_\gamma$ and $Y_S := \prod_{\gamma \in S}y_\gamma$.

\begin{theorem}\label{thm:main}
Let $(S,M)$ be a possibly punctured surface, and let $T$ be a triangulation of $(S,M)$. 
\begin{enumerate}
    \item Let $\gamma_1$ and $\gamma_2$ each be an arc or closed curve such that the multicurve $\{\gamma_1,\gamma_2\}$ is incompatible. Let $C^+$ and $C^-$ be the resolution of one point of incompatibility between $\gamma_1$ and $\gamma_2$. Then, \[
    x_{\gamma_1} x_{\gamma_2} = x_{C^+} + Y_RY_S x_{C^-}
    \]
    where $Y$ is a monomial in $y$-variables.
    \item Let $\gamma$ be an arc or closed curve which is incompatible with itself. Let $C^+$ and $C^-$ be the resolution of one point of self-incompatibility. Then, \[
    x_\gamma = x_{C^+} + Y_RY_S x_{C^-}
    \]
    where $Y$ is a monomial in $y$-variables.
\end{enumerate}   
\end{theorem}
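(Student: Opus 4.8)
The plan is to reduce both statements of Theorem~\ref{thm:main} to a combinatorial identity on posets of join-irreducibles, via the expansion formula of O\u{g}uz--Y\i ld\i r\i m and Pilaud--Reading--Schroll. First I would pass to a covering surface so that each of $\gamma_1,\gamma_2$ (or $\gamma$) crosses each arc of $T$ at most once; this is harmless because the skein relation is local near $\mathsf{s}$ and cluster variables of a cover specialize to those of the original surface, so it suffices to prove the identity upstairs. After this reduction, each of $x_{\gamma_1}, x_{\gamma_2}, x_{C^+}, x_{C^-}$ (and each $x_{\gamma_i}$ in part (2)) is, up to the standard $\boldsymbol{x}$- and $\boldsymbol{y}$-normalization, the rank generating function $\sum_{I} \boldsymbol{y}^{I}$ over order ideals $I$ of an associated fence poset $\calP_{\gamma}$ (or a suitable sub/quotient poset in the notched case, following Musiker--Schiffler--Williams and Wilson). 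So the theorem becomes: the poset $\calP_{\gamma_1} \sqcup \calP_{\gamma_2}$ (disjoint union, whose order-ideal generating function is the product) decomposes as a disjoint union $\calP_{C^+} \sqcup \calP_{C^-}'$, where $\calP_{C^-}'$ is $\calP_{C^-}$ with a chain of $|R|+|S|$ extra minimal (or maximal) elements adjoined below everything, accounting for the $Y_RY_S$ prefactor.

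The core of the argument is the construction of an explicit poset isomorphism $\calP_{\gamma_1}\sqcup\calP_{\gamma_2} \;\cong\; \calP_{C^+}\;\sqcup\;\bigl(\text{chain}_{R,S} \oplus \calP_{C^-}\bigr)$. I would build this by the ``snake/loop graph surgery'' picture: near $\mathsf{s}$, the snake graphs of $\gamma_1$ and $\gamma_2$ share a common band of tiles indexed by $R$, and resolving $\mathsf{s}$ either glues the two snake graphs along this band (yielding the connected snake graph of $C^+$, whose fence poset is the concatenation of $\calP_{\gamma_1}$, the $R$-fence segment, and $\calP_{\gamma_2}$ with appropriate zig-zag orientation) or cuts them past the band (yielding $C^-$, where the $R$-tiles have been removed, shortening the poset, and the sweeping at the endpoints removes/adds the $S$-tiles). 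Translating Banaian--Canak\c{c}\i-type snake graph calculus into the language of fence posets, each of the four local moves (gluing a band, cutting a band, sweeping past an arc at a plain endpoint, sweeping at a notched endpoint) corresponds to a controlled operation on the fence: inserting or deleting a down-up segment of length $|R|$, or prepending/appending a pendant element for each arc in $S$. Carefully checking the orientation (``up'' vs.\ ``down'' steps) of these inserted segments is what forces the extra elements in the $C^-$ summand to be \emph{minimal} (contributing a multiplicative $\boldsymbol{y}^{R\cup S}$ to every ideal) rather than maximal, and this is exactly the statement that precisely one resolution term carries the $y$-monomial.

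For part (2), the self-intersection case, the only change is that one works with a single curve whose poset $\calP_\gamma$ has a ``doubled'' structure near the self-crossing; the same band-gluing/band-cutting dichotomy applies, now internal to one poset, and the resolution splits $\calP_\gamma$ into $\calP_{C^+}$ (reconnect) and $\calP_{C^-}$ with the $R\cup S$ chain adjoined. The notched-endpoint bookkeeping is handled by the Musiker--Schiffler--Williams $\gamma$-to-$\gamma^{(p)}$ correspondence: a notched end selects the sub-distributive-lattice of matchings compatible with the puncture, which on the poset side is an order filter/quotient, and one checks that band-surgery commutes with passing to this sub-poset. I expect the main obstacle to be precisely this commutation in the presence of punctures and notches --- i.e., verifying that when $\mathsf{s}$ lies near a puncture, or when sweeping past an arc incident to a puncture, the sub-poset selected by the tagging on $C^+$ and $C^-$ is exactly the image under the surgery of the sub-poset selected on $\gamma_1,\gamma_2$, including the degenerate cases where $\gamma_i$ has both ends at the same puncture or where resolving creates a contractible loop or a loop around a puncture (which must be interpreted via the conventions for such curves). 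Handling all these boundary cases uniformly, rather than the generic crossing, is where the real work lies; the generic case is essentially the unpunctured argument of Musiker--Williams and Canak\c{c}\i--Schiffler recast in poset language.
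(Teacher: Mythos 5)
There is a genuine structural gap at the heart of your proposal. You reduce the theorem to a \emph{poset isomorphism} $\calP_{\gamma_1}\sqcup\calP_{\gamma_2}\cong\calP_{C^+}\sqcup(\mathrm{chain}_{R,S}\oplus\calP_{C^-})$ and then invoke the fact that the order-ideal generating function of a disjoint union is the product. But that fact works against you: if such an isomorphism existed, it would give $x_{\gamma_1}x_{\gamma_2}$ as a \emph{product} of the generating functions of $\calP_{C^+}$ and of the other summand, whereas the skein relation is a \emph{sum} of two terms, $x_{C^+}+Y_RY_Sx_{C^-}$. The proposed isomorphism also fails on cardinality alone: $\calP_{C^+}$ already has the same number of elements as $\calP_{\gamma_1}\sqcup\calP_{\gamma_2}$ (the $C^+$ resolution crosses the same multiset of arcs of $T$), so there is no room for a nonempty second summand. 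The object that actually needs to be constructed is not an isomorphism of the fence posets but a partition $J(\calP_{\gamma_1})\times J(\calP_{\gamma_2})=A\cup B$ of the set of \emph{pairs of order ideals}, together with a weight-preserving bijection $A\to J(\calP_{\gamma_3})\times J(\calP_{\gamma_4})$ and a bijection $B\to J(\calP_{\gamma_5})\times J(\calP_{\gamma_6})$ that is weight-preserving up to the fixed monomial $\hat Y_R\hat Y_S$, followed by a separate verification of the $\gb$-vector identities $\gb_1+\gb_2=\gb_3+\gb_4$ and $\gb_1+\gb_2+\degx(\hat Y_R\hat Y_S)=\gb_5+\gb_6$. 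These bijections live at the level of the distributive lattices (equivalently, perfect/good matchings), are built from switching positions, and are genuinely not induced by any map of the underlying posets of join-irreducibles; conflating the two levels of Birkhoff duality is what breaks your argument.

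Two secondary points. First, your opening reduction to a covering surface where each arc crosses each arc of $T$ at most once is not justified: you would need to know that the skein relation upstairs specializes to the one downstairs, which presupposes a compatibility of the expansion formulas with the covering that is itself nontrivial; the actual proofs instead work directly with posets whose elements may repeat labels (the winding chains $\sigma_y^{(z)}$). Second, your instinct about where the difficulty lies --- the interaction of notched endpoints, loops in the posets, and the degenerate resolutions producing contractible curves or notched monogons --- is accurate, and your observation that exactly one resolution term carries the $y$-monomial is a key feature of the correct statement; but without replacing the poset-isomorphism mechanism by the order-ideal bijection mechanism, none of that case analysis can be carried out.
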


For example, with notation from Figure \ref{fig:ExampleY}, we have $x_{\gamma_1}x_{\gamma_2} = x_{C^+} + y_3y_4y_5y_6y_7y_8 x_{C^-}$.

\begin{remark}
Some care should be taken in the definition of the symbol $x_{\gamma}$ in our Theorem statement. In our proofs, we will write $x_\gamma = \textbf{x}^{\gb_\gamma} \sum_{I \in J(\calP_\gamma)} wt(I)$, which is analogous to separating the $\gb$-vector and $F$-polynomial of a cluster variable, and show that such expressions satisfy skein relations. When $\gamma$ is an ordinary arc, $x_\gamma$ indeed is the cluster variable associated to $\gamma$, as can be shown using \cite{pilaud2023posets}. When $\gamma$ has self-intersection or is a closed curve, then it cannot be associated to a cluster variable and we are defining the expression $x_\gamma$. Since our expression in terms of order ideals is combinatorially equivalent to using the dimer partition function of a snake graph, as shown in \cite{ouguz2024cluster}, our definition of the expression $x_\gamma$ matches that used in \cite{musiker2013bases} for the unpunctured case. When $\gamma$ is a generalized arc, then by Theorem 5.1 of \cite{pilaud2023posets}, $x_\gamma$ is the \emph{principal coefficients laminated lambda length} of a geodesic. However, when $\gamma$ is a not necessarily simple closed curve, it is not clear if our $x_\gamma$ would have the same interpretation.
\end{remark}

\begin{remark}
Musiker and Williams provide skein relations for plain arcs on punctured surfaces in \cite{musiker2013}, and our formulas align with theirs when all involved arcs are plain. By exclusively considering plain arcs, Musiker and Williams could describe the $y$-monomials that appear by counting the number of intersections between the arcs in the resolution and the \emph{elementary laminations} associated with the initial triangulation. However, when one also examines notched arcs, it becomes impossible to simplify the calculations to counting intersection numbers; instead, one would need to operate within an \emph{opened surface} context, as defined by Fomin-Thurston \cite{fomin2018cluster}.
\end{remark}

\begin{remark}
Skein relations for punctured surfaces will also appear in the work of Michael Tsironis whose proofs will be in terms of quiver representations over Jacobian algebras (by utilizing the bijection between loop graphs and indecomposable representations) \cite{tsironis}.
\end{remark}

In Sections 8.4-8.6 of \cite{musiker2013bases}, Musiker, Schiffler, and Williams show that \emph{bracelets} $\mathcal{B}^{\circ}$ and \emph{bangles} $\mathcal{B}$ form bases for cluster algebras from unpunctured surfaces and suggest that this result may be extensible to the punctured setting. As part of their discussion about this extension, the authors describe fifteen types of new skein relations that can arise on a punctured surface. In this paper, we group those fifteen cases into four categories:
\begin{enumerate}
    \item \textbf{Incompatibility at a puncture:} an ordinary arc and a singly-notched arc incident to the same puncture, an ordinary arc and a doubly-notched arc with one or two common endpoints; two singly-notched arcs with one or two incompatible tagging(s) at a puncture; and, a singly-notched arc and doubly-notched arc with an incompatible tagging at a puncture.
    \item \textbf{Transverse crossings:}  an ordinary arc and a singly-notched arc; an ordinary arc and a doubly-notched arc; two singly-notched arcs; a singly-notched arc and a doubly-notched arc; or two doubly-notched arcs.
    \item \textbf{Self-crossings:} a singly-notched generalized arc or a doubly-notched generalized arc.
    \item \textbf{Closed curves:}  a loop and a singly-notched or a loop and a doubly-notched arc.
\end{enumerate}

Using properties of the explicit skein relations produced via this case analysis, we obtain our other main theorem:

\begin{theorem}\label{thm:Bases}
    Let $(S,M)$ be such that $S$ has a non-empty boundary, or $S$ has genus $0$ and $|M|>3$. Then $\mathcal{B}^{\circ}$ and $\mathcal{B}$ are bases for $\mathcal{A}(S,M)$.
\end{theorem}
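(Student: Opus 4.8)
The plan is to follow the overall strategy of Musiker--Schiffler--Williams in Sections 8.4--8.6 of \cite{musiker2013bases}, using Theorem \ref{thm:main} as the engine that makes each reduction step effective in the principal-coefficient setting. Recall that $\mathcal{B}$ and $\mathcal{B}^\circ$ are indexed by multisets of tagged arcs, loops, bangles, and bracelets respectively; in both cases the ``compatible'' collections (those with no crossings and no repeated bracelets, respectively) give rise to the basis elements. The argument has two halves: first, spanning, and second, linear independence.

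For \textbf{spanning}, I would argue that every element of $\mathcal{A}(S,M)$ can be written as a linear combination of bangle (resp.\ bracelet) elements indexed by compatible multicurves. By the cluster expansion results recalled in the introduction, $\mathcal{A}(S,M)$ is spanned by products $x_{\gamma_1}\cdots x_{\gamma_k}$ over arbitrary finite multisets of tagged arcs and closed curves; it therefore suffices to show that any such product lies in the span of the compatible ones. I would set up an induction on a suitable complexity statistic --- for instance, the total number of crossings among the $\gamma_i$ together with the total number of self-crossings, refined if necessary by the total ``length'' (number of crossings with the fixed triangulation $T$). Whenever the multicurve is not compatible, pick one point of (self-)incompatibility and apply the appropriate part of Theorem \ref{thm:main}: this rewrites $x_{\gamma_1}\cdots x_{\gamma_k}$ (after absorbing the extra factors) as $x_{C^+}+Y_RY_Sx_{C^-}$ times the remaining $x_{\gamma_j}$'s. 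The crucial point, which is exactly what the four-category case analysis in the body of the paper establishes, is that each of $C^+$ and $C^-$ is strictly smaller in the chosen complexity statistic, so the induction closes. For the bracelet spanning statement one additionally uses the standard identity expressing a power of a bracelet (equivalently, a bangle) as a $\mathbb{Z}$-linear combination of bracelets $\brac{k}{}$ of the underlying primitive loop (the Chebyshev-type recursion $\mathrm{Bang}_k = \mathrm{Brac}_k + \mathrm{Bang}_{k-2}$), so that repeated bracelets can be removed.

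For \textbf{linear independence}, I would use the $\gb$-vector / lattice-point filtration: writing each $x_\gamma = \mathbf{x}^{\gb_\gamma}\sum_{I\in J(\calP_\gamma)} wt(I)$ as in the Remark, a compatible multicurve gets a well-defined $\gb$-vector obtained by summing, and I would check that distinct compatible multicurves have distinct $\gb$-vectors (equivalently, the leading/lowest term of the Laurent expansion determines the multicurve). This is the place where the hypothesis on $(S,M)$ --- non-empty boundary, or genus $0$ with $|M|>3$ --- enters: in these cases the tagged arc complex and the corresponding lamination space behave well enough that the $\gb$-vectors of compatible collections are genuinely distinct (the pathologies that occur for closed surfaces of higher genus, and for the small sporadic cases like the once-punctured torus or the sphere with few punctures, are exactly what this hypothesis excludes). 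Given distinctness of $\gb$-vectors, a putative linear dependence among basis elements, after clearing denominators and comparing extremal monomials, forces the coefficient of a minimal element to vanish, and one iterates.

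The \textbf{main obstacle} I anticipate is not the spanning argument --- that is a relatively mechanical induction once Theorem \ref{thm:main} and the enumeration of the fifteen cases are in hand --- but rather the linear independence step, specifically verifying that distinct compatible multicurves have distinct $\gb$-vectors under precisely the stated hypotheses. This requires a careful analysis of tagged arcs at punctures (the $C^+$/$C^-$ bookkeeping of notchings) and a genus/puncture restriction to rule out accidental coincidences; I would expect to handle it by passing to the opened surface of Fomin--Thurston \cite{fomin2018cluster} and tracking laminations there, reducing to an essentially combinatorial statement about how laminations intersect $T$, and then invoking the known independence in the unpunctured/plain case from \cite{musiker2013bases} as the base of the reduction.
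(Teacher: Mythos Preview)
Your spanning argument is essentially the paper's: induction on a crossing-count complexity, using Theorem~\ref{thm:main} at each step, together with the Chebyshev relation between bangles and bracelets. Your linear-independence mechanism via distinct $\gb$-vectors is also the right one, and matches the paper's invocation of Lemma~6.3 and the recipe of Section~8.5 of \cite{musiker2013bases}.

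However, there is a genuine gap, and you have misidentified where the hypothesis on $(S,M)$ enters. You claim the restriction to non-empty boundary or genus~$0$ is needed to guarantee distinctness of $\gb$-vectors. In fact the $\gb$-vector injection (hence linear independence) goes through without that hypothesis; what you are missing entirely is the step showing that the elements of $\calBcirc$ and $\calB$ actually lie in $\mathcal{A}(S,M)$ rather than merely in the upper cluster algebra $\mathcal{U}(S,M)$. The snake/band/loop graph formulas only produce Laurent polynomials in every cluster, i.e.\ elements of $\mathcal{U}(S,M)$, and for closed surfaces of positive genus one has $\mathcal{A}(S,M)\subsetneq\mathcal{U}(S,M)$ (e.g.\ the Markov cluster algebra from the once-punctured torus), so the essential-loop elements can genuinely fail to be in $\mathcal{A}(S,M)$. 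This is precisely why the theorem excludes those surfaces.

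The paper handles containment as follows: for surfaces with at least two boundary marked points or closed genus~$0$, it invokes the argument sketched in Section~8.6 of \cite{musiker2013bases}; for surfaces with exactly one boundary marked point, it uses the result of \cite{ccanakci2015cluster} that $\mathcal{A}(S,M)=\mathcal{U}(S,M)$ in that case. Your proposal needs this third ingredient; without it you have only shown that $\calBcirc$ and $\calB$ are bases of some intermediate ring between $\mathcal{A}(S,M)$ and $\mathcal{U}(S,M)$.
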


\begin{remark}
In surfaces with at least two marked points on the boundary, one can deduce Theorem~\ref{thm:Bases} by combining the results of Mandel-Qin~\cite{Mandel2023} and Greg Muller~\cite{Muller2016}, even though it wasn’t explicitly mentioned in either paper. Gei{\ss}, Labardini-Fragoso, and Wilson, who also only considered surfaces with non-empty boundary, showed that bangles form a basis for $\mathcal{A}(S, M)$ in the coefficient-free case in \cite{geiss2023bangle}.
\end{remark}

The structure of the paper is as follows. Section~\ref{sec:cluster_algebra} provides brief background information on cluster algebras from surfaces and snake graphs. In Section~\ref{sec:expansion_formula}, we present the expansion formula using order ideals from fence posets, as found in \cite{ouguz2024cluster, pilaud2023posets}, and compare our language here with the language used in the snake graph calculus papers \cite{snakegraphcalculus1, snakegraphcalculus2, snakegraphcalculus3}. Our proof of Theorem \ref{thm:main} is conducted by cases. Each proof follows a similar format, outlined in Subsection \ref{subsec:proofStrategy}. Sections 4 through 7 cover various cases: Section~\ref{sec:puncture_incompatibility} addresses resolutions of arcs with incompatible taggings at a marked point (Case 1); Section~\ref{sec:transverse_crossings} covers arcs with transverse crossings (Case 2); Section~\ref{sec:self_crossings} deals with arcs with self-intersections (Case 3); and, Section~\ref{sec:closed_curves} explores intersections involving a closed curve (Case 4). Sections \ref{subsec:Type0} and \ref{subsec:IncAtTwo} establish notation which will be used frequently in the following sections (see, for example, Figure \ref{fig:Indexing} and Table \ref{table:typeII_wind}).  We conclude by discussing further implications of our work in Section~\ref{sec:implication}. These implications include observing that the analogues of the bangles and bracelets defined in \cite{musiker2013bases} have some of the same desirable properties as in the unpunctured case and form bases for many surface-type cluster algebras. Additionally, we discuss progress towards providing expansion formulas for generalized cluster algebras from punctured orbifolds, building on the work of \cite{banaian2020snake}.

\section{Cluster Algebras from Surfaces}
\label{sec:cluster_algebra}

We will let $S$ be a connected, oriented 2-dimensional Riemann surface with (possibly empty) surface, and let $M$ be a finite set of marked points on $S$ such that there is at least one marked point on each boundary component of $S$. For technical reasons, we do not allow $(S,M)$ to be a sphere with less than four punctures, a monogon with less than two punctures, an unpunctured disc with less than two marked points on the boundary, or a closed surface with two marked points. 

We begin by briefly reviewing some relevant notions about \emph{cluster algebras from surfaces}. We use the standard definition of a cluster algebra from \cite{FZ2002} and follow the surface model of Fomin, Shapiro, and D. Thurston \cite{FST-I}. Although we will highlight a few necessary definitions, we refer the reader to \cite{FST-I} for a much more detailed treatment if desired or to expository works such as \cite{glick2017introduction,williams2014cluster}. In this paper, we work with the subclass of cluster algebra that can be modeled by triangulations of marked surfaces. In particular, we focus on \emph{punctured surfaces} where those marked points may appear in the interior of the surface. Such marked points are referred to as \emph{punctures}.

\begin{definition}\label{def:arc}
An \emph{arc} $\gamma$ on a surface $(S,M)$ is a non-self-intersecting curve in $S$ whose endpoints lie in $M$, but is otherwise disjoint from $M$ and $\partial S$. Arcs are not allowed to be contractible to $\partial S$, or to cut out an unpunctured monogon or bigon. Arcs are considered up to isotopy class. All arcs will be regarded with an orientation and we denote  $s(\gamma)$ as the \emph{starting point} of $\gamma$ and $t(\gamma)$ as the \emph{terminal point}.
\end{definition}

An \emph{ideal triangulation} of $(S,M)$ is a maximal collection of pairwise compatible plain arcs. It is possible that some triangles formed by an ideal triangulation do not have three distinct sides. A triangle with two sides identified is referred to as a \emph{self-folded triangle}. Since an arc enclosed by a self-folded triangle cannot be flipped, in order to allow mutation at every cluster variable (equivalently, in order to make all arcs flippable), punctured surfaces require the slightly more nuanced notion of \emph{tagged arcs}.

\begin{definition}
A \emph{tagged arc} is obtained from an ordinary arc by ``tagging'' each of its endpoints either \emph{plain} or \emph{notched}. In diagrams, notched endpoints are decorated with the $\bowtie$ symbol. Endpoints on $\partial S$ must be tagged plain and both endpoints of loops must have the same tagging.
\end{definition}

A tagged triangulation is a maximal collection of pairwise compatible tagged arcs of $(S,M)$, where loops cutting out once-punctured monogons are forbidden and compatibility of tagged arcs is defined as follows.

\begin{definition}\label{def:CompatibleTaggedArcs}
Given a tagged arc $\gamma$, let $\gamma^0$ denote the underlying plain arc. Tagged arcs $\alpha$ and $\beta$ in $(S,M)$ are \emph{compatible} if and only if the following conditions are satisfied:
\begin{itemize}
    \item the isotopy classes of $\alpha^{0}$ and $\beta^{0}$ contain non-intersecting representatives;
    \item if $\alpha^{0} \neq \beta^{0}$ but $\alpha$ and $\beta$ share an endpoint $p$, then both arcs must have the same tagging at $p$;
    \item and if $\alpha^{0} = \beta^{0}$, then at least one end of $\alpha$ has the same tagging as the corresponding end of $\beta$.
\end{itemize}
\end{definition}

In this paper, we consider only tagged triangulations with no self-folded triangles. Two consequences of this assumption are that (1) our triangulations contain only plain arcs and (2)  for every puncture $p$ on our marked surface, there will be at least two distinct arcs from the triangulation which are incident to $p$. We often refer to these arcs as \emph{spokes}. 

To each ideal triangulation $T$ we may uniquely assign a tagged triangulation $\iota(T)$ by replacing a loop that cuts out a once-punctured monogon with a singly notched arc where the notch is incident to the enclosed puncture.

When thinking about the intersection of arcs, we always consider the representatives of the isotopy classes of each arc with the minimum number of crossings. As helpful notation, let $e(\gamma,\gamma')$ denote the minimum number of crossings between representatives of the isotopy classes of two arcs $\gamma$ and $\gamma'$. Then the first condition of the previous definition can be restated as requiring that $e(\gamma,\gamma') = 0$ in order for $\gamma$ and $\gamma'$ to be compatible.

In the standard surface model dictionary, a (possibly punctured) surface $(S,M)$ defines a surface-type cluster $\mathcal{A}$ via the following correspondences: clusters of $\mathcal{A}$ correspond to tagged triangulations of $(S,M)$; individual tagged arcs in a triangulation $T$ correspond to cluster variables of $\mathcal{A}$, and mutations in $\mathcal{A}$ correspond to flips of tagged arcs.

It will often be convenient for us to instead represent tagged arcs using \emph{hooks}. This notion was used by Wilson in \cite{wilson2020surface} to construct \emph{loop graphs} and produce cluster expansion formulas for tagged arcs and by Labardini-Fragoso as well as by Dom{\'i}nguez to define modules associated to tagged arcs in the Jacobian algebra from a punctured surface  \cite{dominguez2017arc, Labardini-Fragoso-thesis}.

\begin{definition}[\cite{wilson2020surface}]
Let $T$ be an ideal triangulation of a surface $S$ and $\gamma$ be a directed arc on $S$ with an endpoint at puncture $p$. A \emph{hook} at $p$ is a curve that either:
\begin{itemize}
    \item winds around $p$ clockwise or counterclockwise, intersecting all spokes incident to $p$ exactly once, and then follows $\gamma$, if $\gamma$ begins at $p$,
    \item or follows $\gamma$ and then winds around $p$ clockwise or counterclockwise, intersecting all spokes incident to $p$ exactly once, if $\gamma$ ends at $p$.
\end{itemize}
\end{definition}

An example of a tagged arc $\gamma^{(p)}$ and the two possible replacements of its notched endpoint with a hook is shown below.

\begin{figure}[H]
\captionsetup{width=0.9\textwidth}
    \centering
    \begin{tikzpicture}[scale=1.5]
    \draw (-2,0) to (-2.5,1);
    \draw (-2,0) to (-2.5,-1);
    \draw (-2.5,1) to (-2,0);
    \draw (-2.5,-1) to (-2,0);
    \draw (-2,0) to (-1.5,0.5);
    \draw (-2,0) to (-1.5,-0.5);
    \draw[red,thick] (-3.3,0) to node[midway,below]{$\gamma^{(p)}$} (-2,0);
    \draw[red,thick] (-2.1,0.1) to (-2.2,-0.1);
    \draw[red,thick] (-2.2,0.1) to (-2.1,-0.1);
    \draw[red,thick] (-2.1,0.1) to (-2.2,0.1);
    \draw[red,thick] (-2.1,-0.1) to (-2.2,-0.1);
    \draw[fill=black] (-2,0) circle [radius=1pt];
    \node at (-1.95,0.2) {$p$};

    \draw (1,0) to (0.5,1);
    \draw (1,0) to (0.5,-1);
    \draw (0.5,1) to (1,0);
    \draw (0.5,-1) to (1,0);
    \draw (1,0) to (1.5,0.5);
    \draw (1,0) to (1.5,-0.5);
    \draw[red,thick] (-0.3,0) to node[midway,below]{$\tilde{\gamma}$} (0.75,0);
    \draw[red, thick, out=90,in=90,looseness=2] (0.75,0) to (1.3,0);
    \draw[red, thick, out=-90,in=-70,looseness=2] (1.3,0) to (0.8,-0.2);
    \draw[fill=black] (1,0) circle [radius=1pt];
    \node at (1.05,0.15) {$p$};

    \draw (4,0) to (3.5,1);
    \draw (4,0) to (3.5,-1);
    \draw (3.5,1) to (4,0);
    \draw (3.5,-1) to (4,0);
    \draw (4,0) to (4.5,0.5);
    \draw (4,0) to (4.5,-0.5);
    \draw[red,thick] (2.7,0) to node[midway,below]{$\tilde{\gamma}$} (3.75,0);
    \draw[red, thick, out=-90,in=-90,looseness=2] (3.75,0) to (4.3,0);
    \draw[red, thick, out=90,in=70,looseness=1.5] (4.3,0) to (3.8,0.2);
    \draw[fill=black] (4,0) circle [radius=1pt];
    \node at (4.05,0.2) {$p$};

    \end{tikzpicture}
    \label{fig:hook}
\end{figure}

We will consider several other families of curves on a surface here, namely closed curves and arcs with self-intersection. The latter are sometimes referred to as \emph{generalized arcs}. When a closed curve has no self-intersections, it is called \emph{essential}.

\subsection{Snake Graphs}\label{subsec:snakegraph}

\emph{Snake graphs} were introduced by Musiker, Schiffler, and Williams in \cite{musiker2011positivity} to give a proof of positivity for cluster algebras of surface type and produce cluster expansion formulas. Later, in \cite{wilson2020surface}, Wilson gave a simplified construction, called a \emph{loop graph}, for notched arcs. We will briefly summarize these constructions, but refer the reader to the original papers for further details and examples.

Throughout this section, we will use the following triangulation as a running source of examples.

\begin{center}
\begin{tikzpicture}

    \draw (4.5,-1) to (5,0);
    \draw (4,1.5) to (5,0);
    \draw (6.5,0) to (5,0);
    \draw (3,-1.2) to (4.5,-1);
    \draw (4.5,-1) to (4,1.5);
    \draw (3,-1.2) to (4,1.5);
    \draw (3,-1.2) to (3,1.5);
    \draw (0,1) to (1.5,0);
    \draw (0,1) to (0,-1);
    \draw (0,-1) to (1.5,0);
    \draw (3,1.5) to (1.5,0);
    \draw (3,-1.2) to (1.5,0);
    \draw (3,-1.2) to (0,-1);
    \draw (3,1.5) to (0,1);
    \draw (3,1.5) to (4,1.5);
    \draw[out=0,in=100, looseness=1.5] (4,1.5) to (6.5,0);
    \draw[out=0,in=-100, looseness=1.5] (4.5,-1) to (6.5,0);

    %adding edge labels

    \node[fill=white,scale=0.7] at (0.6,0.5) {$\sigma_2$};
    \node[fill=white,scale=0.7] at (0.8,-0.4) {$\sigma_3$};
    \node[fill=white,scale=0.7] at (2,-0.3) {$\sigma_4$};
    \node[fill=white,scale=0.7] at (2,0.4) {$\sigma_1$};

    \node[fill=white,scale=0.7] at (3,0.5) {$\tau_1$};
    \node[fill=white,scale=0.7] at (3.5,0.2) {$\tau_2$};
    \node[fill=white,scale=0.7] at (4.15,0.3) {$\tau_3$};

    \draw[red,thick] (1.5,0) to (5,0);
    \draw[out=90,in=90, looseness=0.8, blue, thick] (1,0) to (5.5,0);

    \node[red, fill=white, scale=0.8] at (2.8,0.0) {$\gamma_1$};
    \node[blue, fill=white, scale=0.8] at (3.3,1) {$\gamma_2$};

    \node[fill=white,scale=0.7] at (4.7,0.4) {$\eta_3$};
    \node[fill=white,scale=0.7] at (5.8,0) {$\eta_2$};
    \node[fill=white,scale=0.7] at (4.8,-0.3) {$\eta_1$};
    \draw[out=-90,in=-90, looseness=0.5, blue, thick] (1,0) to (5.5,0);

    \node[scale=0.8] at (1.5,0.2) {$p$};
    \node[scale=0.8] at (5.1,0.2) {$q$};

%draw vertices
    \draw[fill=black] (5,0) circle [radius=2pt];
    \draw[fill=black] (6.5,0) circle [radius=2pt];
    \draw[fill=black] (4,1.5) circle [radius=2pt];
    \draw[fill=black] (4.5,-1) circle [radius=2pt];
    \draw[fill=black] (3,-1.2) circle [radius=2pt];
    \draw[fill=black] (3,1.5) circle [radius=2pt];
    \draw[fill=black] (0,1) circle [radius=2pt];
    \draw[fill=black] (0,-1) circle [radius=2pt];
    \draw[fill=black] (1.5,0) circle [radius=2pt];

    \end{tikzpicture}
    \label{fig:arcs}
\end{center}

Fix a triangulation $T$ of a surface $(S,M)$, a plain arc $\gamma$, and an orientation of $\gamma$. If $\gamma \in T$, then  the snake graph $\mathcal{G}_{\gamma,T}$ is a single edge labeled with $\gamma$. Otherwise, $\gamma$ must cross at least one arc in $T$. Let $\tau_{i_1}, \dots, \tau_{i_k}$ be the sequence of arcs of $T$ crossed by $\gamma$, in order. Each intersection of $\gamma$ with an arc in $\tau_{i_j} \in T$ corresponds to a single square tile $G_j$.  The snake graph $\mathcal{G}_{\gamma,T}$ is recursively constructed by gluing together individual tiles $G_{1}, \dots, G_{k}$ such that $G_{1}$ is the southwest-most tile and each pair $G_{j}, G_{j+1}$ shares one edge.

An individual tile $G_j$ is constructed by taking the two triangles bordered by $\tau_{i_j}$ in the triangulated surface and gluing them along $\tau_{i_j}$ such that both have either the same or opposite orientation as in $S$. If the orientation of the triangles in $G_{j}$ matches their orientation in $S$, we say that $G_j$ has relative orientation $+1$ and denote this as $\textrm{rel}(G_j) = +1$. Otherwise, $\textrm{rel}(G_j) = -1$. The dashed lines (diagonals) indicate labels of the tiles and are not considered edges of the graph.

\begin{center}
\begin{tikzpicture}
        \draw[thick] (0,0) to node[below,midway]{$c$} (1,0) to node[right,midway]{$d$} (1,1) to node[midway,above]{$a$} (0,1) to node[left,midway]{$b$} (0,0);
        \draw[thick, dashed, black!60!white] (0,1) to node[above,midway,xshift=4,yshift=-5]{$\tau_{i_j}$} (1,0);
        
        \draw[thick] (3,0) to node[below,midway]{$d$} (4,0) to node[right,midway]{$c$} (4,1) to node[midway,above]{$b$} (3,1) to node[midway,left]{$a$} (3,0);
        \draw[thick, dashed, black!60!white] (3,1) to node[above,midway,xshift=4,yshift=-5]{$\tau_{i_j}$} (4,0);
    \end{tikzpicture}
\end{center}

When $\gamma$ crosses two consecutive arcs $\tau_{i_j}$ and $\tau_{i_{j+1}}$, we glue $G_{j}$ and $G_{j+1}$ along their shared edge labeled $\tau_{[j]}$, where $\tau_{[j]}$ is the arc from $T$ or the boundary arc which forms a triangle with $\tau_{i_j}$ and $\tau_{i_{j+1}}$, using planar embeddings of the tiles so that $\textrm{rel}(G_{j}) \neq \textrm{rel}(G_{j+1})$. 

\emph{Band graphs}, which represent closed curves, are constructed using a similar procedure. Let $\zeta$ be a closed curve on $S$ that is not homotopic to a curve which does not cross any arcs in $T$ and has no contractible kinks. Choose a point $p$ on $\zeta$ such that $p$ is not in the interior of a pending arc and fix an orientation of the curve. We then construct a snake graph by beginning at $p$ and following the chosen orientation of $\zeta$. The first and last tiles will correspond to arcs bordering the same triangle and will always have a unique common edge. We identify these edges to form the \emph{band graph}, $\mathcal{G}_{\zeta,T}^\circ$ corresponding to $\zeta$.

For singly and doubly notched arcs, Wilson constructed \emph{loop graphs} using the previously described hook construction. Let $\gamma$ be a notched arc such that $\gamma^{0} \not\in T$. Orient $\gamma$ so that it is notched at its starting point. Let $\overline{\gamma}$ denote the \emph{hooked arc} obtained from $\gamma$ by replacing each notched end with a hook. While there are two choices for the orientation of the hook, we will see the final output will not depend on the choice. Let $\mathcal{G}_{\overline{\gamma},T} = (G_1, \dots, G_d)$ be the snake graph obtained from $\overline{\gamma}$ via the construction for plain arcs. The snake graph $\mathcal{G}_{\gamma^{0},T} = (G_{k_1}, \dots, G_{k_2})$ corresponding to $\gamma^{0}$ appears as a subgraph of $\mathcal{G}_{\overline{\gamma},T}$ for some $k_1, k_2 \in \{ 1, \dots, d\}$, necessarily with $k_1 \leq k_2$.

The tiles $G_1$ and $G_{k_1}$ contain a common triangle formed by two spokes of the puncture $p$ and the first arc crossed by $\gamma^{0}$. Let $c$ denote the boundary edge in the copy of this triangle that appears in $G_k$ and $c'$ denote the corresponding edge in $G_1$. Let $x'$ denote the southwest vertex of $c'$ in $G_1$ and $y'$ denote the other vertex of $c'$. Denote the copies of these vertices in $G_{k_1}$ by $x$ and $y$. The \emph{loop} with respect to $G_1,k_{1}$ is formed by identifying $c$ and $c'$ such that $x$ and $y$ are, respectively, identified with $x'$ and $y'$. This identified edge is referred to as a \emph{cut edge}. The loop with respect to $G_d,k_2$ has an analogous construction where southwest is replaced with northeast. The graph $\mathcal{G}_{\gamma,T}$ produced by this gluing is the \emph{loop graph} corresponding to $\gamma$.

The diagram below shows the snake graph for $\gamma_1$ (on the left) and the loop graph for $\gamma_1^{(q)}$ (on the right) in our running example. The glued edge in $\mathcal{G}_{\gamma_1^{(q)}}$ is highlighted in orange.

\begin{center}
\begin{tabular}{ccc}
\begin{tikzpicture}
\draw (0,0) to node[midway,below]{$\sigma_1$} (1,0) to (1,1) to node[midway,above]{$\tau_2$} (0,1) to node[midway,left]{$\sigma_4$} (0,0); % first tile
\draw[dashed,gray] (0,1) to node[midway,above,xshift=4,yshift=-2]{$\tau_1$} (1,0); % first diagonal

\draw (1,0) to node[midway,below]{$\tau_1$} (2,0) to (2,1) to node[midway,above]{$\tau_3$} (1,1); % second tile
\draw[dashed,gray] (1,1) to node[midway,above,xshift=4,yshift=-2]{$\tau_2$} (2,0); % second diagonal

\draw (2,0) to node[midway,below]{$\tau_2$} (3,0) to node[midway,right]{$\eta_3$} (3,1) to node[midway,above]{$\eta_1$} (2,1); % third tile
\draw[dashed,gray] (2,1) to node[midway,above,xshift=4,yshift=-2]{$\tau_3$} (3,0);
\end{tikzpicture}
&
\qquad\qquad
&
\begin{tikzpicture}
\draw (0,0) to node[midway,below]{$\sigma_1$} (1,0) to (1,1) to node[midway,above]{$\tau_2$} (0,1) to node[midway,left]{$\sigma_4$} (0,0); % first tile, tau_1
\draw[dashed,gray] (0,1) to node[midway,above,xshift=4,yshift=-2]{$\tau_1$} (1,0); % first diagonal

\draw (1,0) to node[midway,below]{$\tau_1$} (2,0) to (2,1) to node[midway,above]{$\tau_3$} (1,1); % second tile, tau_2
\draw[dashed,gray] (1,1) to node[midway,above,xshift=4,yshift=-2]{$\tau_2$} (2,0); % second diagonal

\draw (2,0) to node[midway,below]{$\tau_2$} (3,0) to node[midway,right,yshift=-6]{$\eta_3$} (3,1) to node[midway,above]{$\eta_1$} (2,1); % third tile, tau_3
\draw[dashed,gray] (2,1) to node[midway,above,xshift=4,yshift=-2]{$\tau_3$} (3,0);
\draw[ultra thick,orange] (2,1) to (3,1); % highlights identified edge

\draw (3,0) to node[midway,below]{$\tau_3$} (4,0) to (4,1) to node[midway,above]{$\eta_2$} (3,1); % fourth tile, eta_1
\draw[dashed,gray] (3,1) to node[midway,above,xshift=4,yshift=-2]{$\eta_1$} (4,0); % fourth diagonal

\draw (4,0) to node[midway,below]{$\eta_1$} (5,0) to node[midway,right]{$\eta_3$} (5,1) to (4,1); % fifth tile, eta_2
\draw[dashed,gray] (4,1) to node[midway,above,xshift=4,yshift=-2]{$\eta_2$} (5,0); % fifth diagonal

\draw (4,1) to node[midway,left,xshift=2]{$\eta_2$} (4,2) to node[midway,above]{$\eta_1$} (5,2) to node[midway,right]{$\tau_3$} (5,1); % sixth tile, eta_3
\draw[dashed,gray] (4,2) to node[midway,above,xshift=4,yshift=-2]{$\eta_3$} (5,1); % sixth diagonal
\draw[ultra thick, orange] (4,2) to (5,2); % highlights identified edge
\end{tikzpicture} \\
\highlight{$\mathcal{G}_{\gamma_1}$}
&
& \highlight{$\mathcal{G}_{\gamma_1^{(q)}}$}
\end{tabular}
\end{center}

Given a triangulation $T$, these constructions can be used to recover the expansion $x_{\gamma}^T$ of a cluster algebra element associated to $\gamma$ with respect to the cluster corresponding to $T$. Doing so requires defining several statistics on $\mathcal{G}_{\gamma,T}$.

An arc $\gamma$ that crosses the sequence of arcs $\tau_{i_1}, \dots, \tau_{i_k}$ has \emph{crossing monomial} $\textrm{cross}(\gamma,T) = \prod_{j=1}^{k} x_{\tau_{i_j}}$. The \emph{weight} of an edge labeled by $\tau$ is $x_{\tau}$. Recall that a \emph{perfect matching} of a graph $G=(V,E)$ is a subset of edges $P \subset E$ such that every vertex $v \in V$ is incident to exactly one edge in $P$. If a perfect matching of a loop or band graph can be extended to a perfect matching of the snake graph that results from forgetting the identification of edges,  then we refer to $P$ as a \emph{good matching}. Given a perfect or good matching $P$, let $x(P)$ be the product of the weights of edges used in $P$.

An edge in a snake graph is called a \emph{boundary edge} if it borders only one tile. Every snake graph has two perfect matchings that contain only boundary edges. If $\mathcal{G}$ is a snake graph or band graph, we refer to the boundary matching that does not contain the west edge on $G_1$ as the \emph{minimal matching} $P_{\textrm{min}}$. If $\mathcal{G}$ has a loop with respect to $G_1,k_1$, $P_{\textrm{min}}$ is the matching that does not use the west edge of $G_{k_1}$.

Given any perfect or good matching $P$, the symmetric difference $P \ominus P_{\text{min}} := (P \cup P_{\text{min}}) \backslash (P \cap P_{\text{min}})$ is a collection of cycles of even length. Let $h(P) = \{ h_1, \dots, h_m \}$ denote the set, with multiplicity, of diagonal labels of tiles enclosed by $P \ominus P_{\textrm{min}}$. Then let $y(P) := \prod_{h_i \in h(P)}y_{i}$.

\begin{theorem}[Theorem 4.10 of \cite{musiker2011positivity} and Theorem 5.7 of \cite{wilson2020surface}]
\label{thm:SnakeGraphExpansion}
Let $(S,M)$ be a marked surface and $T^{0}$ be an ideal triangulation with corresponding tagged triangulation $T = \iota(T^{0})$. Let $\mathcal{A}$ be the corresponding cluster algebra with principal coefficients. Let $\gamma$ be a tagged arc such that $\gamma^{(0)} \not\in T$ and let $\mathcal{G}_{\gamma,T}$ be the corresponding snake or loop graph. Then the expansion of $x_{\gamma} \in \mathcal{A}$ with respect to the cluster associated to $T$ is given by
\begin{align}\label{eq:Expansion}
    x_{\gamma} = \frac{1}{\textrm{cross}(T,\gamma)} \sum_{P} x(P)y(P)
\end{align}
where the summation is indexed by either perfect matchings (if $\gamma$ is a plain arc) or good matchings (if $\gamma$ is a notched arc) of $\mathcal{G}_{\gamma,T}$.
\end{theorem}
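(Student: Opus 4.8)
The plan is to prove the two halves of the statement in sequence—first the snake-graph formula for plain arcs, then the loop-graph formula for notched arcs, reducing the latter to the former—by induction on $k = e(\gamma, T)$, the number of arcs of $T$ crossed by the (generalized) arc in question, working throughout with principal coefficients. If $k = 0$, then $\gamma \in T$, the graph $\mathcal{G}_{\gamma, T}$ is a single edge, its unique matching $P$ has $x(P) = x_\gamma$ and $y(P) = 1$, and \eqref{eq:Expansion} is an identity. If $k = 1$ and $\gamma$ crosses $\tau$ inside the quadrilateral with cyclic boundary $a, b, c, d$, then $\mathcal{G}_{\gamma, T}$ is a single tile with diagonal $\tau$ and exactly two matchings, $P_{\min} = \{a, c\}$ (the boundary matching avoiding the west edge) and $P_{\max} = \{b, d\}$, with $P_{\max} \ominus P_{\min}$ enclosing the one tile; hence $y(P_{\min}) = 1$, $y(P_{\max}) = y_\tau$, and \eqref{eq:Expansion} reads $x_\gamma = (x_a x_c + y_\tau\, x_b x_d)/x_\tau$, which is exactly the exchange (Ptolemy) relation for the flip of $\tau$.

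\textbf{Inductive step, and the main obstacle.} Since the paper has already arranged that each arc of $T$ is crossed at most once by $\gamma$, I would choose an arc $\tau \in T$ crossed by $\gamma$ whose flip $T' = \mu_\tau(T)$ satisfies $e(\gamma, T') = k - 1$; such a $\tau$ exists by the standard argument (one at either end of the crossing sequence works). The inductive hypothesis gives \eqref{eq:Expansion} for $x_\gamma$ computed from $\mathcal{G}_{\gamma, T'}$, and the principal-coefficient exchange relation gives $x_\tau\, x_{\mu_\tau(\tau)} = M_+ + y_\tau M_-$ with $M_\pm$ monomials in the other $T$-cluster variables, which I would use to re-expand the occurrences of $x_{\mu_\tau(\tau)}$ appearing as edge weights in $\mathcal{G}_{\gamma, T'}$. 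After substitution and clearing denominators I would match the outcome term by term with the claimed $T$-expansion by constructing an explicit bijection between perfect matchings of $\mathcal{G}_{\gamma, T'}$ and of $\mathcal{G}_{\gamma, T}$ that (i) sends $P_{\min}$ to $P_{\min}$, (ii) intertwines $x(P)$ with the prescribed weights once $\textrm{cross}(\gamma, T)$ versus $\textrm{cross}(\gamma, T')$ is accounted for, and (iii) shifts $y(P)$ by exactly $y_\tau$ on the matchings that absorb the $M_-$ term. Establishing (i)--(iii)---casing on the relative orientations $\textrm{rel}(G_j)$ of the tiles near $\tau$ and on the position of $\tau$ in the crossing sequence, and tracking which diagonals wind up enclosed by the relevant symmetric differences---is the technical heart of the proof and the step I expect to be the main obstacle; this is precisely the bookkeeping carried out in \cite{musiker2010cluster, musiker2011positivity}, and it cannot be shortcut via skein or resolution identities here, which would be circular.

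\textbf{Notched arcs.} For a tagged arc $\gamma$ with $\gamma^0 \notin T$, I would reduce to the plain case via the hook construction recalled above: replacing each notched end by a hook produces a generalized plain arc $\overline{\gamma}$, the plain-arc induction applies verbatim to it (it uses only the crossing sequence), and so \eqref{eq:Expansion} holds for $\mathcal{G}_{\overline{\gamma}, T}$. Since $\mathcal{G}_{\gamma, T}$ is obtained from $\mathcal{G}_{\overline{\gamma}, T}$ by identifying the cut edge(s), and good matchings are those that lift to perfect matchings of $\mathcal{G}_{\overline{\gamma}, T}$, I would then prove that the good matchings of $\mathcal{G}_{\gamma, T}$ are in weight-preserving bijection with the subset of perfect matchings of $\mathcal{G}_{\overline{\gamma}, T}$ that appears in the MSW expansion of a notched arc, that $P_{\min}$ of the loop graph is the image of $P_{\min}$ of the snake graph, and that a diagonal straddling a cut edge is counted exactly once in $y(P)$; running this analysis for both winding directions also yields the asserted independence of the construction from the orientation of the hook. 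Combining this correspondence with the plain-arc formula for $\overline{\gamma}$ and the MSW notched-arc expansion gives \eqref{eq:Expansion} for $\gamma$, completing the argument.
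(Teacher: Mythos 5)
This statement is a background result that the paper does not prove: it is quoted verbatim from Theorem~4.10 of \cite{musiker2011positivity} and Theorem~5.7 of \cite{wilson2020surface}, so there is no internal proof to compare against. Your outline is a reasonable reconstruction of the strategy used in those references, and you correctly identify where the real work lies, but as written it is a proof sketch rather than a proof, and it has a few inaccuracies worth flagging.

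First, the hypothesis that ``each arc of $T$ is crossed at most once by $\gamma$'' is not part of the setup of this theorem; that normalization appears in the paper only in the introduction's discussion of the sets $R$ and $S$ for skein relations. A tagged arc can cross an arc of $T$ many times, and the induction must accommodate this. Second, the inductive step as described rests on two nontrivial claims that you defer entirely: (i) the existence of $\tau\in T$ whose flip strictly decreases $e(\gamma,T)$, and (ii) the term-by-term reorganization after substituting the exchange relation for $x_{\mu_\tau(\tau)}$ into the edge weights of $\mathcal{G}_{\gamma,T'}$, together with the matching bijection satisfying your conditions (i)--(iii). These are exactly the content of the cited proofs, so invoking \cite{musiker2010cluster,musiker2011positivity} for them makes the argument circular as a self-contained proof, though it is acceptable if the goal is only to explain why the theorem is plausible. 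Third, for punctured surfaces the published argument in \cite{musiker2011positivity} is not a direct induction on crossing number: it reduces to surfaces with boundary via coverings and handles notched arcs through identities such as their Theorem~12.9 (which the present paper reproves in Proposition~\ref{prop:PosetExpansion} and generalizes in Corollary~\ref{cor:MSWThm12.9}), and Wilson's loop-graph version is obtained by the weight-preserving correspondence between good matchings and the MSW subset of matchings of $\mathcal{G}_{\overline{\gamma},T}$, essentially as you describe in your last paragraph. So your route is plausible in outline but genuinely different from, and in places harder than, the arguments actually on record; since the paper treats this as a citation, the appropriate resolution is simply to cite those proofs rather than attempt to reprove the theorem.
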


Generalized arcs and closed curves do not correspond to cluster variables, so one must define what the symbol $x_\gamma$ represents. Some special cases require their own definition, following \cite{musiker2013bases}.

\begin{definition}\label{def:contandkink}
    \begin{enumerate}
        \item If $\gamma$ is a contractible closed loop, then $x_\gamma$ is -2.
        \item Suppose $\gamma$ has a contractible kink. Let $\gamma'$ be the curve with this kink removed. Then, $x_\gamma=(-1)x_{\gamma'}$.
    \end{enumerate}
\end{definition}

If $\gamma$ is a generalized arc or closed curve which is not covered in Definition \ref{def:contandkink}, then we define the symbol $x_\gamma$ to be the result of applying the expansion formula to $\mathcal{G}_{\gamma,T}$. The issue of defining such expressions is discussed in Section 8.2 in \cite{musiker2013bases} and our approach is the ``combinatorial definition'' in Section 8.2 in \cite{musiker2013bases}. In Corollaries \ref{cor:PlainTimesNotchedEqualsLoop} and \ref{cor:MSWThm12.9} we will show that we could have equivalently used the ``algebraic definition'' by defining expressions for generalized notched arcs using arcs with less notched endpoints.

\section{Cluster Expansion Formula}
\label{sec:expansion_formula}

\subsection{Posets}

In the following, we construct a poset $(\calP_\gamma, \preceq)$ from a (possibly generalized) arc $\gamma$ on $(S,M)$ with triangulation $T$.

First, suppose that $\gamma$ is an arc with both endpoints tagged plain. Choose an orientation for $\gamma$. Let $\tau_{i_1},\ldots,\tau_{i_d}$ be the list of arcs of $T$ crossed by $\gamma$, in order determined by the orientation we placed on $\gamma$. We will place a poset structure on $[d]$ in the following way. Two consecutive arcs crossed by $\gamma$, $\tau_{i_j}$ and $\tau_{i_{j+1}}$ will border a triangle $\Delta_j$ which $\gamma$ passes through between these crossings. Let $s_j$ be the shared endpoint of $\tau_{i_j}$ and $\tau_{i_{j+1}}$ which is an endpoint of $\Delta$. If $s_j$ lies to the right of $\gamma$ (with respect to the orientation placed on $\gamma$), then we set ${j} \succ {j+1}$; otherwise, we set ${j} \prec {j+1}$. The resulting poset is sometimes called a \emph{fence poset} since its Hasse diagram is a path graph. Notice this process is the same whether $\gamma$ has self-intersections or not. An example of the plain arc $\gamma_1$ from our running example is highlighted in blue in Table~\ref{table:posetExamples}. By abuse of notation, we will usually refer to an element $j$ of $\calP_\gamma$ instead by $\tau_{i_j}$, the arc it corresponds to, as in Table~\ref{table:posetExamples}.

Next, suppose that $\gamma^{(p)}$ is notched at its starting point $s(\gamma) = p$. Draw the fence poset for the underlying plain arc first. Suppose the first triangle $\gamma^{(p)}$ passes through  is $\Delta_0$. Necessarily, $\Delta_0$ is bordered by $\tau_{i_1}$ and two spokes at $p$. Label the set of all spokes of $T$ at $p$ $\sigma_1,\sigma_m$ in counterclockwise order where $\sigma_1$ is the counterclockwise neighbor of $\tau_{i_1}$. If an arc has both endpoints at $p$, then it will receive two labels depending on where these ends fall in the cyclic order around $p$. We include elements $1^s,\ldots,m^s$ to the poset, and set $m^s \succ (m-1)^s \succ \cdots \succ 1^s$, $1^s \prec 1$ and $m^s \succ 1$. If we have an arc which is instead notched at its terminal point, we repeat this process with elements $1^t,\ldots,m^t$, and we combine these processes for an arc notched at both endpoints. We call the resulting posets \emph{loop fence posets} as they correspond to the loop graphs given by Wilson in \cite{wilson2020surface}. We say that the elements $1^s,\ldots,m^s$ are in a \emph{loop}. The designation of what is and is not in a loop will be important to define the corresponding $\gb$-vector in Section \ref{subsec:g-vec}. If we wish to refer to a loop fence poset $\calP$ with the loop portion removed, we will denote this $\calP^0$. Examples for $\gamma_1^{(p)}$, $\gamma_1^{(q)}$, and $\gamma_1^{(p,q)}$ are shown in Table~\ref{table:posetExamples}.

\begin{table}[h]
\captionsetup{width=0.9\textwidth}
    \centering
    \begin{tabular}{c|c|c}
        \highlight{$\calP_{\gamma_1^{(p)}}$} & \highlight{$\calP_{\gamma_1^{(q)}}$} & \highlight{$\calP_{\gamma_1^{(p,q)}}$} \\
        \begin{tikzpicture}
            \node (s4) at (0,0) {$\sigma_4$};
            \node (s3) at (0.5,-1) {$\sigma_3$};
            \node (s2) at (1,-2) {$\sigma_2$};
            \node (s1) at (1.5,-3) {$\sigma_1$};
            \node[blue] (t1) at (2,-2) {$\tau_1$};
            \node[blue] (t2) at (2.5,-3) {$\tau_2$};
            \node[blue] (t3) at (3,-2) {$\tau_3$};

            \draw (s4) to (t1);
            \draw (s4) to (s3);
            \draw (s3) to (s2);
            \draw (s2) to (s1);
            \draw (s1) to (t1);
            \draw[blue] (t1) to (t2);
            \draw[blue] (t2) to (t3);
        \end{tikzpicture}
        &
        \begin{tikzpicture}
            \node[blue] (t1) at (2,-2) {$\tau_1$};
            \node[blue] (t2) at (2.5,-3) {$\tau_2$};
            \node[blue] (t3) at (3,-2) {$\tau_3$};
            \node (e1) at (3.5,-3) {$\eta_1$};
            \node (e2) at (4,-2) {$\eta_2$};
            \node (e3) at (4.5,-1) {$\eta_3$};

            \draw (t3) to (e1);
            \draw (e1) to (e2);
            \draw (e2) to (e3);
            \draw (t3) to (e3);
            \draw[blue] (t1) to (t2);
            \draw[blue] (t2) to (t3);
            
        \end{tikzpicture}
        &
        \begin{tikzpicture}
            \node (s4) at (0,0) {$\sigma_4$};
            \node (s3) at (0.5,-1) {$\sigma_3$};
            \node (s2) at (1,-2) {$\sigma_2$};
            \node (s1) at (1.5,-3) {$\sigma_1$};
            \node[blue] (t1) at (2,-2) {$\tau_1$};
            \node[blue] (t2) at (2.5,-3) {$\tau_2$};
            \node[blue] (t3) at (3,-2) {$\tau_3$};
            \node (e1) at (3.5,-3) {$\eta_1$};
            \node (e2) at (4,-2) {$\eta_2$};
            \node (e3) at (4.5,-1) {$\eta_3$};

            \draw (s4) to (t1);
            \draw (s4) to (s3);
            \draw (s3) to (s2);
            \draw (s2) to (s1);
            \draw (s1) to (t1);
            \draw[blue] (t1) to (t2);
            \draw[blue] (t2) to (t3);
            \draw (t3) to (e1);
            \draw (e1) to (e2);
            \draw (e2) to (e3);
            \draw (t3) to (e3);
        \end{tikzpicture} \\
        $\begin{aligned}
            \mathbf{a}_{\gamma_1^{(p)}} &= \mathbf{e}_{\tau_2} \\
            \mathbf{b}_{\gamma_1^{(p)}} &= \mathbf{e}_{\tau_1} \\
            \mathbf{r}_{\gamma_1^{(p)}} &= -\mathbf{e}_{\sigma_1} + \mathbf{e}_{\eta_3}
        \end{aligned}$
        &
        $\begin{aligned}
            \mathbf{a}_{\gamma_1^{(q)}} &= \mathbf{e}_{\tau_2} \\
            \mathbf{b}_{\gamma_1^{(q)}} &= \mathbf{e}_{\tau_3} \\
            \mathbf{r}_{\gamma_1^{(q)}} &= \mathbf{e}_{\sigma_4} - \mathbf{e}_{\eta_{1}}
        \end{aligned}$
        &
        $\begin{aligned}
            \mathbf{a}_{\gamma_1^{(p,q)}} &=  \mathbf{e}_{\tau_2}  \\
            \mathbf{b}_{\gamma_1^{(p,q)}} &= \mathbf{e}_{\tau_1} + \mathbf{e}_{\tau_3} \\
            \mathbf{r}_{\gamma_1^{(p,q)}} &= - \eb_{\sigma_1} - \eb_{\eta_1}
        \end{aligned}$
\end{tabular}

\caption{The loop fence posets $\calP_{\gamma_1^{(p)}}$, $\calP_{\gamma_1^{(q)}}$, and  $\calP_{\gamma_1^{(p,q)}}$ for the arc $\gamma_1$ from our running example. The fence poset $\calP_{\gamma_1}$ for the plain arc $\gamma_1$ appears as a subposet of all three, indicated in blue, and has  $\mathbf{a}_{\gamma_1} = \mathbf{e}_{\tau_2}$, $\mathbf{b}_{\gamma_1} = 0$, and $\mathbf{r}_{\gamma_1} = \mathbf{e}_{\sigma_1} + \mathbf{e}_{\eta_1}$}.

\label{table:posetExamples}
\end{table}

Finally, suppose that $\gamma$ is a closed curve. Choose a point $a$ of $\gamma$ which is not a point of intersection between $\gamma$ and $T$ and choose an orientation of $\gamma$. Treat $\gamma$ like an arc with $s(\gamma) = t(\gamma) = a$ and form the fence poset on $[d]$ associated to this arc. It must be that $\tau_{i_1}$ and $\tau_{i_d}$ share an endpoint which is an endpoint of the triangle containing $a$. If this endpoint is to the right of $\gamma$ with the chosen orientation, we set $d \succ 1$; otherwise, we set $d \prec 1$. These posets are called \emph{circular fence posets} since the underlying graph of such a Hasse diagram is a cycle. 

The circular fence poset for the closed curve $\gamma_2$ in our running example is shown in Figure \ref{fig:ExampleCircularFence}.

\begin{figure}[h]
\captionsetup{width=0.9\textwidth}
\centering
\begin{minipage}{0.5\textwidth}
\begin{tikzpicture}
    \node (t2-1) at (0,0) {$\tau_2$};
    \node (t3-1) at (0.5,1) {$\tau_3$};
    \node (s3) at (1,2) {$\eta_3$};
    \node (s2) at (1.5,1) {$\eta_2$};
    \node (s1) at (2,0) {$\eta_1$};
    \node (t3-2) at (2.5,1) {$\tau_3$};
    \node (t2-2) at (3,0) {$\tau_2$};
    \node (t1-2) at (3.5,1) {$\tau_1$};
    \node (e4) at (4,2) {$\sigma_4$};
    \node (e3) at (4.5,1) {$\sigma_3$};
    \node (e2) at (5,0) {$\sigma_2$};
    \node (e1) at (5.5,-1) {$\sigma_1$};
    \node (t1-1) at (6,1) {$\tau_1$};

    \draw (t2-1) to (t3-1);
    \draw (t3-1) to (s3);
    \draw (s3) to (s2);
    \draw (s2) to (s1);
    \draw (s1) to (t3-2);
    \draw (t3-2) to (t2-2);
    \draw (t2-2) to (t1-2);
    \draw (t1-2) to (e4);
    \draw (e4) to (e3);
    \draw (e3) to (e2);
    \draw (e2) to (e1);
    \draw (e1) to (t1-1);
    \draw (t2-1) to (t1-1);
\end{tikzpicture}
\end{minipage}
\begin{minipage}{0.3\textwidth}
$\begin{aligned}
    \mathbf{a}_{\gamma_2} &= \mathbf{e}_{\eta_{1}} + 2\mathbf{e}_{\tau_2} + \mathbf{e}_{\sigma_1} \\
    \mathbf{b}_{\gamma_2} &= \mathbf{e}_{\eta_3} +  \mathbf{e}_{\tau_1}+ \mathbf{e}_{\tau_3} + \mathbf{e}_{\sigma_4} \\
    \mathbf{r}_{\gamma_2} &= 0
\end{aligned}$
\end{minipage}
\caption{The circular fence poset associated to the closed curve $\gamma_2$ from the beginning of Section \ref{subsec:snakegraph}}\label{fig:ExampleCircularFence}
\end{figure}
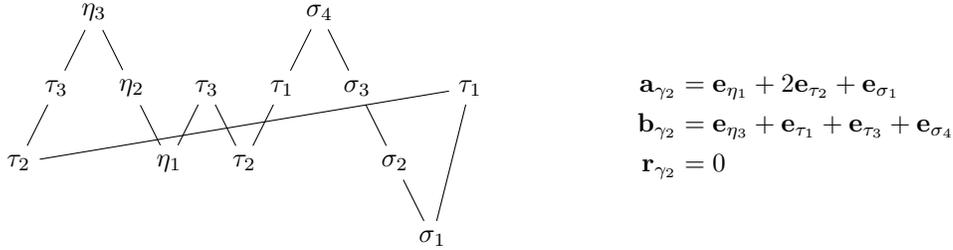

Given a subset $I$ of a poset $\calP_\gamma$, we define the \emph{content} of $I$ to be the multiset with elements from $T$ where $\tau \in T$ appears with multiplicity $m$ if there are exactly $m$ elements of $I$ which correspond to $\tau$. 

The poset $\calP_\gamma$ is related to the snake graph $\mathcal{G}_{\gamma,T}$ in the following sense. Given a poset $\calP$, let $J(\calP)$ denote its lattice of order ideals. Recall a \emph{(lower) order ideal} of a poset $(\calP,\succeq)$ is a subset $I \subseteq \calP$ such that if $x \in I$ and $y \succeq x$, $y \in I$. Given $x \in \calP$, the \emph{principal order ideal} $\langle x \rangle$ is the defined as $\langle x \rangle := \{ y \in \calP : y \succeq x\}$. Dually, an \emph{order filter} is a subset $I \subseteq \calP$ such that if $x \in I$ and $y \preceq x$, $y \in I$; the definition of a principal order filter is dual to principal order ideal.

\begin{theorem}[Theorems 5.4, 5.7 \cite{musiker2011positivity}, Theorem 7.9 \cite{wilson2020surface}]\label{Thm:PMPoset}
Consider a surface $S$ with triangulation $T$. Let $\gamma$ be an arc on $S$ such that $\gamma^0 \notin T$.
\begin{enumerate}
    \item Let $\gamma$ be a plain arc. There exists a lattice bijection between $J(\calP_\gamma)$ and the lattice of perfect matchings of $\mathcal{G}_{\gamma,T}$.
    \item Let $\gamma$ be a tagged arc or a closed curve. There exists a lattice bijection between $J(\calP_\gamma)$ and the lattice of good matchings of $\mathcal{G}_{\gamma,T}$.
\end{enumerate}
\end{theorem}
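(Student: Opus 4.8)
\textbf{Proof proposal for Theorem~\ref{Thm:PMPoset}.}

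The plan is to establish the two lattice bijections by building on the known correspondences between snake/loop graphs, their perfect (or good) matchings, and order ideals of fence posets, and to keep careful track of the poset structures (fence, loop fence, circular fence) that arise from the three geometric situations (plain arc, tagged arc, closed curve). The key observation is that for a plain arc $\gamma$ with $\gamma^0 \notin T$, the snake graph $\mathcal{G}_{\gamma,T}$ has a distinguished minimal matching $P_{\min}$, and any other perfect matching $P$ is recorded by the set of tiles enclosed by $P \ominus P_{\min}$; as established in \cite{musiker2013bases}, this assignment $P \mapsto h(P)$ identifies the set of perfect matchings, ordered by the twist/symmetric-difference order, with the lattice $J(\calP_\gamma)$ of order ideals of the fence poset on the crossing sequence $\tau_{i_1},\dots,\tau_{i_d}$. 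So for part (1), I would first recall this identification, then verify that the poset $\calP_\gamma$ I constructed in Section~\ref{sec:expansion_formula} (with the left/right-of-$\gamma$ convention determining $j \prec j+1$ versus $j \succ j+1$) is isomorphic to the poset of join-irreducibles of the matching lattice. This is a direct check comparing the two sign conventions: a tile $G_j$ can be ``flipped'' relative to $P_{\min}$ exactly when its neighbor $G_{j-1}$ (or $G_{j+1}$) has already been flipped, and the direction of this dependency is dictated by $\mathrm{rel}(G_j)$, which in turn is controlled by whether $s_j$ lies to the right or left of $\gamma$. Matching these up gives the claimed lattice isomorphism.

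For part (2) with $\gamma$ a tagged arc, the plan is to reduce to part (1) via Wilson's loop graph construction. Recall $\mathcal{G}_{\gamma,T}$ is obtained from the snake graph $\mathcal{G}_{\overline\gamma,T}$ of the hooked arc by identifying a cut edge (two cut edges in the doubly-notched case); a good matching is by definition one that extends to a perfect matching of $\mathcal{G}_{\overline\gamma,T}$. By \cite{wilson2020surface}, the good matchings of $\mathcal{G}_{\gamma,T}$, ordered again by symmetric difference against $P_{\min}$, form a distributive lattice. I would then identify the poset of join-irreducibles of this lattice with the loop fence poset $\calP_\gamma$: the ``loop'' elements $1^s,\dots,m^s$ correspond precisely to the spokes around $p$ that the hook winds past, and the relations $m^s \succ \cdots \succ 1^s$, $1^s \prec 1$, $m^s \succ 1$ encode exactly which tiles of the hooked snake graph can be flipped, subject to the constraint imposed by the cut edge (flipping the cut-edge tile forces a cascade around the loop). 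The key point is that the ``good'' condition corresponds exactly to taking order ideals of $\calP_\gamma$ rather than of the larger fence poset of $\overline\gamma$: the forbidden matchings of $\mathcal{G}_{\overline\gamma,T}$ that do not descend to good matchings of $\mathcal{G}_{\gamma,T}$ are precisely those whose enclosed tile set is not a down-set in the loop fence poset. For the circular fence poset case (closed curve), the argument is the same but now $\mathcal{G}_{\zeta,T}^\circ$ is a band graph and the identification of first and last tiles produces the extra relation ($d \succ 1$ or $d \prec 1$) that closes the fence into a cycle; good matchings of the band graph again match up with order ideals of the circular fence poset.

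The main obstacle I anticipate is the bookkeeping around the cut edge(s) in the loop and band graph cases: one must check that the lattice structure on good matchings (join and meet defined via the snake-graph lattice operations, then restricted) agrees with the order-ideal lattice structure on $J(\calP_\gamma)$, and in particular that \emph{every} order ideal of the loop/circular fence poset is realized by some good matching and conversely. This requires a somewhat delicate case analysis of how the hook's winding interacts with the triangulation near $p$ — e.g., the subtlety that an arc with both endpoints at $p$ receives two labels in the cyclic order — and of the orientation choice for the hook (which the construction claims does not affect the output). I would handle this by arguing that both sides are distributive lattices of the same cardinality (counting order ideals on one side, good matchings on the other, via the expansion formula in Theorem~\ref{thm:SnakeGraphExpansion} which is already known to be correct) and that the natural order-preserving map $h(\cdot)$ between them is injective, hence a bijection, and then that it preserves covers. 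Everything else — the plain arc case and the comparison of sign conventions — is routine once the conventions are pinned down, and can largely be cited from \cite{musiker2011positivity, musiker2013bases, wilson2020surface}.
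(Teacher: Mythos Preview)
The paper does not prove this theorem; it is stated as a cited result from \cite{musiker2011positivity} (Theorems 5.4, 5.7) and \cite{wilson2020surface} (Theorem 7.9), with no proof given in the paper itself. Your proposal is a reasonable sketch of how the proof goes in those references: the map $P \mapsto h(P)$ sending a (good) matching to the set of tiles enclosed by $P \ominus P_{\min}$ is exactly the bijection used there, and the verification that this identifies the matching lattice with $J(\calP_\gamma)$ proceeds along the lines you describe. Since the paper simply imports the result, there is nothing to compare your approach against.
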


It will be convenient for us to remove the condition that $\gamma^0 \notin T$ in the above. We will associate a \emph{decorated} poset to arcs whose underlying plain arc is in $T$; this will be a tuple $\widetilde{\calP}_\gamma = (\calP_\gamma, \omega)$ which consists of a poset $\calP_\gamma$ along with a word in the formal letters $\{\tau^\pm: \tau \in T\}$. This decoration will not affect the set of order ideals of the poset, but will affect the monomial $\gb$ we associate to the poset, as in Section \ref{subsec:g-vec}. All our previously defined posets can be considered decorated posets with $\omega = \emptyset$, and in future sections we will often not refer to a decoration when it is $\emptyset$.

Throughout the remainder of the section, we assume $\gamma$ is $\gamma^0 \in T$. First, suppose $\gamma = \gamma^0$ is a plain arc in $T$. Then, $\widetilde{\calP}_\gamma$ will be the tuple $(\emptyset, \gamma)$.

Next, suppose that $\gamma^{(p)}$ is notched at $s(\gamma) = p$ and not at $t(\gamma)$. Then, we have $\widetilde{\calP}_{\gamma^{(p)}} = (\calP_{\ell_p}, \gamma^-)$ where $\ell_p$ is the arc which cuts out a monogon containing $p$ and no other punctures and we regard all of $\calP_{\ell_p}$ as a loop. The poset $\calP_{\ell_p}$ is a chain consisting of all arcs from $T$ incident to $p$ except for $\gamma$. The process is identical to an arc only notched at its terminal point.

Finally, we consider an arc $\gamma^{(p,q)}$ which is notched at both endpoints; it is possible that $p = q$. Form the posets $\calP_{\ell_p}$ and $\calP_{\ell_q}$; these will be chains $1^s \prec 2^s \prec \cdots \prec m^s$ and $1^t \prec 2^t \prec \cdots \prec h^t$. We define $\calP_{\gamma^{(p,q)}}$ to be the poset on $\{1^s,\ldots,m^s,1^t,\ldots,h^t,0^-,0^+\}$ with the relations from $\calP_{\ell_p}$ and $\calP_{\ell_q}$ as well as $0^- \prec 1^s \prec h^t \prec 0^+$ and $0^- \prec 1^t \prec m^s \prec 0^+$,  so that $0^-$ is the minimum and $0^+$ is the maximum. The elements $0^\pm$ of $\calP_{\gamma^{(p,q)}}$ will represent the arc $\gamma^0 \in T$. Then we set the decorated poset to be $\widetilde{\calP}_{\gamma^{(p,q)}} = (\calP_{\gamma^{(p,q)}}, \emptyset)$. We regard the entire poset as a loop.

\begin{center}
\begin{tikzpicture}
    \node[] (k1-) at (0,0){$0^-$};
    \node[] (k) at (-1,1){$1^s$};
    \node[] (dotsl) at (-1,2){$\vdots$};
    \node[] (k2) at (-1,3){$m^s$};
    \node[] (as) at (1,1){$1^t$};
    \node[] (dotsr) at (1,2){$\vdots$};
    \node[] (a1) at (1,3){$h^t$};
    \node[] (k1+) at (0,4){$0^+$};
    \draw (k1+) -- (a1);
    \draw(k1+) -- (k2);
    \draw(k1-) -- (k);
    \draw(k1-) -- (as);
    \draw (as) --(dotsr);
    \draw (dotsr) -- (a1);
    \draw(k) -- (dotsl);
    \draw(dotsl) -- (k2);
    \draw(k2) -- (as);
    \draw(a1) -- (k);
\end{tikzpicture}
\end{center}

We remark that these posets for tagged arcs with an underlying plain arc in $T$ were independently discovered recently by many parties \cite{pilaud2023posets,weng2023f,WilsonsSlides}.

\subsection{Minimal Terms}\label{subsec:g-vec}

Given $T = \{\tau_1,\ldots,\tau_n\}$, we will work in $\mathbb{R}^n$ where each standard basis vector $\eb_i$ is associated to an arc $\tau_i$ from $T$. We sometimes denote this $\eb_{\tau_i}$. 

Given a fence poset $\calP_\gamma$, let $\mathbf{a}_\gamma = \sum_{i=1}^n a_i \eb_i$ where $a_j$ is the number of times there is a minimal element associated to $\tau_j$ in $\calP_\gamma^0$.  Let $\mathbf{b}_\gamma = \sum_{i=1}^n b_i \eb_i$ where $b_j$ is the number of times there is an element associated to $\tau_j$ in $\calP_\gamma^0$ which covers at least two elements and is not in a loop.  It is possible that one or both of the elements which $\tau_{j}$ covers is in a loop. We will refer to a minimal or maximal element as ``strict" if the element covers or is covered by at least two elements.

We define one more vector, $\mathbf{r}_\gamma$. We initialize $\mathbf{r}_\gamma = 0$ and possibly add standard basis vectors as follows. Recall we index the arcs crossed by $\gamma$ as $\tau_{i_1},\ldots,\tau_{i_d}$. We also refer to the first and last triangles which $\gamma$ passes through as $\Delta_0$ and $\Delta_d$ respectively. 

\begin{enumerate}
    \item If $s(\gamma)$ is plain and the clockwise neighbor of $\tau_{i_1}$ in $\Delta_0$ is $\tau \in T$, we add $\eb_\tau$ to $\mathbf{r}_\gamma$. 
    \item If $s(\gamma)$ is notched and the counterclockwise neighbor of $\tau_{i_1}$ in $\Delta_0$ is $\tau' \in T$, we add $-\eb_{\tau'}$ to $\mathbf{r}_\gamma$. 
    \item We consider the same cases for $\tau_{i_d}$ and $\Delta_d$.
\end{enumerate}

Recall we do not have variables associated to the boundary arcs of $(S,M)$. For example, if $s(\gamma)$ is plain and the clockwise neighbor of $\tau_{i_1}$ in $\Delta_0$ is on the boundary, then there is no effect on $\mathbf{r}_\gamma$. See Table \ref{table:posetExamples} and Figure \ref{fig:ExampleCircularFence} for examples of $\mathbf{a}_\gamma, \mathbf{b}_\gamma,$ and $\mathbf{r}_\gamma$.

Finally, suppose we have a decorated poset with nontrivial decoration $\widetilde{\calP}_\gamma = (\calP_\gamma,w)$. If $w = \tau_{i_1}^{\epsilon_1}\cdots \tau_{i_d}^{\epsilon_d}$ for $\epsilon_i \in \{+,-\}$, we let $\mathbf{j}_\gamma = \epsilon_1 \mathbf{e}_{i_1} + \cdots + \epsilon_d \mathbf{e}_{i_d}$. 

Now, any arc or closed curve $\gamma$ with corresponding decorated poset $\widetilde{\calP}_\gamma$, we set $\mathbf{g}_\gamma = -\mathbf{a}_\gamma + \mathbf{b}_\gamma + \mathbf{r}_\gamma + \mathbf{j}_\gamma$.

If $\gamma$ is a plain arc in a surface with triangulation $T = \{\tau_1,\ldots,\tau_n\}$, we already defined $\text{cross}(\gamma,T)$ to be the monomial $x_1^{f_1}\cdots x_n^{f_n}$ where $f_i = e(\gamma,\tau_i)$. We can use the same definition when $\gamma$ is a closed curve. 

Given a puncture $p$, let $X_p = \prod_{i=1}^m x_{\sigma_i}$ be the product of all cluster variables associated to arcs from $T$ incident to $p$, where if an arc has both endpoints at $p$ it contributes twice. If $\gamma$ is notched at one or both endpoints, then we define $\text{cross}(\gamma,T)$ to be the product of $\text{cross}(\gamma^0,T)$ and $X_p$ for each puncture $p$ at which $\gamma$ has a notched endpoint. In each of these cases, one can check that $\text{cross}(\gamma,T)$ records the labels of tiles in the snake graph $\mathcal{G}_{\gamma,T}$. 

\begin{lemma}\label{lem:g-vectorMin}
Let $\gamma$ be a tagged arc or closed curve on a surface with triangulation $T$. Let $\mathcal{G}_{\gamma,T}$ be the corresponding snake, band, or loop graph. If $\min(\mathcal{G}_{\gamma,T})$ is the minimal matching of $\mathcal{G}_{\gamma,T}$, then \[
\mathbf{x}^{\gb_\gamma} = \frac{wt(\min(\mathcal{G}_{\gamma,T}))}{\text{cross}(\gamma,T)}.
\]
\end{lemma}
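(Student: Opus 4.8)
The plan is to verify the identity by directly comparing the monomial contributions on each side, separately in the three structural cases (plain arc not in $T$, tagged arc, closed curve) and in the degenerate case $\gamma^0 \in T$. The right-hand side is $wt(\min(\mathcal{G}_{\gamma,T}))/\text{cross}(\gamma,T)$; since $\text{cross}(\gamma,T)$ records exactly the diagonal labels of the tiles of $\mathcal{G}_{\gamma,T}$ (as remarked just before the lemma), it suffices to show that the multiset of edge labels of the minimal matching $P_{\min}$, minus the multiset of tile-diagonal labels, equals the signed exponent vector $\gb_\gamma = -\mathbf{a}_\gamma + \mathbf{b}_\gamma + \mathbf{r}_\gamma + \mathbf{j}_\gamma$.

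First I would treat the plain-arc case. Here $\mathbf{j}_\gamma = 0$ and $\calP_\gamma = \calP_\gamma^0$, so I must show $\mathbf{x}^{-\mathbf{a}_\gamma + \mathbf{b}_\gamma + \mathbf{r}_\gamma}\cdot \text{cross}(\gamma,T) = wt(P_{\min})$. The key combinatorial fact is the correspondence between the structure of the fence poset $\calP_\gamma$ and the shape of the snake graph: a minimal element of $\calP_\gamma$ at position $j$ corresponds to a "valley" tile, a strict minimum to a place where the snake graph changes direction downward, etc. I would walk along the snake graph tile by tile, tracking which of the two boundary matchings is $P_{\min}$ (the one avoiding the west edge of $G_1$), and observe that $P_{\min}$ uses precisely the south and east boundary edges on "upward" stretches and the west/north edges on "downward" stretches, with the transitions governed by the strict extrema. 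Each interior edge of the snake graph is labeled by some $\tau_{[j]}$ and appears as a tile diagonal for a neighboring tile; the bookkeeping should show these cancel against $\text{cross}(\gamma,T)$, leaving only boundary edges. The surviving boundary-edge labels on the two ends come from the triangles $\Delta_0$ and $\Delta_d$ — this is exactly where the $\mathbf{r}_\gamma$ correction (clockwise/counterclockwise neighbor of $\tau_{i_1}$, resp.\ $\tau_{i_d}$) enters — while the surviving interior boundary edges at the strict extrema account for $-\mathbf{a}_\gamma + \mathbf{b}_\gamma$. This is most cleanly done by induction on $d$: remove the last tile $G_d$, apply the inductive hypothesis to $\mathcal{G}_{\gamma',T}$ for the truncated arc, and check that re-gluing $G_d$ multiplies both sides by the same ratio, with a case split on $\text{rel}(G_d)$ and on whether $G_d$ attaches to the north or the east of $G_{d-1}$.

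Next, for a tagged arc with $\gamma^0 \notin T$, I would use Wilson's loop graph construction together with the loop fence poset $\calP_\gamma$ and its distinguished "loop" portion. The loop graph $\mathcal{G}_{\gamma,T}$ is obtained from the snake graph of the hooked arc $\overline{\gamma}$ by identifying a cut edge, and $P_{\min}$ is defined as the good matching avoiding the west edge of $G_{k_1}$. I would run the same tile-by-tile argument on $\overline{\gamma}$, then check that passing to the loop (identifying the cut edges) does not disturb the minimal matching's weight beyond replacing the $\mathbf{r}$-contribution at the notched end by the $X_p$ factors hidden inside $\text{cross}(\gamma,T)$ — this is precisely what the definition $\mathbf{b}_\gamma = \sum b_j \eb_j$ counting only strict maxima \emph{not in a loop} is designed to handle, since a cover relation that lies in the loop corresponds to an edge of $\mathcal{G}_{\gamma,T}$ that is no longer a boundary edge after gluing. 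The doubly-notched case with the two extra elements $0^\pm$ is handled analogously, with $0^\pm$ contributing the label $\gamma^0$. Finally, the degenerate cases $\gamma^0 \in T$: here $\mathcal{G}_{\gamma,T}$ is either a single edge (plain $\gamma \in T$, giving $\mathbf{x}^{\gb_\gamma} = x_\gamma$ with $\gb_\gamma = \mathbf{j}_\gamma = \eb_\gamma$ from the decoration $(\emptyset,\gamma)$) or the loop graph of $\ell_p$ (resp.\ $\ell_{p,q}$), which reduces to the previous paragraph applied to $\ell_p$ with the decoration $\gamma^-$ supplying $\mathbf{j}_\gamma$.

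The main obstacle I anticipate is the sign/direction bookkeeping in the inductive step: correctly tracking which of the two ends of a newly glued tile's boundary is exposed in $P_{\min}$ requires a careful, consistent convention relating (a) the right/left position of the shared vertex $s_j$ that defines $j \succ j+1$ vs.\ $j \prec j+1$, (b) the relative orientation $\text{rel}(G_j)$, and (c) the north/east attachment of consecutive tiles. Getting these compatible — so that "$j$ is a strict minimum of $\calP_\gamma$" translates exactly into "the west edge of $G_j$ is forced into $P_{\min}$" — is the crux, and once the dictionary is pinned down the rest is a routine verification. A secondary subtlety is making sure boundary arcs of $(S,M)$ (which carry no $x$-variable) are consistently dropped on both sides, which is automatic once one notes that $\text{cross}(\gamma,T)$ and $wt(P_{\min})$ are both defined using only non-boundary labels.
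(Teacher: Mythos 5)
Your proposal is correct in outline but takes a more self-contained route than the paper for the base case. The paper does not reprove the plain-arc and closed-curve cases at all: it simply cites Remark 11.1 and Proposition 10.14 of \cite{geiss2022schemes}, whereas you propose to establish the plain-arc case from scratch by induction on the number of tiles, cancelling interior edges against $\text{cross}(\gamma,T)$ and matching the surviving boundary edges at strict extrema and at $\Delta_0,\Delta_d$ against $-\mathbf{a}_\gamma+\mathbf{b}_\gamma+\mathbf{r}_\gamma$. That is a legitimate (and standard) argument; what it buys is independence from the external reference, at the cost of the orientation bookkeeping you yourself identify as the crux. For the notched case your route essentially coincides with the paper's: the paper introduces an auxiliary plain arc $\gamma'$ (one of the two hooked versions of $\gamma$), applies the known plain case, observes $\text{cross}(\gamma',T)=\text{cross}(\gamma,T)$ and $wt(\min(\mathcal{G}_{\gamma',T}))=x_{\sigma_1}\,wt(\min(\mathcal{G}_{\gamma,T}))$ from the edge identification, and checks $\gb_{\gamma'}=\eb_{\sigma_1}+\gb_{\gamma}$ because $\sigma_1$ contributes to $\mathbf{r}_{\gamma'}$ but not to $\mathbf{r}_\gamma$; your description of the cut-edge gluing and of why $\mathbf{b}_\gamma$ excludes maxima in a loop is the same mechanism, stated slightly less precisely. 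Two points to tighten: (i) the band-graph case appears only in your opening case list and is never carried out — the induction "remove the last tile" has no starting point on a band graph, so you would need to cut it open and account for the identified edge exactly as in your loop-graph paragraph; (ii) your treatment of the degenerate case $\gamma^0\in T$ via decorated posets is a genuine addition the paper's proof leaves implicit, and is worth keeping.
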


\begin{proof}
When $\gamma$ is a plain arc or closed curve, this comes from Remark 11.1 and Proposition 10.14 in \cite{geiss2022schemes}. Now, suppose $\gamma$ is notched at $t(\gamma)$ and plain at $s(\gamma)$. Let $\gamma'$ be the result of following $\gamma$ until the last triangle $\gamma$ passes through, $\Delta_d$, crossing each spoke at $t(\gamma)$ and ending at the endpoint of $\Delta_d$ opposite from the last spoke we cross. There are two such options for this arc. For a convention, suppose we choose the version of this arc which goes counterclockwise around $t(\gamma)$ and ends at the endpoint of $\Delta_d$ which is clockwise of $t(\gamma)$.

\begin{center}
 \begin{tikzpicture}
\draw (0,0) -- (1,1) -- (3,0) -- (1,-1) -- (0,0);
\draw[fill=black] (1.5,0) circle [radius=1pt];
\draw (1.5,0) -- (1,1) -- (1,-1) -- (1.5,0) -- (3,0);
\node[] at (4,0){$\rightarrow$};
\draw (5,0) -- (6,1) -- (8,0) -- (6,-1) -- (5,0);
\draw[thick, orange] (0,0) -- (1.5,0);
\draw[fill=black] (6.5,0) circle [radius=1pt];
\draw (6.5,0) -- (6,1) -- (6,-1) -- (6.5,0) -- (8,0);
\draw[thick, orange] (5,0) to [out = 0, in = 180] (6.25,-0.2);
\draw[thick, orange] (6.25,-0.2) to [out = 0, in = 270] (6.75, 0);
\draw[thick, orange] (6.75,0) to [out = 90, in = 90] (6.25,0);
\draw[thick, orange] (6.25,0) -- (6,-1);
\node[orange, rotate=90] at (1.3,0){$\bowtie$};
\end{tikzpicture}
\end{center}

Since $\gamma'$ is a plain arc, we know from the previous cases that $\mathbf{x}^{\gb_{\gamma'}} = \frac{wt(\min(\mathcal{G}_{\gamma',T}))}{\text{cross}(\gamma',T)}$. If $\tau_{i_d}$ is the last arc $\gamma$ crosses, and $\gamma'$ crosses the spokes in order $\sigma_1,\ldots,\sigma_m$, the loop in $\calP_\gamma$ is of the form $\tau_{i_d} \prec \sigma_m \succ \sigma_{m-1} \succ \cdots \succ \sigma_1 \prec \tau_{i_d}$ The poset $\calP' = \calP_{\gamma'}$ is the same as $\calP$ except $\tau_{i_d}$ and $\sigma_m$ are incomparable, and there are no loops in $\calP'$.
Therefore, we can observe that $\gb_{\gamma'} = \mathbf{e}_{\sigma_1} + \gb_{\gamma}$ since $\sigma_1$ contributes to $\mathbf{r}_{\gamma'}$ as the clockwise neighbor of the last arc crossed by $\gamma'$ but not to $\mathbf{r}_{\gamma}$. 

We see that $\text{cross}(\gamma',T) = \text{cross}(\gamma,T)$. Since we form $\mathcal{G}_{\gamma,T}$ by gluing two edges weighted with $x_{\sigma_1}$ in $\mathcal{G}_{\gamma',T}$, we have that $\min(\mathcal{G}_{\gamma',T}) = x_{\sigma_1} \min(\mathcal{G}_{\gamma,T})$ .  Thus, we have arrived at the desired conclusion. If $\gamma$ is instead notched at $s(\gamma)$, the same arguments hold, and we can perform this process twice if $\gamma$ is doubly-notched.

\end{proof}

\begin{remark}
The notation in this section is inspired by the notation for the $\gb$-vector of a string module, as in \cite{palu2021non}. When $\gamma$ is a plain arc or closed curve, $\gb_\gamma$ matches the $\gb$-vector of the associated \emph{arc module} of the Jacobian algebra associated to $T$; the algebra and arc modules were described in \cite{labardini2009quivers, labardini2009quivers2}. If $\gamma$ is a tagged arc, then $\gb_\gamma$ appears to be equal to the $\gb$-vector of the arc module associated to $\gamma$ in \cite{dominguez2017arc}.
\end{remark}

\subsection{Expansion formula via posets}

Given an arc $\tau \in T$, define $x_{CCW}(\tau) = x_{\tau_j}x_{\tau_k}$ if there exist two arcs $\tau_j,\tau_k \in T$ which are counterclockwise neighbors of $\tau$ within the triangles it borders. If one or both of these neighbors is on the boundary, we ignore the contribution. Define $x_{CW}(\tau)$ analogously, using the clockwise neighbors. Then, we set  \[
\hat{y}_{\tau} = \frac{x_{CCW}(\tau)}{x_{CW}(\tau)} y_{\tau}.
\]

Note this is simply the ordinary definition of the $\hat{y}$-variables, specified to the setting of a cluster algebra from a surface. Given $I \in J(\calP_\gamma)$, let $wt(I) = \prod_{\tau \in I} \hat{y}_\tau$, where $I$ is regarded as a multiset. Let $wt(\emptyset) = 1$.

\begin{prop}\label{prop:PosetExpansion}
Let $\gamma$ be an arc or closed curve on a marked surface $(S,M)$ with triangulation $T$. Then, the associated element $x_\gamma$ of the cluster algebra $\mathcal{A}(S,M)$ written with respect to the cluster given by $T$ can be expressed by \[
x_\gamma^T = \mathbf{x}^{\mathbf{g}_\gamma} \sum_{I \in J(\calP_\gamma)} wt(I).
 \]
\end{prop}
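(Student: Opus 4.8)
The plan is to reduce Proposition~\ref{prop:PosetExpansion} to the snake-graph/loop-graph expansion of Theorem~\ref{thm:SnakeGraphExpansion} together with the lattice isomorphism of Theorem~\ref{Thm:PMPoset}. The right-hand side $\mathbf{x}^{\mathbf{g}_\gamma}\sum_{I\in J(\calP_\gamma)} wt(I)$ should be matched term-by-term against $\frac{1}{\text{cross}(\gamma,T)}\sum_P x(P)y(P)$, where the sum runs over perfect (resp. good) matchings of $\mathcal{G}_{\gamma,T}$. The first step is to handle the ``constant'' factor: by Lemma~\ref{lem:g-vectorMin} we have $\mathbf{x}^{\mathbf{g}_\gamma} = wt(\min(\mathcal{G}_{\gamma,T}))/\text{cross}(\gamma,T)$, so it suffices to prove the polynomial identity
\[
wt(\min(\mathcal{G}_{\gamma,T}))\sum_{I\in J(\calP_\gamma)} wt(I) \;=\; \sum_P x(P)\,y(P).
\]

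Next I would set up the bijection. Theorem~\ref{Thm:PMPoset} gives a lattice isomorphism $\Phi: J(\calP_\gamma)\to \{\text{(good) matchings of }\mathcal{G}_{\gamma,T}\}$, normalized so that $\Phi(\emptyset) = P_{\min}$ (the minimal matching corresponds to the empty order ideal, since $P_{\min}$ is the bottom of the matching lattice and $\emptyset$ is the bottom of $J(\calP_\gamma)$). The heart of the argument is then the claim that for every order ideal $I$,
\[
x(\Phi(I))\,y(\Phi(I)) \;=\; wt(P_{\min})\cdot wt(I),
\]
i.e. that passing from $P_{\min}$ to $P := \Phi(I)$ multiplies the monomial $x(P)y(P)$ by exactly $\prod_{\tau\in I}\hat y_\tau$ (with $I$ as a multiset). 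To see this, recall that $P\ominus P_{\min}$ is a disjoint union of cycles, and the set $h(P)$ of diagonal labels of enclosed tiles is precisely the content of $I$ under the bijection $\Phi$ — this is built into the construction relating order ideals of a fence/loop/circular fence poset to matchings (each element $j\leftrightarrow\tau_{i_j}$ of the poset corresponds to the tile $G_j$, and an order ideal "turns on" exactly the tiles it contains). So $y(P) = y(P_{\min})\prod_{\tau\in I} y_\tau = \prod_{\tau\in I} y_\tau$ since $y(P_{\min})=1$. For the $x$-part, flipping the matching across a single tile $G_j$ with diagonal $\tau_{i_j}$ replaces the two edges in $P_{\min}$ incident to that tile by the complementary two edges; tracking this over the nested cycles, the net effect of enclosing the tiles of $I$ is to multiply $x(P_{\min})$ by $\prod_{\tau\in I}\bigl(x_{CCW}(\tau)/x_{CW}(\tau)\bigr)$ — the counterclockwise neighbors' variables enter and the clockwise neighbors' leave (or vice versa depending on orientation conventions, which is why one fixes $P_{\min}$ to be the specific boundary matching not using the west edge of $G_1$, resp. $G_{k_1}$). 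Combining, $x(P)y(P) = x(P_{\min})\prod_{\tau\in I}\hat y_\tau = wt(P_{\min})\cdot wt(I)$, and summing over $I\in J(\calP_\gamma)$ gives the identity above, completing the proof.

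I expect the main obstacle to be the bookkeeping in the claim $x(\Phi(I))y(\Phi(I)) = wt(P_{\min})\, wt(I)$: specifically, verifying that the local edge-swap at a single tile contributes exactly the ratio $x_{CCW}(\tau)/x_{CW}(\tau)$ of neighbor variables, and that these local contributions aggregate correctly over the (possibly nested) cycles of $P\ominus P_{\min}$, including at the glued/cut edges of band and loop graphs. This requires care about orientation signs (the $\textrm{rel}(G_j)=\pm 1$ data) and about the loop elements $1^s,\dots,m^s$ (resp. $0^\pm$) of the poset, which correspond to tiles coming from the hook winding around a puncture; one must check that traversing a loop contributes the product of $\hat y$'s over the spokes, matching the definition of $X_p$ in $\text{cross}(\gamma,T)$ and the decoration $\mathbf{j}_\gamma$ built into $\mathbf{g}_\gamma$. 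The plain-arc and closed-curve cases are essentially the computation already implicit in \cite{musiker2011positivity, musiker2013bases} and \cite{ouguz2024cluster}, so the novel content is confined to checking that the loop-fence-poset and circular-fence-poset bookkeeping is consistent with Lemma~\ref{lem:g-vectorMin}; I would isolate that verification as the one genuinely new lemma and treat the rest as an assembly of cited results.
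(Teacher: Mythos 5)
Your argument for the main case is sound and is essentially the route the paper takes, except that the paper does not re-derive the weight-preserving property of the lattice bijection: it simply cites Proposition 3.2 of \cite{ouguz2024cluster} (which is exactly the identity $x(\Phi(I))y(\Phi(I)) = wt(P_{\min})\cdot wt(I)$ you propose to verify) and combines it with Lemma \ref{lem:g-vectorMin}. So for that case your proposal is a more self-contained version of the same proof.

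However, there is a genuine gap: your entire argument runs through Theorem \ref{thm:SnakeGraphExpansion} and Theorem \ref{Thm:PMPoset}, and both of those results carry the hypothesis $\gamma^0 \notin T$. Wilson's loop-graph construction is likewise only defined for notched arcs whose underlying plain arc is not in the triangulation. The proposition, though, is also asserted for tagged arcs $\gamma^{(p)}$ and $\gamma^{(p,q)}$ with $\gamma^0 \in T$ — this is precisely why the paper introduces the \emph{decorated} posets $\widetilde{\calP}_\gamma = (\calP_\gamma,\omega)$ and the vector $\mathbf{j}_\gamma$. Your proposal never treats these cases, and they cannot be absorbed into the bijective argument because there is no snake or loop graph to biject against. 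The paper handles them separately: for $\gamma = \gamma^0 \in T$ the claim is immediate from the decoration; for a singly-notched $\gamma^{(p)}$ it follows from $x_{\gamma^{(p)}} = x_{\ell_p}/x_{\gamma^0}$ together with $\mathbf{x}^{\mathbf{g}_{\gamma^{(p)}}} = \mathbf{x}^{\mathbf{g}_{\ell_p}}/x_{\gamma^0}$; and for a doubly-notched $\gamma^{(p,q)}$ one must invoke Theorem 12.9 of \cite{musiker2011positivity}, rewrite everything in $\hat{y}$-variables via Corollary \ref{cor:y-hat_spokes}, and check that the cross-terms $\hat{y}_{\sigma_1}\cdots\hat{y}_{\sigma_m}\hat{y}_\gamma + \hat{y}_{\eta_1}\cdots\hat{y}_{\eta_h}\hat{y}_\gamma$ cancel so that the remaining polynomial is the generating function of order ideals of $\calP_{\gamma^{(p,q)}}$. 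That last computation is the genuinely new content of the proof and is absent from your proposal; you would need to add it (or an equivalent treatment of the $\gamma^0 \in T$ cases) for the argument to be complete.
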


\begin{proof}
First, suppose $\gamma$ is not such that $\gamma^0 \in T$. Then, this follows from combining Proposition 3.2 in \cite{ouguz2024cluster} with Lemma \ref{lem:g-vectorMin}. 

If $\gamma = \gamma^0 \in T$, then $\widetilde{\calP}_\gamma = (\emptyset, \gamma)$ and this is immediate. Now let $\gamma \neq \gamma^0 \in T$. Suppose first $\gamma =  \gamma^{(p)}$ is singly-notched. Then, since $\mathbf{x}^{\mathbf{g}_{\gamma^{(p)}}} = \frac{\mathbf{x}^{\mathbf{g}_{\ell_p}}}{x_{\gamma^0}}$ and $\widetilde{\calP}_{\gamma^{(p)}} = (\calP_{\ell_p}, \gamma^-)$, the result follows from the formula $x_{\gamma^{(p)}} = \frac{x_{\ell_p}}{x_{\gamma^0}}$.

Finally, consider $\gamma^{(p,q)}$, a doubly-notched arc such that  $\gamma^0 \in T$.  For convenience, orient $\gamma$ from $p$ to $q$; it is possible that $p = q$. Let the spokes at $p$, excluding $\gamma$ be $\sigma_1,\ldots,\sigma_{m}$, numbered in counterclockwise order around $p$, such that $\sigma_1$ and $\sigma_{m}$ each border a common triangle with $\gamma$. Label the arcs at $q$ excluding $\gamma$ similarly in counterclockwise order $\eta_1,\ldots,\eta_{h}$. Then, from the previous case we know that $x_{\gamma^{(p)}} = \mathbf{x}^{\mathbf{g}_{\gamma^{(p)}}}(1 + \hat{y}_{\sigma_1} + \hat{y}_{\sigma_1}\hat{y}_{\sigma_2} + \cdots + \hat{y}_{\sigma_1}\cdots \hat{y}_{\sigma_{m}}) = \frac{x_{\eta_{h}}}{x_{\sigma_1}}(1 + \hat{y}_{\sigma_1} + \hat{y}_{\sigma_1}\hat{y}_{\sigma_2} + \cdots + \hat{y}_{\sigma_1}\cdots \hat{y}_{\sigma_{m}}) $ and similarly for $x_{\gamma^{(q)}}$. 

Now, we combine Theorem 12.9 from \cite{musiker2011positivity} with our notation to give an explicit expression for $x_{\gamma^{(p,q)}}$, \begin{align*}
x_{\gamma^{(p,q)}} &= \frac{1}{x_\gamma}( 1 - y_{\sigma_1}\cdots y_{\sigma_{m}}y_\gamma - y_{\eta_1} \cdots y_{\eta_{h}}y_\gamma + y_{\sigma_1}\cdots y_{\sigma_{m}} y_{\eta_1} y_{\eta_{h}}y_\gamma^2 + y_\gamma x_{\gamma^{(p)}}x_{\gamma^{(q)}} ) \\
&=\frac{1}{x_\gamma}\bigg( 1 - \hat{y}_{\sigma_1}\cdots \hat{y}_{\sigma_{m}}\hat{y}_\gamma - \hat{y}_{\eta_1} \cdots \hat{y}_{\eta_{h}}\hat{y}_\gamma + \hat{y}_{\sigma_1}\cdots \hat{y}_{\sigma_{m}} \hat{y}_{\eta_1} \hat{y}_{\eta_{h}}\hat{y}_\gamma^2\\ 
&+ y_\gamma \frac{x_{\sigma_{m}}x_{\eta_{h}}}{x_{\sigma_1}x_{\eta_1}}(1 + \hat{y}_{\sigma_1} + \cdots + \hat{y}_{\sigma_1}\cdots \hat{y}_{\sigma_{m}})(1 + \hat{y}_{\eta_1} + \cdots + \hat{y}_{\eta_1} \cdots \hat{y}_{\eta_{h}})\bigg)\\
\end{align*}
where the second equality follows from Lemma \ref{cor:y-hat_spokes} and expanding $x_{\gamma^{(p)}}$ and $x_{\gamma^{(q)}}$. Now, we recognize that $\hat{y}_\gamma = y_\gamma \frac{x_{\sigma_{m}}x_{\eta_{h}}}{x_{\sigma_1}x_{\eta_1}}$ so that the terms $\hat{y}_{\sigma_1}\cdots \hat{y}_{\sigma_{m}}\hat{y}_\gamma + \hat{y}_{\eta_1} \cdots \hat{y}_{\eta_{h}}\hat{y}_\gamma$ cancel. 
The resulting polynomial exactly matches the sum of weights of order ideals of $\calP_{\gamma^{(p,q)}}$. 
Finally, we see that the term without a factor of $\hat{y}_i$ is $\frac{1}{x_\gamma}$, which matches $\mathbf{x}^{\mathbf{g}_{\gamma^{(p,q)}}}$, so we are finished. 
\end{proof}

We remark that we exclude closed surfaces with two punctures because  Theorem 12.9 from \cite{musiker2011positivity} is not known to hold in these cases. 

Next, we introduce a few results concerning products of $\hat{y}$-variables, which will be useful in later proofs.

\begin{lemma}\label{lem:y-hat}
Given an arc $\gamma \notin T$ which crosses arcs $\alpha_1,\ldots,\alpha_m$ from $T$, suppose the first triangle $\gamma$ passes through has edges $\alpha_1, \eta_1,\eta_h$ in clockwise order and similarly the last triangle has edges $\alpha_m,\sigma_1,\sigma_m$. Let $\{\alpha_{i_{1}},\ldots,\alpha_{i_{s}}\}$ be the multiset of arcs associated to strict maximal elements in $J(\calP_{\gamma})$ and $\{\alpha_{j_{1}},\ldots,\alpha_{j_{t}}\}$ be the multiset of arcs associated to strict minimal elements. Then, we have
  \[\hat{y}_{\alpha_1} \cdots \hat{y}_{\alpha_m}= y_{\alpha_1} \cdots y_{\alpha_m} \frac{ x_{\eta_h} x_{\sigma_m}}{ x_{\eta_{1}} x_{\sigma_1}} x_{\alpha_1}^\pm x_{\alpha_m}^\pm\prod_{l=1}^{s}x_{\alpha_{k_{l}}}^{-2}\prod_{v=1}^{t}x_{\alpha_{j_{v}}}^{2},\]
where, the sign on $x_{\alpha_1}$ is $+$ if $\alpha_1 \prec \alpha_2$ and otherwise it is $-$ and similarly for $x_{\alpha_m}$, where we compare $\alpha_m$ with $\alpha_{m-1}$. 
\end{lemma}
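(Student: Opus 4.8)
The plan is to compute the product $\hat y_{\alpha_1}\cdots\hat y_{\alpha_m}$ directly from the definition $\hat y_\tau = \frac{x_{CCW}(\tau)}{x_{CW}(\tau)}y_\tau$ and to track, arc-by-arc, which $x$-variables survive the telescoping. Writing out the product, the $y$-part is immediately $y_{\alpha_1}\cdots y_{\alpha_m}$, so the content of the claim is entirely about the rational function $\prod_{i=1}^m \frac{x_{CCW}(\alpha_i)}{x_{CW}(\alpha_i)}$. The key observation is that each $\alpha_i$ with $1<i<m$ has exactly two triangles adjacent to it along $\gamma$'s path, namely $\Delta_{i-1}$ and $\Delta_i$, and in each of these triangles $\alpha_i$ has one clockwise and one counterclockwise neighbor; meanwhile $\alpha_1$ and $\alpha_m$ each border the boundary triangles $\Delta_0$ and $\Delta_m$ on one side, where the extra edges $\eta_1,\eta_h,\sigma_1,\sigma_m$ live. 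So the first step is to set up this bookkeeping carefully: for interior $\alpha_i$, the two triangles contribute a numerator factor (the CCW neighbor in each triangle) and a denominator factor (the CW neighbor in each triangle).

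Second, I would match these factors against the poset structure. Recall that in forming $\calP_\gamma$, the relation between $i$ and $i+1$ is governed by whether the shared endpoint $s_i$ of $\tau_{i_i}$ and $\tau_{i_{i+1}}$ in $\Delta_i$ lies to the right or left of $\gamma$; this is exactly the same data that determines which neighbor of $\alpha_i$ in $\Delta_i$ is clockwise versus counterclockwise. The crucial combinatorial fact is that an arc $\alpha_j$ crossed between two triangles $\Delta_{j-1}$ and $\Delta_j$ shares one of its contributions with the arc $\tau_{[j-1]}$ that completes the triangle and one with $\tau_{[j]}$; when we multiply over all $i$, the shared edges $\tau_{[j]}$ between consecutive triangles appear once in a numerator and once in a denominator — or twice in a numerator, or twice in a denominator — depending on the local shape of the fence. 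Specifically, $\tau_{[j]}$ cancels (appears once up, once down) precisely when the element $j+1$ is neither a local max nor a local min in the fence, i.e. when the fence ``goes straight through''; it appears squared in the numerator when $j+1$ is a strict minimum and squared in the denominator when $j+1$ is a strict maximum. This is where the factors $\prod x_{\alpha_{k_l}}^{-2}$ (strict maxima) and $\prod x_{\alpha_{j_v}}^{2}$ (strict minima) come from. I would make this precise by induction on $m$, peeling off $\alpha_m$: the inductive step reduces to analyzing the single triangle $\Delta_{m-1}$ and verifying the claimed change in the decoration exponents, together with the sign convention on $x_{\alpha_m}$ (which records whether the top of the path is ascending or descending into the last triangle, hence whether the last boundary edge contribution lands in the numerator or denominator).

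Third, the boundary terms: in $\Delta_0$, the arc $\alpha_1$ has neighbors among $\{\eta_1,\eta_h\}$, and whichever of these is counterclockwise lands in the numerator while the other is, in effect, the edge shared with $\Delta_1$ — so one of $x_{\eta_1},x_{\eta_h}$ genuinely survives (contributing $x_{\eta_h}$ by the clockwise/counterclockwise convention, matching the way $\mathbf r_\gamma$ was defined) and the $x_{\alpha_1}^{\pm}$ factor accounts for whether $\alpha_1$ itself is a local min or max relative to $\alpha_2$. The symmetric analysis at $\Delta_m$ produces $\frac{x_{\sigma_m}}{x_{\sigma_1}}$ and $x_{\alpha_m}^{\pm}$. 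Collecting the surviving numerator and denominator monomials then gives exactly the stated formula.

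\textbf{Main obstacle.} The genuinely delicate part is the sign/exponent bookkeeping at the two ends and at strict local extrema: one must be careful that ``strict'' maxima and minima (covering/covered by two elements) are exactly the ones that fail to cancel, that the endpoints $\alpha_1,\alpha_m$ are handled by the separate $x_{\alpha_1}^{\pm}x_{\alpha_m}^{\pm}$ terms rather than double-counted in the $\prod x^{\pm2}$ products, and that the orientation conventions for clockwise-vs-counterclockwise neighbors align with the left-vs-right rule used to build $\calP_\gamma$. A clean way to avoid case explosion is to phrase the induction so that removing the last crossing $\alpha_m$ changes $\calP_\gamma$ in a completely controlled way (as in the proof of Lemma \ref{lem:g-vectorMin}), and to verify the formula's consistency under this single reduction step; the base case $m=1$ is then a direct check in one triangle.
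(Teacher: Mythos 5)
Your route is genuinely different from the paper's. The paper does not compute the product $\hat{y}_{\alpha_1}\cdots\hat{y}_{\alpha_m}$ locally at all: it compares the unique term divisible by $y_{\alpha_1}\cdots y_{\alpha_m}$ in the snake-graph expansion (Theorem \ref{thm:SnakeGraphExpansion}) with the corresponding term in the poset expansion (Proposition \ref{prop:PosetExpansion}), then invokes Lemma \ref{lem:g-vectorMin} to rewrite $\mathbf{x}^{\gb_\gamma}$ as $wt(\min(\mathcal{G}_{\gamma,T}))/\text{cross}(\gamma,T)$, so the identity collapses to $\hat{y}_{\alpha_1}\cdots\hat{y}_{\alpha_m} = wt(\max(\mathcal{G}_{\gamma,T}))/wt(\min(\mathcal{G}_{\gamma,T}))$, and the right-hand side is read off from the two boundary matchings. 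You instead unwind the definition of $\hat{y}$ triangle by triangle. Both work; yours is more self-contained (it needs neither expansion formula nor Lemma \ref{lem:g-vectorMin}), at the price of the local case analysis, while the paper's argument transfers immediately to closed curves because the same pair of expansion formulas is available for band graphs.

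One piece of your bookkeeping is off, although the formula you land on is correct. You assert that the third edge $\tau_{[j]}$ of the triangle $\Delta_j$ is what survives squared at a strict extremum. In fact $\tau_{[j]}$ always cancels inside its own triangle: in $\Delta_j$ with edges $\alpha_j,\alpha_{j+1},\tau_{[j]}$, the edge $\tau_{[j]}$ is the counterclockwise neighbor of exactly one of $\alpha_j,\alpha_{j+1}$ and the clockwise neighbor of the other, so $\Delta_j$ contributes exactly $x_{\alpha_j}^{\pm 1}x_{\alpha_{j+1}}^{\mp 1}$ to the product, with the sign governed by the same left/right datum that decides whether $j\prec j+1$ or $j\succ j+1$. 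The exponents $\pm 2$ therefore accumulate on the crossed arcs themselves: an interior $\alpha_j$ receives one $\pm 1$ from $\Delta_{j-1}$ and one from $\Delta_j$, and these reinforce precisely at strict extrema and cancel otherwise, while $\alpha_1$ and $\alpha_m$ receive only one such contribution each, giving $x_{\alpha_1}^{\pm}x_{\alpha_m}^{\pm}$. With that correction, your induction on $m$ goes through as planned.
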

\begin{proof}
We proceed by comparing the unique term divisible by $y_{\alpha_1}\cdots y_{\alpha_m}$ in the expansion formulas from Theorem \ref{thm:SnakeGraphExpansion} and Proposition \ref{prop:PosetExpansion}.  From this, we see \[
\frac{wt(\max(\mathcal{G}_{\gamma,T}))}{\text{cross}(\gamma,T)} = \mathbf{x}^{\gb_\gamma} \hat{y}_{\alpha_1} \cdots \hat{y}_{\alpha_m} = \frac{wt(\min(\mathcal{G}_{\gamma,T}))}{\text{cross}(\gamma,T)}  \hat{y}_{\alpha_1} \cdots \hat{y}_{\alpha_m}
\]
where the second equality follows from Lemma \ref{lem:g-vectorMin}. That is, we have \[
\hat{y}_{\alpha_1} \cdots \hat{y}_{\alpha_m} = \frac{wt(\max(\mathcal{G}_{\gamma,T}))}{wt(\min(\mathcal{G}_{\gamma,T}))}.
\]

We can infer from Lemma \ref{lem:g-vectorMin} that $wt(\min(\mathcal{G}_{\gamma,T})) = x_{\eta_1} x_{\sigma_1} x_{\alpha_{i_1}}^2 \cdots x_{\alpha_{i_s}}^2 x_{\alpha_1}^{\delta_{1,p}} x_{\alpha_m}^{\delta_{m,p}}$ where where $\delta_{i,p} = 1$ if $\alpha_i$ is maximal and equals 0 otherwise. A parallel argument, given by switching conventions for minimal and maximal matchings, shows that $wt(\max(\mathcal{G}_{\gamma,T})) = x_{\eta_h}x_{\sigma_m} x_{\alpha_{j_1}}^2 \cdots x_{\alpha_{j_t}}^2  x_{\alpha_1}^{\delta_{1,d}} x_{\alpha_m}^{\delta_{m,d}}$ where where $\delta_{i,d} = 1$ if $\alpha_i$ is minimal and equals 0 otherwise. By evaluating cases for how $\alpha_1$ and $\alpha_2$ are related, how $\alpha_{m-1}$ and $\alpha_m$ are related, and whether any ends of $\gamma$ are tagged,  the statement follows.

Now, suppose $\gamma$ is a closed curve. Then, from \cite{ouguz2024cluster}, we have the same equality of formulas relating band graphs and posets so we can use the same method of proof. 
\end{proof}

The following special case of Lemma \ref{lem:y-hat} will be particularly helpful in later proofs involving arcs which wind around punctures.

\begin{cor}\label{cor:y-hat_spokes}
Suppose the complete list of spokes from a triangulation $T$ incident to a puncture $p$ is $\sigma_1,\ldots,\sigma_m$, listed in counterclockwise order. Let $\sigma_{[i]}$ be the third arc in the triangle bounded by $\sigma_i$ and $\sigma_{i+1}$ For shorthand set $y_i := y_{\sigma_i}$ and $y_{[i]}:= y_{\sigma_{[i]}}$ and similarly for $\hat{y}_i$ and $x_i$. Then, for $i < j$ where $j-i < n$, we have\[
\hat{y}_{i} \cdots \hat{y}_{j} = y_i\cdots y_j \frac{x_{i-1}x_ix_{[j]}}{x_j x_{j+1}x_{[i-1]}}
\]
and in particular, \[
\hat{y}_{1} \cdots \hat{y}_{m} = y_1\cdots y_m.
\]
\end{cor}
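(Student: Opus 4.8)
The plan is to prove the formula by unwinding the definition $\hat{y}_\tau = \frac{x_{CCW}(\tau)}{x_{CW}(\tau)}y_\tau$ and tracking cancellations in the telescoping product $\hat{y}_i \cdots \hat{y}_j$. First I would observe that for each spoke $\sigma_k$ with $i < k < j$, the counterclockwise neighbor of $\sigma_k$ within the two triangles it borders is $\sigma_{k-1}$ and $\sigma_{[k-1]}$ on one side and $\sigma_{k+1}$ or $\sigma_{[k]}$ on the other; more carefully, I would fix the convention so that $\sigma_{[k]}$ is the third edge in the triangle with $\sigma_k,\sigma_{k+1}$, hence $x_{CCW}(\sigma_k)$ and $x_{CW}(\sigma_k)$ each pick up one neighboring spoke and one non-spoke $\sigma_{[\cdot]}$. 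The key point is that as $k$ runs from $i$ to $j$, the spoke variables $x_{i+1},\ldots,x_{j-1}$ appear once in a numerator and once in a denominator and cancel, while the $x_{[k]}$ contributions all cancel \emph{except} the two at the ends, leaving $\frac{x_{i-1}x_i x_{[j]}}{x_j x_{j+1} x_{[i-1]}}$. I would make this precise by writing each $\hat y_k$ ($i\le k\le j$) explicitly as a ratio of the appropriate $x$'s and $y_k$, multiplying, and checking the bookkeeping directly.

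Alternatively — and this is probably the cleaner route for the paper — I would derive it as a genuine special case of Lemma~\ref{lem:y-hat}. Take $\gamma$ to be an arc crossing exactly the spokes $\sigma_i,\sigma_{i+1},\ldots,\sigma_j$ in that order (such an arc exists: it enters near $\sigma_{i-1}$, winds partway around $p$, and exits near $\sigma_{j+1}$). Then $\calP_\gamma$ is a chain $\sigma_i \prec \sigma_{i+1} \prec \cdots \prec \sigma_j$ (or the reverse), so it has \emph{no strict} maximal or minimal elements — the products $\prod x_{\alpha_{k_l}}^{-2}$ and $\prod x_{\alpha_{j_v}}^{2}$ are empty. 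The first and last triangles of $\gamma$ have edges $(\sigma_i,\sigma_{i-1},\sigma_{[i-1]})$ and $(\sigma_j,\sigma_{j+1},\sigma_{[j]})$ in the relevant clockwise order, so Lemma~\ref{lem:y-hat} gives $\hat y_{\sigma_i}\cdots\hat y_{\sigma_j} = y_i\cdots y_j \cdot \frac{x_{[i-1]}\, x_{[j]}}{x_{i-1}\,x_{j+1}}\, x_i^{\pm} x_j^{\pm}$; since the chain has $\sigma_i \prec \sigma_{i+1}$ the sign on $x_i$ is $+$, and since $\sigma_{j-1}\prec\sigma_j$ the sign on $x_j$ is $-$, yielding $y_i\cdots y_j \frac{x_{[i-1]} x_{[j]} x_i}{x_{i-1} x_{j+1} x_j}$. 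Wait — I need to double-check the orientation conventions so that this matches the claimed $\frac{x_{i-1}x_i x_{[j]}}{x_j x_{j+1} x_{[i-1]}}$; if Lemma~\ref{lem:y-hat}'s clockwise labeling places the neighbors the other way, the roles of $x_{[i-1]}$ and $x_{i-1}$ (and of $x_{[j]}$ and $x_{j+1}$) simply swap, and one picks the consistent convention. The formula for $\hat y_1 \cdots \hat y_m$ then follows either by taking the full cycle (where the chain closes up and every $x_{[\cdot]}$ and every neighboring spoke cancels in pairs, leaving just $y_1\cdots y_m$), or directly: going all the way around, the telescoping is complete with nothing left over.

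The main obstacle is purely the orientation/indexing bookkeeping — getting the clockwise-versus-counterclockwise conventions of $x_{CCW}$, $x_{CW}$, the labeling $\sigma_{[i]}$, and the ordering in Lemma~\ref{lem:y-hat}'s statement all to line up, so that the end terms come out as $\frac{x_{i-1}x_i x_{[j]}}{x_j x_{j+1} x_{[i-1]}}$ rather than some reciprocal or swapped version. I would handle this by drawing the fan of triangles at $p$ once, reading off $x_{CCW}$ and $x_{CW}$ for a generic middle spoke and for the two boundary spokes $\sigma_i,\sigma_j$ of the range, and then the cancellation is mechanical. For the $\hat y_1\cdots\hat y_m$ case one should note it is consistent with (and also follows from) the identity $\prod_{\tau\in T}\hat y_\tau$-type relations, but the direct telescoping around the full fan is the most transparent justification.
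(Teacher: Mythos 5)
Your proposal is correct: the paper gives no explicit proof of this corollary, treating it exactly as your second route does — an immediate specialization of Lemma~\ref{lem:y-hat} to an arc whose poset is a chain of consecutive spokes, so that the strict-extremal products are empty and only the boundary-triangle factors and the signed $x_i^{+}x_j^{-}$ survive. Your first route (direct telescoping of $\hat y_k = y_k\,x_{k-1}x_{[k]}/(x_{k+1}x_{[k-1]})$) also checks out and is arguably the more self-contained justification, since it avoids having to exhibit the auxiliary arc; the orientation bookkeeping you flag does resolve in favor of the stated formula.
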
 

We establish the following symbol. If $p$ is a puncture with incident spokes $\sigma_1,\ldots,\sigma_m$, we set $Y_p = \prod_{i=1}^m y_{\sigma_i}$ and $\hat{Y}_p$ similarly. Then, the last statement of Corollary \ref{cor:y-hat_spokes} can be rephrased as $\hat{Y}_p = Y_p$.
 
\subsection{Format of proofs}
\label{subsec:proofStrategy}
While our method of proving Theorem \ref{thm:main} consists of considering many cases for the interaction of two arcs or an arc with self-intersection, the method of proving each case is the same. Here we give the overarching format of these proofs. 

To lighten notation, we write this section for skein relations of the form $x_1 x_2 = x_3 x_4 + Y x_5x_6$ where $x_i := x_{\gamma_i}$ and the sets $\{\gamma_3,\gamma_4\}$ and $\{\gamma_5,\gamma_6\}$ are resolutions of an intersection between $\gamma_1$ and $\gamma_2$. If there is only one factor in one or multiple terms, the ideas are the same. 

In light of Proposition \ref{prop:PosetExpansion}, we can write $x_1x_2$ as \[
\mathbf{x}^{\gb_1 + \gb_2} \sum_{(I_1,I_2) \in J(\calP_1) \times J(\calP_2)} wt(I_1)wt(I_2)
\]
where $\calP_i := \calP_{\gamma_i}$. The bulk of our proofs concern finding a partition of $J(\calP_1) \times J(\calP_2) = A \cup B$ and bijections between $A$ and $J(\calP_{\gamma_3}) \times J(\calP_{\gamma_4})$ and between $B$ and $J(\calP_5) \times J(\calP_6)$, which combine to form $\Phi: J(\calP_1) \times J(\calP_2) \to (J(\calP_3) \times J(\calP_4)) \cup  (J(\calP_5) \times J(\calP_6))$. It will be necessary that $\emptyset \in A$, $\Phi\vert_A$ is weight-preserving, and $\Phi\vert_B$ is weight-preserving up to a fixed monomial. Specifically,  we require that, if $(I_1,I_2) \in A$, then $wt(\Phi(I_1,I_2)) = wt(I_1,I_2)$ and if $(I_1,I_2) \in B$, then $Z wt(\Phi(I_1,I_2)) = wt(I_1,I_2)$ for a fixed monomial $Z$ where $wt(I,J) := wt(I)wt(J)$. 

Given $Z = (\prod_{i=1}^n x_i^{v_i})(\prod_{i=1}^n y_i^{u_i})$, let $\degx(Z) = \sum_{i=1}^n v_i \eb_i$. The final step of our proof will be to show that, if $\gb_i:= \gb_{\gamma_i}$, then $\gb_1 + \gb_2 = \gb_3 + \gb_4$, and  $\gb_1 + \gb_2 + \degx(Z) = \gb_5 + \gb_6$. Then, if $Y = Z\vert_{x_i = 1} = \prod_{i=1}^n y_i^{u_i}$, we can rewrite $x_1x_2$ as \begin{align*}
&\mathbf{x}^{\gb_1 + \gb_2} \sum_{(I_1,I_2) \in A} wt(I_1)wt(I_2) + \mathbf{x}^{\gb_1 + \gb_2} \sum_{(I_1,I_2) \in B} wt(I_1)wt(I_2) 
\\&= \mathbf{x}^{\gb_1 + \gb_2} \sum_{(I_3,I_4) \in J(\calP_{\gamma_3}) \times J(\calP_{\gamma_4})} wt(I_3)wt(I_4) + \mathbf{x}^{\gb_1 + \gb_2} \sum_{(I_5,I_6) \in J(\calP_5) \times J(\calP_6)} Zwt(I_5)wt(I_6)\\
&= \mathbf{x}^{\gb_3 + \gb_4} \sum_{(I_3,I_4) \in J(\calP_{\gamma_3}) \times J(\calP_{\gamma_4})} wt(I_3)wt(I_4) + \mathbf{x}^{\gb_5 + \gb_6} Y \sum_{(I_5,I_6) \in J(\calP_5) \times J(\calP_6)} wt(I_5)wt(I_6) = x_3x_4 + Y x_5x_6.
\end{align*}

In each proof, we will explicitly identify the monomial $Z$ and will see that it is equal to $\hat{Y}_R\hat{Y}_S$ for the sets $R,S$ defined in Section \ref{sec:introduction}. In Section \ref{subsec:Kissing}, some resolutions will involve a contractible kink, which introduces a negative sign into the equation. This will modify the type of bijections we construct. 

We remark that Pilaud, Reading, and Schroll used a similar proof idea to describe skein relations in the case where $\gamma_1 \notin T$, $\gamma_2 \in T$, and $e(\gamma_1,\gamma_2) = 1$  \cite{pilaud2023posets}. These skein relations are in fact exchange relations from the cluster algebra. 

\subsection{Comparison to snake graph calculus}\label{subsec:CompareToSGC}

Our methods could be rephrased as a generalization of \emph{snake graph calculus}, as in \cite{snakegraphcalculus1,snakegraphcalculus2, snakegraphcalculus3}, to loop graphs. Moreover, several of our proofs use ideas from these articles, rephrased in the language of posets. We will use similar notation to these references in several places, where the meaning has been updated for the poset setting.

In a snake graph, there is a natural ordering on the tiles dictated by following the graph in the north-east direction. We provide an analogous notion for our posets with a \emph{chronological ordering}. When we have a fence poset without loops on $n$ elements, we will refer to these elements as $\calP(1),\calP(2),\ldots,\calP(n)$ such that $\calP(i)$ only has cover relations with $\calP(i-1)$ and $\calP(i+1)$, if these elements exist. Notice that every fence poset with more than one element has exactly two chronological orderings and these are rarely linear extensions. If we have a loop fence poset $\calP$ with $n$ elements in $\calP^0$, we provide a chronological labeling on $\calP^0$ with numbers in $[n]$. Then, if $\calP(1)$ is adjacent to a loop, we label these as $\calP(-a) \succ P(-a+1) \succ \cdots \succ P(0) \prec P(1) \prec P(-a) $; that is, we continue the labeling on the loop passing first to the element covered by $\calP(1)$. Similarly, if $\calP(n)$ is incident to a loop, we add the labels such that $\calP(n+b)\succ \calP(n)\succ \calP(n+1) \prec \calP(n+2) \prec \cdots \prec \calP(n+b)$. We give an example below for a loop fence poset $\calP$ with loops on both sides.

 \begin{center}
 \begin{tikzpicture}
 \node(1) at (0,0) {$\calP(1)$};
 \node(2) at (1,1){$\calP(2)$};
 \node(3) at (2,0){$\calP(3)$};
 \node(0) at (-1,-1){$\calP(0)$};
 \node(-1) at (-2,0){$\calP(-1)$};
 \node(-2) at (-3,1){$\calP(-2)$};
 \node(4) at (3,-1){$\calP(4)$};
\node(5) at (4,1){$\calP(5)$};
\draw(1) -- (2);
\draw(2) -- (3);
\draw(0) -- (1);
\draw(-1) -- (0);
\draw(-2) -- (-1);
\draw(-2) -- (1);
\draw(3) -- (4);
\draw(4) -- (5);
\draw(3) -- (5);
 \end{tikzpicture}
 \end{center}

We will use the following notation for various (induced) subposets of $\calP$, using ordinary $\leq,\geq$ symbols to refer to the indexing via a chronological ordering and using $\preceq, \succeq$ to refer to the inequalities from $\calP$. One can infer from here the meaning of similar expressions, such as $\calP^{\succ a}$. 

\begin{itemize}
    \item $\calP^{\preceq a} :=\{\calP(x)|\calP(x)\preceq \calP(a)\}$
    \item $\calP[\leq a] :=\{\calP(x)|x\leq a\}$
    \item $\calP[a,b] := \calP[\geq a] \cap \calP[\leq b] = \{\calP(x) : a \leq x \leq b\}$
\end{itemize}

When we take such a subset of a poset, we will often use the original chronological labeling inherited from the larger poset. For example, while one could define a chronological labeling on $\calP[a,b]$ using numbers in $\{1,2,\ldots,b-a+1\}$, we will instead use the labels from $\calP$, thus using $\{a,a+1,\ldots,b\}$. 

In \cite{snakegraphcalculus1}, the authors characterize a way to translate (some) intersections of a pair of arcs $\gamma_1$ and $\gamma_2$ cross based on the behavior of the snake graphs $\mathcal{G}_{\gamma_1,T}$ and $\mathcal{G}_{\gamma_2,T}$. This condition can be translated nicely into a condition on the posets $\calP_{\gamma_1}$ and $\calP_{\gamma_2}$. Let a set $S$ of elements in a poset $(\calP,\preceq)$ be on \emph{top} if for any $x \in S, y \in P \backslash S$, $x \not\preceq y$, and we define a set to be on \emph{bottom} similarly. 

\begin{definition}\label{def:Overlaps}
\begin{enumerate}
    \item If two posets $\calP_1$ and $\calP_2$ have isomorphic subposets $R_1 = \calP_1[s,t]$ and $R_2 = \calP_2[s',t']$, we say they have an \emph{overlap}.
    \item If $\calP_1$ and $\calP_2$ have overlaps $R_1$ and $R_2$ such that $R_1$ is on top of $\calP_1^0$ and $R_2$ is on bottom of $\calP_2^0$ or vice versa, and we do not have $s=s'=1$ nor both $t = n_1$ and $t' = n_2$ where $d_i = \vert \calP_i^0 \vert$,  then we say $\calP_1$ and $\calP_2$ have a \emph{crossing overlap}.
    \item A poset $\calP$ has a \emph{self-overlap} if it has two isomorphic subposets of $\calP^0$, $R_1 = \calP^0[s,t]$ and $R_2 = \calP^0[s',t']$.
    \item If a poset $\calP$ has self-overlaps $R_1$ and $R_2$ such that  $R_1$ is on top of $\calP_1^0$ and $R_2$ is on bottom of $\calP_2^0$ or vice versa, and such that, if $s < s'$, we do not have both $s = 1$ and $t' = d$, then we say $\calP$ has a \emph{crossing self-overlap}.
\end{enumerate}
\end{definition}  

These crossing overlaps appear under several other guises in the literature, resembling paths which kiss as in \cite{petersen2010non} and (some) pairs of strings with nontrivial extension groups as in  \cite{brustle2020combinatorics, canakci2017extensions}.

\begin{prop}[Theorem 5.3 \cite{snakegraphcalculus1}, Theorem 6.1 \cite{snakegraphcalculus2}]\label{lem:CrossingOverlapImpliesCrossing}
If $ \calP_{\gamma_1}$ and $\calP_{\gamma_2}$ have a crossing overlap, then $\gamma_1$ and $\gamma_2$ have an intersection. Similarly, if $\calP_\gamma$ has a crossing self-overlap, then $\gamma$ has a self-intersection. 
\end{prop}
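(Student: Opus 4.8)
The statement asserts that a crossing overlap of the posets $\calP_{\gamma_1}$ and $\calP_{\gamma_2}$ forces the arcs $\gamma_1$ and $\gamma_2$ to intersect (and similarly for self-overlaps). The most efficient route is to \emph{transport the statement back to snake graphs}, where the analogous result is Theorem 5.3 of \cite{snakegraphcalculus1} and Theorem 6.1 of \cite{snakegraphcalculus2}, and then invoke the dictionary between snake/loop graphs and posets established in Theorems~\ref{Thm:PMPoset} and \ref{thm:SnakeGraphExpansion} and in Section~\ref{sec:expansion_formula}. So the first step is to make precise the translation: an overlap $R_1 = \calP_1[s,t] \cong R_2 = \calP_2[s',t']$ corresponds, under the chronological ordering, to a common connected string of consecutive tiles in $\mathcal{G}_{\gamma_1,T}$ and $\mathcal{G}_{\gamma_2,T}$ — that is, a common ``subsnake graph'' in the sense of \cite{snakegraphcalculus1}. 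Here I would spell out that the induced subposet on $[s,t]$ records exactly the same sequence of crossed arcs $\tau_{i_j}$ together with the same zig-zag pattern of which shared endpoints lie to the left/right of the curve, which is precisely the data of the tiles $G_s,\dots,G_t$ and their relative orientations; an isomorphism of the subposets is the same as an isomorphism of the corresponding strings of tiles (up to reversing orientation). The ``on top of $\calP_i^0$'' / ``on bottom'' condition translates to the overlap sitting at the end of one snake graph in a ``northeast/southwest'' configuration, which is exactly the geometric hypothesis in the snake graph calculus crossing lemma, and the excluded degenerate cases $s=s'=1$ resp.\ $t=n_1,\,t'=n_2$ are the same degeneracies excluded there (they correspond to the overlap being all of one graph in a way that does not force a genuine crossing).

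\textbf{Key steps, in order.} (1) Fix chronological orderings on $\calP_{\gamma_1}^0$ and $\calP_{\gamma_2}^0$ and recall from Section~\ref{subsec:CompareToSGC} and Theorem~\ref{Thm:PMPoset} the explicit bijection between the data of $\calP_{\gamma_i}$ (chronologically ordered, with up/down pattern) and the tile-by-tile data of $\mathcal{G}_{\gamma_i,T}$, including relative orientations. (2) Show that a subposet $R_i = \calP_i[s_i,t_i]$ corresponds to the string of tiles $G_{s_i},\dots,G_{t_i}$, and that a poset isomorphism $R_1 \cong R_2$ — which must either preserve or reverse the chronological order — corresponds to an isomorphism of the associated strings of tiles in the sense of \cite{snakegraphcalculus1} (possibly after a flip). (3) Check that ``$R_1$ on top of $\calP_1^0$ and $R_2$ on bottom of $\calP_2^0$'' is equivalent to the two subsnake graphs being glued to the remainders of $\mathcal{G}_{\gamma_1,T}$ and $\mathcal{G}_{\gamma_2,T}$ on opposite sides, which is the configuration called a crossing overlap of snake graphs, and that the excluded degenerate index cases match exactly. (4) Invoke Theorem 5.3 of \cite{snakegraphcalculus1} (transverse crossing) and Theorem 6.1 of \cite{snakegraphcalculus2} (the self-crossing analogue, using a crossing self-overlap) to conclude $e(\gamma_1,\gamma_2) > 0$, resp.\ that $\gamma$ has a self-intersection. (5) Handle the loop-graph case: when $\gamma_i$ is a tagged or closed curve, $\calP_{\gamma_i}$ is a loop fence poset and we work with $\calP_{\gamma_i}^0$; the conditions in Definition~\ref{def:Overlaps} are phrased in terms of $\calP^0$, and the loop graph restricted to the tiles recording $\calP^0$ is precisely the ordinary snake graph $\mathcal{G}_{\gamma^0,T}$, so the same argument applies to the underlying (possibly generalized) arc and a crossing of underlying arcs gives a crossing of the tagged arcs.

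\textbf{Main obstacle.} The routine bookkeeping is step (2)–(3): making the correspondence between subposets and subsnake graphs airtight, in particular matching ``on top/on bottom'' with the correct geometric gluing side and verifying that the two chronological orderings (and hence the two possible directions of the isomorphism) are accounted for. The genuinely delicate point, and the one I expect to be the real obstacle, is the loop-graph/tagged-arc case in step (5): the translation between $\calP_\gamma$ and $\mathcal{G}_{\gamma,T}$ when $\gamma$ is notched involves the hook construction and the loop portion of the poset, and one must be careful that the overlap, which by Definition~\ref{def:Overlaps} lives inside $\calP^0$, does not interact with the loop in a way that breaks the correspondence — and that a crossing of the underlying plain/generalized arcs produced by the snake graph calculus lemma indeed upgrades to an incompatibility of the original tagged arcs. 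This requires invoking the compatibility definition for tagged arcs (Definition~\ref{def:CompatibleTaggedArcs}) and the fact that $e(\gamma_1^0,\gamma_2^0) > 0$ already implies incompatibility. If one wishes to avoid re-deriving the snake graph calculus crossing theorems, the cleanest write-up is simply to state the dictionary carefully and then cite Theorems 5.3 and 6.1 of \cite{snakegraphcalculus1,snakegraphcalculus2} as a black box, since the excerpt explicitly attributes the proposition to them.
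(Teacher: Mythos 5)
Your proposal is correct and takes essentially the same route as the paper: the proposition is stated as a translation of Theorems 5.3 and 6.1 of the snake graph calculus papers into the poset language, relying on the dictionary between chronologically ordered fence posets and tile sequences of snake/loop graphs set up in Section~\ref{subsec:CompareToSGC}, and the paper offers no further proof beyond the citation. Your attention to the loop-graph case and to matching the degenerate index exclusions is reasonable diligence but does not diverge from the intended argument.
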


As we will explain at the beginning of Section \ref{sec:transverse_crossings}, not every intersection or self-intersection of arcs will result in a crossing overlap of the corresponding posets. These other types of intersections are exactly when the intersection point lies in the first or last triangle passed through by one of the arcs involved. This is referred to as grafting in \cite{snakegraphcalculus1,snakegraphcalculus2} and arrow crossing and 3-cycle crossing in \cite{canakci2017extensions}. 

In Definition \ref{def:CompatibleTaggedArcs}, we see that, on a punctured surface, a pair of arcs can be incompatible without having a point of intersection on their interiors. These incompatibilities concern common endpoints which have differing taggings, as in Definition \ref{def:CompatibleTaggedArcs}. These will be addressed in Section \ref{sec:puncture_incompatibility}.

A useful concept from \cite{snakegraphcalculus1,snakegraphcalculus2} is the idea of a \emph{switching position}. We will frequently use the same concept in the context of order ideals of fence posets. Let $\calP_1$ and $\calP_2$ be isomorphic fence posets with bijection $\Phi$. Fix a compatible chronological ordering on each poset, so that the isomorphism $\Phi$ satisfies $\Phi(\calP_1(i)) = \calP_2(i)$. Let $I_1,I_2$ be order ideals of $\calP_1$ and $\calP_2$ respectively. Then, we say the \emph{switching position between $I_1$ and $I_2$} (with respect to the ordering) is the smallest value $i$ such that $\calP_1(i) \in I_1$ if and only if $\calP_2(i) \in I_2$. There are two compatible pairs of chronological of $\calP_1$ and $\calP_2$ and this choice of ordering will in general affect the switching position. 

\begin{lemma}\label{lem:SwitchingExists}
Given $I_1$ and $I_2$, order ideals of isomorphic fence posets $\calP_1$ and $\calP_2$ respectively, a switching position between $I_1$ and $I_2$ exists unless $I_1 = \calP_1$ and $I_2 = \emptyset$ or vice versa. 
\end{lemma}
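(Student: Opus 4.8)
The statement is really a lemma about order ideals of a fence poset, so I would forget about arcs entirely and argue combinatorially with the chronological orderings. Fix a compatible pair of chronological orderings on $\calP_1$ and $\calP_2$, so that $\Phi(\calP_1(i)) = \calP_2(i)$ for all $i$, and write $n$ for the common number of elements. For an order ideal $I$ of a fence poset, membership of the elements $\calP(1), \calP(2), \ldots, \calP(n)$ along the chronological ordering is not arbitrary: because each $\calP(i)$ has cover relations only with $\calP(i-1)$ and $\calP(i+1)$, the sequence of indicator values records a zig-zag pattern that is heavily constrained. The key local observation is that at an ``up-step'' of the fence (where $\calP(i) \prec \calP(i+1)$) the order-ideal condition forces $\calP(i+1) \in I \implies \calP(i) \in I$, and at a ``down-step'' it forces $\calP(i) \in I \implies \calP(i+1) \in I$. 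I would isolate this as the one structural fact I need.

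\textbf{Key steps.} First, define the switching position as in the text: the least $i$ with $\calP_1(i) \in I_1 \iff \calP_2(i) \in I_2$, and suppose for contradiction that no such $i$ exists, i.e. for every $i$ exactly one of $\calP_1(i) \in I_1$, $\calP_2(i) \in I_2$ holds. Second, argue by induction on $i$ that this forces $I_1$ and $I_2$ to be ``complementary'' in the strongest possible way along the whole chronological sequence, using the up-step/down-step constraint above: if at position $i$ we have $\calP_1(i) \in I_1$ and $\calP_2(i) \notin I_2$, then examine the cover relation between positions $i$ and $i+1$. In the up-step case, $\calP_2(i) \notin I_2$ is consistent, but the order-ideal condition on $I_1$ does not yet force $\calP_1(i+1)$; however, if $\calP_1(i+1) \in I_1$ then $\calP_2(i+1) \notin I_2$ (by assumption), and if $\calP_1(i+1) \notin I_1$ then we need $\calP_2(i+1) \in I_2$ — but then $\calP_2(i) \in I_2$ is forced by the up-step order-ideal condition on $I_2$, contradiction. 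So in the up-step case the ``exactly one'' hypothesis propagates and moreover pins down which side contains $\calP(i+1)$. Running the symmetric analysis for the down-step case and for the other starting parity, one finds that the only globally consistent configurations are $I_1 = \calP_1, I_2 = \emptyset$ or $I_1 = \emptyset, I_2 = \calP_1$. Third, conclude: in every other case the induction breaks, producing a position $i$ at which both or neither side contains $\calP(i)$, i.e. a switching position.

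\textbf{Main obstacle.} The conceptual content is easy; the delicate part is bookkeeping the case analysis cleanly. There are two compatible chronological orderings, so one must check the argument does not secretly depend on a convention; and fence posets can start with either an up-step or a down-step at position $1$, and the element $\calP(1)$ may or may not be minimal, so the base case of the induction needs care (in particular handling $i=1$: if $\calP(1)$ is a down-step start then $\calP_1(1) \in I_1$ forces nothing about $I_2$ directly, and one must look forward). I would structure the induction as ``the set of positions $\{1, \ldots, i\}$ on which $I_1$ and $I_2$ disagree is an up-set-like prefix forcing $I_1 \cap \calP[\leq i]$ and $I_2 \cap \calP[\leq i]$ to be determined'' — essentially showing $I_1$ restricted to any prefix is forced to be a maximal ideal of that prefix and $I_2$ the empty one (or vice versa), which at $i = n$ gives exactly the two exceptional pairs. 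The loop-fence-poset case (when $\calP_1$ or $\calP_2$ has loop elements with the chronological labels $\calP(-a), \ldots, \calP(0)$ or $\calP(n+1), \ldots, \calP(n+b)$) requires only that the same cover-relation reasoning applies to those elements, which it does since they too have degree $\leq 2$ in the Hasse diagram, so no new ideas are needed there.
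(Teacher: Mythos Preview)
Your approach is essentially the same as the paper's: assume no switching position exists, so at every index exactly one of $I_1,I_2$ contains the element, and then use the single cover relation between consecutive positions to show the parity cannot flip, forcing $I_1=\calP_1$ and $I_2=\emptyset$ (or vice versa). The paper compresses your up-step/down-step case analysis into one line and does not worry about base cases, choice of chronological ordering, or loop elements (the lemma is stated only for fence posets), so your extra bookkeeping is harmless but unnecessary.
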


\begin{proof}
Suppose that there is no switching position between $I_1$ and $I_2$. Without loss of generality,  $\calP_1(1) \in I_1$ and $\calP_2(1) \notin I_2$. Since either $\calP_1(1) \prec \calP_1(2)$ or $\calP_1(1) \succ \calP_1(2)$, we cannot have that $\calP_1(2) \notin I_1$ and $\calP_2(2) \in I_2$. So, it must be that $\calP_1(2) \in I_1$ and $\calP_2(2) \notin I_2$. Proceeding in this way, we see that in order for there to not exist a switching position, we must have $I_1 = \calP_1$ and $I_2 = \emptyset$. 
\end{proof}

\section{Incompatibility at Punctures}
\label{sec:puncture_incompatibility}

In this section, we address how to resolve the incompatibility of two arcs which have incompatible taggings at one or two punctures, as in Definition \ref{def:CompatibleTaggedArcs}. We also discuss a singly-notched monogon, which is a tagged arc which is incompatible with itself. 

\subsection{Incompatible at one puncture}\label{subsec:IncAtOne}

Consider two arcs, $\gamma_1^{(p)}$ and $\gamma_2$ which are incompatible at a puncture $p$; that is, both arcs are incident to $p$, $\gamma_1$ is notched at $p$, $\gamma_2$ is plain at $p$, and $\gamma_1^0 \neq \gamma_2^0$. Orient $\gamma_1^{(p)}$ and $\gamma_2$ to both begin at $p$. Let the spokes at $p$ from $T$ be labeled $\sigma_1,\ldots,\sigma_m$ in counterclockwise order such that the first triangle that $\gamma_1^{(p)}$ passes through is bounded by $\sigma_1$ and $\sigma_m$. This means that in the poset for $\gamma_1^{(p)}$, the set $\sigma_1,\ldots,\sigma_m$ sit on a loop connected to $\calP_{\gamma_1^{(p)}}(1)$ with $\calP_{\gamma_1^{(p)}}(-i+1)$ corresponding to $\sigma_i$. We will refer to these elements of the loop by the corresponding arcs, even though the interior of $\gamma_1$ may cross some of the $\sigma_i$.  

If $\gamma_2 \notin T$, let $1 \leq k \leq m$ be such that the first triangle $\gamma_2$ passes through is bounded by $\sigma_k$ and $\sigma_{k+1}$, where we interpret $\sigma_{m+1}$ as $\sigma_1$. If $\gamma_2 \in T$, then we let $k$ be such that $\gamma_2 = \sigma_k$.

Draw a small circle $\mathsf{h}$ encompassing $p$ and not crossing any arcs of $T$ except the spokes at $p$. We define $\gamma_1^{-1} \circ_{CCW} \gamma_2$ as the arc which results from following $\gamma_1$ from $t(\gamma_1)$ with reverse orientation until its intersection with $\mathsf{h}$, following $\mathsf{h}$ counterclockwise until its intersection with $\gamma_2$, and then following $\gamma_2$ until $t(\gamma_2)$. We define $\gamma_1^{-1} \circ_{CW} \gamma_2$ similarly.  Define $\gamma_3:= \gamma_1^{-1} \circ_{CCW} \gamma_2$ and $\gamma_4:= \gamma_1^{-1} \circ_{CW} \gamma_2$; in particular, $\gamma_3$ crosses $\sigma_1,\ldots,\sigma_k$ and $\gamma_4$ crosses $\sigma_{k+1},\ldots,\sigma_m$. See Figure~\ref{fig:Incompatible} for a case where $k \neq m$ and Table \ref{tab:PosetsKIsNot0orM} for the associated posets.

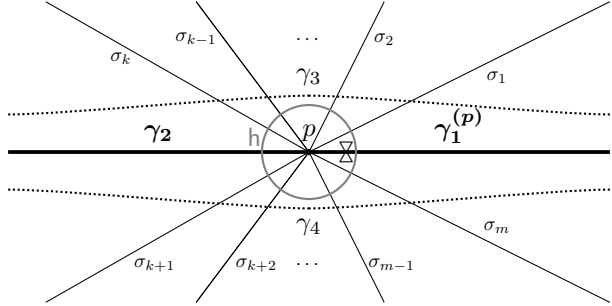
\begin{figure}[H]
\captionsetup{width=0.9\textwidth}
    \centering
    \begin{tikzpicture}[scale=0.5, transform shape]
%draw gammas
\draw[line width=0.5mm] (0,0) to (8,0);
\node[line width=0.5mm,scale=2,rotate=90] at (1,0) {$\bowtie$};
\draw[line width=0.5mm] (0,0) to (-8,0);

\draw[out=180,in=0,looseness=0.5, thick, densely dotted] (8,1) to (0,1.5);
\draw[out=0,in=180,looseness=0.5, thick,densely dotted] (-8,1) to (0,1.5);

\draw[out=180,in=0,looseness=0.5, thick,densely dotted] (8,-1) to (0,-1.5);
\draw[out=0,in=180,looseness=0.5, thick, densely dotted] (-8,-1) to (0,-1.5);

%draw sigmas
\draw (0,0) to (8,4);
\draw (0,0) to (2,4);
\draw (0,0) to (-3,4);
\draw (0,0) to (-3,4);
\draw (0,0) to (-7,4);

\draw (0,0) to (8,-4);
\draw (0,0) to (2,-4);
\draw (0,0) to (-3,-4);
\draw (0,0) to (-3,-4);
\draw (0,0) to (-7,-4);
%draw vertices
\draw[fill=black] (0,0) circle [radius=2pt];

%adding edge labels
\node[scale=2] at (0,0.5) {$p$};
\node[scale=1.5] at (5,2) {$\sigma_1$};
\node[scale=1.5] at (2,3) {$\sigma_2$};
\node[scale=1.5] at (-3,3) {$\sigma_{k-1}$};
\node[scale=1.5] at (-5,2.5) {$\sigma_{k}$};
\node[scale=1.5] at (-4.1,-3) {$\sigma_{k+1}$};
\node[scale=1.5] at (-1.4,-3) {$\sigma_{k+2}$};
\node[scale=1.5] at (2.2,-3) {$\sigma_{m-1}$};
\node[scale=1.5] at (5,-2) {$\sigma_{m}$};

\node[scale=1.5] at (0,3) {$\cdots$};
\node[scale=1.5] at (0,-3) {$\cdots$};

\node[scale=2,font=\boldmath] at (4,0.7) {$\gamma_1^{(p)}$};
\node[scale=2,font=\boldmath] at (-4,0.5) {$\gamma_2$};
\node[scale=2] at (0,2) {$\gamma_3$};
\node[scale=2] at (0,-2) {$\gamma_4$};

\draw[gray, thick] (0,0) circle (1.25);
\node[scale = 2, gray] at (-1.45,0.4){$\mathsf{h}$};
    \end{tikzpicture}
    \caption{An example of two arcs, $\gamma_2$ and $\gamma_1^{(p)}$(thick), that are incompatible at a puncture and the resulting arcs $\gamma_3$ and $\gamma_4$(dotted)}.
    \label{fig:Incompatible}
\end{figure}

If $\gamma_2 = \sigma_k$, then $\calP_{\gamma_3}$ consists of $\calP_{\gamma_1^{(p)}}$ with the principal order filter generated by $\sigma_k$ removed. Similarly, $\calP_{\gamma_4}$ consists of $\calP_{\gamma_1^{(p)}}$ with the order ideal $\langle \sigma_k \rangle$ removed. 

\begin{table}[]
\captionsetup{width=0.9\textwidth}
    \centering
\begin{tabular}{>{\centering\arraybackslash} m{7cm}|>{\centering\arraybackslash}m{6cm}}
    \highlight{$\calP_{\gamma_1^{(p)}}$} & \highlight{$\calP_{\gamma_2}$} \\
    \begin{tikzpicture}[scale = 0.7]
    \node[](dots) at (8.5,0){$\cdots$};
    \node[](alpha) at (7,0){$\calP_1(1)$};
    \node[](sigma1) at (6,-1){$\sigma_1$};
    \node[](sigma2) at (5,0) {$\sigma_2$};
    \node[](dots1) at (4,1){$\ddots$};
    \node[](sigmak) at (3,2){$\sigma_k$};
    \node[](sigmak+1) at (2,3){$\sigma_{k+1}$};
    \node[](dots2) at (1,4){$\ddots$};
    \node[](sigmas) at (0,5){$\sigma_m$};
    \draw(alpha)--(dots);
    \draw(alpha)--(sigma1);
    \draw(sigma1) -- (sigma2);
    \draw (sigma2) -- (dots1);
    \draw (dots1) -- (sigmak);
    \draw(sigmak) -- (sigmak+1);
    \draw(sigmak+1) -- (dots2);
    \draw(dots2) -- (sigmas);
    \draw(sigmas) -- (alpha);
    \end{tikzpicture} &  
    \begin{tikzpicture}[scale = 0.7]
    \node[](r) at (0,0) {$\calP_2(1)$};
    \node[] (dots) at (1.3,0){$\cdots$};
    \end{tikzpicture}\\\hline\hline
    \vspace{2mm}
    \highlight{$\calP_{\gamma_3}$} & 
    \vspace{2mm} 
    \highlight{$\calP_{\gamma_4}$} \\
    \begin{tikzpicture}[scale = 0.7]
    \node[](dots) at (-1.5,0){$\cdots$};
    \node[](tau) at (0,0){$\calP_1(1)$};
    \node[](sigma1) at (1,-1){$\sigma_1$};
    \node[](sigma2) at (2,0){$\sigma_2$};
    \node[] (dots3) at (3,1){$\iddots$};
    \node[](sigmak) at (4,2){$\sigma_k$};
    \node[](etam) at (5,1){$\calP_2(1)$};
    \node[](dots2) at (6.5,1){$\cdots$};
    \draw(tau) -- (dots);
    \draw(tau) -- (sigma1);
    \draw (sigma1) -- (sigma2);
    \draw(sigma2) -- (dots3);
    \draw(dots3) --(sigmak);
    \draw(sigmak) -- (etam);
    \draw(dots2) -- (etam);
    \end{tikzpicture}
    &  
    \begin{tikzpicture}[scale = 0.7]
    \node[](dots) at (-1.5,0){$\cdots$};
    \node[](tau) at (0,0){$\calP_1(1)$};
    \node[](sigma1) at (1,1){$\sigma_m$};
    \node[](sigma2) at (2,0){$\sigma_{m-1}$};
    \node[] (dots3) at (3,-1){$\ddots$};
    \node[](sigmak) at (4,-2){$\sigma_{k+1}$};
    \node[](etam) at (5,-1){$\calP_2(1)$};
    \node[](dots2) at (6.5,-1){$\cdots$};
    \draw(tau) -- (dots);
    \draw(tau) -- (sigma1);
    \draw (sigma1) -- (sigma2);
    \draw(sigma2) -- (dots3);
    \draw(dots3) --(sigmak);
    \draw(sigmak) -- (etam);
    \draw(dots2) -- (etam);
    \end{tikzpicture}\\
\end{tabular}
\caption{Posets for a resolution of a pair of arcs $\gamma_1^{(p)}$ and $\gamma_2$ with incompatible taggings at a puncture $p$ for the case $k \neq 0,m$, $\gamma_2 \notin T$  with $\calP_1:=\calP_{\gamma_1^{(p)}}$ and $\calP_2 := \calP_{\gamma_2}$}\label{tab:PosetsKIsNot0orM}
\end{table}

When $k = m$ and $\gamma_2 \notin T$, so that the first triangle $\gamma_1^{(p)}$ passes through is the same as the first triangle $\gamma_2$ passes through, then we have two additional cases based on whether $\gamma_1^{(p)}$ is clockwise or counterclockwise of $\gamma_2$ at $p$. Since these cases will produce different expansions, we will refer to the case where $\gamma_1^{(p)}$ lies clockwise from $\gamma_2$ as the $k=0$ case. See Figure~\ref{fig:punctureIncompatibilityCases}.

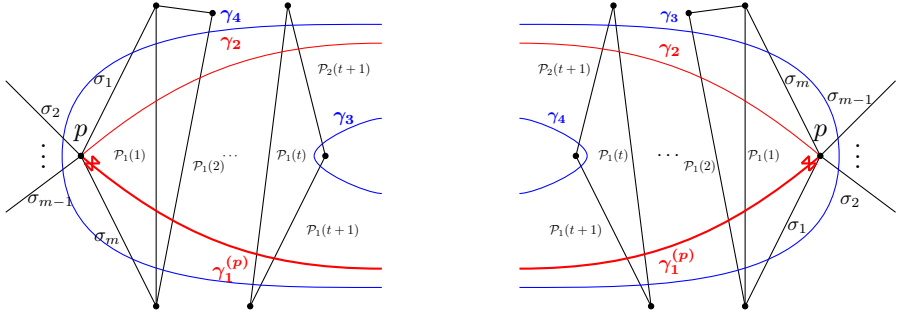
\begin{figure}[H]
\captionsetup{width=0.9\textwidth}
    \centering
    \begin{subfigure}{0.4\textwidth}
        \begin{tikzpicture}[scale = 0.5, transform shape]

    \draw (-8,0) to (-6,4);
    \draw (-6,-4) to (-6,4);
    \draw (-8,0) to (-6,-4);
    \draw (-6,-4) to (-4.5,3.8);
    \draw (-6,4) to (-4.5,3.8);
    \draw (-8,0) to (-10,-1.5);
    \draw (-8,0) to (-10,2);
    \draw (-2.5,4) to (-3.5,-4);
    \draw (-1.5,0) to (-3.5,-4);
    \draw (-2.5,4) to (-1.5,0);

%draw gamma's
    \draw[out=40,in=180,looseness=1, red] (-8,0) to (0,3);
    \draw[out=-40,in=180,looseness=1,thick, red] (-8,0) to (0,-3);
    \draw[thick,red] (-7.7,0) to (-7.7,-0.4); 
    \draw[thick,red] (-7.9,-0.2) to (-7.5,-0.2); 
    \draw[thick,red] (-7.9,-0.2) to (-7.7,-0.4); 
    \draw[thick,red] (-7.7,0) to (-7.5,-0.2); 

%draw gamma's
    \draw[out=90,in=180,looseness=0.5, blue] (-1.8,0) to (0,1);
    \draw[out=-90,in=180,looseness=0.5, blue] (-1.8,0) to (0,-1);
    \draw[out=90,in=180,looseness=1, blue] (-8.5,0) to (0,3.5);
    \draw[out=-90,in=180,looseness=1, blue] (-8.5,0) to (0,-3.5);

    \draw[fill=black] (-8,0) circle [radius=2pt];
    \draw[fill=black] (-6,4) circle [radius=2pt];
    \draw[fill=black] (-6,-4) circle [radius=2pt];
    \draw[fill=black] (-2.5,4) circle [radius=2pt];
    \draw[fill=black] (-3.5,-4) circle [radius=2pt];
    \draw[fill=black] (-4.5,3.8) circle [radius=2pt];
    \draw[fill=black] (-1.5,0) circle [radius=2pt];

    \node[scale=2] at (-8,0.6) {$p$};
    \node[scale=2] at (-9,0.2) {$\vdots$};
    \node[scale=1.5] at (-7.4,2) {$\sigma_1$};
    \node[scale=1.5] at (-8.8,1.2) {$\sigma_{2}$};
    \node[scale=1.5] at (-8.8,-1.2) {$\sigma_{m-1}$};
    \node[scale=1.5] at (-7.3,-2.2) {$\sigma_{m}$};
    \node[scale=1] at (-6.7,0) {$\calP_1(1)$};
    \node[scale=1] at (-4.6,-0.3) {$\calP_1(2)$};
    \node[scale=1] at (-2.4,0) {$\calP_1(t)$};

    \node at (-4,0) {$\cdots$};

    \node[scale=1] at (-1,2.3) {$\calP_2(t+1)$};
    \node[scale=1] at (-1.3,-2) {$\calP_{1}(t+1)$};

    \node[scale=1.5,red] at (-4,3) {$\bm{\gamma_2}$};
    \node[scale=1.5,red] at (-4,-3) {$\bm{\gamma_1^{(p)}}$};

    \node[scale=1.5,blue] at (-1,1) {$\bm{\gamma_3}$};
    \node[scale=1.5,blue] at (-4,3.7) {$\bm{\gamma_4}$};

    \end{tikzpicture}
    \end{subfigure}
    \begin{subfigure}{0.4\textwidth}
        \begin{tikzpicture}[scale=0.5,transform shape]

        \draw (8,0) to (6,4);
        \draw (6,-4) to (6,4);
        \draw (8,0) to (6,-4);
        \draw (6,-4) to (4.5,3.8);
        \draw (6,4) to (4.5,3.8);

        \draw (8,0) to (10,-1.5);
        \draw (8,0) to (10,2);

        \draw (2.5,4) to (3.5,-4);
        \draw (1.5,0) to (3.5,-4);
        \draw (2.5,4) to (1.5,0);

        %draw gamma's
        \draw[out=140,in=0,looseness=1, red] (8,0) to (0,3);
        \draw[out=-140,in=0,looseness=1, thick, red] (8,0) to (0,-3);
        \draw[thick,red] (7.7,0) to (7.7,-0.4); 
        \draw[thick,red] (7.9,-0.2) to (7.5,-0.2); 
        \draw[thick,red] (7.9,-0.2) to (7.7,-0.4); 
        \draw[thick,red] (7.7,0) to (7.5,-0.2); 

        \draw[out=90,in=0,looseness=0.5, blue] (1.8,0) to (0,1);
        \draw[out=-90,in=0,looseness=0.5, blue] (1.8,0) to (0,-1);
        \draw[out=90,in=0,looseness=1, blue] (8.5,0) to (0,3.5);
        \draw[out=-90,in=0,looseness=1, blue] (8.5,0) to (0,-3.5);

        %draw vertices
        \draw[fill=black] (8,0) circle [radius=2pt];
        \draw[fill=black] (6,4) circle [radius=2pt];
        \draw[fill=black] (6,-4) circle [radius=2pt];
        \draw[fill=black] (2.5,4) circle [radius=2pt];
        \draw[fill=black] (3.5,-4) circle [radius=2pt];
        \draw[fill=black] (4.5,3.8) circle [radius=2pt];
        \draw[fill=black] (1.5,0) circle [radius=2pt];

        %adding edge labels 
        \node[scale=2] at (8,0.6) {$p$};
        \node[scale=2] at (9,0.2) {$\vdots$};
        \node[scale=1.5] at (7.4,2) {$\sigma_m$};
        \node[scale=1.5] at (8.8,1.5) {$\sigma_{m-1}$};
        \node[scale=1.5] at (8.8,-1.2) {$\sigma_{2}$};
        \node[scale=1.5] at (7.4,-1.9) {$\sigma_{1}$};
        \node[scale=1] at (6.5,0) {$\calP_1(1)$};
        \node[scale=1] at (4.8,-0.3) {$\calP_1(2)$};
        \node[scale=1] at (2.5,0) {$\calP_1(t)$};

        \node[scale=1.5] at (4,0) {$\cdots$};

        \node[scale=1] at (1.2,2.3) {$\calP_2(t+1)$};
        \node[scale=1] at (1.5,-2) {$\calP_{1}(t+1)$};

        \node[scale=1.5,red] at (4,2.8) {$\bm{\gamma_2}$};
        \node[scale=1.5,red] at (4.2,-2.8) {$\bm{\gamma_1^{(p)}}$};

        \node[scale=1.3,blue] at (1,1) {$\bm{\gamma_4}$};
        \node[scale=1.3,blue] at (4,3.7) {$\bm{\gamma_3}$};
    
        \end{tikzpicture}
    \end{subfigure}
    \caption{The local configuration when $k=0$ (left) and when $k=m$ (right).}
    \label{fig:punctureIncompatibilityCases}
\end{figure}

\begin{table}
\captionsetup{width=0.9\textwidth}
    \centering
\begin{tabular}{c|c}
\highlight{$\calP_{\gamma_1^{(p)}}$} & \highlight{$\calP_{\gamma_2}$} \\
     \begin{tikzpicture}[scale = 0.6] 
    \node[scale = 0.8](dots) at (6.2,-1){$\cdots$};
    \node[](taun) at (4.7,-1){$\calP_1(t+1)$};
    \node[](ul) at (2.2,0){$\calP_1(t)$};
    \node[scale = 0.8](dots2) at (1.2,0){$\cdots$};
    \node[](u1) at (0,0){$\calP_1(1)$};
    \node[](sigma1) at (-1,-3){$\sigma_1$};
    \node[](sigma2) at (-1,-1.5){$\sigma_{2}$};
    \node[] (dots3) at (-1,0){$\vdots$};
    \node[](sigmas) at (-1,1.5){$\sigma_m$};
    \draw(ul) -- (taun);
    \draw(u1) -- (sigma1);
     \draw (sigma1) -- (sigma2);
     \draw(sigma2) -- (dots3);
     \draw(dots3) --(sigmas);
     \draw(sigmas) -- (u1);
     \end{tikzpicture} &
    \begin{tikzpicture}[scale = 0.6]
    \node[](u1) at (-2,0){$\calP_2(1)$};
     \node[scale = 0.8](dots2) at (-1,0){$\cdots$};
     \node[](ul) at (0,0){$\calP_2(t)$};
     \node[](r) at (1.5,1.5) {$\calP_2(t+1)$};
     \node[] (dots) at (3.2,1.5){$\cdots$};
     \draw (r) -- (ul);
    \end{tikzpicture}\\\hline\hline
    \highlight{$\calP_{\gamma_3}$} & \highlight{$\calP_{\gamma_4}$} \\
          \begin{tikzpicture}[scale = 0.7]
    \node[scale = 0.8](dots) at (-1.5,0.5){$\cdots$};
    \node[](tau) at (0,0.5){$\calP_1(t+1)$};
    \node[](etam) at (1.5,-1){$\calP_2(t+1)$};
    \node[scale = 0.8](dots2) at (3,-1){$\cdots$};
    \draw(tau) -- (etam);
    \end{tikzpicture}&
     \begin{tikzpicture}[scale = 0.6] 
    \node[scale = 0.8](dots) at (-4.5,-1.5){$\cdots$};
    \node[](taun) at (-3,-1.5){$\calP_1(t+1)$};
    \node[](ul) at (-2,0){$\calP_1(t)$};
    \node[scale = 0.8](dots2) at (-1,0){$\cdots$};
    \node[](u1) at (0,0){$\calP_1(1)$};
    \node[](sigma1) at (1,1){$\sigma_m$};
    \node[](sigma2) at (2,0){$\sigma_{m-1}$};
    \node[] (dots3) at (3,-1){$\ddots$};
    \node[](sigmak) at (4,-2){$\sigma_1$};
    \node[](u12) at (5,-1){$\calP_2(1)$};
    \node[scale = 0.8](dots5) at (6,-1){$\cdots$};
    \node[](ul2) at (7,-1){$\calP_2(t)$};
    \node[](etam) at (8,0.5){$\calP_2(t+1)$};
    \node[scale = 0.8](dots7) at (9.5,0.5){$\cdots$};
    \draw(ul) -- (taun);
    \draw(u1) -- (sigma1);
    \draw (sigma1) -- (sigma2);
    \draw(sigma2) -- (dots3);
    \draw(dots3) --(sigmak);
    \draw(sigmak) -- (u12);
    \draw(sigmak) -- (u12);
    \draw(etam) -- (ul2);
    \end{tikzpicture}
      \\
\end{tabular}
\caption{The posets for all arcs in the resolution of an incompatibility in the $k=0$ case. }
\label{tab:PosetsFork=0}
\end{table}

If $k = 0$ or $k = m$, then for some $t \geq 1$, we have overlaps $\calP_{\gamma_1^{(p)}}[1,t]$ and $\calP_{\gamma_2}[1,t]$ as in Definition \ref{def:Overlaps}. Let $t$ be the largest such $t$, so that  $\calP_{\gamma_1^{(p)}}(t+1) \neq \calP_{\gamma_2}(t+1)$. Since we assume $\gamma_1^0 \neq \gamma_2^0$, such a value $t+1$ must exist. Since these overlaps both begin at 1, these are not crossing overlaps 

When $k = 0$, the poset $\calP_{\gamma_4}$ can be constructed in the same way as above, whereas $\calP_{\gamma_3}$ consists of $\calP_{1}[>t] \cup \calP_{2}[>t]$ where we have the added relation $\calP_{1}(t+1) \prec \calP_{2}(t+1)$ based on the local configuration of these arcs. Similarly, if $k = m$, the poset $\calP_{\gamma_3}$ follows from the $k \neq 0,m$ case and the poset $\calP_{\gamma_4}$ is given by $\calP_{1}[>t] \cup \calP_{2}[>t]$ where we now have $\calP_1(t+1) \succ \calP_2(t+1)$. See Table \ref{tab:PosetsFork=0} for the $k =0$ case; the $k = m$ case is similar.

We can similarly deal with a singly-notched monogon $\gamma^{(p)}$, which is incompatible with itself by Definition~\ref{def:CompatibleTaggedArcs}. If we follow the definition of $\gamma_3$ and $\gamma_4$ now, we will instead get two closed curves. Again we will have that $\gamma_3$ crosses $\sigma_1,\ldots,\sigma_k$ and $\gamma_4$ crosses $\sigma_{k+1},\ldots,\sigma_m$. If the first and last triangles which $\gamma^{(p)}$ passes through are the same, then we will be in the $k = 0$ case if the tagged endpoint of $\gamma$ lies clockwise from the plain endpoint and otherwise we will be in the $k = m$ case.

In the special case that $\gamma^{(p)}$ is contractible, which is a subset of the $k=0$ and $k=m$ cases, we will have that one of the closed curves $\gamma_3$ and $\gamma_4$ will be contractible and the other will be contractible onto $p$. Recall that for $\ell$ contractible, in Definition \ref{def:contandkink} we set $x_\ell = -2$, and one can see from the band graph expansion formula that for $\ell$ a closed curve contractible onto $p$, $x_\ell = 1 + Y_p$ . We also introduce a definition for the element of the cluster algebra associated to a contractible singly-notched monogon. 

\begin{definition}\label{def:ContractibleSinglyTaggedLoop}
Let $\gamma^{(p)}$ be a contractible singly-notched monogon. Suppose the spokes from $T$ at $p$ are $\sigma_1,\ldots,\sigma_m$. Then, 
\begin{align*}
    x_{\gamma^{(p)}} := \begin{cases}
        Y_p-1 & \textrm{if } k = 0 \\
        1- Y_p & \textrm{if } k = m.
    \end{cases}
\end{align*}
\end{definition}

Comparing Definition \ref{def:ContractibleSinglyTaggedLoop} with Proposition \ref{prop:incompatible_puncture_singly_tagged} will show that this definition is compatible with our resolution for a non-contractible, singly-notched monogon.

\begin{figure}[H]
\captionsetup{width=0.9\textwidth}
    \centering
     \begin{subfigure}{0.3\textwidth}
        \includegraphics[scale=0.5]{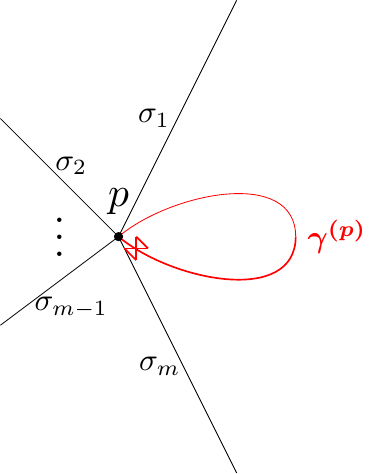}
    \end{subfigure}
    \hspace{0.5cm}
    \begin{subfigure}{0.3\textwidth}
        \includegraphics[scale=0.5]{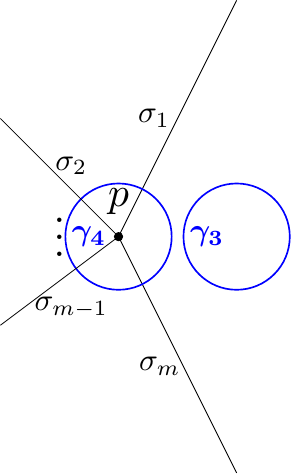}
    \end{subfigure}
        \caption{A contractible singly-notched monogon in the $k=0$ case (left) and the resulting contractible closed curves $\gamma_3$ and $\gamma_4$}.
        \label{fig:monogon}
\end{figure}

For the part of the following proposition concerning a pair of incompatible arcs, we assume the tagging of one endpoint of each arc. An advantage of our approach is that the tagging of the other endpoint of each arc does not affect the relation. Therefore, this proposition encompasses cases 6,7,9, and 11 from \cite{musiker2013bases}. The case where $\gamma_2 \in T$ was also studied by Pilaud, Reading, and Schroll in \cite{pilaud2023posets}. 

\begin{prop}
\label{prop:incompatible_puncture_singly_tagged}
Let $\gamma_1^{(p)}$ and $\gamma_2$ be arcs which are incompatible at a puncture $p$ and let $\gamma^{(p)}$ be a singly-notched non-contractible monogon.

\begin{enumerate}
    \item Suppose $\gamma_2 = \sigma_k \in T$. Then,
    \[
    x_{\gamma_1^{(p)}}x_{\gamma_2}= x_{\gamma_3} + (\prod_{\tau \in \calP_{\gamma_1^{(p)}}^{\preceq \sigma_k}} y_\tau) x_{\gamma_4}.
    \]
    \item Suppose $\gamma_2 \notin T$. Let $t \geq 0$ be the largest number such that $\calP_{1}[1,t]$ and $\calP_{2}[1,t]$ are overlaps, where in particular $t = 0$ if $\calP_{\gamma_1^{(p)}}(1) \neq \calP_{\gamma_2}(1)$. Let $k$ be as defined above.  Set \[
Y_R = \prod_{i=1}^{t} y_{\calP_1(i)}\quad \text{and} \quad Y_{S} = \prod_{i=1}^{k} y_{\calP_1(-i+1)} = \prod_{i=1}^k y_{\sigma_i}
\]
We have $x_{\gamma_1^{(p)}}x_{\gamma_2}=X_{C^{+}}+X_{C^{-}}$ where $X_{C^{+}}$ and $X_{C^{-}}$ are defined as follows.
\begin{center}
\bgroup
\def\arraystretch{1.25}
\begin{tabular}{ |C{2cm} | C{2cm} | C{2cm}| } 
 \hline
 \cellcolor{gray!75} & \cellcolor{gray!25} $X_{C^{+}}$ & \cellcolor{gray!25} $X_{C^{-}}$ \\
 \hline
 \cellcolor{gray!25} $k\neq0, m$ & $x_{\gamma_3}$ & $Y_S x_{\gamma_4}$ \\ 
 \hline
 \cellcolor{gray!25} $k=0$ & $x_{\gamma_4}$ &  $Y_Rx_{\gamma_3}$\\ 
 \hline
 \cellcolor{gray!25} $k=m$ & $x_{\gamma_3}$ & $Y_SY_Rx_{\gamma_4}$ \\ 
 \hline
\end{tabular} 
\egroup
\end{center}
\item Let $n = \vert \calP_{\gamma}^0 \vert$ and let $t \geq 0$ be the largest number such that $\calP_\gamma[1,t]$ and $\calP_\gamma[d-t+1,d]$ are self-overlaps, where in particular $t = 0$ if $\calP(1) \neq \calP(n)$. If we define $Y_R, Y_S, X_{C^+},$ and $X_{C^-}$ as above, then, $x_{\gamma^{(p)}} = X_{C^+} + X_{C^-}$.
\end{enumerate} 
\end{prop}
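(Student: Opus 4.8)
We follow the template of Subsection~\ref{subsec:proofStrategy}. By Proposition~\ref{prop:PosetExpansion} the left-hand side of each asserted identity equals $\mathbf{x}^{\gb_{\gamma_1^{(p)}}+\gb_{\gamma_2}}\sum_{(I_1,I_2)\in J(\calP_1)\times J(\calP_2)}wt(I_1)wt(I_2)$ in parts~(1)--(2), and $\mathbf{x}^{\gb_\gamma}\sum_{I\in J(\calP_\gamma)}wt(I)$ in part~(3); as noted in the remark preceding the statement we may suppress the taggings at the other endpoints, since they do not affect the relation. It then suffices to produce a partition $J(\calP_1)\times J(\calP_2)=A\sqcup B$ with $\emptyset\in A$, a weight-preserving bijection from $A$ onto the index set of $X_{C^+}$, and a bijection from $B$ onto the index set of $X_{C^-}$ that scales weights by a fixed monomial $Z$, and then to check the $\gb$-identities $\gb_1+\gb_2=\gb_{C^+}$ and $\gb_1+\gb_2+\degx(Z)=\gb_{C^-}$; the stated monomial $Y$ equals $Z|_{x_i=1}$, which we read off from $Z$ using Corollary~\ref{cor:y-hat_spokes} (or Lemma~\ref{lem:y-hat}).

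\textbf{Part (1).} Here $\calP_{\gamma_2}=(\emptyset,\sigma_k)$, so $J(\calP_{\gamma_2})=\{\emptyset\}$, $x_{\gamma_2}=x_{\sigma_k}$, and $\gb_{\gamma_2}=\eb_{\sigma_k}$. Partition $J(\calP_1)$ by whether $\sigma_k\in I_1$. An order ideal avoiding $\sigma_k$ is exactly an order ideal of $\calP_1$ disjoint from the principal filter of $\sigma_k$, i.e.\ an order ideal of $\calP_{\gamma_3}$; this matching is the identity on underlying multisets, hence weight-preserving, and $\emptyset$ lies in this block, so it is $A$. If $\sigma_k\in I_1$ then $I_1$ contains the principal ideal $\langle\sigma_k\rangle=\calP_1^{\preceq\sigma_k}$, and $I_1\mapsto I_1\setminus\langle\sigma_k\rangle$ is a bijection onto $J(\calP_{\gamma_4})$ scaling weights by $Z=\prod_{\tau\in\calP_1^{\preceq\sigma_k}}\hat y_\tau$. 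Corollary~\ref{cor:y-hat_spokes} gives $Z=\bigl(\prod_{\tau\in\calP_1^{\preceq\sigma_k}}y_\tau\bigr)\cdot(\text{monomial in the }x_i)$, so $Y=\prod_{\tau\in\calP_1^{\preceq\sigma_k}}y_\tau$; the same corollary supplies $\degx(Z)$, and the two $\gb$-identities then follow by comparing the vectors $\mathbf a,\mathbf b,\mathbf r,\mathbf j$ of $\calP_1,\calP_{\gamma_3},\calP_{\gamma_4}$, which differ only in a controlled way near the loop.

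\textbf{Parts (2) and (3).} The loop of $\calP_1$ (resp.\ of $\calP_\gamma$) is the spoke chain $\sigma_1\prec\cdots\prec\sigma_m$ attached to $\calP_1(1)$, and it is cut at position $k$ into a CCW piece $\{\sigma_1,\dots,\sigma_k\}$ carried by $\gamma_3$ and a CW piece $\{\sigma_{k+1},\dots,\sigma_m\}$ carried by $\gamma_4$; the posets in Tables~\ref{tab:PosetsKIsNot0orM} and~\ref{tab:PosetsFork=0} are obtained by splicing $\calP_1^0$, one of these pieces, and $\calP_2$ (for part~(3), by splicing two such pieces into a circular fence). When $k\neq 0,m$ the first triangles of $\gamma_1$ and $\gamma_2$ differ, there is no overlap, and the bijection is an ``unfolding of the loop'': from $(I_1,I_2)$ record $I_1\cap(\text{spoke chain})=\{\sigma_1,\dots,\sigma_j\}$, route into $J(\calP_{\gamma_3})$ if $j\le k$ and into $J(\calP_{\gamma_4})$ if $j>k$, and glue back $I_1\cap\calP_1^0$ and $I_2$ on the two ends, the only genuine check being the coupling where the spoke piece abuts $\calP_2$, which is decided by a switching-position argument (Lemma~\ref{lem:SwitchingExists}) at the fence joining $\calP_1^0$ to $\calP_2$. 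When $k=0$ or $k=m$ the two first triangles coincide, producing the maximal overlap $\calP_1[1,t]\cong\calP_2[1,t]$ (resp.\ the self-overlap $\calP_\gamma[1,t]\cong\calP_\gamma[d-t+1,d]$); one must additionally transplant this overlap across the splice, governed by the switching position between $I_1\cap\calP_1[1,t]$ and $I_2\cap\calP_2[1,t]$, and this transplant is precisely what interchanges which of $\gamma_3,\gamma_4$ plays the role of $C^+$. In each case $Z$ is a product of $\hat y_{\sigma_i}$'s (from reconfiguring the winding) and/or $\hat y_{\calP_1(i)}$'s with $i\le t$ (from the transplanted overlap), so Corollary~\ref{cor:y-hat_spokes} and Lemma~\ref{lem:y-hat} yield $Y=Y_S$, $Y_R$, or $Y_RY_S$ exactly as in the table, while the $\gb$-identities again follow by comparing the $\mathbf a,\mathbf b,\mathbf r$ vectors read off from the tables. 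Part~(3) is formally identical with $\calP_2$ absent and $\gamma_3,\gamma_4$ closed curves; its degenerate contractible instances collapse, via $x_\ell=-2$ for a contractible loop and $x_\ell=1+Y_p$ for a loop contractible onto $p$, to the values in Definition~\ref{def:ContractibleSinglyTaggedLoop}.

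\textbf{Main obstacle.} The crux is showing that the unfolding-and-transplanting map is genuinely a bijection at the junctions: that the down-closure conditions where the spoke piece meets $\calP_2$, and (for $k\in\{0,m\}$) across the transplanted overlap, are matched exactly by the switching-position dichotomy so that no order ideal is lost or double-counted, and that the break index $k$ together with the $k=0$ versus $k=m$ asymmetry is handled correctly — this asymmetry is what dictates whether the $y$-monomial attaches to $\gamma_3$ or to $\gamma_4$, and which of them receives the extra overlap factor $Y_R$ as well as the full puncture factor $Y_p=Y_S$. Once that bijection is pinned down, identifying $Z$ and checking the $\gb$-vector identities are routine given Corollary~\ref{cor:y-hat_spokes} and the explicit posets in Tables~\ref{tab:PosetsKIsNot0orM} and~\ref{tab:PosetsFork=0}.
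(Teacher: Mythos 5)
Your proposal follows essentially the same route as the paper: partition $J(\calP_1)\times J(\calP_2)$ (or $J(\calP_1)$) according to how order ideals meet the spoke chain and the overlap, route each block to $J(\calP_{\gamma_3})$ or $J(\calP_{\gamma_4})$ with the coupling at the junction with $\calP_2$ resolved by switching positions in the $k=0,m$ cases, and then verify the $\gb$-vector identities via Lemma~\ref{lem:y-hat} and Corollary~\ref{cor:y-hat_spokes}. The junction condition you flag as the main obstacle is exactly the refinement the paper supplies (for $k\neq 0,m$ it is the single condition ``$\sigma_k\in I_1$ only if $\calP_2(1)\in I_2$,'' and for $k=0,m$ the four-block switching-position partition $A_1\cup A_2\cup A_3\cup B$), so your plan is correct as written.
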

\begin{proof}

For shorthand in this proof, set $\calP_1 = \calP_{\gamma_1^{(p)}}$ and set $\calP_2,\calP_{3},\calP_{4}$ similarly. 

\textbf{Part 1) $\gamma_2 = \sigma_k \in T$}

Every element in $\calP_1$ has at most one corresponding image in $\calP_{3}$ and in $\calP_{4}$, so that under certain conditions we can consider subsets of $\calP_1$ as subsets of $\calP_{3}$ or of $\calP_{4}$. We define a map $\Phi: J(\calP_1) \to J(\calP_{3}) \cup J(\calP_{4})$ as 
\[
\Phi(I_1) = \begin{cases} I_1 \in J(\calP_{\gamma_3}) & \sigma_k \notin I_1 \\
I_1 \backslash \langle \sigma_k \rangle \in J(\calP_{\gamma_4}) & \sigma_k \in I_1\\
\end{cases}
\]

This map is clearly a bijection.

Recall that, since $\gamma_2$ is a plain arc in $T$, $\gb_2:= \gb_{\gamma_2} = \eb_{\sigma_k}$. We can quickly check that $\gb_{\gamma_3} = \gb_{\gamma_1^{(p)}} + \eb_{\sigma_k}$. This is immediate if $k \neq 1$. If $k = 1$ and $a$ is the smallest integer such that $\calP_1(a)$ is maximal in $\calP_1^0$, then our claim follows from noting that $\calP_{\gamma_3}$ begins with $\calP_1(a+1)$, so that $\eb_{\calP_1(a)}$ contributes to $\mathbf{b}_{\gamma_1^{(p)}}$ and to $\mathbf{r}_{\gamma_3}$. 

Next, we check that $\gb_{\gamma_1^{(p)}} + \eb_{\sigma_k} + \degx(\prod_{\tau \in \calP_{\gamma_1^{(p)}}^{\preceq \sigma_k}} \hat{y}_\tau)= \gb_{\gamma_4}$.

If $k < m$, then by Corollary \ref{cor:y-hat_spokes} we have \[
\degx(\prod_{\tau \in \calP_{\gamma_1^{(p)}}^{\preceq \sigma_k}} \hat{y}_\tau) = (\eb_{\sigma_m} + \eb_{\sigma_1} + \eb_{\sigma_{[k]}}) - (\eb_{\sigma_k} + \eb_{\sigma_{k+1}} +  \eb_{\calP_1(1)}). 
\]

Given a statement $T$, let $\delta_T = 1$ if $X$ is true and otherwise let $\delta_T = 0$. We observe \[
\gb_{\gamma_1^{(p)}} = -\eb_{\sigma_1} + \delta_{\mathcal{P}_1(1) \succ \mathcal{P}_1(2)} \eb_{\calP_1(1)} + \gb'
\]
and \[
\gb_{\gamma_4} = \eb_{\sigma_{[k]}} - \eb_{\sigma_{k+1}} + \eb_{\sigma_m} - \delta_{\mathcal{P}_1(1) \prec \mathcal{P}_1(2)} \eb_{\calP_1(1)} + \gb'
\]

where $\gb'$ refers to the contributions from $\calP[>1]$. Our claim follows after noting $\delta_{\mathcal{P}_1(1) \succ \mathcal{P}_1(2)} - 1 = \delta_{\mathcal{P}_1(1) \prec \mathcal{P}_1(2)}$. 

Now suppose $k = m$ and $a$ is the smallest integer such that $\calP_1(a)$ is minimal in $\calP_1^0$.  Let $[\calP_1(a)]$ denote the third arc in the triangle formed by $\calP_1(a)$ and $\calP_1(a+1)$. Then, using Lemma \ref{lem:y-hat} and noting by Corollary \ref{cor:y-hat_spokes}, $\degx(\hat{y}_{\sigma_1}\hat{y}_{\sigma_1} \cdots \hat{y}_{\sigma_m}) = 0 $, we have \[
\degx(\prod_{\tau \in \calP_{\gamma_1^{(p)}}^{\preceq \sigma_m}} \hat{y}_\tau) = (\eb_{\sigma_m} + \eb_{\calP_1(1)} + \eb_{\calP_1(a+1)}) - (\eb_{\calP_1(a)} + \eb_{[\calP_1(a)]} + \eb_{\sigma_1})
\]

We can update $\gb_{\gamma_1^{(p)}}$ and $\gb_{\gamma_4}$ in this case with our additional knowledge of the first minimal element in $\calP_1$ and again find  $\gb_{\gamma_1^{(p)}} + \eb_{\sigma_k} + \degx(\prod_{\tau \in \calP_{\gamma_1^{(p)}}^{\preceq \sigma_k}} \hat{y}_\tau)= \gb_{\gamma_4}$.

\textbf{Part 2) $\gamma_2 \notin T$}

If $k \neq 0, m$, then we set up the following map $\Phi:  J(\calP_{1}) \times J(\calP_{2}) \to J(\calP_{\gamma_3}) \cup J(\calP_{\gamma_4})$, 
\[\Phi((I_1,I_2)) = \begin{cases} I_1 \cup I_2 \in J(\calP_{\gamma_3}) &  \sigma_{k+1} \notin I_1, \sigma_k \in I_1 \text{ only if } \calP_2(1) \in I_2 \\
I_1 \backslash \langle \sigma_k \rangle \cup I_2 \in J(\calP_{\gamma_4}) & \text{otherwise}\\ \end{cases}
\]

The conditions make it clear that our map constructs order ideals of both posets $\calP_3$ and $\calP_4$.  Once we have verified that this map is well-defined, the fact that it is a bijection is immediate.

Now, we consider the special cases. First, let $k = 0$; recall this means that the first triangles which $\gamma_1^{(p)}$ and $\gamma_2$ pass through are the same and that $\gamma_2$ lies counterclockwise from $\gamma_1^{(p)}$ as on the lefthand side of Figure \ref{fig:punctureIncompatibilityCases}. The relevant posets are given in Table \ref{tab:PosetsFork=0}.

 We partition $J(\calP_1)\times J(\calP_2) = A_1 \cup A_2 \cup A_3 \cup B$ and define a map $\Phi: J(\calP_1) \times J(\calP_2) \to J({P}_{3}) \cup J({P}_{4})$. Since $k = 0$, we  want a bijection $\Phi$ which restricts to a weight-preserving bijection between $A_1 \cup A_2 \cup A_3$ and $J(\calP_{4})$ and such that for each $(I_1,I_2) \in B$, $wt(I_1)wt(I_2) = \hat{Y}_R wt(\Phi(I_1,I_2))$.  Let $R_1 = \calP_1[1,t]$ and $R_2 = \calP_2[1,t]$, and  let $R_3$ ($R_4$) denote the image of $R_1$ ($R_2$) in $\calP_{3}$. Any elements outside $R_1$ and $R_2$ in $I_1$ and $I_2$ each have a unique image in $\calP_{3}$ and $\calP_{4}$, so we will largely ignore these elements.

\begin{enumerate}
    \item Let $A_1$ consist of all pairs $(I_1,I_2)$ such that $\calP_2(1) \in I_2$ only if $\sigma_1 \in I_1$. If $(I_1,I_2) \in A_1$, then $\Phi$ acts by sending elements from $R_1$ to $R_3$ and from $R_2$ to $R_4$. The image $\Phi(A_1)$ is all $I_4 \in \calP_4$ such that $\calP_1(1) \in I_4$ only if $\sigma_1 \in I_4$. 
    \item Let $A_2$ consist of all pairs $(I_1,I_2)$ such that $\calP_2(1) \in I_2$; $\sigma_1 \notin I_1$; and, there exists a switching position between $I_1 \cap R_1$ and $I_2 \cap R_2$.  If $(I_1,I_2) \in A_2$, we know there exists at least one switching position between $I_1 \cap R_1$ and $I_2 \cap R_2$. We map $I_1 \cap R_1$  and $I_2 \cap R_2$ to $R_4$ and $R_3$ respectively until the first switching position, and then we swap. The image $\Phi(A_2)$ is all $I_4 \in \calP_4$ such that $\calP_1(1) \in I_4$; $\sigma_1 \notin I_1$; and, there is a switching position between $R_3$ and $R_4$. 
    \item Let $A_3$ consist of all pairs $(I_1,I_2)$ such that  $R_2 \subseteq I_2$; $R_1 \cap I_1 = \emptyset$; $\sigma_1 \notin I_1$; $\calP_1(t+1) \in I_1$; and,  $\calP_2(t+1) \notin I_2$. If $(I_1,I_2) \in A_3$, we send $R_2$ to $R_3$, and all other elements have clear images. The set $\Phi(A_3)$ is all $I_4 \in \calP_4$ such that $R_3 \subseteq I_4$; $R_4 \cap I_4 = \emptyset$; and, $\sigma_1 \notin I_1$. 
    \item Let $B$  consist of all pairs $(I_1,I_2)$ such that $R_2 \subseteq I_2$; $R_1 \cap I_1 = \emptyset$; $\sigma_1 \notin I_1$; and, $\calP_1(t+1) \in I_1$ only if $\calP_2(t+1) \in I_2$. If $(I_1,I_2) \in B$, we send $I_1 \cup (I_2 \backslash R_2)$ to the corresponding order ideal of $\calP_{3}$.
\end{enumerate}

Now that we understand the image of each subset of $J(\calP_1) \times J(\calP_2)$, an inverse map could be quickly constructed.

The bijection for the case $k = m$ is dual - one can swap $\sigma_1$ with $\sigma_m$ and the order of the conditionals to define a new partitioning $A_1 \cup A_2 \cup A_3 \cup B$ and proceed with similar maps.

When $k \neq 0,m$, the $\gb$-vector calculations are similar to those for the case when $\gamma_2 \in T$. Consider the case for $k = 0$. Let $\gb_R$ be the contribution of $\calP_1[2,t-1]$ to $\gb_{\gamma_1^{(p)}}$. Notice this is equal to the contribution of $\calP_2[2,t-1]$ to $\gb_{\gamma_2}$. Let $\gb_{1}'$ be the contribution of $\calP_1[>t+1]$ to $\gb_{\gamma_1}$ and define $\gb_{2}'$ similarly. We have that \[
\gb_{\gamma_1^{(p)}} = -\eb_{\sigma_1} + \delta_{\calP_1(1) \succ \calP_1(2)} \eb_{\calP_1(1)} + \delta_{\calP_1(t) \succ \calP_1(t-1)} \eb_{\calP_1(t)} - \delta_{\calP_1(t+1) \prec \calP_1(t+2)} \eb_{\calP_1(t+1)} + \gb_R + \gb_{1}', 
\]
\[
\gb_{\gamma_2} = \eb_{\sigma_m} - \delta_{\calP_2(1) \prec \calP_2(2)} \eb_{\calP_2(1)}  - \delta_{\calP_2(t) \prec \calP_2(t-1)} \eb_{\calP_2(t)} + \delta_{\calP_2(t+1) \succ \calP_2(t+2)} \eb_{\calP_2(t+1)} + \gb_R + \gb_2'.
\]
By observing $\calP_4$ in Table \ref{tab:PosetsFork=0}, we see $\gb_{\gamma_1^{(p)}} + \gb_{\gamma_2} = \gb_{\gamma_4}$. Now, by Lemma \ref{lem:y-hat} we have \begin{align*}
\degx(\hat{Y}_R) = \eb_{\sigma_1} - \eb_{\sigma_m} + \eb_{\calP_1(t+1)} - \eb_{\calP_2(t+1)} + (\delta_{\calP_1(1) \prec \calP_1(2)} - \delta_{\calP_1(1) \succ \calP_1(2)}) \eb_{\calP_1(1)} \\+ (\delta_{\calP_1(t) \prec \calP_1(t-1)} - \delta_{\calP_1(t) \succ \calP_1(t-1)})  \eb_{\calP_1(t)}  -2\gb_R.
\end{align*}
Notice that for each term we have the opposite sign on $\eb_{\calP_1(1)} = \eb_{\calP_2(1)}$ and $\eb_{\calP_1(t)} = \eb_{\calP_2(t)}$ as in $\gb_{\gamma_1^{(p)}} + \gb_{\gamma_2}$. 
We conclude \[
\gb_{\gamma_1^{(p)}} + \gb_{\gamma_2}+\degx(\hat{Y}_R) = -\delta_{\calP_2(t+1) \prec \calP_2(t+2)} \eb_{\calP_2(t+1)} + \delta_{\calP_1(t+1) \succ \calP_1(t+2)} \eb_{\calP_1(t+1)} + \gb_1' + \gb_2' = \gb_{\gamma_3}.
\]

\textbf{Part 3) A singly-notched monogon}

Given a singly-notched non-contractible monogon, one can use the same proofs as in the two arc case with only minor adjustments to reach the conclusion of part (3).

\end{proof}
\begin{remark}
In Example 3.16 of \cite{Mandel2023}, Mandel presents a similar result regarding singly-notched monogons in the context of coefficient-free cluster algebras. In this example, the approach involves introducing a covering space that covers the surface three times, referred to as a triple cover. This technique is applied alongside the digon relation, which concerns the interaction between a singly-notched arc and a plain arc, as discussed in \cite{fomin2018cluster}. 
\end{remark}

\begin{remark}
Since resolving incompatible taggings involves a different local move than resolving transverse crossings, the set $S$ behaves slightly differently in this setting. However, one can notice that the spokes at $p$ which contribute to the $y$-monomial are exactly those which lie counterclockwise of $\gamma_1^{(p)}$ and clockwise of $\gamma_2$, mirroring the way arcs contribute to $S$ from the description in the introduction. 
\end{remark}

In Proposition \ref{prop:incompatible_puncture_singly_tagged}, we assumed $\gamma_1^0 \neq \gamma_2^0$, and the resolution had two terms. However, if  $\gamma_1^0 = \gamma_2^0$, we can use similar techniques to prove a relation between $x_{\gamma_1^{(p)}}$ and $x_{\gamma_2}$.

\begin{cor}\label{cor:PlainTimesNotchedEqualsLoop}
Let $\gamma_1^{(p)}$ and $\gamma_2$ be two arcs incompatible at a puncture $p$ such that $\gamma_1^0 = \gamma_2^0$. Let $\gamma_3 = \gamma_1^{-1} \circ_{CCW} \gamma_2$. Then,\[
x_{\gamma_1^{(p)}} x_{\gamma_2} = x_{\gamma_3}
\]
\end{cor}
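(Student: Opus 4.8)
The plan is to reduce this statement to a special case of Part 2 of Proposition~\ref{prop:incompatible_puncture_singly_tagged}, or rather to mimic its proof in the degenerate situation where $\gamma_1^0 = \gamma_2^0$. The key observation is that when $\gamma_1^0 = \gamma_2^0$, the underlying plain arcs cross exactly the same arcs of $T$, so $\calP_{\gamma_1^{(p)}}$ and $\calP_{\gamma_2}$ agree completely off the loop: in the notation of the proof, the ``overlap'' $\calP_1[1,t]$ versus $\calP_2[1,t]$ extends all the way, $t = d = |\calP_{\gamma_2}|$, and there is no ``$t+1$'' where they diverge. Since $\gamma_1$ is notched at $p$ and $\gamma_2$ is plain at $p$, we are necessarily in the $k=m$ case (the tagged end of $\gamma_1$ lies on one specified side), so the spokes $\sigma_1,\ldots,\sigma_m$ all contribute and $\gamma_4 = \gamma_1^{-1} \circ_{CW}\gamma_2$ is a closed curve that winds fully around $p$, hence is contractible onto $p$, while $\gamma_3 = \gamma_1^{-1}\circ_{CCW}\gamma_2$ is the genuine object we want. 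Actually, an alternative and cleaner route: $\gamma_1^{(p)}\cup\gamma_2$ with $\gamma_1^0=\gamma_2^0$ is precisely a once-notched monogon $\ell$ around $p$ whose interior is the arc $\gamma_1^0$; but this is not contractible in general, so we should argue directly.

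First I would set up the posets explicitly. Orient both arcs to begin at $p$. The poset $\calP_{\gamma_2}$ is the fence poset $\calP^0$ of the common underlying arc, and $\calP_{\gamma_1^{(p)}}$ is $\calP^0$ with a loop of length $m$ attached at $\calP(1)$ carrying $\sigma_1,\ldots,\sigma_m$, exactly as described in Section~\ref{sec:puncture_incompatibility} (with $\gamma_1^0 = \gamma_2^0 \notin T$; the case $\gamma_1^0\in T$ is handled separately via Proposition~\ref{prop:PosetExpansion} and Definition~\ref{def:ContractibleSinglyTaggedLoop}, or directly). Then $\gamma_3 = \gamma_1^{-1}\circ_{CCW}\gamma_2$ is an honest arc (or closed curve if $p=q$ — but here it is an arc since $\gamma_2$ has another endpoint), and I claim $\calP_{\gamma_3}$ is isomorphic to $\calP_{\gamma_1^{(p)}}$: geometrically, $\gamma_3$ follows $\gamma_1$ backwards, winds counterclockwise around $p$ crossing all the spokes, and then follows $\gamma_2$ forward — but since $\gamma_1^0 = \gamma_2^0$, after the winding it retraces the same crossings, so its poset is again $\calP^0$ with the same loop attached. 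The point is that in the $k=m,\ \gamma_1^0=\gamma_2^0$ situation the ``$C^+$'' term $x_{\gamma_3}$ absorbs everything and the ``$C^-$'' term degenerates: $\gamma_4$ is the closed curve contractible onto $p$, with $x_{\gamma_4} = 1 + Y_p$ as noted in the text, but the bijection collapses so there is no separate $Y_SY_R x_{\gamma_4}$ contribution — rather $x_{\gamma_3}$ already equals the full sum.

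The cleanest way to finish is a direct order-ideal bijection $\Phi\colon J(\calP_{\gamma_1^{(p)}})\times J(\calP_{\gamma_2}) \to J(\calP_{\gamma_3})$ together with the $\gb$-vector check $\gb_{\gamma_1^{(p)}} + \gb_{\gamma_2} = \gb_{\gamma_3}$, following exactly the proof strategy of Subsection~\ref{subsec:proofStrategy} but now with only one term on the right. For the bijection: given $(I_1,I_2)$ with $I_1 \in J(\calP_{\gamma_1^{(p)}})$ and $I_2 \in J(\calP_{\gamma_2})$, one uses the switching-position technology (Lemma~\ref{lem:SwitchingExists}) on the common fence part $\calP^0$ to ``merge'' $I_1$ and $I_2$ into a single order ideal of $\calP_{\gamma_3}$, whose fence part is again $\calP^0$ and whose loop is filled according to the loop-portion of $I_1$; one checks this is weight-preserving using $\hat{y}_{\tau}$ multiplicativity. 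For the $\gb$-vectors, $\gb_{\gamma_2} = \gb_{\gamma_2}$ (no decoration if $\gamma_2\notin T$, or $\eb_{\gamma_2}$-type term if $\gamma_2\in T$) and the computation is a simpler version of the $k=m$ calculation already carried out in Part~1 and Part~2 of the proof of Proposition~\ref{prop:incompatible_puncture_singly_tagged}, using Corollary~\ref{cor:y-hat_spokes} for the spoke contributions. I expect the main obstacle to be correctly handling the degenerate matching of the two copies of $\calP^0$ — making sure the merge map is genuinely a bijection onto all of $J(\calP_{\gamma_3})$ and not just an injection, which amounts to checking that every order ideal of $\calP_{\gamma_3}$ decomposes uniquely; this is where Lemma~\ref{lem:SwitchingExists} does the real work, analogously to how $A_1,A_2,A_3$ were carved out in the $k=0$ case. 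The boundary subtlety (whether $\gamma_1^0 \in T$, forcing use of Definition~\ref{def:ContractibleSinglyTaggedLoop} and Proposition~\ref{prop:PosetExpansion}) should be dispatched as a short separate remark.
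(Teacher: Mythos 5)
Your overall strategy---merge pairs of order ideals into order ideals of $\calP_{\gamma_3}$ via switching positions and then check $\gb_{\gamma_1^{(p)}}+\gb_{\gamma_2}=\gb_{\gamma_3}$---is the right one, and it is essentially what the paper does (the paper gives no separate proof of the corollary but carries out exactly this argument for the analogous $\gamma_1^0=\gamma_2^0$ case in Proposition~\ref{prop:TwiceIncDoublyNotched}). However, there is a genuine error at the heart of your setup: the claim that $\calP_{\gamma_3}$ is isomorphic to $\calP_{\gamma_1^{(p)}}$ is false. The curve $\gamma_3=\gamma_1^{-1}\circ_{CCW}\gamma_2$ runs from $t(\gamma_1)$ back toward $p$, winds around $p$, and returns along $\gamma_2$; since $\gamma_1^0=\gamma_2^0$, it crosses each arc of $T$ crossed by the common underlying arc \emph{twice} and each spoke at $p$ once, so $\vert\calP_{\gamma_3}\vert = 2\vert\calP^0\vert+m$ while $\vert\calP_{\gamma_1^{(p)}}\vert=\vert\calP^0\vert+m$. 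The correct poset consists of two copies of $\calP^0$ (one with reversed chronological ordering) joined by the chain $\sigma_m\succ\cdots\succ\sigma_1$ of spokes, in direct analogy with the poset $\calP_{\ell_1}$ drawn in the proof of Proposition~\ref{prop:TwiceIncDoublyNotched}. Note also that your isomorphism claim is inconsistent with the bijection you propose two sentences later: a bijection $J(\calP_{\gamma_1^{(p)}})\times J(\calP_{\gamma_2})\to J(\calP_{\gamma_3})$ cannot exist if $J(\calP_{\gamma_3})\cong J(\calP_{\gamma_1^{(p)}})$, since $\vert J(\calP_{\gamma_2})\vert>1$ in general.

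Two further points. First, the digression about being ``in the $k=m$ case'' with $\gamma_4$ contractible onto $p$ is a red herring: the $k$-classification and the largest overlap index $t$ are only defined under the hypothesis $\gamma_1^0\neq\gamma_2^0$, and the resolution of a tagging incompatibility with coinciding underlying arcs produces a single curve, not a pair, so there is no second term to ``degenerate.'' Second, once the poset is corrected, the bijection needs the same three-part case analysis as $A_1\cup A_2\cup A_3$ in Proposition~\ref{prop:TwiceIncDoublyNotched} (switching position exists; $I_2=\calP_2$ with $I_1$ empty on $\calP^0$; $I_2=\emptyset$ with $I_1\supseteq\calP^0$), and one should observe that---unlike the doubly-notched case, where the two copies of $\calP^0$ in $\calP_{\ell_1}$ are joined into a cycle and the extreme pairs $(\calP_1,\emptyset),(\emptyset,\calP_2)$ are left over as monomial terms---here $\calP_{\gamma_3}$ is a path with the spoke chain at only one junction, so every pair, including the extreme ones, acquires an image and no extra terms appear. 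With those corrections the argument goes through as you outline.
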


Note that, up to orientation, $\gamma_3$ is homotopic to $\gamma_1^{-1} \circ_{CW} \gamma_2$. When $\gamma_1$ and $\gamma_2$ are ordinary arcs, so that $x_{\gamma_1^{(p)}}$ and $x_{\gamma_2}$ are cluster variables, this identity is known. Here, we can see this formula holds even for generalized arcs. This shows that our definition of the cluster algebra element $x_{\gamma_1^{(p)}}$ via the poset expansion formula is equivalent to a definition using such algebraic expressions. This equality was suggested in Section 8.2 of \cite{musiker2013bases}. In the next section, we show a similar statement for doubly-notched arcs.

\subsection{Incompatible at two punctures}\label{subsec:IncAtTwo}

Two items from the list of cases of skein relations in \cite{musiker2013bases} concern a pair of arcs $\gamma_1$ and $\gamma_2$ which are incompatible at two punctures. This pair could consist of two singly-notched arcs or a doubly-notched arc and a plain arc. In Section 3.2 of \cite{brustle2015tagged} it is shown that the modules corresponding to such a pair of arcs are related by Auslander-Reiten translation. 

If $\gamma_1^0 \neq \gamma_2^0$, then we can use Proposition \ref{prop:incompatible_puncture_singly_tagged} twice to write an expression for $x_{\gamma_1}x_{\gamma_2}$ in terms of arcs which do not have incompatibilities. This will result in an expression involving the following four closed curves. Suppose $\gamma_1$ and $\gamma_2$ are incident to punctures $p$ and $q$, and let $\mathsf{h}_p$ and $\mathsf{h}_q$ be small circles drawn around $p$ and $q$ respectively such that each does not intersect any arcs other than the spokes at the relevant puncture. Suppose both $\gamma_1$ and $\gamma_2$ are oriented from $q$ to $p$. These curves are also drawn in Figure \ref{fig:loops}; compare with Figure \ref{Two}.

\begin{itemize}
\item Let $\ell_1$ denote the closed curve we have from beginning at the intersection point of $\gamma_1$ and $\mathsf{h}_q$, following $\gamma_1$ until its intersection point with $\mathsf{h}_p$, following $\mathsf{h}_p$ in \emph{clockwise} order until its intersection point with $\gamma_2$, following $\gamma_2^{-1}$ until its intersection point with $\mathsf{h}_q$, and then following $\mathsf{h}_q$ in \emph{clockwise} order until we reach our starting point. 
\item Let $\ell_2$ be the closed curve obtained when we instead follow both $\mathsf{h}_p$ and $\mathsf{h}_q$ in counterclockwise order.
\item Let $\ell_3$ be the closed curve obtained when we follow $\mathsf{h}_p$ in clockwise order and $\mathsf{h}_q$ in counterclockwise order.
\item Let $\ell_4$ be the closed curve obtained when we follow both $\mathsf{h}_p$ in counterclockwise order and $\mathsf{h}_q$ in clockwise order. 
\end{itemize}

If we have $\gamma_1^0 = \gamma_2^0$, then we cannot use the methods from Section \ref{subsec:IncAtOne} since we assume $\gamma_1^0 \neq \gamma_2^0$. We will show the results directly in this case. Note that $\ell_2$ is contractible if $\gamma_1^0 = \gamma_2^0$.

\begin{figure}[H]
\captionsetup{width=0.9\textwidth}
\centering
\begin{tikzpicture}[scale=0.5, transform shape]
\draw (8,0) to (7,4);
\draw (4.5,1) to (7,4);
\draw (4.5,1) to (8,0);

\draw (8,0) to (7,-4);
\draw (5.5,-1) to (7,-4);
\draw (5.5,-1) to (8,0);

\draw (8,0) to (10,-1);
\draw (8,0) to (10,1.5);

\draw (-8,0) to (-7,4);
\draw (-6,1.5) to (-7,4);
\draw (-6,1.5) to (-8,0);

\draw (-6.5,-1) to (-7,-4);
\draw (-6.5,-1) to (-8,0);
\draw (-8,0) to (-7,-4);

\draw (-8,0) to (-10,-1.5);
\draw (-8,0) to (-10,2);

%draw gamma's
\draw[out=120,in=60,looseness=1, red] (8,0) to (-8,0);
\draw[out=-120,in=-60,looseness=1, red] (8,0) to (-8,0);

%draw vertices
\draw[fill=black] (8,0) circle [radius=2pt];
\draw[fill=black] (4.5,1) circle [radius=2pt];\draw[fill=black] (5.5,-1) circle [radius=2pt];

\draw[fill=black] (-8,0) circle [radius=2pt];
\draw[fill=black] (-6,1.5) circle [radius=2pt];\draw[fill=black] (-6.5,-1) circle [radius=2pt];

%adding edge labels
\node[scale=2] at (8.2,0.5) {$p$};
\node[scale=1.5] at (7.9,2) {$\sigma_m$};
\node[scale=1.5] at (8.8,1.3) {$\sigma_{m-1}$};
\node[scale=1.5] at (8.8,-1) {$\sigma_{k_p+2}$};
\node[scale=1.5] at (8.2,-2.2) {$\sigma_{k_p+1}$};
\node[scale=1.5] at (6.6,-1) {$\sigma_{k_p}$};
\node[scale=1.5] at (6.6,0.7) {$\sigma_{1}$};
\node[scale=1] at (5.6,3) {$\calP_1(1)$};
\node[scale=1] at (5.8,-3) {$\calP_2(1)$};

\node[scale=3] at (0,2) {$\cdots$};

\node[scale=2] at (9,0.28) {$\vdots$};
\node[scale=1.5] at (6.6,0.1) {$\vdots$};
\node[scale=1.5] at (-6.6,0.2) {$\vdots$};

\node[scale=3] at (0,-1.5) {$\cdots$};

\node[scale=2] at (-8.2,0.5) {$q$};
\node[scale=2] at (-9,0.2) {$\vdots$};
\node[scale=1.5] at (-7.9,2) {$\eta_1$};
\node[scale=1.5] at (-8.8,1.2) {$\eta_2$};
\node[scale=1.5] at (-8.8,-1.2) {$\eta_{k_q-1}$};
\node[scale=1.5] at (-7.8,-2.2) {$\eta_{k_q}$};
\node[scale=1.5] at (-6.5,-0.6) {$\eta_{k_q+1}$};
\node[scale=1.5] at (-6.6,0.6) {$\eta_{h}$};
\node[scale=1] at (-6,3) {$\calP_1(n_1)$};
\node[scale=1] at (-6.3,-3) {$\calP_2(n_2)$};

\node[scale=2,red] at (0,3.5) {$\gamma_1^{0}$};
\node[scale=2,red] at (0,-3.5) {$\gamma_2^{0}$};

\end{tikzpicture}
    \caption{The underlying arcs, $\gamma_1^{0}$ and $\gamma_2^{0}$, which will be considered in Section \ref{subsec:IncAtTwo} with several different types of notchings.}
    \label{Two}
\end{figure}
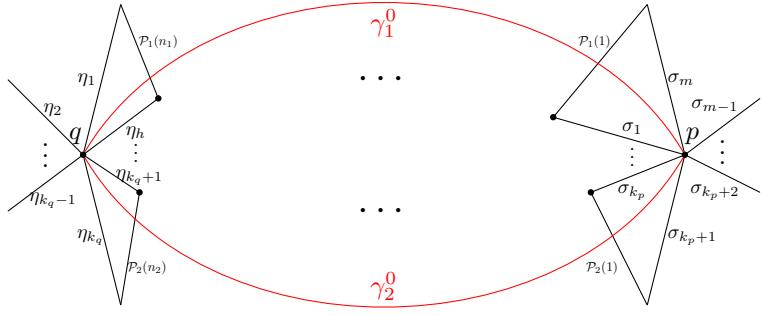

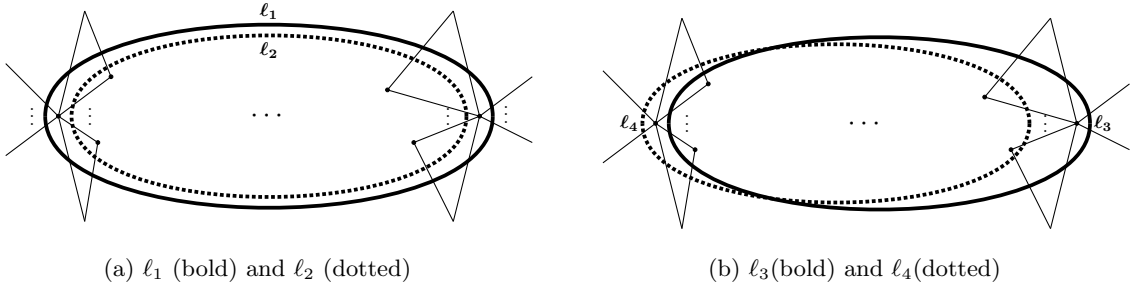
\begin{figure}[H]
\captionsetup{width=0.9\textwidth}
    \centering
     \begin{subfigure}[b]{0.4\textwidth}
        \begin{tikzpicture}[scale=0.35, transform shape]
        \draw (8,0) to (7,4);
        \draw (4.5,1) to (7,4);
        \draw (4.5,1) to (8,0);
        
        \draw (8,0) to (7,-4);
        \draw (5.5,-1) to (7,-4);
        \draw (5.5,-1) to (8,0);

        \draw (8,0) to (10,-1);
        \draw (8,0) to (10,1.5);

        \draw (-8,0) to (-7,4);
        \draw (-6,1.5) to (-7,4);
        \draw (-6,1.5) to (-8,0);
        
        \draw (-6.5,-1) to (-7,-4);
        \draw (-6.5,-1) to (-8,0);
        \draw (-8,0) to (-7,-4);

        \draw (-8,0) to (-10,-1.5);
        \draw (-8,0) to (-10,2);
        
        %draw gamma's
        \draw[out=90,in=90,looseness=0.7,line width=0.5mm] (8.5,0) to (-8.5,0);
        \draw[out=-90,in=-90,looseness=0.7,line width=0.5mm] (8.5,0) to (-8.5,0);
        
        \draw[out=90,in=90,looseness=0.7,line width=0.5mm,densely dotted] (7.5,0) to (-7.5,0);
        \draw[out=-90,in=-90,looseness=0.7,line width=0.5mm,densely dotted] (7.5,0) to (-7.5,0);

        %draw vertices
        \draw[fill=black] (8,0) circle [radius=2pt];
        \draw[fill=black] (4.5,1) circle [radius=2pt];\draw[fill=black] (5.5,-1) circle [radius=2pt];

        \draw[fill=black] (-8,0) circle [radius=2pt];
        \draw[fill=black] (-6,1.5) circle [radius=2pt];\draw[fill=black] (-6.5,-1) circle [radius=2pt];

        %adding edge labels

        %\node[scale=3] at (0,1.8) {$\cdots$};

        \node[scale=2] at (9,0.28) {$\vdots$};
        \node[scale=2] at (6.8,0.2) {$\vdots$};
        \node[scale=2] at (-6.8,0.2) {$\vdots$};

        \node[scale=3] at (0,0) {$\cdots$};

        \node[scale=2] at (-9,0.2) {$\vdots$};

        \node[scale=2, font=\boldmath] at (0,4) {$\ell_1$};
        \node[scale=2, font=\boldmath] at (0,2.5) {$\ell_2$};

        \end{tikzpicture}
                \caption{$\ell_1$ (bold) and $\ell_2$ (dotted)}
        \label{fig:loops2}
    \end{subfigure}
    \hspace{1cm}
    \begin{subfigure}[b]{0.4\textwidth}
        \begin{tikzpicture}[scale=0.35, transform shape]
        \draw (8,0) to (7,4);
        \draw (4.5,1) to (7,4);
        \draw (4.5,1) to (8,0);
        
        \draw (8,0) to (7,-4);
        \draw (5.5,-1) to (7,-4);
        \draw (5.5,-1) to (8,0);

        \draw (8,0) to (10,-1);
        \draw (8,0) to (10,1.5);

        \draw (-8,0) to (-7,4);
        \draw (-6,1.5) to (-7,4);
        \draw (-6,1.5) to (-8,0);
        
        \draw (-6.5,-1) to (-7,-4);
        \draw (-6.5,-1) to (-8,0);
        \draw (-8,0) to (-7,-4);

        \draw (-8,0) to (-10,-1.5);
        \draw (-8,0) to (-10,2);
        
        %draw gamma's
        \draw[out=90,in=90,looseness=0.7,line width=0.5mm] (8.5,0) to (-7.5,0);
        \draw[out=-90,in=-90,looseness=0.7,line width=0.5mm] (8.5,0) to (-7.5,0);
        
        \draw[out=90,in=90,looseness=0.7,line width=0.5mm, densely dotted] (6.2,0) to (-8.5,0);
        \draw[out=-90,in=-90,looseness=0.7,line width=0.5mm, densely dotted] (6.2,0) to (-8.5,0);
        
        %draw vertices
        \draw[fill=black] (8,0) circle [radius=2pt];
        \draw[fill=black] (4.5,1) circle [radius=2pt];\draw[fill=black] (5.5,-1) circle [radius=2pt];
        
        \draw[fill=black] (-8,0) circle [radius=2pt];
        \draw[fill=black] (-6,1.5) circle [radius=2pt];\draw[fill=black] (-6.5,-1) circle [radius=2pt];
        
        %adding edge labels

        %\node[scale=3] at (0,2) {$\cdots$};
        
        \node[scale=2] at (6.8,0.2) {$\vdots$};
        \node[scale=2] at (-6.8,0.2) {$\vdots$};
        
        \node[scale=3] at (0,0) {$\cdots$};

        \node[scale=2, font=\boldmath] at (9,0) {$\ell_3$};
        \node[scale=2, font=\boldmath] at (-9,0) {$\ell_4$};

        \end{tikzpicture}
                \caption{{$\ell_3$}(bold) and {$\ell_4$}(dotted)}
        \label{fig:loops3}
    \end{subfigure}
        \caption{The four closed curves obtained by resolving arcs $\gamma_1$ and $\gamma_2$ (whose underlying arcs $\gamma_1^{0}$ and $\gamma_2^{0}$ are shown in Figure \ref{Two}) that are incompatible at two punctures.}
        \label{fig:loops}
\end{figure}

We assume the spokes at $p$ are $\sigma_1,\ldots,\sigma_m$ and the spokes at $q$ are $\eta_1,\ldots,\eta_h$. We again index these so that, if $\gamma_1$ is oriented from $p$ to $q$, the first triangle $\gamma_1$ passes through is bordered by $\sigma_1$ and $\sigma_m$ and the last triangle is bordered by $\eta_1$ and $\eta_h$. We let $k_p$ be the quantity $k$ from Section \ref{subsec:IncAtOne} and let $k_q$ refer to this same quantity but with respect to the spokes $\eta_i$ and the puncture $q$.

If $k_p = 0$ or $k_p = m$, let $t_p$ be the largest number such that $\calP_{\gamma_1}[1,t_p]$ and $\calP_{\gamma_2}[1,t_p]$ are  overlaps, and similarly if $k_q = 0$ or $k_q = h$, let $t_q$ be the largest number such that $\calP_{\gamma_1}[n_1 - t_q,n_1]$ and $\calP_{\gamma_2}[n_2 - t_q,n_2]$ are overlaps.

We begin with the resolution of a doubly-notched arc and a plain arc that are incompatible at two punctures.

\begin{prop}\label{prop:TwiceIncDoublyNotched}
If $\gamma_1^{(p,q)}$ and $\gamma_2$ are doubly-notched and plain arcs respectively which are incompatible at both $p$ and $q$ and such that $\gamma_1^0 \neq \gamma_2^0$, then $x_{\gamma_1^{(p,q)}} x_{\gamma_2} =  Y_1x_{\ell_1} + Y_2 x_{\ell_2} + Y_3 x_{\ell_3} + Y_4 x_{\ell_4}$ where the tuple $(Y_1,Y_2,Y_3,Y_4)$ is given below. 

\scalebox{0.9}{
\bgroup
\def\arraystretch{1.25}
\begin{tabular}{|c|c|c|c|} \hline
   \cellcolor{gray!75} & \cellcolor{gray!25} $k_p \neq 0,m$ & \cellcolor{gray!25} $k_p = m$  & \cellcolor{gray!25} $k_p = 0$ \\\hline
 \cellcolor{gray!25} $k_q \neq 0,h$    & $(Y_{S^p},Y_{S^q_1},Y_{S^p}Y_{S^q_1},1)$  & $(Y_pY_{R^p},Y_{S^q_1},Y_pY_{S^q_1}Y_{R^p},1)$ & $(1,Y_{S^q_1}Y_{R^p},Y_{S^q_1},Y_{R^p})$ \\\hline
  \cellcolor{gray!25} $k_q = h$   &$(Y_{S^p},Y_qY_{R^q},Y_{S^p}Y_qY_{R^q},1)$ &$(Y_pY_{R^p},Y_qY_{R^q},Y_pY_qY_{R^q}Y_{R^p},1)$&$(1,Y_qY_{R^p}Y_{R^q},Y_qY_{R^q},Y_{R^p})$ \\\hline
 \cellcolor{gray!25} $k_q = 0$    & $(Y_{S^p}Y_{R^q},1,Y_{S^p},Y_{R^q})$ & $(Y_pY_{R^p}Y_{R^q},1,Y_pY_{R^p},Y_{R^q})$ & $(Y_{R^q},Y_{R^p},1,Y_{R^p}Y_{R^q})$\\\hline

\end{tabular} \egroup}

The $y$-monomials in the above table are as follows, where $\calP_i:= \calP_{\gamma_i}$, \[
Y_{R^p} = \prod_{j=1}^{t_p} y_{\calP_1(i)}  \quad Y_{R^q} = \prod_{j=n_1-t_q}^{t_q} y_{\calP_1(j)} \quad Y_{S^p} = \prod_{j=1}^{k_p} y_{\sigma_j}\quad \text{and} \quad Y_{S^q_1} = \prod_{j=1}^{k_q} y_{\eta_j}
\]

Now let $\gamma_1^{(p,q)}$ and $\gamma_2$ be doubly-notched and plain arcs respectively which are incompatible at both $p$ and $q$ and such that $\gamma_1^0=\gamma_2^0$. 
Let $Y_R = \prod_{j=1}^{n_1} y_{\calP_1(j)}$.  Then, we have $x_{\gamma_1^{(p,q)}} x_{\gamma_2} = x_{\ell_1} + Y_pY_qY_R + Y_R$. 
\end{prop}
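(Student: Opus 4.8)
The plan is to establish this identity directly, since the standing assumption $\gamma_1^0 \neq \gamma_2^0$ of Section~\ref{subsec:IncAtOne} is now violated. Write $\gamma := \gamma_1^0 = \gamma_2^0$, so that $\gamma_2 = \gamma^0$ and $\gamma_1 = \gamma^{(p,q)}$. The starting point is the explicit expansion of the doubly-notched arc $x_{\gamma^{(p,q)}}$ recorded in the proof of Proposition~\ref{prop:PosetExpansion} (namely Theorem~12.9 of \cite{musiker2011positivity}); multiplying it through by $x_{\gamma^0}$ gives an expression for $x_{\gamma^{(p,q)}}x_{\gamma^0}$ consisting of the constant $1$, three monomials in $\hat y$-variables supported on the spokes at $p$ and $q$ together with $\gamma$ (which simplify using $\hat{Y}_p = Y_p$ and $\hat{Y}_q = Y_q$ from Corollary~\ref{cor:y-hat_spokes}), and a single term of the form $y_\gamma\, x_{\gamma^{(p)}}\, x_{\gamma^{(q)}}$. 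The first step is to replace $x_{\gamma^{(p)}}$ and $x_{\gamma^{(q)}}$ by their poset expansions from Proposition~\ref{prop:PosetExpansion}: since $\calP_{\gamma^{(p)}}$ (resp.\ $\calP_{\gamma^{(q)}}$) is obtained from $\calP_{\gamma^0}$ by attaching a loop-chain running along the spokes at $p$ (resp.\ $q$), the weighted sum over its order ideals factors as the weighted sum over order ideals of $\calP_{\gamma^0}$ times a geometric-type factor $1 + \hat y_{\sigma_1} + \hat y_{\sigma_1}\hat y_{\sigma_2} + \cdots + \hat{Y}_p$ coming from the chain, and similarly at $q$.

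Carrying out the multiplication, I would split off the top terms $\hat{Y}_p$ and $\hat{Y}_q$ of the two spoke-chains. Two of the resulting summands cancel the two $\hat{Y}_p\hat y_\gamma$- and $\hat{Y}_q\hat y_\gamma$-terms exactly; the bulk of the remaining terms reassemble into the band-graph (circular-fence-poset) expansion of the genuine closed curve $\ell_1$ (the resolution in which both $\mathsf{h}_p$ and $\mathsf{h}_q$ are traversed clockwise), using Proposition~\ref{prop:PosetExpansion} together with the identity $\gb_{\ell_1} = \gb_{\gamma^{(p,q)}} + \gb_{\gamma^0}$; and the two summands that are left over, after converting the $\hat y$-variables along $\gamma^0$ back into $y$-variables with Lemma~\ref{lem:y-hat} and checking that the spurious $x$-factors cancel against $\mathbf{x}^{\gb_{\gamma^{(p,q)}} + \gb_{\gamma^0}}$, collapse to the two bare monomials $Y_R$ and $Y_pY_qY_R$. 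As a consistency check one can also note that the other three curves of the general construction degenerate here: $\ell_2$ is contractible, so $x_{\ell_2} = -2$ by Definition~\ref{def:contandkink}, while $\ell_3$ and $\ell_4$ are each contractible onto a single puncture, so $x_{\ell_3}, x_{\ell_4} \in \{\,1 + Y_p,\ 1 + Y_q\,\}$ as recorded before Definition~\ref{def:ContractibleSinglyTaggedLoop}; feeding these degenerate values into the analogue of the multiplication formula from Proposition~\ref{prop:TwiceIncDoublyNotched} and simplifying reproduces $Y_R + Y_pY_qY_R$.

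I expect the main obstacle to be the bookkeeping around the closed-curve term rather than any conceptual difficulty: one must track the cancellations at $p$ and at $q$ simultaneously — they are coupled through the shared factor $\hat y_\gamma$ and through $x_{\gamma^0}$ — and one must verify that what survives the cancellations is genuinely $\mathbf{x}^{\gb_{\gamma^{(p,q)}} + \gb_{\gamma^0}}\sum_{I \in J(\calP_{\ell_1})} wt(I)$ and not merely something with the same $y$-support; this is where the $\gb$-vector computation $\gb_{\ell_1} = \gb_{\gamma^{(p,q)}} + \gb_{\gamma^0}$, carried out via the $\mathbf{a}$-, $\mathbf{b}$-, $\mathbf{r}$-vectors of the circular fence poset of $\ell_1$, is essential. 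A secondary subtlety is to make the argument uniform in whether or not $\gamma^0 \in T$: when $\gamma^0 \in T$ one has $Y_R = 1$ and $\calP_{\gamma^0} = (\emptyset, \gamma^0)$, so the geometric-factor reasoning degenerates and one works directly with $\hat y_\gamma = y_{\gamma^0}\, x_{CCW}(\gamma^0)/x_{CW}(\gamma^0)$, whereas when $\gamma^0 \notin T$ one uses the loop-graph expansion of $x_{\gamma^{(p,q)}}$ directly in place of Theorem~12.9, with Lemma~\ref{lem:y-hat} supplying the conversion between the $\hat y$-monomial along $\gamma^0$ and $Y_R$.
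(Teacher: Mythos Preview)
Your plan addresses only the $\gamma_1^0=\gamma_2^0$ clause (the $\gamma_1^0\neq\gamma_2^0$ table follows from applying Proposition~\ref{prop:incompatible_puncture_singly_tagged} twice, as the paper does), and for that clause you take a genuinely different route from the paper: you propose an algebraic manipulation starting from the Musiker--Schiffler--Williams identity for $x_\gamma x_{\gamma^{(p,q)}}$, whereas the paper constructs an explicit, content-preserving bijection
\[
J(\calP_{\gamma^{(p,q)}})\times J(\calP_{\gamma^0})\;\setminus\;\{(\calP_1,\emptyset),(\emptyset,\calP_2)\}\ \longrightarrow\ J(\calP_{\ell_1})
\]
using switching positions between the two copies $R_1,R_2$ of $\calP_{\gamma^0}$, and then reads off the two excluded pairs as the monomials $Y_pY_qY_R$ and $Y_R$ after checking $\gb_1+\gb_2=\gb_{\ell_1}$.

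There is, however, a concrete gap in your algebraic argument. The claim that $\sum_{I\in J(\calP_{\gamma^{(p)}})}wt(I)$ factors as $\bigl(\sum_{I'\in J(\calP_{\gamma^0})}wt(I')\bigr)\cdot\bigl(1+\hat y_{\sigma_1}+\cdots+\hat Y_p\bigr)$ is false whenever $\gamma^0\notin T$. The loop is attached to $\calP_{\gamma^0}(1)$ by \emph{two} relations, $\sigma_1\prec\calP(1)$ and $\sigma_m\succ\calP(1)$, so the set of admissible extensions of an order ideal $I'\in J(\calP_{\gamma^0})$ to the loop depends on whether $\calP(1)\in I'$: if $\calP(1)\in I'$ the loop contributes $\hat y_{\sigma_1}(1+\hat y_{\sigma_2}+\cdots+\hat y_{\sigma_2}\cdots\hat y_{\sigma_m})$, while if $\calP(1)\notin I'$ it contributes $1+\hat y_{\sigma_1}+\cdots+\hat y_{\sigma_1}\cdots\hat y_{\sigma_{m-1}}$. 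These differ by $\hat Y_p-1$, so the total is $(\sum_{I'}wt(I'))\cdot S+(\hat Y_p-1)\sum_{\calP(1)\in I'}wt(I')$ rather than a product. Even the term counts disagree: each ideal of $\calP_{\gamma^0}$ extends in exactly $m$ ways, not $m{+}1$. Your fallback for $\gamma^0\notin T$ (``use the loop-graph expansion of $x_{\gamma^{(p,q)}}$ directly'') does not explain how you would then reach $x_{\ell_1}$ without the factorization, and in effect you would need to prove the content of Proposition~\ref{prop:two_punctures_1} in this degenerate case along the way --- which the paper does by the same bijective method. A second issue is scope: the form of Theorem~12.9 you invoke is recorded in the paper only for $\gamma^0\in T$, and its extension to possibly generalized arcs is precisely Corollary~\ref{cor:MSWThm12.9}, which the paper \emph{derives from} Propositions~\ref{prop:TwiceIncDoublyNotched} and~\ref{prop:two_punctures_1}; relying on it here risks circularity.
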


\begin{proof}

If $\gamma_1^0 \neq \gamma_2^0$, then these follow from Proposition \ref{prop:incompatible_puncture_singly_tagged}. We illustrate one case. Suppose $k_p \neq 0,m$ and and $k_q = h$. If we first resolve the incompatibility at $p$, from part (2) of Proposition \ref{prop:incompatible_puncture_singly_tagged} we have \[
x_{\gamma_1^{(p,q)}} x_{\gamma_2} = x_{\gamma_3^{(q)}} + Y_{S^p} x_{\gamma_4^{(q)}}.
\]

Now, we apply part (3) of Proposition \ref{prop:incompatible_puncture_singly_tagged} to $\gamma_3^{(q)}$ and $\gamma_4^{(q)}$, \[
x_{\gamma_3^{(q)}} = x_{\ell_4} + Y_{q}Y_{R^q} x_{\ell_2} \qquad x_{\gamma_4^{(q)}} = x_{\ell_1} + Y_qY_{R^q} x_{\ell_3},
\]

and the claim follows.

If $\gamma_1^0 = \gamma_2^0$, then we cannot use Proposition \ref{prop:incompatible_puncture_singly_tagged}. We will instead exhibit a bijection between $J(\calP_1) \times J(\calP_2) \backslash \{(\calP_1,\emptyset),(\emptyset,\calP_2)\}$ and $J(\calP_{\ell})$ and demonstrate a relation on the $\gb$-vectors which will explain the relation. 

We draw $\calP_1$ and $\calP_{\ell_1}$ below, where $R_1 = \calP_1^0$. We know $\calP_1^0 = \calP_2 = \calP_2^0$, and we refer to $\calP_2$ as $R_2$ for shorthand. We have two copies of this poset as a subset in $\calP_{\ell_1}$, denoted $R_3$ and $R_4$. We choose chronological orderings on the three posets so that its restrictions to $R_1,R_2,$ and $R_3$ all agree; consequentially, this ordering on $R_4$ will be opposite, which we emphasize by writing  $\overline{R_4}$.

\begin{center}
\begin{tabular}{ccc}
\begin{tikzpicture}
\node(r1) at (0,0){$\boxed{R_1}$};
\node(s1) at (-1,-1){$\sigma_1$};
\node(s2) at (-1,-0.4){$\sigma_2$};
\node(sm) at (-1,1){$\sigma_m$};
\node(dots) at (-1,0.35){$\vdots$};
\node(n1) at (1,-1){$\eta_1$};
\node(n2) at (1,-0.4){$\eta_2$};
\node(nh) at (1,1){$\eta_h$};
\node(dots2) at (1,0.35){$\vdots$};
\draw(r1) -- (s1);
\draw(r1) -- (sm);
\draw(s1) -- (s2);
\draw(s2) -- (dots);
\draw(dots) -- (sm);
\draw(r1) -- (n1);
\draw(r1) -- (nh);
\draw(n1) -- (n2);
\draw(n2) -- (dots2);
\draw(dots2) -- (nh);
\end{tikzpicture}
\qquad
\qquad
&
\qquad
\begin{tikzpicture}
\node at (0,0){$\boxed{R_2}$};
\end{tikzpicture}
\qquad
&
\qquad
\qquad
\begin{tikzpicture}
\node(r1) at (0,0){$R_3$};
\node(s1) at (-1,-1){$\sigma_1$};
\node(s2) at (-1,-0.4){$\sigma_2$};
\node(sm) at (-1,1){$\sigma_m$};
\node(dots) at (-1,0.35){$\vdots$};
\node(n1) at (1,-1){$\eta_1$};
\node(n2) at (1,-0.4){$\eta_2$};
\node(nh) at (1,1){$\eta_h$};
\node(dots2) at (1,0.35){$\vdots$};
\node(r4) at (2,0){$\boxed{\overline{R_4}}$};
\draw(r1) -- (s1);
\draw(s1) -- (s2);
\draw(s2) -- (dots);
\draw(dots) -- (sm);
\draw(r4) -- (n1);
\draw(n1) -- (n2);
\draw(n2) -- (dots2);
\draw(dots2) -- (nh);
\draw(nh) -- (r1);
\draw(sm) [out = 30,in=90]  to (2.2,0.2);
\end{tikzpicture}\\
\highlight{$\calP_1$} & \highlight{$\calP_2$}&\highlight{$\calP_{\ell_1}$}
\end{tabular}
\end{center}

Let $A_1$ denote all pairs $(I_1,I_2) \in J(\calP_1) \times J(\calP_2)$ such that there is a switching position between $I_1 \vert_{R_1}$ and $I_2 = I_2\vert_{R_2}$. We define the map $\Phi$ on $A_1$ by sending $I_1 \vert_{R_1}$ to $R_3$ and $I_2$ to $R_4$ first and then swapping at the switching position. The $\sigma_i$ and $\eta_i$ will always be sent to their respective images in $\calP_{\ell_1}$. The set $\Phi(A_1)$ is all $I \in \calP_{\ell_1}$ such that there is a switching position between $I\vert_{R_3}$ and $I\vert_{R_4}$.

Let $A_2$ denote all pairs $(I_1,I_2) \in J(\calP_1) \times J(\calP_2)$ such that $I_2 = \calP_2$ and $I_1 \vert_{R_1} = \emptyset$ but $I_1 \neq \emptyset$. If $\eta_1 \in I_1$, $\Phi$ acts on $(I_1,I_2)$ by sending $R_2$ to $R_4$, and otherwise $\Phi$ acts by sending $R_2$ to $R_3$. The set $\Phi(A_2)$ consists of $I \in \calP_{\ell_1}$ such that ($I \vert_{R_3} = \emptyset$; $I\vert_{R_4} = R_4$; and, $\sigma_m \notin I$) or ($I \vert_{R_4} = \emptyset$; $I\vert_{R_3} = R_3$; and, $\eta_1 \notin I$).

Let $A_3$ denote all pairs $(I_1,I_2) \in J(\calP_1) \times J(\calP_2)$ such that $I_2 = \emptyset$ and $I_1 \vert_{R_1} = R_1$ but $I_1 \neq \calP_1$. If $\sigma_m \in I_1$, $\Phi$ acts on $(I_1,I_2)$ by sending $R_1$ to $R_4$ and otherwise $\Phi$ acts on $(I_1,I_2)$ by sending $R_1$ to $R_3$. The set $\Phi(A_2)$ consists of $I \in \calP_{\ell_1}$ such that ($I \vert_{R_3} = \emptyset$; $I\vert_{R_4} = R_4$; and, $\sigma_m \in I$) or ($I \vert_{R_4} = \emptyset$; $I\vert_{R_3} = R_3$; and, $\eta_1 \in I$).

We see $A_1 \cup A_2 \cup A_3$ is a partitioning of $J(\calP_1) \times J(\calP_2) \backslash \{(\calP_1,\emptyset),(\emptyset,\calP_2)\}$ and $\Phi(A_1) \cup \Phi(A_2) \cup \Phi(A_3)$ is a partitioning of $\calP_{\ell_1}$. 

Now, our final step is to show $\gb_1 + \gb_2 + \degx(\hat{Y}_R\hat{Y}_p\hat{Y}_q) = \gb_1 + \gb_2 + \degx(\hat{Y}_R) = 0$ where $\gb_1:= \gb_{\gamma_1^{(p,q)}}$ and $\gb_2:= \gb_{\gamma_2}$. The first equality follows immediately from Corollary \ref{cor:y-hat_spokes}. 

Let $\gb_R$ denote the contributions of $\calP_1[2,n_1-1]$ to $\gb_1$, which is equivalent to the contributions of $\calP_2[2,n_2-1]$ to $\gb_2$

 We compute\[
\gb_1 = -\eb_{\sigma_1} - \eb_{\eta_1} + \delta_{\calP_1(1) \succ \calP_1(2)} \eb_{\calP_1(1)} + \delta_{\calP_1(n_1) \succ \calP_1(n_1-1)} \eb_{\calP_1(n_1)}  + \gb_R,
 \]
 and
 \[
\gb_2 = \eb_{\sigma_m} + \eb_{\eta_h} - \delta_{\calP_1(1) \prec \calP_1(2)} \eb_{\calP_1(1)} -\delta_{\calP_1(n_1) \prec \calP_1(n_1-1)} \eb_{\calP_1(n_1)} + \gb_R.
 \]

Now, Lemma \ref{lem:y-hat} guarantees $\gb_1 + \gb_2 = - \degx(\hat{Y}_R)$. 
\end{proof}

We note that when $k_p=m$, $Y_{S^p} = Y_p$ and similarly for $k_q = h$.

\begin{prop}
\label{prop:two_punctures_1}
    If $\gamma_1^{(p)}$ and $\gamma_2^{(q)}$ are two singly-notched arcs which are incompatible at both $p$ and $q$ and such that $\gamma_1^0 \neq \gamma_2^0$, then $x_{\gamma_1^{(p)}}x_{\gamma_2^{(q)}} = Y_1x_{\ell_1} + Y_2 x_{\ell_2} + Y_3 x_{\ell_3} + Y_4 x_{\ell_4}$ where the tuple $(Y_1,Y_2,Y_3,Y_4)$ is given below. 

\scalebox{0.9}{
\bgroup
\def\arraystretch{1.25}
\begin{tabular}{|c|c|c|c|} \hline
   \cellcolor{gray!75} & \cellcolor{gray!25}$k_p \neq 0,m$ & \cellcolor{gray!25} $k_p = m$  & \cellcolor{gray!25} $k_p = 0$ \\\hline
 \cellcolor{gray!25}$k_q \neq 0,h$    & $(Y_{S^p}Y_{S^q_2},1,Y_{S^p},Y_{S^q_2})$  & $(Y_{p}Y_{S^q_2}Y_{R^p},1,Y_{p}Y_{R^p},Y_{S^q_2})$ & $(Y_{S^q_2},Y_{R^p},1,Y_{S^q_2}Y_{R^p})$ \\\hline
  \cellcolor{gray!25}$k_q = h$   &$(Y_{S^p}Y_{q}Y_{R^q},1,Y_{S^p},Y_{q}Y_{R^q})$ &$(Y_{p}Y_{q}Y_{R^p}Y_{R^q},1,Y_{p}Y_{R^p},Y_{q}Y_{R^q})$&$(Y_{q}Y_{R^q},Y_{R^p},1,Y_{q}Y_{R^p}Y_{R^q})$ \\\hline
 \cellcolor{gray!25} $k_q = 0$    & $(Y_{S^p},Y_{R^q},Y_{S^p}Y_{R^q},1)$ & $(Y_{p}Y_{R^p},Y_{R^q},Y_{p}Y_{R^p}Y_{R^q},1)$ & $(1,Y_{R^p}Y_{R^q},Y_{R^q},Y_{R^p})$\\\hline
\end{tabular} \egroup}

The $y$-monomials $Y_{R^p},Y_{R^q},$ and $Y_{S^p}$ are as in Proposition \ref{prop:TwiceIncDoublyNotched} and $Y_{S^q_2} = \prod_{j=k_q+1}^{h} y_{\eta_j}$. 

If $\gamma_1^{(p)}$ and $\gamma_2^{(q)}$ are two singly-notched arcs which are incompatible at both $p$ and $q$ and such that $\gamma_1^0 = \gamma_2^0$, then $x_{\gamma_1^{(p)}} x_{\gamma_2^{(q)}} =x_{\ell_1}+Y_{p}Y_R+Y_{q}Y_R$.
\end{prop}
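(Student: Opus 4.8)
The case $\gamma_1^0\neq\gamma_2^0$ needs no genuinely new argument: exactly as in the proof of Proposition~\ref{prop:TwiceIncDoublyNotched}, one resolves the incompatibility at $p$ by Proposition~\ref{prop:incompatible_puncture_singly_tagged}(2) (the tagging of $\gamma_1^{(p)}$ at $q$ is a passenger, per the remark preceding that proposition), producing a singly-notched monogon at $q$ in each term, and then resolves that monogon at $q$ by Proposition~\ref{prop:incompatible_puncture_singly_tagged}(3); keeping track of which arc plays the notched role at the second step accounts for the appearance of $Y_{S^q_2}=\prod_{j=k_q+1}^{h}y_{\eta_j}$ rather than $Y_{S^q_1}$ in the table, and multiplying the two relations together gives every entry. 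So the only new content is the case $\gamma_1^0=\gamma_2^0=:\delta$, which I would prove directly and bijectively, following the corresponding case of Proposition~\ref{prop:TwiceIncDoublyNotched}. Here $R:=\calP_\delta$ is the common underlying fence poset, $\calP_1:=\calP_{\gamma_1^{(p)}}$ is $R$ with a loop of $\sigma$-spokes attached at its $p$-end, and $\calP_2:=\calP_{\gamma_2^{(q)}}$ is $R$ with a loop of $\eta$-spokes attached at its $q$-end (so, unlike in Proposition~\ref{prop:TwiceIncDoublyNotched}, the two spoke-loops are split between the two factors). Since $\ell_1$ depends only on $\delta,p,q,T$, the poset $\calP_{\ell_1}$ is the same circular fence poset as there: two copies $R_3,\overline{R_4}$ of $R$ (the bar recording that $\ell_1$ traverses $\delta$ once in each direction, so the chronological order on $R_4$ is reversed), glued to the $\sigma$-chain along one boundary element and to the $\eta$-chain along the other. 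I fix compatible chronological orderings on $\calP_1,\calP_2,\calP_{\ell_1}$ so that the copies $R_1=\calP_1^0$, $R_2=\calP_2^0$, $R_3$ all carry the same labeling.

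The core of the proof is a weight-preserving bijection
\[
\Phi\colon\ J(\calP_1)\times J(\calP_2)\setminus\{(\calP_1,\emptyset),(\emptyset,\calP_2)\}\ \longrightarrow\ J(\calP_{\ell_1}).
\]
I would partition the domain according to whether a switching position between $I_1|_{R_1}$ and $I_2|_{R_2}$ exists (Lemma~\ref{lem:SwitchingExists}). On the generic part, route $I_1|_{R_1}$ into $R_3$ and $I_2|_{R_2}$ into $R_4$ up to the switching position and then swap, while each spoke $\sigma_i\in I_1$ (resp.\ $\eta_j\in I_2$) is sent to its unique copy in $\calP_{\ell_1}$. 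On the degenerate pieces — those where $I_1|_{R_1}$ or $I_2|_{R_2}$ is $\emptyset$ or all of $R$ — one reads off from the local pictures near $p$ and $q$ into which side of the circular poset the remaining spoke-segments feed. This reproduces the $A_1\cup A_2\cup A_3$ decomposition from the proof of Proposition~\ref{prop:TwiceIncDoublyNotched}, but with the $\sigma$- and $\eta$-data now entering through $\calP_1$ and $\calP_2$ separately; an inverse is read off piece by piece. Weight-preservation is immediate: swapping at a switching position leaves the underlying multiset of arcs, hence the product of $\hat y$'s, unchanged.

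It remains to handle the two excluded pairs and the $\gb$-vector bookkeeping. After multiplication by $\mathbf{x}^{\gb_1+\gb_2}$, the pair $(\calP_1,\emptyset)$ contributes $\mathbf{x}^{\gb_1+\gb_2}\,wt(\calP_1)=\mathbf{x}^{\gb_1+\gb_2}\hat{Y}_R\prod_i\hat y_{\sigma_i}=\mathbf{x}^{\gb_1+\gb_2}\hat{Y}_R\,Y_p$ by Corollary~\ref{cor:y-hat_spokes}, and $(\emptyset,\calP_2)$ contributes $\mathbf{x}^{\gb_1+\gb_2}\hat{Y}_R\,Y_q$. Thus it suffices to establish $\gb_1+\gb_2+\degx(\hat{Y}_R)=0$, which turns these into $Y_pY_R$ and $Y_qY_R$; combined with $\gb_1+\gb_2=\gb_{\ell_1}$, read directly off the poset structures (the band-graph analogue of the identity used throughout, and equivalently the statement that both sides equal $-\degx(\hat{Y}_R)$), the non-exceptional sum reassembles into $x_{\ell_1}$, giving $x_{\gamma_1^{(p)}}x_{\gamma_2^{(q)}}=x_{\ell_1}+Y_pY_R+Y_qY_R$. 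The computation of $\gb_1,\gb_2$ in terms of the common interior contribution $\gb_R$ of $R[2,n_1-1]$ plus boundary corrections at $\calP_i(1),\calP_i(n_i)$ and the spoke-ends $-\eb_{\sigma_1},-\eb_{\eta_1}$, together with the identification of $\degx(\hat{Y}_R)$ via Lemma~\ref{lem:y-hat}, is verbatim the end of the proof of Proposition~\ref{prop:TwiceIncDoublyNotched}.

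I expect the main obstacle to be the bookkeeping in the degenerate pieces of $\Phi$. Because $\calP_1$ carries only the $\sigma$-loop and $\calP_2$ only the $\eta$-loop, the degenerate order ideals of the two factors are no longer organized as in the doubly-notched situation, and one must check carefully that the pieces genuinely partition the domain, that exactly the two pairs $(\calP_1,\emptyset)$ and $(\emptyset,\calP_2)$ are omitted, and that the order-ideal (gluing) conditions at the two boundary elements of the circular poset $\calP_{\ell_1}$ — in the presence of the orientation reversal on $R_4$ — are matched on the nose; the rest of the argument is a direct transcription of the techniques already in Section~\ref{subsec:IncAtTwo}.
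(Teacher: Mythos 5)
Your proposal is correct and follows essentially the same route as the paper, which likewise disposes of the $\gamma_1^0\neq\gamma_2^0$ case by two applications of Proposition~\ref{prop:incompatible_puncture_singly_tagged} (parts (2) and (3), the second step acting on the resulting singly-notched monogons at $q$) and handles $\gamma_1^0=\gamma_2^0$ by transcribing the bijective argument from the corresponding case of Proposition~\ref{prop:TwiceIncDoublyNotched}. In fact your write-up of the equal-underlying-arc case, including the exceptional pairs $(\calP_1,\emptyset)$ and $(\emptyset,\calP_2)$ contributing $Y_pY_R$ and $Y_qY_R$ and the identity $\gb_1+\gb_2+\degx(\hat Y_R)=0$, supplies more detail than the paper itself, which only asserts that the case "can be shown in a similar manner."
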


Again, if $\gamma_1^{0} \neq \gamma_2^0$, Proposition \ref{prop:two_punctures_1} follows from Proposition \ref{prop:incompatible_puncture_singly_tagged}. The case  $\gamma_1^{0} = \gamma_2^0$ can be shown in a similar manner as this case in Proposition \ref{prop:TwiceIncDoublyNotched}.

Each entry in the table in Proposition \ref{prop:two_punctures_1} results from taking the corresponding entry in the table in Proposition \ref{prop:TwiceIncDoublyNotched}, swapping the entries $Y_1$ and $Y_3$ and the entries $Y_2$ and $Y_4$, and replacing $S_1^q$ with $S_2^q$ where necessary.

By combining Propositions \ref{prop:TwiceIncDoublyNotched} and \ref{prop:two_punctures_1} in the case where $\gamma_1^0 = \gamma_2^0$, we show a generalization of Theorem 12.9 from \cite{musiker2011positivity} for arcs which possibly have self-intersections. Recently, similar relations were studied in \cite{baur2024infinite} with the goal of comparing various frieze patterns arising from twice-punctured discs. 

\begin{cor}\label{cor:MSWThm12.9}
Let $\gamma$ be a (possibly generalized) plain arc incident to two punctures $p$ and $q$. Let $Y_R = \prod_{i=1}^{n_1} y_{\calP_1(i)}$. Then,\[
x_\gamma x_{\gamma^{(p,q)}} - x_{\gamma^{(p)}} x_{\gamma^{(q)}} = (1-Y_p) (1-Y_q) Y_R.
\]
\end{cor}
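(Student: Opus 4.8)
The plan is to derive the identity by subtracting two skein relations already proved in this section, each specialized to the situation of a common underlying arc. Write $\gamma^0$ for the underlying plain arc of $\gamma$, and let $\gamma^{(p)}$, $\gamma^{(q)}$, $\gamma^{(p,q)}$ denote the tagged arcs on $\gamma^0$ that are notched at $p$, at $q$, and at both $p$ and $q$, respectively. At each of $p$ and $q$ the two arcs in each of the pairs $\{\gamma,\gamma^{(p,q)}\}$ and $\{\gamma^{(p)},\gamma^{(q)}\}$ share the underlying arc $\gamma^0$ but carry opposite taggings there, so by Definition~\ref{def:CompatibleTaggedArcs} both pairs are incompatible at $p$ and at $q$. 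Hence both fall under the ``$\gamma_1^0 = \gamma_2^0$'' case of Propositions~\ref{prop:TwiceIncDoublyNotched} and~\ref{prop:two_punctures_1}.

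First I would apply Proposition~\ref{prop:TwiceIncDoublyNotched} with $\gamma_1^{(p,q)} = \gamma^{(p,q)}$ and $\gamma_2 = \gamma$ to obtain
\[
x_\gamma\, x_{\gamma^{(p,q)}} = x_{\ell_1} + Y_pY_qY_R + Y_R,
\]
and then Proposition~\ref{prop:two_punctures_1} with $\gamma_1^{(p)} = \gamma^{(p)}$ and $\gamma_2^{(q)} = \gamma^{(q)}$ to obtain
\[
x_{\gamma^{(p)}}\, x_{\gamma^{(q)}} = x_{\ell_1} + Y_pY_R + Y_qY_R.
\]
The crucial point is that the closed curve $\ell_1$ and the monomial $Y_R = \prod_{i=1}^{n_1} y_{\calP_1(i)}$ appearing in these two equalities are literally the same objects: by the definitions in Section~\ref{subsec:IncAtTwo}, $\ell_1$ and $Y_R$ are determined solely by $\gamma^0$, the punctures $p$ and $q$, and the triangulation $T$, and in particular do not depend on the choice of taggings on $\gamma^0$. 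Subtracting the two displayed identities cancels $x_{\ell_1}$ and leaves
\[
x_\gamma\, x_{\gamma^{(p,q)}} - x_{\gamma^{(p)}}\, x_{\gamma^{(q)}} = Y_pY_qY_R + Y_R - Y_pY_R - Y_qY_R = Y_R\,(1-Y_p)(1-Y_q),
\]
which is the claimed formula.

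I do not expect a genuine obstacle here: the two ingredients are results of the preceding subsection, and the only care required is to verify that the hypothesis $\gamma_1^0 = \gamma_2^0$ holds in each application and that the symbols $x_{\ell_1}$ and $Y_R$ in the two propositions refer to the same data, both of which are immediate from the definitions. Finally, since Propositions~\ref{prop:TwiceIncDoublyNotched} and~\ref{prop:two_punctures_1} were established for arcs allowed to have self-intersections, this argument proves the identity for an arbitrary generalized plain arc $\gamma$, matching the generality of the statement.
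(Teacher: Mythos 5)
Your proof is correct and is exactly the argument the paper intends: the text above the corollary states that it follows ``by combining Propositions \ref{prop:TwiceIncDoublyNotched} and \ref{prop:two_punctures_1} in the case where $\gamma_1^0 = \gamma_2^0$,'' which is precisely your subtraction of the two identities. Your additional check that $\ell_1$ and $Y_R$ denote the same objects in both propositions is the right point to verify and is handled correctly.
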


\section{Transverse Crossings}
\label{sec:transverse_crossings}

Here we consider two tagged arcs $\gamma_1$ and $\gamma_2$ whose interiors have at least one point of intersection. As discussed in Section \ref{subsec:CompareToSGC} and in \cite{snakegraphcalculus1}, there are several ways this intersection can appear. We refer to these as intersections of Type 0, Type 1, and Type 2, where an intersection of Type $i$ occurs if the intersection point occurs in the first or last triangle passed through by $i$ of the two arcs. An intersection of Type 0 is exactly the type of intersection which produces crossing overlaps in the posets $\calP_{\gamma_1}$ and $\calP_{\gamma_2}$.

\begin{center}
\begin{tabular}{ccc}
Type 0 & Type 1 & Type 2\\
\begin{tikzpicture}[scale = 1.3]
 \draw (0,0.8) -- (1,1.6) -- (4,1.6) -- (5,0.8) -- (4,0) -- (1,0) -- (0,0.8);
 \draw (1,0) -- (1,1.6);
 \draw (4,0) -- (4,1.6);
 \draw[red,thick] (0.5,1.5) -- (4.5,0.1);
 \draw[blue,thick] (0.5,0.1) -- (4.5,1.5);
\end{tikzpicture}
&
\begin{tikzpicture}[scale = 1.3]
 \draw[] (0,0) -- (2,0) -- (1,1.6) -- (0,0); 
 \draw[red, thick] (0,0) -- (2.5,1.2);
\draw[blue,thick] (0,1.2) -- (1.5,-0.5);
\end{tikzpicture}
&
\begin{tikzpicture}[scale = 1.3]
 \draw[] (0,0) -- (2,0) -- (1,1.6) -- (0,0); 
 \draw[red, thick] (0,0) -- (2.5,1.2);
\draw[blue,thick] (1,1.6) -- (1,-0.5);
\end{tikzpicture}
\end{tabular}
\end{center}

Our first three subsections deal with, in order, intersections of Type 0,1, and 2, with the assumption that none of the arcs involved are in $T$ nor any version of these arcs' notchings removed. The last subsection is devoted to an intersection between two arcs where the plain underlying version of one arc is in the triangulation.

Notice that this sorting of cases differs from the sorting in Section 8 of \cite{musiker2013bases}. For example, in Section \ref{subsec:Type0}, we work with Type 0 intersections and all possible options for notchings on the endpoints of the two arcs involved.

\subsection{Type 0}\label{subsec:Type0}

Consider two arcs, $\gamma_1$ and $\gamma_2$ not in $T$ and suppose that the posets $\calP_{\gamma_1}$ and $\calP_{\gamma_2}$ have a crossing overlap, so that $\gamma_1$ and $\gamma_2$ cross and near this point of intersection, the two arcs cross the same set of arcs from $T$.  The resolution consists of $\{\gamma_3,\gamma_4\} \cup \{\gamma_5,\gamma_6\}$ where $\gamma_3:= \gamma_1 \circ \gamma_2$, $\gamma_4:= \gamma_1^{-1} \circ \gamma_2^{-1}$, $\gamma_5:=\gamma_1\circ \gamma_2^{-1}$, and $\gamma_6:=\gamma_1^{-1}\circ\gamma_2$, where now the notation $\circ$ refers to switching between the two arcs at their point of intersection.

\begin{figure}[H]
\captionsetup{width=0.9\textwidth}
    \centering
    \includegraphics{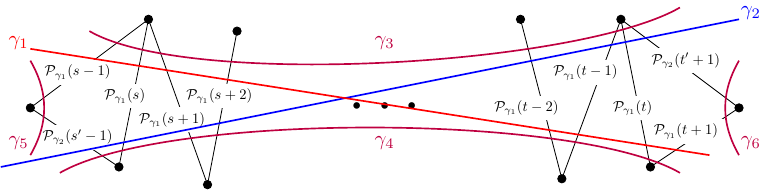}
    \caption{A type $0$ intersection in the case of  $s > 1, s'>1, t< n_1 = \vert \calP_{\gamma_1} \vert, t' < n_2 = \vert \calP_{\gamma_2}\vert$. }
    \label{type0}
\end{figure}

Let $\gamma_1$ and $\gamma_2$ be indexed so that, if their crossing overlaps are $R_1 = \{\calP_{\gamma_1}(s),\ldots,\calP_{\gamma_1}(t)\}$ and $R_2 = \{\calP_{\gamma_2}(s'),\ldots,\calP_{\gamma_2}(t')\}$, then $R_1$ is on the top of $\calP_{\gamma_1}$ and $R_2$ is on the bottom of $\calP_{\gamma_2}$.
Then, $\calP_{\gamma_3} = \calP_{\gamma_1}[\leq t] \cup \calP_{\gamma_2}[>t']$ where from the geometry of the triangulation we have $\calP_{\gamma_1}(t) \prec \calP_{\gamma_2}(t'+1)$. Similarly, $\calP_{\gamma_4} = \calP_{\gamma_2}[\leq t'] \cup \calP_{\gamma_1}[>t]$ where we also know $\calP_{\gamma_1}(t+1) \succ \calP_{\gamma_2}(t)$. 

There will be several cases for $\calP_5$ based on whether $s = 1$ or $s' = 1$ and similarly whether $t = n_1$ or $t' = n_2$. Recall we cannot have $s = s' = 1$ nor $t = n_1$ and $t' = n_2$ simultaneously. Let $\calP_{\gamma_5}' = \calP_{\gamma_1}[<s] \cup \calP_{\gamma_2}[<s']$ where we know from $T$ that we also have $\calP_{\gamma_1}(s-1) \succ \calP_{\gamma_2}(s'-1)$. A chronological ordering on $\calP_{\gamma_5}'$ would have to reverse the ordering on either the component from $\calP_{\gamma_1}$ or the component from $\calP_{\gamma_2}$. Throughout, let $\alpha$ be the counterclockwise neighbor of $\calP_{\gamma_2}(1)$ and let $\beta$ be the clockwise neighbor of $\calP_{\gamma_1}(1)$.

\begin{enumerate}
    \item If $s > 1$ and $s' > 1$, then $\calP_{\gamma_5} = \calP_{\gamma_5}'$.  
    \item If $s = 1$, and $s(\gamma_1)$ is plain, then we have $\calP_{\gamma_5} = \calP_{\gamma_5}' \backslash \calP_{\gamma_2}^{\preceq (s'-1)}$. There are two special cases where we introduce a decoration.
    \begin{enumerate}
        \item If this set is empty, implying that $s(\gamma_2)$ is plain, then we include decoration $\alpha$. 
        \item If this set consists entirely of elements from a loop in $\calP_{\gamma_2}$, implying that $s(\gamma_2)$ is notched, then we include decoration $\alpha^-$.
    \end{enumerate} 
    \item If $s = 1$ and $s(\gamma_1)$ is notched, then we again have $\calP_{\gamma_5} = \calP_{\gamma_5}' \backslash \calP_{\gamma_2}^{\preceq (s'-1)}$. We explain which relations will connect the loop at the beginning of $\calP_{\gamma_1}$ with the rest of $\calP_{\gamma_5}$. 
    \begin{enumerate}
        \item If $\calP_{\gamma_2}[<s'] \backslash \calP_{\gamma_2}^{\preceq (s'-1)} = \emptyset$, then $\calP_{\gamma_5}$ will be the poset for $\alpha^{(s(\gamma_1))}$, and thus must have  decoration $\alpha^-$.
        \item If $\calP_{\gamma_2}[<s'] \backslash \calP_{\gamma_2}^{\preceq (s'-1)}$ consists only of elements of a loop, then $\calP_5$ will be the poset for $\alpha^{(s(\gamma_1), s(\gamma_2))}$ and thus we do not include a decoration. 
        \item If $\calP_{\gamma_2}[<s'] \backslash \calP_{\gamma_2}^{\preceq (s'-1)}$ contains elements not in a loop, then $\calP_{\gamma_5}$ will be the result of attaching the elements of the loop from $\calP_{\gamma_1}$ onto the element of this set with largest index in the chronological ordering. 
    \end{enumerate}
    \item If $s' = 1$ instead, then we have 
    parallel cases, using instead  $\calP_{\gamma_5}' \backslash \calP_{\gamma_1}^{\succeq (s-1)}$ and $\beta$.
\end{enumerate}

If $t = n_1$ or $t = n_2$, we perform an identical operation, beginning with $\calP_{\gamma_6}' = \calP_{\gamma_1}[>t] \cup \calP_{\gamma_2}[>t']$. If $t = n_1$, then we remove $\calP_{\gamma_2}^{\preceq (t'+1)}$ and if $t' = n_2$, then we remove $\calP_{\gamma_1}^{\succeq (t+1)}$; in each case, we have the same subcases. We highlight that in Case iii of the following proof we introduce a method of indexing a chain of spokes which will be used repeatedly in later proofs. This indexing is depicted in Figure \ref{fig:Indexing}.

\begin{prop}\label{prop:GraftingType0}
Let $\gamma_1$ and $\gamma_2$ be two arcs with a Type 0 crossing, indexed so that the crossing overlap $\calP_{\gamma_1}[s,t]$ is on top and $\calP_{\gamma_2}[s',t']$ is on bottom. Let the resolution be $\{\gamma_3, \gamma_4\} \cup \{\gamma_5,\gamma_6\}$. Then,\[
x_{\gamma_1} x_{\gamma_2} = x_{\gamma_3}x_{\gamma_4} + Y_R Y_{S_s} Y_{S_t} x_{\gamma_5} x_{\gamma_6}
\]
where 
\[
Y_R = \prod_{i=s}^t y_{\calP_{\gamma_1}(i)},
\]

\[
Y_{S_s} = \begin{cases} 
\displaystyle \prod_{\substack{\calP_{\gamma_1}(i) \succeq \calP_{\gamma_1}(s-1) \\ i \leq s-1}} y_{\calP_{\gamma_1}(i)} & s' = 1 \text{ and } s(\gamma_2) \text{ is notched} \\ 
\displaystyle \prod_{\substack{\calP_{\gamma_2}(i) \preceq \calP_{\gamma_2}(s'-1) \\ i \leq s'-1}} y_{\calP_{\gamma_2}(i)} & s = 1 \text{ and } s(\gamma_1) \text{ is plain} \\ 
1 & \text{ otherwise,}\\ 
\end{cases}
\]

and 

\[
Y_{S_t} = \begin{cases} 
\displaystyle \prod_{\substack{\calP_{\gamma_1}(i) \succeq \calP_{\gamma_1}(t+1) \\ i \geq t+1}} y_{\calP_{\gamma_1}(i)} & t' = n_2 \text{ and } t(\gamma_2) \text{ is notched} \\ 
\displaystyle \prod_{\substack{\calP_{\gamma_2}(i) \preceq \calP_{\gamma_2}(t'+1) \\ i \geq t'+1}} y_{\calP_{\gamma_2}(i)} & t = n_1 \text{ and } t(\gamma_1) \text{ is plain} \\ 
1 & \text{ otherwise.}\\ 
\end{cases}
\]
\end{prop}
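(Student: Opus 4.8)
The plan is to follow the proof strategy laid out in Subsection~\ref{subsec:proofStrategy}: express $x_{\gamma_1}x_{\gamma_2}$ using Proposition~\ref{prop:PosetExpansion} as $\mathbf{x}^{\gb_1+\gb_2}\sum_{(I_1,I_2)} wt(I_1)wt(I_2)$, partition $J(\calP_{\gamma_1})\times J(\calP_{\gamma_2}) = A \cup B$, build a bijection $\Phi$ from $A$ onto $J(\calP_{\gamma_3})\times J(\calP_{\gamma_4})$ and from $B$ onto $J(\calP_{\gamma_5})\times J(\calP_{\gamma_6})$, then check the $\gb$-vector identities $\gb_1+\gb_2 = \gb_3+\gb_4$ and $\gb_1+\gb_2+\degx(Z) = \gb_5+\gb_6$ where $Z = \hat Y_R \hat Y_{S_s}\hat Y_{S_t}$. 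The combinatorial heart is the construction of $\Phi$ via switching positions (Lemma~\ref{lem:SwitchingExists}). Recall $\calP_{\gamma_3} = \calP_{\gamma_1}[\leq t]\cup \calP_{\gamma_2}[>t']$ and $\calP_{\gamma_4} = \calP_{\gamma_2}[\leq t']\cup\calP_{\gamma_1}[>t]$, so an element of $J(\calP_{\gamma_3})\times J(\calP_{\gamma_4})$ is essentially a pair consisting of an ideal of $\calP_{\gamma_1}$ truncated-and-reglued with an ideal of $\calP_{\gamma_2}$. The idea is: given $(I_1,I_2)$, look at the switching position between $I_1\cap R_1$ and $I_2\cap R_2$ inside the overlap (where $R_1\cong R_2$ under the crossing isomorphism, with appropriately chosen chronological orderings). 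If a switching position exists, we are in $A$: we route the "left part" of $I_1$ and the "right part" of $I_2$ through the overlap up to the switching position, swap, and thereby produce a compatible pair $(I_3,I_4)$; conversely, if no switching position exists within the overlap, i.e.\ $I_1\cap R_1 = R_1, I_2\cap R_2 = \emptyset$ (or vice versa), we are in $B$ and glue to get ideals of $\calP_{\gamma_5},\calP_{\gamma_6}$. One must take $\emptyset\in A$ and verify $\Phi|_A$ is weight-preserving (the content of the order ideals is preserved since elements only get re-routed, not created or destroyed) while $\Phi|_B$ changes the content by exactly the multiset corresponding to $R_1$ together with the swept spokes at the two ends, giving the factor $\hat Y_R\hat Y_{S_s}\hat Y_{S_t}$.

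The main subtlety — and the place where I expect to spend the most effort — is the boundary behavior at $s,t$ versus $s',t'$, i.e.\ the many subcases enumerated before the Proposition statement (whether $s=1$ or $s'=1$, whether the relevant endpoint is plain or notched, and whether the truncated loop piece is empty, entirely a loop, or mixed). When $s>1$ and $s'>1$, $\calP_{\gamma_5}'=\calP_{\gamma_1}[<s]\cup\calP_{\gamma_2}[<s']$ and there is nothing extra to do; the weight bookkeeping for $B$ picks up $Y_R$ only. But when $s=1$, the "leftover" of $\calP_{\gamma_2}$ that gets deleted when forming $\calP_{\gamma_5}$ — namely $\calP_{\gamma_2}^{\preceq(s'-1)}$ — is precisely the set whose elements are the swept arcs recorded in $S_s$; these elements were free to be in or out of $I_2$ on the $B$-side but are forced out on the $\gamma_5$-side, which is exactly what produces the $\hat Y_{S_s}$ factor in $Z$ after the $\gb$-vector comparison. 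I plan to handle the ends symmetrically (the $t$-end mirrors the $s$-end with chronological orderings reversed) and to treat the plain/notched dichotomy by observing that a notched end contributes a loop whose elements all sit on top or bottom, so the relevant sweeping set is a chain of spokes; here I will set up the spoke-indexing convention depicted in Figure~\ref{fig:Indexing} once and reuse it. The decoration cases (2a), (2b), (3a)--(3c) require care only to confirm that the decorated poset $\widetilde\calP_{\gamma_5}$ has the right $\mathbf{j}$-vector contribution so that the $\gb$-identity still closes; this is a finite check.

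For the $\gb$-vector verification I would proceed exactly as in the proof of Proposition~\ref{prop:incompatible_puncture_singly_tagged}: write $\gb_1$ and $\gb_2$ as sums of an "interior" contribution $\gb_R$ (from the part of $R_1$, resp.\ $R_2$, strictly between its extremes), boundary terms indexed by Kronecker-type deltas $\delta_{\calP(i)\succ\calP(i\pm1)}$ recording whether each relevant element is a local min or max, and "outside-the-overlap" contributions $\gb_1', \gb_2'$. Then compute $\gb_3,\gb_4$ directly from the descriptions $\calP_{\gamma_3}=\calP_{\gamma_1}[\leq t]\cup\calP_{\gamma_2}[>t']$, $\calP_{\gamma_4}=\calP_{\gamma_2}[\leq t']\cup\calP_{\gamma_1}[>t]$, noting the new cover relations $\calP_{\gamma_1}(t)\prec\calP_{\gamma_2}(t'+1)$ and $\calP_{\gamma_1}(t+1)\succ\calP_{\gamma_2}(t)$ flip exactly the deltas at the gluing points, so the $\gb_R$'s and outside contributions line up and $\gb_1+\gb_2=\gb_3+\gb_4$. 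For the $B$-side, apply Lemma~\ref{lem:y-hat} (and Corollary~\ref{cor:y-hat_spokes} where spokes are involved) to evaluate $\degx(\hat Y_R)$, $\degx(\hat Y_{S_s})$, $\degx(\hat Y_{S_t})$; as in the earlier proof, the $x$-degree of $\hat Y_R$ contributes $2\gb_R$ with opposite sign plus boundary corrections that exactly cancel the discrepancy between $\gb_1+\gb_2$ and $\gb_5+\gb_6$, and the extra $\hat Y_{S_s},\hat Y_{S_t}$ factors account for the spokes deleted when passing to $\calP_{\gamma_5}$. The hardest part overall is organizing the case analysis so that each boundary configuration is checked once and the bijection and the $\gb$-identity are seen to be consistent across all of them; I would present the generic case ($s>1,s'>1,t<n_1,t'<n_2$) in full and then indicate the modifications for each boundary case, much as the excerpt already does in its setup.
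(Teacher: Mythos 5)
Your overall framework (the strategy of Subsection~\ref{subsec:proofStrategy}, the $\gb$-vector bookkeeping via Lemma~\ref{lem:y-hat} and Corollary~\ref{cor:y-hat_spokes}, and the plan to present the generic case and then patch the boundary/notched subcases) matches the paper's proof. However, the central combinatorial step is set up incorrectly: the partition ``$A=$ pairs with a switching position in the overlap, $B=$ pairs without one'' does not work. By Lemma~\ref{lem:SwitchingExists}, the no-switching pairs split into two families: (a) $R_1\subseteq I_1$ and $R_2\cap I_2=\emptyset$, and (b) $I_1\cap R_1=\emptyset$ and $R_2\subseteq I_2$. The swap-at-the-switching-position map only produces pairs $(I_3,I_4)$ that themselves admit a switching position between $R_3$ and $R_4$, so your $\Phi|_A$ is not surjective onto $J(\calP_{\gamma_3})\times J(\calP_{\gamma_4})$: the target also contains pairs with $R_3\subseteq I_3,\ R_4\cap I_4=\emptyset$ (and the reverse), and these need preimages among the no-switching pairs. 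Dually, your $B$ is too large to biject with $J(\calP_{\gamma_5})\times J(\calP_{\gamma_6})$, since an element of the latter contains no copy of the overlap and is further constrained by the new cover relations $\calP_{\gamma_1}(s-1)\succ\calP_{\gamma_2}(s'-1)$ and its analogue at the $t$-end, constraints that do not exist in $\calP_{\gamma_1}\times\calP_{\gamma_2}$.

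Concretely, in the paper's Case i the entire family (a) is sent to $J(\calP_{\gamma_3})\times J(\calP_{\gamma_4})$ (the set $A_2$), and family (b) is split three ways: two pieces ($A_3$, $A_4$, distinguished by whether $\calP_{\gamma_1}(t+1)\in I_1$ while $\calP_{\gamma_2}(t'+1)\notin I_2$, resp.\ $\calP_{\gamma_1}(s-1)\in I_1$ while $\calP_{\gamma_2}(s'-1)\notin I_2$) also go to $J(\calP_{\gamma_3})\times J(\calP_{\gamma_4})$, and only the sub-family satisfying the compatibility conditions ``$\calP_{\gamma_1}(s-1)\in I_1$ only if $\calP_{\gamma_2}(s'-1)\in I_2$'' and ``$\calP_{\gamma_1}(t+1)\in I_1$ only if $\calP_{\gamma_2}(t'+1)\in I_2$'' forms $B$. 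Note the asymmetry between families (a) and (b), which is forced by $R_1$ being on top of $\calP_{\gamma_1}$ and $R_2$ on bottom of $\calP_{\gamma_2}$; your write-up treats the two directions symmetrically, which cannot be repaired without this finer case analysis. The same refinement (with further subdivisions along the chain of spokes) is what makes the notched cases iii and iv work, so the gap propagates into exactly the part of the argument you flagged as the main subtlety. The rest of your outline (weight preservation, the origin of $\hat Y_{S_s},\hat Y_{S_t}$ from the deleted sets $\calP_{\gamma_2}^{\preceq(s'-1)}$ etc., and the $\gb$-vector comparison) is consistent with the paper once the bijection is corrected.
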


\begin{proof}

Since the definition of the posets $\calP_{\gamma_5}$ and $\calP_{\gamma_6}$ vary based on the notchings of the endpoints of $\gamma_1$ and $\gamma_2$ and whether these endpoints are near the overlap, we will break our proof into cases. 

We set $\calP_i:= \calP_{\gamma_i}$ throughout the proof. Let $R_3$ denote the image of $R_1$ in $\calP_3$ and define $R_4$ similarly. 

\textbf{Case i) $ s>1$; $s'>1$; $t<n_1$; and, $t' < n_2$.}  

The notching of each arc does not affect this case, so here the result follows from the plain case as in \cite{snakegraphcalculus1,musiker2013}. However, we will write the details in our setting of order ideals as they will be used in later cases where the notching does have an effect. We proceed with the usual strategy, providing a partition of $J(\calP_1) \times J(\calP_2)$ and defining $\Phi: J(\calP_1) \times J(\calP_2) \to (J(\calP_3) \times J(\calP_4)) \cup (J(\calP_5) \times J(\calP_6))$.

\begin{tabular}{cccc}
  \begin{tikzpicture}[scale = 0.8]
\node(s1) at (0,0){$\calP_1(s-1)$};
\node(R1) at (1,1){$\boxed{R_1}$};
\node(t1) at (2,0){$\calP_1(t+1)$};
\draw(s1) -- (R1);
\draw(R1) -- (t1);
  \end{tikzpicture}   
  &  \begin{tikzpicture}[scale = 0.8]
\node(s1) at (0,0){$\calP_2(s'-1)$};
\node(R1) at (1,-1){$\boxed{R_2}$};
\node(t1) at (2,0){$\calP_2(t'+1)$};
\draw(s1) -- (R1);
\draw(R1) -- (t1);
  \end{tikzpicture} 
   & \begin{tikzpicture}[scale = 0.8]
\node(s1) at (0,0){$\calP_1(s-1)$};
\node(R1) at (1,1){$\boxed{R_3}$};
\node(t1) at (2,2){$\calP_2(t'+1)$};
\draw(s1) -- (R1);
\draw(R1) -- (t1);
  \end{tikzpicture} 
    &\begin{tikzpicture}[scale = 0.8]
\node(s1) at (0,0){$\calP_2(s'-1)$};
\node(R1) at (1,-1){$\boxed{R_4}$};
\node(t1) at (2,-2){$\calP_1(t+1)$};
\draw(s1) -- (R1);
\draw(R1) -- (t1);
  \end{tikzpicture} \\
\end{tabular}

\begin{enumerate}
\item Let $A_1$ denote the set of pairs $(I_1,I_2) \in J(\calP_1) \times J(\calP_2)$ such that there is a switching position between $I_1 \vert_{R_1}$ and $I_2 \vert_{R_2}$. Given such a pair, we define $\Phi(I_1,I_2)$ to send $R_1$ to $R_3$ and $R_2$ to $R_4$ until the first switching position and then swap. The image of other elements is clear. The set $\Phi(A_1)$ would be all pairs $(I_3,I_4)$ such that there is a switching position between $R_3$ and $R_4$.
\item Let $A_2$ denote the set of $(I_1,I_2) \in J(\calP_1) \times J(\calP_2)$ such that $R_1 \subseteq I_1$ and $R_2 \cap I_2 = \emptyset$. If $(I_1,I_2) \in A_2$, we define $\Phi(I_1,I_2)$ to send $R_1$ to $R_3$. The image of $A_2$ is all pairs $(I_3,I_4)$ such that $R_3 \subseteq I_3$; $R_4 \cap I_4 = \emptyset$; $\calP_1(t+1) \in I_4$; and, $\calP_2(t'+1) \notin I_3$.
\item Let $A_3$ denote the set of $(I_1,I_2)$ such that  $I_1 \cap R_1 = \emptyset$; $R_2 \subseteq I_2$; $\calP_1(t+1) \in I_1$; and, $\calP_2(t'+1) \notin I_2$. Given $(I_1,I_2) \in A_3$, the map $\Phi$ acts by sending $R_2$ to $R_4$. The set $\Phi(A_3)$ is all pairs $(I_3,I_4)$ such that $R_3 \cap I_3 = \emptyset$ and  $R_4 \subseteq I_4$. 
\item Let $A_4$  denote the set of $(I_1,I_2)$ such that $I_1 \cap R_1 = \emptyset$; $R_2 \subseteq I_2$; $\calP_1(t+1) \in I_1$ only if $\calP_2(t'+1) \in I_2$; $\calP_1(s-1)\in I_1$; and, $\calP_2(s'-1) \notin I_2$. Given $(I_1,I_2) \in A_3$, the map $\Phi$ acts by sending $R_2$ to $R_3$. The set $\Phi(A_4)$ is all  all pairs $(I_3,I_4)$ such that $R_3 \subseteq I_3$; $R_4 \cap I_4 = \emptyset$; and, $\calP_1(t+1) \in I_4$ only if $\calP_2(t'+1) \in I_3$. 
\item Let $B$ denote the set of $(I_1,I_2)$ such that $I_1 \cap R_1 = \emptyset$; $R_2 \subseteq I_2$; $\calP_1(s-1) \in I_1$ only if $\calP_2(s'-1) \in I_2$; and,  $\calP_1(t+1) \in I_1$ only if $\calP_2(t'+1) \in I_2$. Given $(I_1,I_2) \in B$, the set $I_1 \cup (I_2 \backslash R_2)$ can be naturally partitioned into an element of $J(\calP_5) \times J(\calP_6)$, and this map is bijective. 
\end{enumerate}

We omit the proof for $\gb$-vectors as this case is already known.

\textbf{Case ii) ($s=1$; $s(\gamma_1)$ is plain; $t<n_1$; and, $t' < n_2$) or
($s'=1$; $s(\gamma_2)$ is plain; $t<n_1$; and, $t' < n_2$).}

The remaining notching of the arcs does not affect the combinatorics for these cases, so these follow from previously known results, such as from \cite{snakegraphcalculus1,musiker2013}. 

\textbf{Case iii) $s=1$; $s(\gamma_1)$ is notched; $t<n_1$; and, $t' < n_2$.}

Here, we introduce some notation which will be used repeatedly in later proofs. Let $\sigma_1,\ldots,\sigma_m$ denote the spokes at $s(\gamma_1)$, indexed in counterclockwise order,  such that the first triangle $\gamma_1$ passes through is bound by $\sigma_1$ and $\sigma_m$. Then, before $\gamma_2$ starts crossing the arcs recorded in the region $R_2$, it will cross some of these spokes; in particular, $\calP_2(s'-1) = \sigma_m$. $\gamma_2$ may wind around $s(\gamma_1)$ so that it could cross some spokes multiple times. This creates a chain, which we index $\sigma_m^{(w)} \succ \sigma_{m-1}^{(w)} \succ \cdots \succ \sigma_1^{(w)} \succ \sigma_m^{(w-1)} \succ \cdots \succ \sigma_k^{(1)}$ for some $w \geq 1$ and $k \in [m]$. We assume that this is the maximal, saturated chain of these spokes so that if $\gamma_2$ crosses an arc before this, it is not a spoke at $s(\gamma)$. Let $\mu$ denote the element before (with respect to the chronological ordering) $\sigma_k^{(1)}$ in $\calP_2$, if it exists. We will assume we are in situation 3.b or 3.c in the description before the statement; situation 3.a is simpler so we do not provide the details.

We draw $\calP_1, \calP_2, \calP_5,$ and $\calP_6$ in Figure \ref{fig:Indexing}. The posets $\calP_3$ and $\calP_4$ come from taking $\calP_1$ and $\calP_2$ respectively and swapping $\calP_1(t+1)$ and $\calP_2(t'+1)$. 

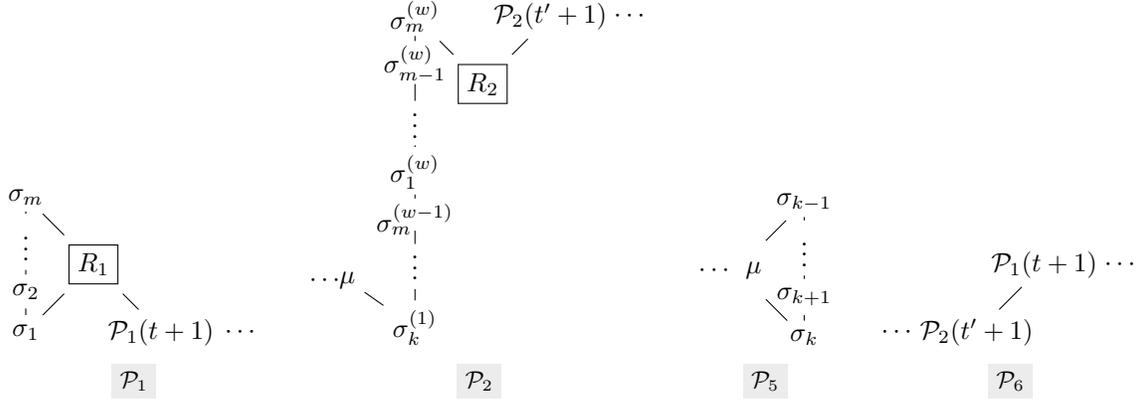
\begin{figure}
\captionsetup{width=0.9\textwidth}
    \centering
\begin{tabular}{cccc}
\begin{tikzpicture}[scale = 0.9]
\node(r1) at (0,0){$\boxed{R_1}$};
\node(s1) at (-1,-1){$\sigma_1$};
\node(s2) at (-1,-0.4){$\sigma_2$};
\node(sm) at (-1,1){$\sigma_m$};
\node(pt1) at (1,-1){$\calP_1(t+1)$};
\node(dots) at (-1,0.35){$\vdots$};
\node(dots2) at (2.2,-1){$\cdots$};
\draw(r1) -- (s1);
\draw(r1) -- (sm);
\draw(r1) -- (pt1);
\draw(s1) -- (s2);
\draw(s2) -- (dots);
\draw(dots) -- (sm);
\end{tikzpicture}
&
\begin{tikzpicture}[scale = 0.9]
\node(dots3) at (2.2,1.2){$\cdots$};
\node(pt1) at (1,1.2){$\calP_2(t'+1)$};
\node(r2) at (0,0.2){$\boxed{R_2}$};
\node(sm) at (-1,1.2){$\sigma_m^{(w)}$};
\node(sm1) at (-1,0.5){$\sigma_{m-1}^{(w)}$};
\node(dots1) at (-1,-0.3){$\vdots$};
\node(s1w) at (-1,-1.1){$\sigma_1^{(w)}$};
\node(smw1) at (-1,-1.8){$\sigma_m^{(w-1)}$};
\node(dots2) at (-1,-2.4){$\vdots$};
\node(sk1) at (-1,-3.4){$\sigma_k^{(1)}$};
\node(mu) at (-2,-2.7){$\mu$};
\node(dots4) at (-2.3,-2.7){$\cdots$};
\draw(pt1) -- (r2);
\draw(r2) -- (sm);
\draw(sm) -- (sm1);
\draw(-1,-0.05) -- (-1,0.2);
\draw(dots1) -- (s1w);
\draw(s1w) -- (smw1);
\draw(smw1) -- (dots2);
\draw(dots2) -- (sk1);
\draw(sk1) -- (mu);
\end{tikzpicture}
&
\begin{tikzpicture}[scale = 0.9]
\node(dots2) at (-0.3,0){$\cdots$};
\node(mu) at (0.25,0){$\mu$};
\node(s1) at (1,-1){$\sigma_k$};
\node(s2) at (1,-0.4){$\sigma_{k+1}$};
\node(sm) at (1,1){$\sigma_{k-1}$};
\node(dots) at (1,0.35){$\vdots$};
\draw(r1) -- (s1);
\draw(r1) -- (sm);
\draw(s1) -- (s2);
\draw(s2) -- (dots);
\draw(dots) -- (sm);
\end{tikzpicture}
&
\begin{tikzpicture}[scale = 0.9]
\node(dots1) at (-1.15,0){$\cdots$};
\node at (2.15,1){$\cdots$};
\node(pt1) at (0,0){$\calP_2(t'+1)$};
\node(ptt1) at (1,1){$\calP_1(t+1)$};
\draw(pt1) -- (ptt1);
\end{tikzpicture}\\
\highlight{$\calP_1$} & \highlight{$\calP_2$} & \highlight{$\calP_5$} & \highlight{$\calP_6$}
\end{tabular}
    \caption{These posets are used in the proof of Proposition \ref{prop:GraftingType0}. The posets $\calP_1$ and $\calP_2$ showcase an indexing system which we use frequently in later sections.}
    \label{fig:Indexing}
\end{figure}

The sets $A_1,A_2,$ and $A_3$ and the definition of $\Phi$ restricted to each is identical to the general case. Thus, we have that the complement of $\Phi(A_1 \cup A_2 \cup A_3)$ in $J(\calP_3) \times J(\calP_4)$ consists of pairs $(I_3,I_4)$ such that $R_3 \subseteq I_3, R_4 \cap I_4 = \emptyset$, and $\calP_1(t+1) \in I_4$ only if $\calP_2(t'+1) \in I_3$. We partition the remainder of $J(\calP_1) \times J(\calP_2)$ below and complete the definition of $\Phi$.

Given an arbitrary element $(I_1,I_2) \in J(\calP_1)\times J(\calP_2)$, let $\sigma_x$ denote the maximal element of $I_1$ in the loop and let $\sigma_y^{(z)}$ denote the maximal element of $I_2$ on the chain of spokes.  It is of course possible one or both of these do not exist. We will use these symbols in the same way when referring to $(I_3,I_4) \in J(\calP_3) \times J(\calP_4)$.  

\begin{enumerate}
\item[4.] Let $A_4$ consist of pairs $(I_1,I_2)$ such that $R_1 \cap I_1 = \emptyset$; $R_2 \subseteq I_2$; $\calP_1(t+1) \in I_1$ only if $\calP_2(t'+1) \in I_2$; $\calP_1(s-1) = \sigma_1 \in I_1$; and, $\sigma_y^{(z)} \prec \sigma_m^{(1)}$. Given $(I_1,I_2) \in A_4$, we define $\Phi(I_1,I_2)$ as the map which sends $R_2$ to $R_3$, and the elements in the loop and chain of spokes to the respective elements in $I_3$ and $I_4$ respectively. The set $\Phi(A_4)$ is all pairs $(I_3,I_4) \in J(\calP_3) \times J(\calP_4)$ such that $R_3 \subseteq I_3$; $R_4 \cap I_4 = \emptyset$; $\calP_1(t+1) \in I_4$ only if $\calP_2(t'+1) \in I_3$; $\sigma_y^{(z)} \prec \sigma_m^{(1)}$; and, $x < m$. 
\item[5.] Let $A_5$ consist of pairs $(I_1,I_2)$ such that $R_1 \cap I_1 = \emptyset$; $R_2 \subseteq I_2$; $\calP_1(t+1) \in I_1$ only if $\calP_2(t'+1) \in I_2$; $\sigma_y^{(z)} = \sigma_m^{(1)}$; and, $x \geq k-1$ with strict inequality if $\mu \in I_2$. Given $(I_1,I_2) \in A_5$, we define $\Phi(I_1,I_2)$ as the map which sends $R_2$ to $R_3$ and which sends $(\langle \sigma_x \rangle, \langle \sigma_m^{(1)} \rangle)$ to $(\langle \sigma_m \rangle, \langle \sigma_x \rangle)$. The set $\Phi(A_5)$ is all pairs $(I_3,I_4) \in J(\calP_3) \times J(\calP_4)$ such that $R_3 \subseteq I_3$; $R_4 \cap I_4 = \emptyset$; $\calP_1(t+1) \in I_4$ only if $\calP_2(t'+1) \in I_3$; $\sigma_y^{(z)} \prec \sigma_m^{(1)}$; and, $x = m$. 
\item[6.] Let $A_6$ consist of pairs $(I_1,I_2)$ such that $R_1 \cap I_1 = \emptyset$; $R_2 \subseteq I_2$; $\calP_1(t+1) \in I_1$ only if $\calP_2(t'+1) \in I_2$; and, $z > 1$. Given $(I_1,I_2) \in A_6$, we define $\Phi(I_1,I_2)$ as the map which sends $R_2$ to $R_3$ and which sends $(\langle \sigma_x \rangle, \langle \sigma_m^{(1)} \rangle)$ to $(\langle \sigma_y \rangle, \langle \sigma_x^{(z)} \rangle)$ where we set $\sigma_x^{(z)} := \sigma_m^{(z-1)}$ if $\sigma_x$ does not exist. The set $\Phi(A_6)$ is all pairs $(I_3,I_4) \in J(\calP_3) \times J(\calP_4)$ such that $R_3 \subseteq I_3$; $R_4 \cap I_4 = \emptyset$; $\calP_1(t+1) \in I_4$ only if $\calP_2(t'+1) \in I_3$; and, $\sigma_y^{(z)} \succeq  \sigma_m^{(1)}$. 
\item[7.] Let $B$ consist of pairs $(I_1,I_2)$ such that $R_1 \cap I_1 = \emptyset$; $R_2 \subseteq I_2$; $\calP_1(t+1) \in I_1$ only if $\calP_2(t'+1) \in I_2$; $x \leq k-1$; $z = 1$; $\sigma_1 \in I_1$ only if $\sigma_m^{(1)} \in I_2$; and, $\sigma_{k-1} \in I_1$ only if $\mu \in I_2$. If $\mu$ does not exist, we ignore the last criterion. Given $(I_1,I_2) \in B$, there is a natural way to break $I_1 \cup (I_2 \backslash R_2)$ into a pair of elements of $J(\calP_5) \times J(\calP_6)$, and this describes a bijective map. 
\end{enumerate}

For our computations relating to $\gb$-vectors, we assume $\mu$ exists( equivalently, $\gamma_5^0 \notin T$). If this was not the case, the proof would be similar but would use the idea of a decorated poset. 

We begin by listing $\gb_1:=\gb_{\gamma_1^{(s(\gamma))}}$ and $\gb_2:=\gb_{\gamma_2}$. Let $\gb_1'$ refer to the contributions of $\calP_1[>t+1]$ to $\gb_1$, $\gb_2'$ refer to the contribution of elements in  $\calP_2$ before $\sigma_k^{(1)}$ to $\gb_2$, $\gb_2''$ refer to the contribution of $\calP_2[>t'+1]$ to $\gb_2$, and $\gb_R$ refer to the contribution of elements in $\calP_1[s+1,t-1]$ to $\gb_1$ (equivalently of elements in $\calP_2[s'+1,t'-1]$ to $\gb_2$). Then, we have 
\begin{align*}
\gb_1 = -\eb_{\sigma_1} + \delta_{\calP_1(1) \succ \calP_1(2)} \eb_{\calP_1(1)} + \delta_{\calP_1(t) \succ \calP_1(t-1)} \eb_{\calP_1(t)} + (\delta_{\calP_1(t+1) \succ \calP_1(t+2)} - 1) \eb_{\calP_1(t+1)} + \gb_1' + \gb_R,
\end{align*}
and 
\begin{align*}
\gb_2 &= -\eb_{\sigma_k} + \eb_{\sigma_m} - \delta_{\calP_2(s') \prec \calP_2(s'+1)} \eb_{\calP_2(s')} - \delta_{\calP_2(t') \prec \calP_2(t'-1)} \eb_{\calP_2(t')} \\
&+ \delta_{\calP_2(t'+1) \succ \calP_2(t'+2)} \eb_{\calP_2(t'+1)} + \gb_2' + \gb_2'' + \gb_R.
\end{align*}
We require $\calP_1(t+2)$ to exist in order for ``$\calP_1(t+1) \succ \calP_1(t+2)$'' to be true. 

Setting $\gb_i:= \gb_{\gamma_i}$ for remaining values of $i$, one can observe that \[
\gb_3 = \gb_1 - \eb_{\calP_1(t)} - (\delta_{\calP_1(t+1) \succ \calP_1(t+2)} - 1) \eb_{\calP_1(t+1)} + \delta_{\calP_2(t'+1) \succ \calP_2(t'+2)} \eb_{\calP_2(t'+1)} -\gb_1' + \gb_2'' \]
and similarly \[
\gb_4 = \gb_2 + \eb_{\calP_2(t')} - \delta_{\calP_2(t'+1) \succ \calP_2(t'+2)} \eb_{\calP_2(t'+1)} + (\delta_{\calP_1(t+1) \succ \calP_1(t+2)} - 1) \eb_{\calP_1(t+1)} - \gb_2'' + \gb_1'
\]
and since $\calP_1(t)$ and $\calP_2(t')$ correspond to the same arc in $T$, we conclude $\gb_1 + \gb_2 = \gb_3 + \gb_4$. 

Now, we have $\gb_5 = \gb_2' - \eb_{\sigma_k}$ and $\gb_6 = (\delta_{\calP_2(t'+1) \succ \calP_2(t'+2)} -1 )\eb_{\calP_2(t'+1)} + \delta_{\calP_1(t+1) \succ \calP_1(t+2)} \eb_{\calP_1(t+1)} + \gb_1' + \gb_2''$. By Lemma \ref{lem:y-hat}, we have \begin{align*}
\degx(\hat{Y}_R) = \eb_{\sigma_1} - \eb_{\sigma_m} + \eb_{\calP_1(t+1)} - \eb_{\calP_2(t'+1)} + (\delta_{\calP_1(1) \prec \calP_1(2)} - \delta_{\calP_1(1) \succ \calP_1(2)}) \eb_{\calP_1(1)} \\+ (\delta_{\calP_1(t) \prec \calP_1(t-1)} - \delta_{\calP_1(t) \succ \calP_1(t-1)}) \eb_{\calP_1(t)}+ \gb_R,
\end{align*}

and after noting that $\calP_1(1)$ and $\calP_2(s')$ also correspond to the same arc in $T$, we conclude $\gb_1 + \gb_2 + \degx(\hat{Y}_R)= \gb_5 + \gb_6$. 

\textbf{Case iv) $s' = 1$; $s(\gamma_2)$ is notched; $t < n_1$; and, $t' < n_2$.}

We use a similar indexing as in Case iii. Let the spokes at $s(\gamma_2)$ again be $\sigma_1,\ldots,\sigma_m$ with the same convention. Here, $\calP_1(s-1)$ corresponds to $\sigma_1$. The chain of spokes immediately before $R_1$ in $\calP_1$ is of the form $\sigma_1^{(1)} \prec \sigma_2^{(1)} \prec \cdots \prec \sigma_m^{(1)} \prec \sigma_1^{(2)} \prec \cdots \prec \sigma_k^{(w)}$ for some $w \geq 1$ and $k \in [m]$. Let $\rho \in T$ correspond to the element immediately before $\sigma_k^{(w)}$ in $\calP_1$, if it exists.  As in previous cases, we let the restriction of $I_1$ and $I_2$ to the chain of spokes and loop in $\calP_1$ and $\calP_2$ be $\langle \sigma_y^{(z)} \rangle$ and $\langle \sigma_x \rangle$ respectively. If $I_1$ is not supported on the chain of spokes, it will be convenient to set $\sigma_y^{(z)} := \sigma_m^{(0)}$.

We again use the same sets $A_1,A_2$, and $A_3$ and the definition of $\Phi$ as in the previous case. We will define 4 more sets $A_4,A_5,A_6$, and $B$. We define these such that $A_4 \cup A_5 \cup A_6 \cup B$ consists of all sets such that  $R_1 \cap I_1 = \emptyset$; $R_2 \subseteq I_2$; and, $\calP_1(t+1) \in I_1$ only if $\calP_2(t'+1) \in I_2$. For all three new sets, our bijection will send $R_2$ to $R_3$ and will send elements outside $R_i$, the loop, and the chain of spokes to their corresponding elements in $\calP_3$ or $\calP_4$.

\begin{enumerate}
    \item[4.] Let $A_4$ consist of pairs satisfying the above condition and such that $y \neq m$ and if $\sigma_y^{(z)} \succ \sigma_m^{(w-1)}$, then $\sigma_x \preceq \sigma_k$ with equality only if $\rho \in I_1$. Given $(I_1,I_2) \in A_4$, $\Phi$ sends $(\langle \sigma_y^{(z)} \rangle,\langle \sigma_x \rangle)$ to $(\langle \sigma_x^{(z)} \rangle, \langle \sigma_y \rangle)$. As before, we interpret $\langle \sigma_x^{(z)} \rangle = \langle \sigma_m^{(z-1)} \rangle$ if $\sigma_x$ does not exist. The set $\Phi(A_4)$ is all pairs $(I_3,I_4)$ such that $R_3 \subseteq I_3$; $R_4 \cap I_4 = \emptyset$; $\calP_1(t+1) \in I_4$ only if $\calP_2(t'+1) \in I_3$; $\sigma_1 \in I_4$; and, if $z = w$, then $x \leq k$, with equality only if $\rho \in I_3$. 
    \item[5.] Let $A_5$ consist of pairs satisfying the above condition and such that $y = m$ (including $\sigma_m^{(0)}$) and if $z = w$, then $x \leq k$ with equality only if $\rho \in I_1$. Given $(I_1,I_2) \in A_5$, $\Phi$ sends $(\langle \sigma_m^{(z)} \rangle,\langle \sigma_x \rangle)$ to $(\langle \sigma_x^{(z+1)} \rangle, \emptyset)$. The set $\Phi(A_5)$ is all pairs $(I_3,I_4)$ such that $R_3 \subseteq I_3$; $R_4 \cap I_4 = \emptyset$; $\calP_1(t+1) \in I_4$ only if $\calP_2(t'+1) \in I_3$; and, $\sigma_1 \notin I_4$. 
    \item[6.] Let $A_6$ consist of pairs satisfying the above condition and such that $z = w$ and $x \geq k$ with equality only if $\rho \notin I_3$. Given $(I_1,I_2) \in A_6$, $\Phi$ sends $(\langle \sigma_y^{(z)} \rangle,\langle \sigma_x \rangle)$ to $(\langle \sigma_y^{(z)} \rangle,\langle \sigma_x \rangle)$. The set $\Phi(A_6)$ is all pairs $(I_3,I_4)$ such that $R_3 \subseteq I_3$; $R_4 \cap I_4 = \emptyset$; $\calP_1(t+1) \in I_4$ only if $\calP_2(t'+1) \in I_3$; $z = w$; and, $x \geq k$ with equality only if $\rho \notin I_3$.
    \item[7.] Let $B$ be the set of $(I_1,I_2)$ such that $ \sigma_y^{(z)} \succeq \sigma_m^{(w-1)}$; $\sigma_x \succeq \sigma_k$; $\calP_1(t+1) \in I_1$ only if $\calP_2(t'+1) \in I_2$; $\rho \in I_1$ only if $\sigma_{k+1} \in I_2$; and, $\sigma_1^{(w)} \in I_1$ only if $\sigma_m \in I_2$. We interpret $\langle \sigma_m^{(0)}\rangle = \emptyset$, and if $k = m$, the condition ``$\rho \in I_1$ only if $\sigma_{k+1} \in I_2$'' is replaced with ``$\rho \in I_1$ only if $\sigma_1^{(w)} \in I_1$''. Given $(I_1,I_2) \in B$, the set $(I_1 \backslash \langle \sigma_m^{(w-1)} \rangle) \cup (I_2 \backslash (\langle \sigma_k \rangle \cup R_2))$ can naturally be partitioned into elements of $J(\calP_5) \times J(\calP_6)$, and this forms a bijective map. 
\end{enumerate}

We turn to the $\gb$-vectors, focusing on the case where $\rho$ exists and $\sigma_k^{(w)} \neq \sigma_1^{(1)}$; one can make small adjustments if one or both of these statements is/are not true.  Let our specific element $\rho$ from be poset element $ \calP_1(b)$. Let $\gb_1'$ concern the contributions of $\calP_1[<b]$ to $\gb_1$, and define $\gb_1'',\gb_2''$ and $\gb_R$ as in the previous case. In particular, we do not have a term $\gb_2'$ as there is no portion of $\calP_2$ before the region we are focusing on. We have \begin{align*}
\gb_1 = (\delta_{\calP_1(b) \succ \calP_1(b-1)}-1) \eb_{\calP_1(b)} + \eb_{\sigma_k} - \eb_{\sigma_1} + \delta_{\calP_1(s) \succ \calP_1(s+1)} \eb_{\calP_1(s)}  + \delta_{\calP_1(t) \succ \calP_1(t-1)} \eb_{\calP_1(t)} \\ + (\delta_{\calP_1(t+1) \succ \calP_1(t+2)} - 1) \eb_{\calP_1(t+1)} + \gb_1' + \gb_1''+ \gb_R
\end{align*}
and 
\[
\gb_2 = -\eb_{\sigma_1} + \delta_{\calP_2(1) \succ \calP_2(2)} \eb_{\calP_2(1)}   - \delta_{\calP_2(t') \prec \calP_2(t'-1)} \eb_{\calP_2(t')} + \delta_{\calP_2(t'+1) \succ \calP_2(t'+2)}\eb_{\calP_2(t'+1)} + \gb_2'' + \gb_R.
\]

Showing $\gb_1 + \gb_2 = \gb_3 + \gb_4$ is akin to the previous case. We turn to comparing $\gb_1 + \gb_2$ with $\gb_5 + \gb_6$. Recall the monomial $Y_{S_s}$ is not just 1 in this case, so we again use Lemma \ref{lem:y-hat} to compute its $x$-degree vector, \begin{align*}
\degx(\hat{Y}_R\hat{Y}_{S_s}) &= - \eb_{\sigma_{k+1}} + \eb_{\calP_1(b)} + \eb_{\calP_1(t+1)} - \eb_{\calP_2(t'+1)} - \eb_{\sigma_k} + 2\eb_{\sigma_1} -2\delta_{\calP_1(s) \succ \calP_1(s+1)} \eb_{\calP_1(s)} \\
&+ (\delta_{\calP_1(t) \prec \calP_1(t-1)} - \delta_{\calP_1(t) \succ \calP_1(t-1)}) \eb_{\calP_1(t)}-2 \gb_R.
\end{align*}

Now, similar to the previous case, we have $\gb_5 = -\eb_{\sigma_{k+1}} + \delta_{\calP_1(b) \succ \calP_1(b-1)} \eb_{\calP_1(b)} + \gb_1'$, and $\gb_6$ is identical to the previous case. We can conclude $\gb_1 + \gb_2 + \degx(\hat{Y}_R\hat{Y}_{S_s}) = \gb_5 + \gb_6$. 

\textbf{All other cases)}

If we are in the case where either $t = n_1$ or $t' = n_2$ and $s,s'$ are both larger than 1, we can switch the chronological ordering of both posets and use the previous cases. 

If we are in a case where $s$ or $s'$ is 1 and $t = n_1$ or $t' = n_2$, we can use the ideas from the previous cases in two different ways to construct similar bijections with a finer partitioning of $J(\calP_1) \times J(\calP_2)$. 

\end{proof}

\subsection{Type 1}\label{subsec:Type1}
Here we consider an intersection between two arcs $\gamma_1$ and $\gamma_2$ which occurs before the first crossing between $\gamma_2$ and $T$ or dually after the last crossing but between the first crossing and last crossings of $\gamma_1$ and $T$. As a convention, we will choose an orientation of $\gamma_2$ so that the intersection is before the first crossing of $\gamma_2$ and $T$. We will assume $s(\gamma_2)$ is notched; if this is not the case, then the skein relation and proofs will be the same as in the unpunctured case.  An example is shown in Figure \ref{fig:Type1Grafting}.

\begin{figure}
\captionsetup{width=0.9\textwidth}
\centering
\begin{tikzpicture}
\draw(0,0) to node[right]{$\sigma_1$} (1,-2);
\draw(0,0) to node[left]{$\sigma_m$} (-1,-2);
\draw(0,0) to node[left, yshift = 14pt, xshift = 11pt]{$\sigma_{\ell+1}$} (1,2);
\draw(0,0) to node[right]{$\sigma_\ell$}(1.9,1.2);
\draw(1,-2) to node[below, xshift = -13pt]{$\calP_2(1)$} (-1,-2);
\draw(0,0) to node[right, yshift = 10pt, xshift = -8pt]{$\sigma_{k-1}$} (-1,2);
\draw(0,0) to node[left]{$\sigma_{k}$} (-1.9,1.2);
\draw(-1,2) to node[left]{$\calP_1(a)$} (-1.9,1.2);
\draw(1,2) to node[right]{$\calP_1(b)$} (1.9,1.2);
\node[] at (1.2,0){$\vdots$};
\node[] at (-1.2,0){$\vdots$};
\node[] at (0,0.8){$\cdots$};
\draw[thick, orange,->](0,0)  to node[right]{$\gamma_2$} (0,-2.5);
\node[scale = 0.7, orange] at (0,-0.4){$\bowtie$};
\draw[thick, Dgreen] (-1.4,2) to [out = -45, in = 90] (-1,0);
\node[Dgreen] at (-0.75,0.8){$\gamma_1$};
\draw[thick, Dgreen,->] (-1,0) to [out = 270, in = 270, looseness = 1.4](0.9,0);
\draw[thick, Dgreen](0.9,0) to [out = 90, in = 90, looseness = 1.4](-0.8,0);
\draw[thick, Dgreen] (-.8,0) to [out = 270, in = 270, looseness = 1.4](0.7,0);
\draw[thick, Dgreen] (0.7,0) to [out = 90, in = 225] (1.4,2);
\end{tikzpicture}
\caption{An illustration of the type of intersections we study in Section \ref{subsec:Type1} as well as the notation we will use. We can have $k \geq \ell$ as well and it is possible that $\gamma_1$ does not intersect the arcs labeled $\calP_1(a)$ and $\calP_1(b)$. }\label{fig:Type1Grafting}
\end{figure}
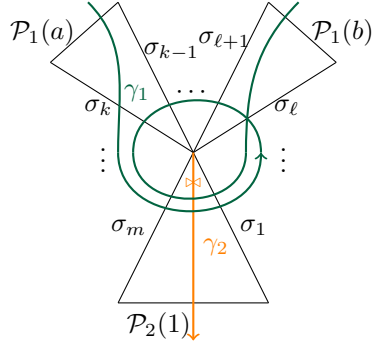

Let $p = s(\gamma_2)$, and suppose the spokes in $T$ which are incident to $p$ are $\sigma_1,\ldots,\sigma_m$, indexed in counterclockwise order and such that the first triangle that $\gamma_2$ passes through is bounded by $\sigma_1$ and $\sigma_m$. Since $\gamma_1$ crosses $\gamma_2$ in a Type 1 intersection, $\gamma_1$ must cross a consecutive sequence of at least two spokes incident to $p$, which, up to the choice of orientation, is of the form $\sigma_k,\ldots,\sigma_m,\sigma_1,\ldots \sigma_\ell$, for some $1 \leq k, \ell \leq m$.

It is possible that $\gamma_1$ winds around this puncture multiple times, thus crossing $\gamma_2$ in multiple Type 1 intersections. Let $w \geq 2$ be such that $\gamma_1$ and $\gamma_2$ intersect $w-1$ times in the first triangle $\gamma_2$ passes through. We let $a < b$ be such that $\calP_{\gamma_1}(a)$ and $\calP_{\gamma_1}(b)$ correspond to the arcs from $T$ which $\gamma_1$ crosses immediately before and after this sequence of crossings with spokes at $p$ respectively. However, these values might not exist. 

We will use indexing for $\calP_1:= \calP_{\gamma_1}$ similar to that depicted in Figure \ref{fig:Indexing}. So the sequence of spokes crossed will produce a chain \[
\sigma_k^{(1)} \prec \sigma_{k+1}^{(1)} \prec \cdots \prec \sigma_m^{(1)} \prec \sigma_1^{(2)} \prec \cdots \prec \sigma_{\ell}^{(w)}.
\]

We also can see from Figure \ref{fig:Type1Grafting} that $\sigma_k^{(1)} \prec \calP_1(a)$ and $\sigma_{\ell}^{(w)} \succ \calP_1(b)$.

Label the Type 1 intersection points between $\gamma_1$ and $\gamma_2$ as $1,2,\ldots,w-1$, which are ordered with respect to the orientation of $\gamma_1$. The resolution of $\gamma_1$ and $\gamma_2$ at a given intersection point consists of two sets of size 2, $\{\gamma_3,\gamma_4\} \cup \{\gamma_5, \gamma_6\}$. We index these so that $\gamma_3$ is the result of following $\gamma_1$ with opposite orientation from $t(\gamma_1)$ to the chosen intersection point and then following $\gamma_2$ with positive orientation, which we denote $\gamma_1^{-1}\circ \gamma_2$. Then, $\gamma_4 = \gamma_1 \circ \gamma_2^{-1}$, $\gamma_5 = \gamma_1 \circ \gamma_2$ and $\gamma_6 = \gamma_1^{-1} \circ \gamma_2^{-1}$. The arcs $\gamma_4$ and $\gamma_6$ are homotopic to arcs which do not cross any spokes since they each have an endpoint at $p$. The posets for these arcs are depicted in Table \ref{tab:indexingType1}, whereas before we set $\calP_i:= \calP_{\gamma_i}$.

\begin{prop}\label{prop:GraftingType1}
 Given $\gamma_1$ and $\gamma_2$ as above, when we resolve at the $r$-th intersection point, we have that \[
x_{\gamma_1}x_{\gamma_2} = Y_{\preceq \sigma_m^{(r)}} x_{\gamma_3}x_{\gamma_4} + x_{\gamma_5}x_{\gamma_6}
 \]
 and 
 \[
x_{\gamma_1}x_{\gamma_2^{(p)}} =  x_{\gamma_3}x_{\gamma_4^{(p)}} + Y_{\succeq \sigma_1^{(r+1)}} x_{\gamma_5}x_{\gamma_6^{(p)}}
\]
where \[
Y_{\preceq \sigma_m^{(r)}} = \prod_{\calP_1(i) \preceq \sigma_m^{(r)}} y_{\calP_1(i)} \qquad Y_{\succeq \sigma_1^{(r+1)}} = \prod_{\calP_1(i) \succeq \sigma_1^{(r+1)}} y_{\calP_1(i)}.
 \]
\end{prop}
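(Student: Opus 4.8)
The plan is to follow the general format of Subsection~\ref{subsec:proofStrategy}, adapted to the Type~1 situation described in Section~\ref{subsec:Type1}. The crucial structural feature here is that two of the four output curves, namely $\gamma_4$ and $\gamma_6$, are homotopic to arcs with an endpoint at $p$ that cross no spoke at $p$; so their posets are obtained from the corresponding ``composed'' poset by deleting the entire chain of spokes $\sigma_k^{(1)} \prec \cdots \prec \sigma_\ell^{(w)}$ (in the plain case) or by converting it into a loop (in the notched case). I would first fix compatible chronological orderings on $\calP_1$, $\calP_2$, and the output posets $\calP_3,\dots,\calP_6$ as in Figure~\ref{fig:Indexing}, using the spoke-indexing $\sigma_j^{(z)}$ established in the proof of Proposition~\ref{prop:GraftingType0}, Case~iii. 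Then I would identify, for a fixed resolution point $r$, which portion of the chain of spokes belongs to each output poset: $\gamma_3$ retains $\sigma_k^{(1)} \prec \cdots \prec \sigma_m^{(r)}$ (the part up through intersection $r$) and then switches onto $\calP_2$, while $\gamma_5$ retains $\sigma_1^{(r+1)} \prec \cdots \prec \sigma_\ell^{(w)}$ (the part after intersection $r$) together with $\calP_2$; this is exactly the dichotomy that produces the two $y$-monomials $Y_{\preceq \sigma_m^{(r)}}$ and $Y_{\succeq \sigma_1^{(r+1)}}$.

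The combinatorial heart is to partition $J(\calP_1)\times J(\calP_2)$ into $A \cup B$ with $\emptyset \in A$, build a weight-preserving bijection $\Phi|_A : A \to J(\calP_3)\times J(\calP_4)$ and a bijection $\Phi|_B : B \to J(\calP_5)\times J(\calP_6)$ that scales weights by a fixed monomial $Z$, and then check the $\gb$-vector identities $\gb_1+\gb_2 = \gb_3+\gb_4$ and $\gb_1+\gb_2+\degx(Z) = \gb_5+\gb_6$. Concretely, since $\calP_2$ sits on the ``bottom'' of the chain of spokes in $\calP_1$, an order ideal $I_1$ of $\calP_1$ restricted to the spoke-chain is a principal ideal $\langle \sigma_y^{(z)}\rangle$, and $I_2$ ranges over $J(\calP_2)$ freely; I would cut at whether $\sigma_y^{(z)}$ lies $\preceq \sigma_m^{(r)}$ or $\succeq \sigma_1^{(r+1)}$, with a switching-position argument (Lemma~\ref{lem:SwitchingExists}) handling the interface between the spoke-portion and the copy of $\calP_2$. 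This is structurally the same mechanism as the sets $A_4,A_5,A_6,B$ in Case~iii/iv of Proposition~\ref{prop:GraftingType0}; in fact, because a Type~1 crossing is precisely the degenerate Type~0 crossing in which the overlap region $R$ is empty (or, equivalently, is ``grafting'' in the sense of \cite{snakegraphcalculus1}), much of the bookkeeping can be imported verbatim, with $R_1 = R_2 = \emptyset$. For the two displayed relations I would run the argument twice: once with $\gamma_2$ plain (so $\gamma_4,\gamma_6$ are plain arcs at $p$, forcing the nontrivial monomial onto the $\gamma_3\gamma_4$ term, giving $Y_{\preceq\sigma_m^{(r)}}$) and once with $\gamma_2$ notched at $p$ (so $\calP_4,\calP_6$ acquire a loop at $p$, shifting the monomial to the $\gamma_5\gamma_6$ term and, via Corollary~\ref{cor:y-hat_spokes}, turning the relevant product of $\hat y$'s over the spoke-chain into the honest product $Y_{\succeq\sigma_1^{(r+1)}}$ of $y$'s).

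For the $\gb$-vector verification I would write out $\gb_1$ with $\gb_1'$, $\gb_1''$ denoting the contributions of $\calP_1$ before $\sigma_k^{(1)}$ and after $\sigma_\ell^{(w)}$, isolate the boundary terms at $\calP_1(a),\calP_1(b)$ and at the spoke endpoints, and similarly decompose $\gb_2$; then $\gb_3,\gb_4$ differ from $\gb_1,\gb_2$ only in how the two ``tails'' past the resolution point are reattached, so the identity $\gb_1+\gb_2=\gb_3+\gb_4$ reduces to the fact that the swapped arcs correspond to the same elements of $T$. The identity involving $\degx(Z)$ would use Lemma~\ref{lem:y-hat} and Corollary~\ref{cor:y-hat_spokes} to compute $\degx(\hat Y_{\preceq\sigma_m^{(r)}})$ (resp.\ $\degx(\hat Y_{\succeq\sigma_1^{(r+1)}})$) as an explicit telescoping sum of $\eb$'s over the spoke-chain, and I expect the terms to cancel exactly against the difference $\gb_5+\gb_6 - \gb_1 - \gb_2$. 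The main obstacle I anticipate is not any single computation but the careful handedness/orientation bookkeeping: getting the clockwise-vs-counterclockwise conventions for the spokes right so that $R$ really is empty, that the monomial attaches to the correct term in each of the two displayed relations, and that the degenerate cases (when $\calP_1(a)$ or $\calP_1(b)$ does not exist, i.e.\ an endpoint of $\gamma_1$ is near the crossing, or when $k=\ell$, or when $\gamma_1$ also winds enough to make $\gamma_5$ or $\gamma_6$ itself a curve touching $p$) are handled with the decorated-poset device rather than breaking the bijection.
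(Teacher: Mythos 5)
Your overall architecture matches the paper's: partition $J(\calP_1)\times J(\calP_2)$, build a weight-preserving bijection onto one product of lattices and a bijection scaling weights by a fixed monomial onto the other, then verify the two $\gb$-vector identities via Lemma~\ref{lem:y-hat} and Corollary~\ref{cor:y-hat_spokes}. However, there is a concrete error in your identification of the resolved posets that would derail the construction. Since $\gamma_3=\gamma_1^{-1}\circ\gamma_2$ follows $\gamma_1$ \emph{backwards} from $t(\gamma_1)$ to the $r$-th intersection point, $\calP_3$ retains the spokes $\sigma_1^{(r+1)},\dots,\sigma_\ell^{(w)}$ (the part of the chain \emph{after} the $r$-th crossing) glued onto $\calP_2(1)$, while $\calP_5$ retains $\sigma_k^{(1)},\dots,\sigma_m^{(r)}$ glued onto $\calP_2(1)$ --- exactly the opposite of what you assert. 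This is not cosmetic: in the framework of Subsection~\ref{subsec:proofStrategy}, the monomial $Y_{\preceq\sigma_m^{(r)}}$ (resp.\ $Y_{\succeq\sigma_1^{(r+1)}}$) records the content that every pair in the scaled part of the partition must contain and that its image must lack, so it attaches to the term whose posets do \emph{not} contain those spokes. With your assignment the monomials would land on the wrong terms of the relation.

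Second, the claim that the Type~0 bookkeeping "can be imported verbatim with $R_1=R_2=\emptyset$" overstates the case. A Type~1 crossing is grafting rather than a degenerate overlap: with empty overlaps the switching-position machinery of the sets $A_1$--$A_3$ in Proposition~\ref{prop:GraftingType0} has nothing to act on, and the bijection instead has to be built around the pair consisting of the maximal element $\sigma_y^{(z)}$ of $I_1$ on the spoke chain and the maximal element $\sigma_x$ of $I_2$ on its loop, using a rotation map of the shape $(\langle\sigma_y^{(z)}\rangle,\langle\sigma_x\rangle)\mapsto(\langle\sigma_x^{(r+z-1)}\rangle,\langle\sigma_y\rangle)$ together with a case analysis on where $\sigma_y^{(z)}$ sits relative to the $r$-th winding. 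This resembles the sets $A_4$--$A_6$ of the Type~0 Case~iii argument but is not the same partition; moreover the case $r=1$ with $\ell<k-1$ requires a separate partition, and when $\gamma_1$ is notched at an endpoint adjacent to the spoke chain the posets acquire extra loops whose elements must be threaded through all the conditions (this is not handled by the decorated-poset device, which only enters when an underlying plain arc lies in $T$). So you have identified the correct strategy, but these two points are gaps that must be repaired before the proof goes through.
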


\begin{table}
\captionsetup{width=0.9\textwidth}
\centering
\begin{tabular}{|c|c|}\hline
    \highlight{$\calP_1$} & \highlight{$\calP_2$} \\
    \begin{tikzpicture}[scale = 1]
    \node[] at (-0.75,0){$\cdots$};
     \node[](alpha) at (0,0){$\calP_1(a)$};
     \node[](k0) at (1,-1){$\sigma_k^{(1)}$};
     \node[](dots1) at (2,0.1) {$\iddots$};
     \node[](ellw) at (3,1){$\sigma_\ell^{(w)}$};
     \node[](beta) at (4,0){$\calP_1(b)$};
      \node[] at (4.75,0){$\cdots$};
     \draw(alpha)--(k0);
     \draw (k0) -- (dots1);
     \draw (dots1) -- (ellw);
     \draw(ellw) -- (beta);
    \end{tikzpicture} &  
    \begin{tikzpicture}[scale = 1]
     \node[](r) at (0,0) {$\sigma_1$};
     \node[] (r-1) at (1,1){$\sigma_2$};
     \node[] (dots) at (2,2) {$\iddots$};
     \node[] (1) at (3,3){$\sigma_m$};
     \node[] (delta) at (4,1.5){$\calP_2(1)$};
      \node[] at (4.75,1.5){$\cdots$};
    \draw (r) -- (r-1);
    \draw (r-1) -- (dots);
    \draw (dots) -- (1);
    \draw(1) -- (delta);
    \draw(r) -- (delta);
    \end{tikzpicture}\\\hline
    \highlight{$\calP_{3}$} & \highlight{$\calP_{4}$} \\
     \begin{tikzpicture}[scale = 1]
      \node[] at (-0.75,0){$\cdots$};
     \node[](delta) at (0,0){$\calP_2(1)$};
     \node[](rw-m) at (1,-1){$\sigma_1^{(r+1)}$};
     \node[] (dots3) at (2,0){$\iddots$};
    \node[](ellw) at (3,1){$\sigma_\ell^{(w)}$};
     \node[](beta) at (4,0){$\calP_1(b)$};
      \node[] at (4.75,0){$\cdots$};
    \draw(ellw) -- (beta);
     \draw (dots3) -- (ellw);
     \draw(rw-m) --(dots3);
     \draw(delta) -- (rw-m);
     \end{tikzpicture}
     &  
      \begin{tikzpicture}[scale = 1]
     \node[](r) at (0,0) {$\sigma_k$};
     \node[] (r-1) at (1,1){$\sigma_{k+1}$};
     \node[] (dots) at (2,2) {$\iddots$};
     \node[] (1) at (3,3){$\sigma_{k-1}$};
     \node[] (delta) at (4,1.5){$\calP_1(a)$};
      \node[] at (4.75,1.5){$\cdots$};
    \draw (r) -- (r-1);
    \draw (r-1) -- (dots);
    \draw (dots) -- (1);
    \draw(1) -- (delta);
    \draw(r) -- (delta);
    \end{tikzpicture}\\\hline
    \highlight{$\calP_5$} & \highlight{$\calP_6$} \\
         \begin{tikzpicture}[scale = 1]
     \node[] at (-0.75,0){$\cdots$};
     \node[](delta) at (0,0){$\calP_1(a)$};
     \node[](rw-m) at (1,-1){$\sigma_k^{(1)}$};
     \node[] (dots3) at (2,0){$\iddots$};
    \node[](ellw) at (3,1){$\sigma_m^{(r)}$};
     \node[](beta) at (4,0){$\calP_2(1)$};
      \node[] at (4.75,0){$\cdots$};
    \draw(ellw) -- (beta);
     \draw (dots3) -- (ellw);
    \draw (dots3) -- (rw-m);
     \draw(delta) -- (rw-m);
     \end{tikzpicture} &  
      \begin{tikzpicture}[scale = 1]
     \node[](r) at (0,0) {$\sigma_{\ell+1}$};
     \node[] (r-1) at (1,1){$\sigma_{\ell+2}$};
     \node[] (dots) at (2,2) {$\iddots$};
     \node[] (1) at (3,3){$\sigma_\ell$};
     \node[] (delta) at (4,1.5){$\calP_1(b)$};
      \node[] at (4.75,1.5){$\cdots$};
    \draw (r) -- (r-1);
    \draw (r-1) -- (dots);
    \draw (dots) -- (1);
    \draw(1) -- (delta);
    \draw(r) -- (delta);
    \end{tikzpicture}\\\hline
\end{tabular}
\caption{We depict the indexing used in the proof of Proposition \ref{prop:GraftingType1}, with $\calP_i:= \calP_{\gamma_i}$. As in Figure \ref{fig:Indexing}, the subscripts increase modulo $m$ and the superscripts increase between $\sigma_m$ and $\sigma_1$. If $\gamma_2$ is plain at $s(\gamma)$, as in the first part of Proposition \ref{prop:GraftingType1}, then the posets for the resolution would be the result of removing the loops in $\calP_2,\calP_4,$ and $\calP_6$}\label{tab:indexingType1}
\end{table}

\begin{proof}

The first result can be found in \cite{snakegraphcalculus1, musiker2013}, so we  assume $s(\gamma_2)$ is notched throughout. 

\textbf{Case i) $\calP_1(a)$ and $\calP_1(b)$ do not lie in loops.}
 
First, we assume we do not have that both $s(\gamma_1)$ is notched and the first crossing between $\gamma_1$ and $T$ is in this chain of spokes which $\gamma_1$ crosses as it crosses $\gamma_2$, nor do we have the parallel configuration at $t(\gamma_1)$. This means that if the elements $\calP_1(a)$ and $\calP_1(b)$ in Table \ref{tab:indexingType1} exist, they are elements of $\calP_1^0$.

Any element of $\calP_1$ and $\calP_2$ outside the chain of spokes and loop will correspond to a unique element in $\calP_{\gamma_3} \cup \calP_{\gamma_4}$ and also in $\calP_5\cup \calP_6$. Therefore, we will focus on how $\Phi$ acts on the elements in the chain of spokes and loop in each $\calP_i$, and its action on the remaining elements will always be the natural one from this correspondence. 

Given $(I_1,I_2) \in J(\calP_1) \times J(\calP_2)$, let $\sigma_y^{(z)}$  be the largest value in $I_1$ in the chain of spokes, if it exists. Similarly, let $\sigma_x$ be the largest value in $I_2$ in the chain of spokes. By abuse of notation, we will also use $\sigma_y^{(z)}$ to denote the maximal elements on the chain of spokes in $\calP_3$ and $\calP_5$ and similarly for $\sigma_x$ in $\calP_4$ and $\calP_6$. 

We can write any $I_1 \in J(\calP_1)$ as $\langle \sigma_y^{(z)} \rangle \cup I_1'$, where even if $\sigma_y^{(z)} = \sigma_{\ell}^{(w)}$, we include in $I_1'$ everything not in the chain of spokes. We similarly can write $I_2 = \langle \sigma_x \rangle \cup I_2'$. The elements in $I_1'$ and $I_2'$ have clear images in the other posets, so we will ignore them when we define the bijection. 

We interpret any statement ``$\sigma_y^{(z)} \succeq \sigma_{k-1}^{(0)}$'' as always true. If $\calP_1(a)$ or $\calP_1(b)$ do not exist, then we ignore any statement regarding these. 

\textbf{Subcase) $r > 1$.}

\begin{enumerate}
\item Let $A_1$ denote the set of $(I_1,I_2) \in J(\calP_1) \times J(\calP_2)$ such that $\sigma_y^{(z)} \prec \sigma_{k-1}^{(w-r+1)}$;  we do not have both $\calP_1(a) \in I_1$ and $y = k-1$; and,  if $\sigma_y^{(z)} \succeq \sigma_{k-1}^{(w-r)}$, then $\sigma_x \preceq \sigma_\ell$, with equality only if $\calP_1(b) \in I_1$. Given $(I_1,I_2) \in A_1$, we set \[
    \Phi(\langle \sigma_y^{(z)} \rangle, \langle \sigma_x \rangle) = \begin{cases}
    (\langle \sigma_x^{(r+z-1)} \rangle, \langle \sigma_y \rangle) & y < k-1\\
     (\langle \sigma_x^{(r+z)} \rangle, \emptyset) & y = k-1 \\
    (\langle \sigma_x^{(r+z)} \rangle, \langle \sigma_y \rangle) & y > k-1\\
    \end{cases}
    \]
    where if $\sigma_x$ does not exist, we set $\sigma_x^{(z)}:= \sigma_{k-1}^{(z)}$. The set $\Phi(A_1)$ is all $(I_3,I_4) \in J(\calP_3) \times J(\calP_4)$ such that $\sigma_{k-1} \notin I_4$.
\item $A_2$ is the set of  $(I_1,I_2) \in J(\calP_1) \times J(\calP_2)$ such that $\calP_1(a) \in I_1$; $y = k-1$; and, $z \leq w-r+1$ where if $z = w-r+1$, then $\sigma_x \preceq \sigma_\ell$, with equality only if $\calP_1(b) \in I_1$. Given $(I_1,I_2) \in A_2$, we set $\Phi(\langle \sigma_{k-1}^{(z)} \rangle, \langle \sigma_x \rangle) = (\langle \sigma_x^{(r+z-1)} \rangle, \langle  \sigma_{k-1} \rangle)$ where again $\sigma_x^{(r+z-1)}:= \sigma_m^{(r+z-2)}$ if $\sigma_x$ does not exist and we also set $\langle \sigma_m^{(r)} \rangle:= \emptyset$ in $\calP_3$. The set $\Phi(A_2)$ is all $(I_3,I_4)$ such that $\sigma_{k-1} \in I_4$. 
\item $B_1$ is the set of $(I_1,I_2) \in J(\calP_1) \times J(\calP_2)$ such that $\sigma_y^{(z)} \succeq \sigma_{k-1}^{(w-r+1)}$ with strict inequality if $\calP_1(a) \in I_1$ and $\sigma_x \preceq \sigma_\ell$, with equality only if $\calP_1(b) \in I_1$. Given $(I_1,I_2) \in B_1$, we set $\Phi(\langle \sigma_y^{(z)} \rangle, \langle \sigma_x \rangle) = (\langle \sigma_y^{(z-(w-r))} \rangle , \langle \sigma_x \rangle)$ if $\sigma_x$ exists and otherwise set the image to $(\langle \sigma_y^{(z-(w-r))} \rangle , \langle \sigma_m \rangle)$. Notice that if $\ell \leq k-1$, the image of $(I_1,I_2)$ has the same content as $(I_1 \backslash (\langle \sigma_{k-1}^{(w-r)} \rangle \cup \{ \sigma_1^{(w-r+1)},\ldots,\sigma_\ell^{(w-r+1)} \}))\cup I_2$ and similarly if $\ell > k-1$, the image has the same content as $(I_1 \backslash (\langle \sigma_{\ell}^{(w-r)} \rangle \cup \{ \sigma_1^{(w-r+1)},\ldots,\sigma_{k-1}^{(w-r+1)}))\cup I_2$.  The set $\Phi(B_1)$ consists of all pairs $(I_5,I_6) \in J(\calP_5) \times J(\calP_6)$ such that $\sigma_y^{(z)} \preceq \sigma_\ell^{(r)}$ with equality only if $\calP_1(b) \in I_6$ and $\sigma_x \succeq \sigma_m$ with strict inequality if $\calP_2(1) \in I_5$.
\item $B_2$ is the set of $(I_1,I_2) \in J(\calP_1) \times J(\calP_2)$ such that $\sigma_y^{(z)} \succeq \sigma_{k-1}^{(w-r)}$ with strict inequality if $\calP_1(a) \in I_1$; $\sigma_y^{(z)} \preceq \sigma_\ell^{(w-1)}$ with equality only if $\calP_1(b) \in I_1$; and, $\sigma_x \succeq \sigma_\ell$, with strict inequality if $\calP_1(b) \in I_1$.  Given $(I_1,I_2) \in B_2$, we set the image to be $(\langle \sigma_y^{(z-(w-r))}\rangle, \langle \sigma_x \rangle)$. Notice that the image of $(I_1,I_2)$ has the same content as $(I_1 \backslash \langle \sigma_{k-1}^{(w-r)} \rangle) \cup (I_2 \backslash \langle \sigma_\ell \rangle)$. The set $\Phi(B_2)$ consists of all pairs $(I_5,I_6)$ such that $\sigma_y^{(z)} \preceq \sigma_\ell^{(r)}$ with equality only if $\calP_1(b) \in I_6$ and $\sigma_x \preceq \sigma_m$ with equality only if $\calP_2(1) \in I_5$.
\item $B_3$ is the set of $(I_1,I_2) \in J(\calP_1) \times J(\calP_2)$ such that $\sigma_y^{(z)} \succeq \sigma_\ell^{(w-1)}$ with strict inequality if $\calP_1(b) \in I_1$ and $\sigma_x \succeq \sigma_\ell$ with strict inequality if $\calP_1(b) \in I_1$. Given $(I_1,I_2) \in B_3$, if $\sigma_y^{(z)} \succ \sigma_\ell^{(w-1)}$, we set the image to be $(\langle \sigma_x^{(r)} \rangle , \langle \sigma_y \rangle)$ and otherwise we set the image to be $(\langle \sigma_x^{(r)} \rangle , \emptyset)$. The content of the image of $(I_1,I_2)$ in this case is again the same as $(I_1 \backslash \langle \sigma_{k-1}^{(w-r)} \rangle) \cup (I_2 \backslash \langle \sigma_\ell \rangle)$. The set $\Phi(B_3)$ consists of pairs $(I_5,I_6)$ such that $\sigma_y^{(z)} \succeq \sigma_\ell^{(r)}$ with strict inequality if $\calP_1(b) \in I_6$.
\end{enumerate}

Now, let $r =1 $. If $k-1 \leq \ell$, then the same maps as above will work, so let $\ell < k-1$.  We will partition the complement of $A_1 \cup A_2$ in $J(\calP_1) \times J(\calP_2)$ now by $B_2^0 \cup B_3^0$, which are defined as follows. Set $B_2^0$ to be the set of pairs $(I_1,I_2)$ such that $\sigma_{k-1}^{(w-1)} \preceq \sigma_y^{(z)} \preceq \sigma_m^{(w-1)}$ where the second inequality is equality only if $\calP_2(1) \in I_2$, and $\sigma_\ell \preceq \sigma_x \prec \sigma_{k-1}$ where the first inequality is strict if $\calP_1(b) \in I_1$. Set $B_3^0$ to be the set of pairs $(I_1,I_2)$ such that $\sigma_y^{(z)} \succeq \sigma_{k-1}^{(w-1)}$ and $\sigma_x \succeq \sigma_k$. We set $\Phi\vert_{B_2^0}(\langle \sigma_y^{(z)} \rangle, \langle\sigma_x\rangle) = (\langle \sigma_y^{(0)} \rangle, \langle\sigma_x \rangle)$ and $\Phi\vert_{B_3^0}(\langle \sigma_y^{(z)} \rangle, \langle\sigma_x\rangle) = (\langle \sigma_x^{(0)} \rangle, \langle \sigma_y \rangle)$. The image of $B_2^0$ is pairs $(I_5,I_6)$ such that $\sigma_{k-1} \notin I_6$ and the image of $B_3^0$ is the complement. From this, one can construct a reverse map and observe that $\Phi: B_2^0 \cup B_3^0 \to J(\calP_5) \times J(\calP_6)$ is a bijection.

Let $\mathbf{g}_i := \mathbf{g}_{\gamma_i}$. We compare the quantities $\mathbf{g}_1 + \mathbf{g}_2$, $\mathbf{g}_3 + \mathbf{g}_4$, and $\mathbf{g}_5 + \mathbf{g}_6$. Recall that for a statement $X$, $\delta_X = 1$ if $X$ is true and otherwise $\delta_X = 0$, and if a statement involves an inequality, we require both objects in the comparison to exist for the statement to be true. We have the following $\gb$-vectors, where $\gb_i'$ denotes the contribution for the portion of $\calP_i$ which is not pictured in Table \ref{tab:indexingType1}. 

\begin{itemize}
    \item $\mathbf{g}_1 = -\eb_{\sigma_k} + \eb_{\sigma_\ell} - (1-\delta_{\calP_1(b)>\calP_1(b+1)}) \eb_{\calP_1(b)} + \delta_{\calP_1(a) > \calP_1(a-1)} \eb_{\calP_1(a)} + \gb_1'$
    \item $\mathbf{g}_2 = -\eb_{\sigma_1} + \delta_{\calP_2(1) > \calP_2(2)} \eb_{\calP_2(1)} + \gb_2'$
    \item $\mathbf{g}_3 = -\eb_{\sigma_1}+ \eb_{\sigma_\ell} - (1 - \delta_{\calP_1(b)>\calP_1(b+1)} ) \eb_{\calP_1(b)} + \delta_{\calP_2(1) > \calP_2(2)} \eb_{\calP_2(1)} + \gb_3'$.
    \item $\mathbf{g}_4 =   -\eb_{\sigma_k} + \delta_{\calP_1(a) > \calP_1(a-1)} \eb_{\calP_1(a)} + \gb_4'$.
    \item $\mathbf{g}_5 = -\eb_{\sigma_k} + \eb_{\sigma_m} + \delta_{\calP_1(a) > \calP_1(a-1)} \eb_{\calP_1(a)} - (1-\delta_{\calP_2(1) > \calP_2(2)}) \eb_{\calP_2(1)} + \gb_5'$
    \item $\mathbf{g}_6 = -\eb_{\sigma_{\ell+1}} + \delta_{\calP_1(b)>\calP_1(b+1)} \eb_{\calP_1(b)} + \gb_6'$
\end{itemize}

From the definition of the posets $\calP_i$, we can see that $\gb_1' + \gb_2' = \gb_3' + \gb_4' = \gb_5' + \gb_6'$. Thus, we immediately have $\mathbf{g}_1 + \mathbf{g}_2 = \mathbf{g}_3 + \mathbf{g}_4$. We also can see that \[
(\mathbf{g}_5 + \mathbf{g}_6) - (\mathbf{g}_1 + \mathbf{g}_2) = \eb_{\sigma_1} + \eb_{\sigma_m} -\eb_{\sigma_\ell} - \eb_{\sigma_{\ell+1}} + \eb_{\calP_1(b)} - \eb_{\calP_2(1)},
\]
and by Corollary \ref{cor:y-hat_spokes}, this vector is exactly $\deg_{\boldsymbol{x}}(\hat{Y}_{\succeq \sigma_1^{(r+1)}}) = \deg_{\boldsymbol{x}}(\hat{Y}_p^{w-r} \hat{y}_{\sigma_1} \hat{y}_{\sigma_1}\cdots \hat{y}_{\sigma_\ell}) =\deg_{\boldsymbol{x}}( \hat{y}_{\sigma_1} \hat{y}_{\sigma_1}\cdots \hat{y}_{\sigma_\ell})$. 

Now suppose $\gamma_1$ is notched at $s(\gamma_1) = q_s$ and $\gamma_1$ has no crossings with $T$ before its chain of crossings with spokes. Then, by our choice of labeling, $s(\gamma_4)$ and $s(\gamma_5)$ are also notched at $q_s$, and $\sigma_{k-1}$ is also a spoke incident to $q_s$. Let the spokes at $q$ be $\eta_0 = \sigma_{k-1},\ldots,\eta_h$, labeled in counterclockwise order. This implies that  $\eta_h$ forms a triangle with $\sigma_{k-1}$ and $\sigma_{k}$. In $\calP_1$ and $\calP_5$, we update the poset by replacing the cover relation $\calP_1(a) \succ \sigma_k^{(1)}$ with the portion of a Hasse diagram shown below to the left.
\begin{center}
\begin{tabular}{cc}
\begin{tikzpicture}
\node[] (k0) at (0,0){$\sigma_{k}^{(0)}$};
\node[] (a1) at (-1,1){$\eta_h$};
\node[rotate=10] (dots) at (-2,0){$\iddots$};
\node[] (as) at (-3,-1){$\eta_1$};
\node[] (k-1) at (-1,-2){$\sigma_{k-1}^{(0)}$};
\node[] (dotsu) at (1,1){$\iddots$};
\draw (k0) -- (a1);
\draw(k-1) -- (k0);
\draw(k-1) -- (as);
\draw (as) --(dots);
\draw (dots) -- (a1);
\draw(k0) -- (dotsu);
\end{tikzpicture}
&
\begin{tikzpicture}
\node[] (k1-) at (0,0){$\sigma_{k-1}^-$};
\node[] (k) at (-1,1){$\sigma_k$};
\node[] (dotsl) at (-1,2){$\vdots$};
\node[] (k2) at (-1,3){$\sigma_{k-2}$};
\node[] (as) at (1,1){$\eta_1$};
\node[] (dotsr) at (1,2){$\vdots$};
\node[] (a1) at (1,3){$\eta_h$};
\node[] (k1+) at (0,4){$\sigma_{k-1}^+$};
\draw (k1+) -- (a1);
\draw(k1+) -- (k2);
\draw(k1-) -- (k);
\draw(k1-) -- (as);
\draw (as) --(dotsr);
\draw (dotsr) -- (a1);
\draw(k) -- (dotsl);
\draw(dotsl) -- (k2);
\draw(k2) -- (as);
\draw(a1) -- (k);
\end{tikzpicture}
\end{tabular}
\end{center}

Notice that $\gamma_4$ is a doubly-notched arc whose underlying untagged version, $\gamma_4^0$, is $\sigma_{k-1} \in T$. Its corresponding poset is above to the right.

To construct the bijections between the products of order ideals, we separately deal with sets $(I_i,I_{i+1})$ such that $\eta_h \notin I_i \cup I_{i+1}$ and those which contain $\eta_h$. The same bijections as in the untagged case work for going between the sets of tuples which do not contain $\eta_h$, where occurrences of $\sigma_k$ or $\sigma_k^{(z)}$ are replaced with $\sigma_{k-1}$ and $\sigma_{k-1}^{(z)}$ depending on which is appropriate and $\eta_1$ acts as $\calP_1(a)$. If $\eta_h \in I_1$, then we can use our maps above where instances of $\calP_1(a)$ are replaced with $\eta_h$. 

Now, we consider the case where $\gamma_1$ is notched at $t(\gamma_1) = q_t$  and $\gamma_1$ does not have further crossings with $T$ after its crossings in the chain of spokes. Then, we have a parallel situation where the spokes at $q_t$ are $\beta_1, \beta_2,\ldots,\beta_t = \sigma_{\ell+1}$. Then, we replace the cover relation $\sigma_\ell^{(w)} \succ \calP_1(b)$ in $\calP_1$ and $\calP_{3}$ with the following portion of a Hasse diagram on the left, and $\calP_6$ is given by the Hasse diagram on the right as $\gamma_6$ is now $\sigma_{\ell}^{(s(\gamma_2),q_t)}$.

\begin{center}
\begin{tabular}{cc}
\begin{tikzpicture}
\node[] (lw) at (0,0){$\sigma_{\ell}^{(w)}$};
\node[] (l-1) at (1,2){$\sigma_{\ell+1}^{(w)}$};
\node[] (bt) at (1,-1){$\beta_1$};
\node[] (dots) at (2,0){$\iddots$};
\node[] (b1) at (3,1){$\beta_{t-1}$};
\node[] (dotsd) at (-0.9,-0.7){$\iddots$};
\draw (lw) -- (bt);
\draw(lw) -- (l-1);
\draw(bt) -- (dots);
\draw (b1) --(dots);
\draw (b1) -- (l-1);
\draw(lw) -- (-0.5,-0.5);
\end{tikzpicture}
&
\begin{tikzpicture}
\node[] (k1-) at (0,0){$\sigma_{\ell+1}^-$};
\node[] (k) at (-1,1){$\sigma_{\ell+2}$};
\node[] (dotsl) at (-1,2){$\vdots$};
\node[] (k2) at (-1,3){$\sigma_{\ell}$};
\node[] (as) at (1,1){$\beta_1$};
\node[] (dotsr) at (1,2){$\vdots$};
\node[] (a1) at (1,3){$\beta_{t-1}$};
\node[] (k1+) at (0,4){$\sigma_{\ell+1}^+$};
\draw (k1+) -- (a1);
\draw(k1+) -- (k2);
\draw(k1-) -- (k);
\draw(k1-) -- (as);
\draw (as) --(dotsr);
\draw (dotsr) -- (a1);
\draw(k) -- (dotsl);
\draw(dotsl) -- (k2);
\draw(k2) -- (as);
\draw(a1) -- (k);
\end{tikzpicture}
\end{tabular}
\end{center}

We adjust the definitions of $A_1,A_2$ and $B_1$ by replacing the condition ``$\sigma_x \preceq \sigma_\ell$ with equality only if $\calP_1(b) \in I_1$'' with ``$\sigma_x \preceq \sigma_{\ell+1}$ where $\sigma_x = \sigma_{\ell+1}$ only if $\beta_{t-1} \in I_1$ and  $\sigma_x = \sigma_{\ell}$ only if $\beta_1 \in I_1$''. In the definition of $B_2$, we replace ``$\sigma_x \succeq \sigma_\ell$ with strict inequality if $\calP_1(b) \in I_1''$ with  `` $\sigma_x \succeq \sigma_\ell$, with $\sigma_x \succ \sigma_\ell$ if $\beta_1 \in I_1$ and $\sigma_x \succ \sigma_{\ell+1}$ if $\beta_{t-1} \in I_1$'', and in the definition of $B_3$ we replace ``$\sigma_y^{(z)} \succeq \sigma_\ell^{(w-1)}$ with strict inequality if $\calP_1(b) \in I_1$'' with ``$\sigma_y^{(z)} \succeq \sigma_\ell^{(w-1)}$ with  $\sigma_y^{(z)} \succ \sigma_\ell^{(w-1)}$ if $\beta_1 \in I_1$ and $\sigma_y^{(z)} \succeq \sigma_{\ell+1}^{(w-1)}$ if $\beta_{t-1} \in I_1$''.

If both endpoints of $\gamma_1$ are tagged and $\gamma_1$ has no crossings with $T$ besides in this set of spokes, then we combine the above methods. In all such cases, the $\gb$-vector computations are similar. 

\end{proof} 

\subsection{Type 2}\label{subsec:Type2}

Now, we consider an intersection between two arcs $\gamma_1$ and $\gamma_2$ that, up to orientation, occurs before both arcs' first intersection with $T$.

Let $p = s(\gamma_2)$ and $q = s(\gamma_1)$. Let the edge between $p$ and $q$ be labeled $\tau$. Following previous sections, let the remaining spokes incident to $p$ be labeled $\sigma_1, \dots, \sigma_m$, indexed in counterclockwise order such that $\sigma_1$ follows counterclockwise from $\tau$ and $\tau$ follows counterclockwise from $\sigma_m$. Similarly, label the remaining spokes incident to $q$ with $\eta_1, \dots, \eta_h$, indexed in counterclockwise order such that $\eta_1$ follows counterclockwise from $\tau$ and $\tau$ follows counterclockwise from $\eta_h$. In this configuration, the triangle where $\gamma_1$ and $\gamma_2$ intersect has edges labeled, in clockwise order, by $\tau, \sigma_1,$ and $\eta_h$. The arc $\gamma_1$ must cross some consecutive sequence $\sigma_{1}, \dots, \sigma_{k}$ of spokes incident to $p$ and $\gamma_2$ must cross a sequence $\eta_\ell,\ldots,\eta_h$. It is possible for either sequence to consist of a single spoke, implying $k = 1$ or $\ell = m$, or for the arcs to wind several times around each puncture. Let $a$ be such that $\calP_{\gamma_1}(a)$ denotes the arc (if it exists) that $\gamma_1$ crosses immediately after this sequence of spokes, if it exists, and similarly let $\calP_{\gamma_2}(b)$ denote such an arc for $\gamma_2$. 

In the figure below, only one of the intersections is an intersection of Type 2 and all others, which occur when at least one of the arcs winds around one of the punctures, are of Type 1. 

\begin{center}
\begin{tikzpicture}
    % draws central triangle
    \draw[thick] (0,0) to node[midway,below]{$\tau$} (3,0) to node[left,midway]{$\eta_{h}$} (1.5,2.6) to node[midway,right]{$\sigma_1$} (0,0) to node[midway, right]{$\sigma_m$}(1.5,-2.6) to node[midway, left]{$\eta_1$} (3,0);

    % draws spokes on p
    \draw[thick] (0,0) to node[midway,right,yshift=15,xshift=-5]{$\sigma_2$} (-0.75,2);
    %\draw[thick] (0,0) to node[midway,right,yshift=-15,xshift=7]{$\sigma_m$} (0.75,-1.5);
    \draw[thick] (0,0) to node[midway,below,xshift=-10,yshift=-8]{$\sigma_{k+1}$} (-2,-1);
    \draw[thick] (0,0) to node[midway,above,xshift=-15,yshift=5]{$\sigma_{k}$} (-2.25,0.75);

    % draws edge a
    \draw[thick] (-2.25,0.75) to node[midway,left,yshift=10]{$\calP_{\gamma_1}(a)$} (-2,-1);

    % draws spokes on q
    \draw[thick] (3,0) to node[midway,right,yshift=15,xshift=5]{$\eta_{h-1}$} (3.75,2);
    %\draw[thick] (3,0) to node[midway,left,yshift=-15,xshift=-7]{$\eta_{1}$} (2.25,-1.5);
    \draw[thick] (3,0) to node[midway,below,xshift=7,yshift=-5]{$\eta_{\ell-1}$} (5,-1);
    \draw[thick] (3,0) to node[midway,above,xshift=15,yshift=5]{$\eta_{\ell}$} (5.25,0.75);

    % draws edge b
    \draw[thick] (5,-1) to node[midway,right,yshift=10]{$\calP_{\gamma_2}(b)$} (5.25,0.75);

    % draws and labels gamma_1
    \draw[thick,black!20!green,out=135,in=90] (3,0) to (-1.2,0);
    \draw[thick,black!20!green,out=-90,in=180] (-1.2,0) to (0,-0.75);
    \draw[thick,black!20!green,out=0,in=0,looseness=2] (0,-0.75) to (0,0.9);
    \draw[thick,black!20!green,out=180,in=180,looseness=1.5] (0,0.9) to (-0.4,-0.5);
    \node[black!20!green] at (-0.1,-0.5) {$\dots$};
    \node[black] at (-0.4,-1.2) {$\dots$};
    \node[black] at (3.4,-1.2) {$\dots$};
    \node[black] at (-1,1.2) {$\iddots$};
    \node[black] at (4.3,1.2) {$\ddots$};

    \draw[thick,black!20!green,out=0,in=0,looseness=1.75] (0.1,-0.5) to (0,0.75);
    \draw[thick,black!20!green,out=180,in=45] (0,0.75) to (-0.6,0);
    \draw[thick,black!20!green,out=225,in=0] (-0.6,0) to (-2.5,-0.4);
    \node[black!20!green] at (-2.75,-0.4) {$\gamma_1$};

    % draws and labels gamma_2
    \draw[thick,orange,out=45,in=90,looseness=1] (0,0) to (4.2,0);
    \draw[thick,orange,out=-90,in=0] (4.2,0) to (3,-0.75);
    \draw[thick,orange,out=180,in=180,looseness=2] (3,-0.75) to (3,0.9);
    \draw[thick,orange,out=0,in=0,looseness=1.5] (3,0.9) to (3.4,-0.5);
    \node[orange] at (3.2,-0.5) {$\dots$};
    \draw[thick,orange,out=180,in=180,looseness=1.75] (2.9,-0.5) to (3,0.75);
    \draw[thick,orange,out=0,in=135] (3,0.75) to (3.6,0);
    \draw[thick,orange,out=-45,in=180] (3.6,0) to (5.5,-0.4);
    \node[orange] at (5.75,-0.4) {$\gamma_2$};

    % labels p and q
    \node at (-0.1,-0.25) {$p$};
    \node at (3.1,-0.25) {$q$};
\end{tikzpicture}
\end{center}

Suppose that $\gamma_1$ winds $w-1 \geq 0$ complete times around $p$ and $\gamma_2$ winds $v-1 \geq 0$ complete times around $q$. For notational convenience, we set $\tau = \eta_0 = \sigma_{0}$. Then, on the chain of $\sigma_i^{(j)}$, we have $\sigma_1^{(1)} \prec \cdots \prec \sigma_m^{(1)} \prec \sigma_{0}^{(2)} \prec \sigma_1^{(2)} \prec \cdots \prec \sigma_k^{(w)}$ and similarly in the chain of $\eta_i^{(j)}$. 

Let $\gamma_3 = \gamma_1^{-1} \circ \gamma_2$, where we switch between these arcs at the intersection point of Type 2. Similarly, set $\gamma_5 = \gamma_1 \circ \gamma_2$ and $\gamma_6 = \gamma_2 \circ \gamma_1$. Note that $\gamma_2 \circ \gamma_1^{-1}$ is isotopic to $\tau$.

We draw all relevant posets in Table \ref{table:typeII_wind}. We draw these in the case where $s(\gamma_1)$ and $s(\gamma_2)$ are both notched; if one of these is plain, one obtains the correct posets by deleting the corresponding loops (which are all drawn in blue). We will recall the structure poset for the arc $\tau$ with appropriate notchings within each proof. 

\begin{table}
\captionsetup{width=0.9\textwidth}
    \begin{center}
    \begin{tabular}{|c|c|} \hline
        \highlight{$\mathcal{P}_{1}$} & \highlight{$\mathcal{P}_{2}$} \\
       \begin{tikzpicture}
\node(s1) at (0,0){$\sigma_1^{(1)}$};
\node(dots) at (1,1){$\iddots$};
\node(sm) at (2,2){$\sigma_k^{(w)}$};
\node(a) at (3,1){$\calP_{1}(a)$};
\node[] at (3.75,1){$\cdots$};
\draw(s1) -- (dots);
\draw(dots) --(sm);
\draw(sm) -- (a);
\node[blue](t) at (-1,-1.3){$\tau$};
\node[blue](n1) at (-1,-0.5){$\eta_1$};
\node[blue](vdots) at (-1,0.25){$\vdots$};
\node[blue](nh) at (-1,1){$\eta_h$};
\draw[blue](s1) -- (t);
\draw[blue](t) -- (n1);
\draw[blue](n1) -- (vdots);
\draw[blue](vdots) -- (nh);
\draw[blue](s1) -- (nh);
\end{tikzpicture}     
     &
\begin{tikzpicture}
\node(n1) at (0,0){$\eta_{\ell}^{(1)}$};
\node(dots) at (-1,1){$\ddots$};
\node(nh) at (-2,2){$\eta_h^{(v)}$};
\node(b) at (1,1){$\calP_{2}(b)$};
\node[] at (1.75,1){$\cdots$};
\draw(n1) -- (dots);
\draw(dots) --(nh);
\draw(n1) -- (b);
\node[blue](s1) at (-3,0.7){$\sigma_1$};
\node[blue](vdots) at (-3,1.5){$\vdots$};
\node[blue](sm) at (-3,2.25){$\sigma_m$};
\node[blue](t) at (-3,3){$\tau$};
\draw[blue](nh) -- (s1);
\draw[blue](nh) -- (t);
\draw[blue](s1) -- (vdots);
\draw[blue](vdots) -- (sm);
\draw[blue](sm) -- (t);
\end{tikzpicture}          
    \\\hline
        \multicolumn{2}{|c|}{\highlight{$\calP_{3}$}} \\
        \multicolumn{2}{|c|}{\begin{tikzpicture}
            \node[] at (-0.75,0){$\cdots$};
            \node[](A) at (0,0) {$\calP_1(a)$};
            \node[](Rj) at (1,1) {$\sigma_k^{(w)}$};
            \node[](dots1) at (2,0) {$\ddots$};
            \node[](R1) at (3,-1) {$\sigma_1^{(1)}$};
            \node(nh) at (4,0){$\eta_h^{(v)}$};
            \node(dots2) at (5,-1){$\ddots$};
            \node(n1) at (6,-2){$\eta_\ell^{(1)}$};
            \node(BO) at (7,-1){$\calP_2(b)$};
            \node[] at (7.75,-1){$\cdots$};
            \draw (A) to (Rj);
            \draw (Rj) to (dots1);
            \draw (dots1) to (R1);
            \draw (R1) to (nh);
            \draw (nh) to (dots2);
            \draw (dots2) to (n1);
            \draw (n1) to (BO);
        \end{tikzpicture}} \\ \hline
        \highlight{$\calP_5$} & \highlight{$\calP_6$} \\
        \begin{tikzpicture}
        \node[] at (0.75,0){$\cdots$};
        \node[](A) at (0,0) {$\calP_2(b)$};
        \node[blue](t) at (-1,-1.3){$\eta_{\ell}$};
        \node[blue](n1) at (-1,-0.5){$\eta_{\ell+1}$};
        \node[blue](vdots) at (-1,0.25){$\vdots$};
        \node[blue](nh) at (-1,1){$\eta_{\ell-1}$};
        \draw[blue] (A) -- (t);
        \draw[blue] (t) -- (n1);
        \draw[blue] (n1) -- (vdots);
        \draw[blue] (vdots) -- (nh);
        \draw[blue] (nh) -- (A);
        \end{tikzpicture} &
        \begin{tikzpicture}
        \node[] at (0.75,0){$\cdots$};
        \node[](A) at (0,0) {$\calP_1(a)$};
        \node[blue](t) at (-1,-1.3){$\sigma_{k+1}$};
        \node[blue](n1) at (-1,-0.5){$\sigma_{k+2}$};
        \node[blue](vdots) at (-1,0.25){$\vdots$};
        \node[blue](nh) at (-1,1){$\sigma_k$};
        \draw[blue] (A) -- (t);
        \draw[blue] (t) -- (n1);
        \draw[blue] (n1) -- (vdots);
        \draw[blue] (vdots) -- (nh);
        \draw[blue] (nh) -- (A);
        \end{tikzpicture} \\ \hline
    \end{tabular}
    \end{center}

    \caption{The structure of the posets used in the proofs in Section \ref{subsec:Type2}. The depicted posets are for the case where $s(\gamma_1)$ and $s(\gamma_2)$ are both notched. If $s(\gamma_1)$ is plain, one deletes the loops in $\calP_1$ and $\calP_5$ and similarly if $s(\gamma_2)$ is plain.} 
    \label{table:typeII_wind}
\end{table}

We include the statement for the case when $s(\gamma_1)$ and $s(\gamma_2)$ are both plain for the sake of comparison. However, even if the other ends are notched, this fact follows from the work done for plain arcs \cite{snakegraphcalculus1,musiker2013}. 

\begin{prop}
    \label{prop:typeIIgraft_wind_plain}
    Given $\gamma_1$ and $\gamma_2$ as described above, let $\gamma_3 = \tau \circ \gamma_1$, $\gamma_4 = \tau \circ \gamma_2$, and $\gamma_5 = \gamma_2^{-1} \circ \gamma_1$, where $\gamma_5$ is spliced at the intersection point of $\gamma_1$ and $\gamma_2$ in their mutual last triangle. Then,
   \[ x_{\gamma_1}x_{\gamma_2} = x_{\gamma_3}x_{\tau} + Y_{\preceq \eta_h^{(v)}} x_{\gamma_5}x_{\gamma_6} \]
    where $ Y_{\preceq \eta_h^{(v)}} = \prod_{\calP_{2}(i) \preceq \eta_h^{(v)}} y_{\calP_2(i)}$.
\end{prop}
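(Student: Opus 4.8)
The plan is to follow the general strategy of Subsection~\ref{subsec:proofStrategy}: set $\calP_i := \calP_{\gamma_i}$, expand $x_{\gamma_1}x_{\gamma_2}$ as $\mathbf{x}^{\gb_1+\gb_2}\sum_{(I_1,I_2)} wt(I_1)wt(I_2)$ via Proposition~\ref{prop:PosetExpansion}, and construct a bijection $\Phi\colon J(\calP_1)\times J(\calP_2)\to (J(\calP_3)\times J(\calP_\tau))\cup(J(\calP_5)\times J(\calP_6))$ which is weight-preserving on the first part and weight-preserving up to the monomial $\hat{Y}_{\preceq \eta_h^{(v)}}$ on the second. Here $\calP_\tau$ is the poset of the arc $\tau$ (with the appropriate plain/notched decoration at $p$ and $q$ inherited from $\gamma_2$ and $\gamma_1$, which in this statement are both plain, so $\tau$'s poset is just the relevant chain of spokes or even a single decorated element depending on whether $\tau \in T$); I would recall its explicit shape at the start of the proof as the table comment promises. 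The central combinatorial content is the same grafting bijection already used in Cases iii and iv of Proposition~\ref{prop:GraftingType0} and in Proposition~\ref{prop:GraftingType1}, namely the switching-position argument of Lemma~\ref{lem:SwitchingExists}, applied to the two chains of spokes (the $\sigma_i^{(j)}$ chain sitting below $\calP_1(a)$ in $\calP_1$ and the $\eta_i^{(j)}$ chain sitting below $\calP_2(b)$ in $\calP_2$), which in $\calP_3$ are concatenated into one long chain $\calP_1(a)\succ\sigma_k^{(w)}\succ\cdots\succ\sigma_1^{(1)}\succ\eta_h^{(v)}\succ\cdots\succ\eta_\ell^{(1)}\succ\calP_2(b)$ as shown in Table~\ref{table:typeII_wind}.

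Concretely, I would partition $J(\calP_1)\times J(\calP_2)$ into a set $A$ (mapped bijectively and weight-preservingly onto $J(\calP_3)\times J(\calP_\tau)$) and a set $B$ (mapped onto $J(\calP_5)\times J(\calP_6)$). For $(I_1,I_2)\in J(\calP_1)\times J(\calP_2)$, let $\sigma_y^{(z)}$ be the top element of $I_1$ on the $\sigma$-chain and $\eta_x^{(u)}$ the top element of $I_2$ on the $\eta$-chain; the partition is governed by comparing where these reach relative to the ``grafting threshold'' $\eta_h^{(v)}$ (equivalently the full $v$-fold winding of $\gamma_2$ around $q$). When the combined data of $(I_1,I_2)$ on the two chains "reaches past" $\eta_h^{(v)}$, the pair goes to $B$, and $\Phi$ re-splits the data so that the portion corresponding to $\{\sigma_1^{(1)},\ldots,\eta_h^{(v)}\}$ becomes the crossing monomial $\hat{Y}_{\preceq \eta_h^{(v)}}$ (i.e.\ one writes $\hat{Y}_{\preceq\eta_h^{(v)}}$ as the content of exactly this segment, so $wt(I_1)wt(I_2)=\hat{Y}_{\preceq\eta_h^{(v)}}\,wt(\Phi(I_1,I_2))$); otherwise the pair goes to $A$ and $\Phi$ simply identifies elements of $I_1,I_2$ outside these segments with their images in $\calP_3$ and $\calP_\tau$, using a switching-position swap on the overlapping spoke chains where both $I_1$ and $I_2$ contribute. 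The subcases $k=1$, $\ell=m$, $w=1$, $v=1$, and the non-existence of $\calP_1(a)$ or $\calP_2(b)$ all need to be noted but are handled, as in the earlier proofs, by interpreting the boundary conditions ($\langle\sigma_m^{(0)}\rangle:=\emptyset$, ignoring conditions on nonexistent elements, decorating $\calP_\tau$ when $\tau\in T$) rather than by genuinely new arguments.

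The $\gb$-vector bookkeeping is the last step: I would write $\gb_1,\gb_2,\gb_3,\gb_\tau,\gb_5,\gb_6$ in terms of contributions $\gb_i'$ from the parts of each poset not pictured in Table~\ref{table:typeII_wind} plus explicit $\pm\eb_\sigma$ terms coming from the spokes $\sigma_1,\sigma_k,\sigma_m,\eta_\ell,\eta_h$ and strictness indicators $\delta_{\calP_i(a)\succ\calP_i(a\pm1)}$ at the elements $\calP_1(a),\calP_2(b)$. One checks $\gb_1+\gb_2=\gb_3+\gb_\tau$ directly from the poset definitions (the unpictured contributions match up, and the spoke terms cancel because $\calP_1$'s $\eta$-loop and $\calP_\tau$, resp.\ $\calP_2$'s $\sigma$-loop and $\calP_3$, carry the same data), and then using Corollary~\ref{cor:y-hat_spokes} (applied to compute $\degx(\hat{Y}_{\preceq\eta_h^{(v)}}) = \degx(\hat{Y}_q^{\,v-1}\hat{y}_{\eta_\ell}\cdots\hat{y}_{\eta_h}) = \degx(\hat{y}_{\eta_\ell}\cdots\hat{y}_{\eta_h})$, since $\hat{Y}_q = Y_q$) one verifies $\gb_1+\gb_2+\degx(\hat{Y}_{\preceq\eta_h^{(v)}})=\gb_5+\gb_6$. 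I expect the main obstacle to be, as in Proposition~\ref{prop:GraftingType0}, the careful case analysis for well-definedness of $\Phi$ near the two chain endpoints $\calP_1(a)$ and $\calP_2(b)$ when those elements may fail to exist or may themselves lie in a loop (the Type~1 vs Type~2 interplay visible in the figure, where extra windings create auxiliary Type~1 crossings) — getting the threshold conditions on $\sigma_y^{(z)}$ and $\eta_x^{(u)}$ exactly right so that $A$ and $B$ genuinely partition the domain and their images genuinely partition the targets. This is bookkeeping rather than a conceptual hurdle, since the analogous grafting bijection is already established in the preceding propositions.
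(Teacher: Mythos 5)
Your overall plan is sound, but note that the paper does not actually prove this proposition: the sentence immediately preceding the statement says it is included ``for the sake of comparison'' and that, because $s(\gamma_1)$ and $s(\gamma_2)$ are both plain, the relation already follows from the plain-arc results of \cite{snakegraphcalculus1,musiker2013} (the taggings of the far endpoints do not affect the local combinatorics at the Type 2 crossing, so no new argument is needed). So where the paper outsources the proof, you re-derive it in the poset language. That is a legitimate and more self-contained route, and your outline is essentially the template the paper then executes in full for the genuinely new cases, Propositions \ref{prop:typeII_singlytagged_winding_1} and \ref{prop:typeII_bothTagged}.

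One detail in your sketch would, taken literally, produce the wrong $y$-monomial. You say that for pairs in $B$ the ``portion corresponding to $\{\sigma_1^{(1)},\ldots,\eta_h^{(v)}\}$'' becomes the crossing monomial. But in the plain case $\calP_5$ and $\calP_6$ (with their loops deleted, as the caption of Table \ref{table:typeII_wind} prescribes) contain neither the $\sigma$-chain nor the $\eta$-chain, while $Y_{\preceq\eta_h^{(v)}}$ has content equal to the $\eta$-chain of $\calP_2$ only. The membership condition for $B$ is therefore asymmetric: $I_2$ must contain the entire $\eta$-chain while $I_1$ must be \emph{disjoint} from the $\sigma$-chain (together with the usual boundary conditions at $\calP_1(a)$ and $\calP_2(b)$), so that the image, essentially $(I_2\setminus\langle\eta_h^{(v)}\rangle,\,I_1)\in J(\calP_5)\times J(\calP_6)$, loses exactly the $\eta$-chain in content. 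If $B$ instead contained pairs whose $\sigma$-chain content also had to be stripped, the fixed discrepancy monomial $Z$ required by Subsection \ref{subsec:proofStrategy} would acquire the $\sigma$-spokes as well, contradicting the stated formula. The remainder of your bookkeeping, in particular the use of Corollary \ref{cor:y-hat_spokes} to reduce $\degx(\hat{Y}_{\preceq\eta_h^{(v)}})$ to $\degx(\hat{y}_{\eta_\ell}\cdots\hat{y}_{\eta_h})$, is consistent with what the paper does in the adjacent proofs.
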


Now we turn to our first new result.

\begin{prop}
\label{prop:typeII_singlytagged_winding_1}
 Given $\gamma_1$ and $\gamma_2$ which intersect in a Type 2 intersection, we have \[
x_{\gamma_1}x_{\gamma_2^{(p)}} = x_{\gamma_3} x_{\tau^{(p)}} + Y_{\succeq \sigma_1^{(1)}} Y_{\preceq \eta_h^{(v)}}^{\geq 1} x_{\gamma_5}x_{\gamma_6^{(p)}}
\]

where \[
Y_{\succeq \sigma_1^{(1)}} = \prod_{\calP_1(i) \succeq \sigma_1^{(1)}} y_{\calP_1(i)}\quad \text{and} \quad Y_{\preceq \eta_h^{(v)}}^{\geq 1} = \prod_{\substack{\calP_2(i) \preceq \eta_h^{(v)} \\ i \geq 1}} y_{\calP_2(i)}.
\]

\end{prop}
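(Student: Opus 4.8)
The plan is to follow the proof strategy outlined in Section~\ref{subsec:proofStrategy}, adapting the bijective machinery developed in the proof of Proposition~\ref{prop:GraftingType1} (Type~1 crossings) and combining it with the handling of the terminal loop near the notched endpoint $s(\gamma_2^{(p)})=p$. First I would fix chronological orderings on all six posets from Table~\ref{table:typeII_wind} that agree on their common subposets, using the indexing convention $\sigma_i^{(j)}$ and $\eta_i^{(j)}$ already established in Section~\ref{subsec:Type2}. The key structural observations are: $\calP_{\gamma_3}$ is the ``concatenation'' $\calP_1[\text{past }\calP_1(a)] \cup (\text{chain of }\sigma\text{'s, reversed}) \cup (\text{chain of }\eta\text{'s, reversed}) \cup \calP_2[\text{past }\calP_2(b)]$ with the new cover relations dictated by the triangle with sides $\tau,\sigma_1,\eta_h$; that $\calP_{\tau^{(p)}}=\calP_{\ell_p}$ is just the chain of spokes $\sigma_1\prec\cdots\prec\sigma_m$ at $p$ (with $\tau$ removed); that $\calP_{\gamma_5}$ carries the loop of all spokes at $p$ except $\tau$ attached at $\calP_2(b)$; and that $\calP_{\gamma_6^{(p)}}$ carries the loop of spokes at $q$ attached at $\calP_1(a)$, with $\gamma_6^{(p)}$ notched at $p$ so its poset has an extra loop of the $\sigma$-spokes as well.

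Next I would construct the map $\Phi: J(\calP_1)\times J(\calP_{\gamma_2^{(p)}}) \to (J(\calP_{\gamma_3})\times J(\calP_{\tau^{(p)}})) \cup (J(\calP_{\gamma_5})\times J(\calP_{\gamma_6^{(p)}}))$ by partitioning $J(\calP_1)\times J(\calP_{\gamma_2^{(p)}}) = A \cup B$. The set $A$ (mapping to the first term) should be, roughly, those pairs $(I_1,I_2)$ where the restriction of $I_1$ to the chain of $\sigma$-spokes and the restriction of $I_2$ to the loop of $\sigma$-spokes at $p$ interact so that after ``unwinding'' one obtains a valid order ideal of $\calP_{\gamma_3}$ together with an order ideal of the spoke chain $\calP_{\tau^{(p)}}$; the empty pair lies in $A$. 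The set $B$ handles the remaining winding data, which gets redistributed into the loop of $\calP_{\gamma_5}$ (coming from $I_2$'s loop at $p$) and the loops of $\calP_{\gamma_6^{(p)}}$ (one coming from $I_1$'s $\sigma$-chain near $q$... more precisely, both the chain of $\sigma_i^{(j)}$ winding data and the $\eta$-winding data of $\gamma_2$). On $A$, $\Phi$ is weight-preserving; on $B$, $wt(I_1,I_2) = \hat{Y}_R\hat{Y}_S \cdot wt(\Phi(I_1,I_2))$ where here $R=\emptyset$ (there is no overlap region in a Type~2 crossing) and $S$ records the spokes swept, namely the $\sigma_i^{(j)}$'s ($i\geq 1$) contributing $Y_{\succeq \sigma_1^{(1)}}$ at the endpoint $q$ together with the $\eta_i^{(j)}$'s contributing $Y_{\preceq \eta_h^{(v)}}^{\geq 1}$ at $p$. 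The subtlety is that the shift/swap maps on loops — analogous to the maps sending $(\langle\sigma_x\rangle,\langle\sigma_y^{(z)}\rangle)$ to shifted versions in Cases~iii--iv of Proposition~\ref{prop:GraftingType0} and the five sets $B_1,\dots,B_3$ in Proposition~\ref{prop:GraftingType1} — need to be set up carefully so the content is preserved up to exactly $Z = \hat{Y}_{\succeq\sigma_1^{(1)}}\hat{Y}^{\geq 1}_{\preceq\eta_h^{(v)}}$.

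The final step is the $\gb$-vector bookkeeping: show $\gb_{\gamma_1} + \gb_{\gamma_2^{(p)}} = \gb_{\gamma_3} + \gb_{\tau^{(p)}}$ and $\gb_{\gamma_1} + \gb_{\gamma_2^{(p)}} + \degx(Z) = \gb_{\gamma_5} + \gb_{\gamma_6^{(p)}}$. I would write out each $\gb$-vector as a sum of a ``local'' part (contributions from strict extrema and the $\mathbf r$-vector near the spoke chains and loops, depending on whether $\calP_1(1)\succ\calP_1(2)$, etc.) plus ``global'' parts $\gb_i'$ coming from the portions of the posets not pictured in Table~\ref{table:typeII_wind}; the global parts match termwise by inspection of how the posets are concatenated. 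Then use Corollary~\ref{cor:y-hat_spokes} to compute $\degx(\hat{Y}_{\succeq\sigma_1^{(1)}})$ (a telescoping product of $\hat{y}$'s over a winding chain of spokes, which collapses using $\hat{Y}_p = Y_p$) and similarly $\degx(\hat{Y}^{\geq 1}_{\preceq\eta_h^{(v)}})$, and check the sum matches $(\gb_{\gamma_5}+\gb_{\gamma_6^{(p)}}) - (\gb_{\gamma_1}+\gb_{\gamma_2^{(p)}})$. I expect the main obstacle to be the careful definition of the partition and the loop-shifting bijections near the two notched/spoke-winding regions simultaneously — in particular ensuring well-definedness at the ``boundary cases'' where $\calP_1(a)$ or $\calP_2(b)$ fails to exist (so decorated posets enter, as in situations 2.a, 2.b, 3.a of Section~\ref{subsec:Type0}) and where $k=1$, $\ell=m$, or the winding numbers $w,v$ equal their minimal values; these edge cases need to be checked to not break the content-preservation count, though the $\gb$-vector argument should be routine by analogy with Proposition~\ref{prop:GraftingType1}.
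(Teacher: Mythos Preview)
Your overall strategy is exactly the paper's: partition $J(\calP_1)\times J(\calP_{\gamma_2^{(p)}})$ into $A\cup B$, build a weight-preserving bijection from $A$ to $J(\calP_{\gamma_3})\times J(\calP_{\tau^{(p)}})$ and a weight-shifted one from $B$ to $J(\calP_{\gamma_5})\times J(\calP_{\gamma_6^{(p)}})$, then verify the $\gb$-vector identities via Corollary~\ref{cor:y-hat_spokes}. The paper does precisely this with $A=A_1\cup A_2\cup A_3$ and explicit swap maps on the pairs $(\langle\sigma_y^{(z)}\rangle,\langle\sigma_x\rangle)$, much as you anticipate.

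However, your description of $\calP_{\gamma_5}$ and $\calP_{\gamma_6^{(p)}}$ is wrong, and carrying it through would break the bijection. In this proposition $s(\gamma_1)=q$ is \emph{plain} and $s(\gamma_2)=p$ is \emph{notched}; by the caption to Table~\ref{table:typeII_wind} one therefore deletes the blue loops in $\calP_1$ and $\calP_5$ and keeps those in $\calP_2$ and $\calP_6$. Concretely, $\calP_{\gamma_5}$ has \emph{no loop at all}: $\gamma_5$ starts at $q$, so the $\eta$-winding inherited from $\gamma_2$ is homotoped away, and the chronologically first element of $\calP_{\gamma_5}$ is simply $\calP_2(b)$. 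Likewise $\calP_{\gamma_6^{(p)}}$ has exactly \emph{one} loop, consisting of the $\sigma$-spokes at $p$ attached at $\calP_1(a)$; there is no $\eta$-loop (those are spokes at $q$, and $\gamma_6$ does not end at $q$), and certainly not two loops. With the correct posets in hand, the mechanism simplifies: the $\eta$-chain in $I_2$ passes straight into $\calP_{\gamma_3}$, while the swap occurs only between the $\sigma$-chain of $I_1$ and the $\sigma$-loop of $I_2$, with the $\sigma$-loop data on the $B$ side landing in the single loop of $\calP_{\gamma_6^{(p)}}$. The content removed in passing to $J(\calP_5)\times J(\calP_6)$ is then exactly the $\sigma$-chain together with the $\eta$-chain, matching $Y_{\succeq\sigma_1^{(1)}}Y_{\preceq\eta_h^{(v)}}^{\geq 1}$ as claimed.
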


\begin{proof}
We follow our usual approach, partitioning $J(\calP_1) \times J(\calP_2)$ as $A_1 \cup A_2 \cup A_3 \cup B$ and constructing bijections between $A_1 \cup A_2 \cup A_3$ and $J(\calP_3) \times J(\calP_{\tau^{(p)}})$ and between $B$ and $J(\calP_5) \times J(\calP_6)$. We set $\calP_4:= \calP_{\tau^{(p)}}$. Recall the poset for $\tau^{(p)}$ is the chain $\sigma_1 \prec \sigma_2 \prec \cdots \prec \sigma_m$. 

In the following, let $\tau^{(v+1)}$ be the element $\tau$ in the loop of $\calP_2$. We repeat our notation from previous proofs, setting $\sigma_y^{(z)}$ to be the maximal element of a given order ideal of $\calP_1$ on the chain of spokes and $\sigma_x$ to be the maximal element on the loop of $\calP_2$. We let $\eta_c^{(d)}$ similarly denote the maximal element of an order ideal of $\calP_2$ on the chain of spokes. We also use this notation for the other posets in the resolution.

\begin{enumerate}
    \item Let $A_1 \subseteq J(\calP_1) \times J(\calP_2)$ consist of pairs $(I_1,I_2)$ such that $\tau^{(v+1)} \notin I_2$ and if $z = w$, then $x \leq k$, with equality only if $\alpha \in I_1$. Given $(I_1,I_2) \in A_1$, we set
\[
\Phi(\langle \sigma_y^{(z)} \rangle, \langle \sigma_x \rangle \cup \langle \eta_c^{(d)} \rangle) = \begin{cases}
(\langle \sigma_x^{(z)} \rangle \cup \langle \eta_c^{(d)} \rangle, \langle \sigma_y \rangle) & y \neq 0 \\
(\langle \sigma_x^{(z)} \rangle \cup \langle \eta_c^{(d)} \rangle, \emptyset) & y =0
\end{cases}
\]

where in each case we set $\sigma_x^{(z)}:= \sigma_0^{(z)}$ if $\sigma_x$ does not exist. The set $\Phi(A_1)$ is the set of all $(I_3,I_4)$ such that if $z = w$, then $x \leq k$ with equality only if $\calP_1(a) \in I_3$ and if $\eta_c^{(d)} = \eta_h^{(v)}$, then $y \neq 0$. 
\item $A_2$ consists of pairs $(I_1,I_2)$ such that $\tau^{(v+1)} \in I_2$ and  $z < w$. Given $(I_1,I_2) \in A_2$, we define
\[
\Phi(\langle \sigma_y^{(z)} \rangle, \langle \tau^{(v+1)} \rangle) = \begin{cases}
(\langle \sigma_0^{(z+1)} \rangle \cup \langle \eta_h^{(v)} \rangle, \langle \sigma_y \rangle) & y \neq 0 \\
(\langle \sigma_0^{(z+1)} \rangle \cup \langle \eta_h^{(v)} \rangle, \emptyset) & y =0
\end{cases}
\]
with the same convention when $\sigma_x$ does not exist. The set $\Phi(A_2)$ is the set of all $(I_3,I_4)$ such that $\eta_c^{(d)} = \eta_h^{(v)}$ and $y = 0$. 
\item $A_3$ consists of pairs $(I_1,I_2)$ such that $\tau^{(v+1)} \notin I_2$; $\sigma_y^{(z)} \succeq \sigma_0^{(w)}$ which is strict if $\eta_h^{(v)} \in I_2$; and, $x \geq k $, which is strict if $\calP_1(a) \in I_1$. The map $\Phi$ is defined on the spokes by $\Phi(\langle \sigma_y^{(z)} \rangle, \langle \sigma_x \rangle \cup \langle \eta_c^{(d)} \rangle) = (\langle \sigma_y^{(z)} \rangle \cup \langle \eta_c^{(d)} \rangle, \langle \sigma_x \rangle )$. The set $\Phi(A_3)$ is the set of $(I_3,I_4)$ such that $\sigma_y^{(z)} \succeq \sigma_0^{(w)}$ which is strict if $\eta_c^{(d)} = \eta_h^{(v)}$ and $x \geq k$, which is strict if $\calP_1(a) \in I_3$.
\item $B$ consists of sets with $\sigma_y^{(z)} \succeq \sigma_0^{(w-1)}$, which is only strict if $\tau^{(v+1)} \in I_2$; $x \geq k$, which is strict if $\calP_1(a) \in I_1$; and, $\eta_c^{(d)} = \eta_h^{(v)}$. Given $(I_1,I_2) \in B$, the set $(I_1 \backslash \langle \sigma_0^{(w-1)} \rangle) \cup (I_2 \backslash (\langle \sigma_k \rangle \cup \langle \eta_h^{(v)} \rangle))$ can be naturally partitioned into order ideals of $\calP_5$ and $\calP_6$ and this map is bijective.
\end{enumerate}

Finally, we must consider the $\gb$-vectors. Let $\gb_1', \gb_2'$ denote contribution of the parts of $\calP_1[>a]$ and $\calP_2[>b]$ to $\gb_1, \gb_2$ respectively. We have
\begin{align*}
    \gb_1 + \gb_2 &= \gb_1' + \gb_2' +  \eb_{\sigma_k} + 2\eb_{\eta_{h}} - 2\eb_{\sigma_1} - \eb_{\eta_{\ell}} + (\delta_{\calP_1(a) \succ \calP_1(a+1)}-1) \eb_{\calP_1(a)} + \delta_{\calP_2(b) \succ \calP_2(b+1)}\eb_{\calP_2(b)} \\
    \gb_3 &= \gb_1' + \gb_2' + \eb_{\sigma_k} + \eb_{\eta_{h}}  - \eb_{\sigma_1} - \eb_{\eta_{\ell}} + (1-\delta_{\calP_1(a) \succ \calP_1(a+1)}) \eb_{\calP_1(a)} + \delta_{\calP_2(b) \succ \calP_2(b+1)}\eb_{\calP_2(b)}\\
    \gb_{5} + \gb_6 &= \gb_1' + \gb_2' + \eb_{\eta_{\ell-1}} - \eb_{\sigma_{k+1}} + \delta_{\calP_1(a) \succ \calP_1(a+1)} \eb_{\calP_1(a)} + (\delta_{\calP_2(b)\succ \calP_2(b+1)}-1) \eb_{\calP_2(b)}.  \\
\end{align*}

Since the poset for $\tau^{(p)}$ comes with decoration $\tau^-$, we have the $\gb$ vector associated to it is $-\eb_{\sigma_1} + \eb_{\eta_h}$, so we have $\gb_1 + \gb_2 = \eb_{\tau^{(p)}} + \gb_3$. 

 Observe that
\begin{align*}
\left(\gb_5 + \gb_6 \right) - \left( \gb_{1} + \gb_2 \right) = -\eb_{\sigma_k} - 2\eb_{\eta_{h}} + 2\eb_{\sigma_1} + \eb_{\eta_{\ell}} +\eb_{\eta_{\ell-1}} - \eb_{\sigma_{k+1}} + \eb_{\calP_1(a)} - \eb_{\calP_2(b)}
\end{align*}
which exactly matches the $x$-degree of  $Y_{\succeq \sigma_1^{(1)}} Y_{\preceq \eta_h^{(v)}}^{\geq 1} $, as can be verified by using Corollary \ref{cor:y-hat_spokes} two times. 
\end{proof}

Recall that the content of an order ideal $I$ of some poset $\calP_\gamma$, denoted  $\cont(I)$, is the multiset of elements from $T$ where $\tau \in T$ shows up with multiplicity equal to the number of distinct elements of $I$ correspond to $\tau$. In the following proof, concerning $\gamma_1^{(q)}$ and $\gamma_2$, we can avoid the usual strategy by using the skein relation of $\gamma_1$ and $\gamma_2^{(p)}$ and noting that the two situations are related, up to relabeling, by reversing the posets. This also will explain why the $y$-monomial appears on a different term than in the rest of the section. We will employ a similar tactic in the proof of Proposition \ref{prop:IntArcFromTDoubleTag}.

\begin{prop}
\label{prop:typeII_singlytagged_winding_2}
Given $\gamma_1$ and $\gamma_2$ which intersect in a Type 2 intersection, we have \[
x_{\gamma_1^{(q)}}x_{\gamma_2} = y_\tau x_{\gamma_3} x_{\tau^{(q)}} + x_{\gamma_5^{(q)}}x_{\gamma_6}.
\]
\end{prop}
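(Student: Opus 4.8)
The plan is to exploit the symmetry observed in the remark preceding the statement: the configuration for $\gamma_1^{(q)}$ and $\gamma_2$ is, up to relabeling, the ``mirror image'' of the configuration for $\gamma_1$ and $\gamma_2^{(p)}$ treated in Proposition \ref{prop:typeII_singlytagged_winding_1}. Concretely, I would first set up the dictionary: under the involution that reverses the chronological orderings of all posets and swaps the roles of $p\leftrightarrow q$, $\sigma_i \leftrightarrow \eta_i$, $\gamma_1 \leftrightarrow \gamma_2$, and $\gamma_5\leftrightarrow\gamma_6$ (while $\gamma_3$ and $\tau$ are sent to versions of themselves), the poset $\calP_{\gamma_1^{(q)}}$ corresponds to $\calP_{\gamma_2^{(p)}}$ (reversed), $\calP_{\gamma_2}$ to $\calP_{\gamma_1}$ (reversed), $\calP_{\gamma_3}$ to $\calP_{\gamma_3}$ (reversed), $\calP_{\tau^{(q)}}$ to $\calP_{\tau^{(p)}}$ (reversed), and $\calP_{\gamma_5^{(q)}}, \calP_{\gamma_6}$ to $\calP_{\gamma_5},\calP_{\gamma_6}$ (reversed). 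Reversing a fence poset and simultaneously complementing order ideals (sending $I \mapsto \calP \setminus I$, which is an anti-isomorphism $J(\calP) \to J(\calP^{\mathrm{op}})$) gives a weight-reversing correspondence, so this is genuinely an instance of the same combinatorial problem.

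Then I would carry out the following steps. First, invoke Proposition \ref{prop:typeII_singlytagged_winding_1} applied to the mirrored configuration — i.e., with $\gamma_2^{(p)}$ playing the role of ``$\gamma_2^{(p)}$'' and $\gamma_1$ playing the role of ``$\gamma_1$'' — to obtain $x_{\gamma_2^{(p)}} x_{\gamma_1} = x_{\gamma_3}x_{\tau^{(p)}} + Y' x_{\gamma_5}x_{\gamma_6^{(p)}}$ for the corresponding $y$-monomial $Y'$. Second, I would translate this identity back through the relabeling dictionary. The key point to track carefully is what happens to the $y$-monomial under the mirror: the monomial $Y_{\succeq \sigma_1^{(1)}} Y_{\preceq \eta_h^{(v)}}^{\ge 1}$ from Proposition \ref{prop:typeII_singlytagged_winding_1} records all elements of $\calP_1$ from $\sigma_1^{(1)}$ upward together with all elements of $\calP_2$ up through $\eta_h^{(v)}$; under the reversal, this becomes a monomial recording the complementary/dual region in the mirrored posets. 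Because the complement of an order ideal is an order filter, the ``$\hat y$-monomial on the $C^-$ term'' in one picture becomes a monomial that, after moving to the other side of the exchange relation (dividing through, using $\hat y_\tau = y_\tau \cdot (\text{cluster monomial})$ and the cancellations in Corollary \ref{cor:y-hat_spokes}), attaches to the $C^+$ term with exactly a single leftover factor of $y_\tau$. I would then verify this $y_\tau$ factor directly: it is precisely the contribution of the arc $\tau$ that both $\gamma_1^{(q)}$ and $\gamma_2$ cross near the intersection point, i.e., the ``$R$'' set here is $\{\tau\}$, matching the general description $Y_R Y_S$ with $Y_R = y_\tau$, $Y_S = 1$.

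The final step is the $\gb$-vector bookkeeping, which I would handle by the same mirroring: from Proposition \ref{prop:typeII_singlytagged_winding_1} we have $\gb_{\gamma_1}+\gb_{\gamma_2^{(p)}} = \gb_{\tau^{(p)}} + \gb_{\gamma_3}$ and $\gb_{\gamma_1}+\gb_{\gamma_2^{(p)}} + \degx(\hat Y) = \gb_{\gamma_5}+\gb_{\gamma_6^{(p)}}$, and reversing posets negates and permutes $\gb$-vectors in a controlled way (since $\mathbf a$ and $\mathbf b$ swap roles under reversal, as do the two ``$\mathbf r$'' contributions at the two ends). Applying the dictionary yields $\gb_{\gamma_1^{(q)}}+\gb_{\gamma_2} = \degx(\hat y_\tau) + \gb_{\tau^{(q)}} + \gb_{\gamma_3}$ and $\gb_{\gamma_1^{(q)}}+\gb_{\gamma_2} = \gb_{\gamma_5^{(q)}}+\gb_{\gamma_6}$; combining with the bijection on order ideals (obtained by reversing-and-complementing the bijection $\Phi$ of Proposition \ref{prop:typeII_singlytagged_winding_1}) and the proof strategy of Subsection \ref{subsec:proofStrategy} gives the claimed relation $x_{\gamma_1^{(q)}}x_{\gamma_2} = y_\tau x_{\gamma_3}x_{\tau^{(q)}} + x_{\gamma_5^{(q)}}x_{\gamma_6}$.

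The main obstacle I anticipate is making the ``mirror'' correspondence fully precise rather than merely heuristic — in particular, checking that the decorations on $\calP_{\tau^{(p)}}$ versus $\calP_{\tau^{(q)}}$ transform correctly (the former carries decoration $\tau^-$, and under reversal plus the swap of notched endpoint from $p$ to $q$ one must confirm the latter also carries the appropriate decoration so that $\gb_{\tau^{(q)}}$ comes out to $\eb_{\sigma_1} - \eb_{\eta_h}$ or its analogue), and verifying that exactly one residual factor of $y_\tau$ — and no stray cluster-variable factors — survives when the dual monomial is transported across the exchange relation. If the mirror argument proves too delicate to state cleanly, the fallback is to run the explicit partition-and-bijection argument from scratch exactly as in Proposition \ref{prop:typeII_singlytagged_winding_1}, with $\calP_1$ replaced by $\calP_{\gamma_1^{(q)}}$ (which now has its loop at the $t(\gamma_1)=q$ end rather than contributing at the intersection end) and with the $y$-monomial relocated to the first term; the four-set partition $A_1\cup A_2\cup A_3\cup B$ and the $\gb$-vector computation go through with only sign and indexing changes.
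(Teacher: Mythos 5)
Your proposal is correct and follows essentially the same route as the paper: the paper also passes to the dual posets $\overline{\calP_i}$, observes they form an instance of the configuration in Proposition \ref{prop:typeII_singlytagged_winding_1}, transfers the partition and bijection by complementing order ideals (order ideals of $\overline{\calP_i}$ being order filters of $\calP_i$), and tracks contents to see that the roles of $A$ and $B$ swap so that a single residual $y_\tau$ attaches to the $x_{\gamma_3}x_{\tau^{(q)}}$ term, with the $\gb$-vector check likewise dualized.
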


\begin{proof}
Let $\gamma_4:= \tau^{(q)}$ and set $\calP_i:= \calP_{\gamma_i}$, using the versions from the statement of the proposition. For each $i$, let $\overline{\calP_i}$ be the dual poset to $\calP_i$. That is, this is a poset on the same elements where each relation is reversed. One can observe that, up to different labeling conventions,  this set of posets resembles those used in Proposition \ref{prop:typeII_singlytagged_winding_1}. 
Therefore, from the proof of that proposition, there exists a partition $\overline{A} \cup \overline{B} =  J(\overline{\calP_1}) \cup J(\overline{\calP_2})$ and bijections between $\overline{A}$ and $J(\overline{\calP_3}) \cup J(\overline{\calP_4})$ and between $\overline{B}$ and $J(\overline{\calP_5}) \cup J(\overline{\calP_6})$.

In any poset, the complement of an order ideal is an order filter, so the above bijections could instead be phrased in terms of order filters. Now, the order filters of $\overline{\calP_i}$ are exactly the order ideals of $\calP_i$, so we recover a bijection $J(\calP_1) \times J(\calP_2) \to (J(\calP_3) \times J(\calP_4)) \cup (J(\calP_5) \times J(\calP_6))$
 Now we check that this bijection has the desired properties to give the skein relation.

Given any order ideal $\overline{I}_i \in \overline{\calP}_i$, let $I_i$ be its complement viewed as an order ideal in $\calP_i$. Let $A = \{(I_1,I_2) \in J(\calP_1) \times J(\calP_2): (\overline{I_1},\overline{I_2}) \in \overline{A}\}$ and similarly for $B$.  
Given $(\overline{I_1},\overline{I_2}) \in \overline{A}$ with $\overline{\Phi}(\overline{I_1},\overline{I_2}) = (\overline{I_3},\overline{I_4})$, we have $\cont(\overline{I_1} \cup \overline{I_2}) = \cont(\overline{I_3}\cup \overline{I_4})$. Therefore, $\cont(\overline{I_1} \cup \overline{I_2})$ is bounded above by $\cont(\overline{\calP_3} \cup \overline{\calP_4}) = \cont(\overline{\calP_1} \cup \overline{\calP_2})\backslash \{\tau\}$.  Equivalently, $\{\tau\} \subseteq \cont(I_1 \cup I_2)$ for all $(I_1,I_2)  \in A$. The fact that $\overline{\Phi}: \overline{A} \to J(\overline{\calP_3}) \times J(\overline{\calP_4})$ preserved content will mean that there is a bijection $\Phi: A \to J(\calP_3) \times J(\calP_4)$ such that for all $(I_1,I_2) \in A$, if $ \Phi(I_1,I_2) = (I_3,I_4) \in J(\calP_3) \times J(\calP_4)$, then $\cont(I_1 \cup I_2) \backslash \{\tau\} = \cont(I_3 \cup I_4)$. 

Let $S$ be the multiset $\{\sigma_1^w, \sigma_2^w,\ldots,\sigma_k^w, \sigma_{k+1}^{w-1},\ldots,\eta_1^{v-1},\ldots,\eta_{\ell-1}^{v-1},\eta_{\ell}^{v},\ldots,\eta_h^{v}, \tau^{w+v-2}\}$ and notice this multiset corresponds to the indices of the $Y$-monomial in the skein relation in Proposition \ref{prop:typeII_singlytagged_winding_1}. There are some special cases when $\sigma_k = \tau$ or $\eta_\ell = \tau$ but these are handled similarly. Now, given $(\overline{I_1},\overline{I_2}) \in \overline{B}$, we know that $S \subseteq \cont(\overline{I_1} \cup \overline{I_2})$, so $\cont(I_1 \cup I_2)$ is bounded above by $\cont(\calP_1 \cup \calP_2) \backslash S = \{\tau,\eta_1,\ldots,\eta_h\}$. Here we are using a relabeling, since in the case where $\gamma_2$ is notched, the set $S$ contains both chains of spokes but not the loop $\sigma_1 \prec \cdots \prec \sigma_m \prec \tau$, and then we relabel these as $\eta_i$. Since $(\overline{\calP_1},\overline{\calP_2}) \in \overline{B}$, we know $(\emptyset, \emptyset) \in B$. We can conclude that $\Phi$ is a weight-preserving bijection between $B$ and $J(\calP_5) \times J(\calP_6)$. 

Now, we have shown the desired bijection (notice the sets $A$ and $B$ swapped roles with the duality). The $\gb$-vector computations would also follow a dual version of those in Proposition \ref{prop:typeII_singlytagged_winding_1}, so we omit this.
\end{proof}

We turn to the final and most complicated scenario of this section, which is where $s(\gamma_1)$ and $s(\gamma_2)$ are both notched.  

\begin{prop}
\label{prop:typeII_bothTagged}
Given $\gamma_1$ and $\gamma_2$ as previously described, let $\gamma_3 = \tau \circ \gamma_1$, $\gamma_4 = \tau \circ \gamma_2$, and $\gamma_5 = \gamma_2^{-1} \circ \gamma_1$, where $\gamma_5$ is spliced at the intersection point of $\gamma_1$ and $\gamma_2$ in their mutual last triangle. Then,
\begin{align*}
x_{\gamma_1^{(q)}}x_{\gamma_2^{(p)}} &= x_{\tau^{(p,q)}}x_{\gamma_5} + Y_{\succeq \sigma_1^{(1)}} x_{\gamma_5^{(q)}}x_{\gamma_6^{(p)}} + 
\end{align*}
where \[
Y_{\succeq \sigma_1^{(1)}} = \prod_{\calP_1(i) \succeq \sigma_1^{(1)}} y_{\calP_1(i)}.
\]
\end{prop}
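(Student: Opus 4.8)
`\textbf{Proof strategy.}` The plan is to follow the standard proof format from Subsection~\ref{subsec:proofStrategy}, using the poset presentations recorded in Table~\ref{table:typeII_wind} with both loops included. The right-hand side of the claimed relation is, as usual, split into two pieces: one coming from the resolution $\{\tau^{(p,q)},\gamma_5\}$ and one from $\{\gamma_5^{(q)},\gamma_6^{(p)}\}$ (with the $y$-monomial $Y_{\succeq\sigma_1^{(1)}}$ attached to the latter). Recall that the poset $\calP_{\tau^{(p,q)}}$ is the doubly-looped poset on $\{0^-,0^+,\sigma_1,\dots,\sigma_m,\eta_1,\dots,\eta_h\}$ constructed in Subsection~\ref{subsec:g-vec}, with $0^-$ the minimum and $0^+$ the maximum. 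First I would fix compatible chronological orderings on $\calP_1,\calP_2$ and on each of $\calP_{\tau^{(p,q)}},\calP_5,\calP_6$ so that the shared spoke-chains and loops line up, exactly as in the figures of Table~\ref{table:typeII_wind}; any element outside the spoke-chains and loops of $\calP_1,\calP_2$ has a unique image in $\calP_3\cup\calP_6$ (resp. in $\calP_{\tau^{(p,q)}}$) and so will be carried along by the natural correspondence.

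`\textbf{Key steps.}` (1) Partition $J(\calP_1)\times J(\calP_2)=A\cup B$ where $\emptyset\in A$, construct a bijection $\Phi|_A\colon A\to J(\calP_{\tau^{(p,q)}})\times J(\calP_5)$ that is weight-preserving, and a bijection $\Phi|_B\colon B\to J(\calP_5^{(q)})\times J(\calP_6^{(p)})$ that is weight-preserving up to the fixed monomial $Z=\hat Y_{\succeq\sigma_1^{(1)}}$. The construction of $\Phi$ should be almost identical to that in Proposition~\ref{prop:typeII_singlytagged_winding_1}: using the notation $\sigma_y^{(z)}$ for the maximal element of an order ideal of $\calP_1$ on the chain of $\sigma$-spokes, $\eta_c^{(d)}$ for the maximal element of $\calP_2$ on the chain of $\eta$-spokes, and $\sigma_x$ (resp.\ the loop element at $q$) for the loop parts, one reads off a small number of subcases $A_1,A_2,A_3,B$ distinguished by whether $z=w$, by where $\sigma_x$ (or the loop generator $\tau$) sits, and by whether $\calP_1(a),\calP_2(b)$ are in the ideals. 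The presence of the extra loop at $q$ (which is what upgrades $\tau^{(p)}$ to $\tau^{(p,q)}$ and $\gamma_5$ to $\gamma_5^{(q)}$, $\gamma_6$ to $\gamma_6^{(p)}$) should only require tracking one additional chain of spoke-elements in parallel; the swapping rules at the switching positions are the same. (2) After establishing the bijection, verify the $\gb$-vector identities $\gb_{\gamma_1^{(q)}}+\gb_{\gamma_2^{(p)}}=\gb_{\tau^{(p,q)}}+\gb_{\gamma_5}$ and $\gb_{\gamma_1^{(q)}}+\gb_{\gamma_2^{(p)}}+\degx(\hat Y_{\succeq\sigma_1^{(1)}})=\gb_{\gamma_5^{(q)}}+\gb_{\gamma_6^{(p)}}$. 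For the first I would compute each $\gb$-vector via Subsection~\ref{subsec:g-vec}: $\gb_{\tau^{(p,q)}}$ is the doubly-notched expression $-\eb_{\sigma_1}-\eb_{\eta_h}+\eb_{\sigma_m}+\eb_{\eta_{h-1}}$-type vector (more precisely, $\mathbf{b}=0$, $\mathbf{a}=\eb_{\sigma_1}+\eb_{\eta_h}$, $\mathbf{r}=\mathbf{e}_{\sigma_m}+\mathbf{e}_{\eta_{h-1}}$ up to boundary corrections), and one checks the sum telescopes. For the second I would invoke Lemma~\ref{lem:y-hat} / Corollary~\ref{cor:y-hat_spokes} to compute $\degx(\hat Y_{\succeq\sigma_1^{(1)}})=\degx(\hat Y_p^{\,w-1}\hat y_{\sigma_1}\cdots\hat y_{\sigma_k})=\degx(\hat y_{\sigma_1}\cdots\hat y_{\sigma_k})$, which should equal $\mathbf{e}_{\sigma_0}-\mathbf{e}_{\sigma_{k+1}}+(\text{correction terms at }\calP_1(a))$, matching $(\gb_{\gamma_5^{(q)}}+\gb_{\gamma_6^{(p)}})-(\gb_{\gamma_1^{(q)}}+\gb_{\gamma_2^{(p)}})$. (3) Assemble the two halves via the displayed chain of equalities in Subsection~\ref{subsec:proofStrategy} to conclude $x_{\gamma_1^{(q)}}x_{\gamma_2^{(p)}}=x_{\tau^{(p,q)}}x_{\gamma_5}+Y_{\succeq\sigma_1^{(1)}}x_{\gamma_5^{(q)}}x_{\gamma_6^{(p)}}$.

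`\textbf{Main obstacle.}` The hard part will be step~(1): carefully writing down the partition $A_1\cup A_2\cup A_3\cup B$ and the bijection $\Phi$ when \emph{both} arcs are notched and both may wind multiple times around their punctures, so that the two spoke-chains and the two loops all interact simultaneously. In the singly-tagged case (Proposition~\ref{prop:typeII_singlytagged_winding_1}) there was a single loop; here one must interleave the transfer of the $\sigma$-chain, the $\eta$-chain, the $\sigma$-loop and the $\eta$-loop between $\calP_1\times\calP_2$ on one side and $\calP_{\tau^{(p,q)}}\times\calP_5$ (or $\calP_5^{(q)}\times\calP_6^{(p)}$) on the other, all while keeping the map weight-preserving and bijective and handling the several degenerate subcases ($k=m$, $\ell=1$, $\sigma_k=\tau$, $\eta_\ell=\tau$, $\calP_1(a)$ or $\calP_2(b)$ nonexistent, $w=v=1$, etc.). I expect, though, that once the ordering conventions of Table~\ref{table:typeII_wind} are fixed the extra loop behaves exactly like an extra block of spokes appended in chronological order, so the bookkeeping — while lengthy — is a direct elaboration of Proposition~\ref{prop:typeII_singlytagged_winding_1} rather than a genuinely new difficulty; the $\gb$-vector computation in step~(2) is then routine via Lemma~\ref{lem:y-hat}. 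I would also double-check the edge case where $\gamma_2\circ\gamma_1^{-1}$ is isotopic to $\tau$ (noted before the proposition) does not create a contractible kink requiring the sign conventions of Subsection~\ref{subsec:Kissing}; if it does, $\Phi|_B$ would need to be replaced by the sign-twisted variant.
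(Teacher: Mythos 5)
Your overall strategy is the paper's: partition $J(\calP_1)\times J(\calP_2)$, build a content-preserving bijection onto $J(\calP_{\tau^{(p,q)}})\times J(\calP_{\gamma_5})$ on one block and a bijection shifted by $\hat Y_{\succeq\sigma_1^{(1)}}$ onto $J(\calP_{\gamma_5^{(q)}})\times J(\calP_{\gamma_6^{(p)}})$ on the other, then check the two $\gb$-vector identities via Lemma~\ref{lem:y-hat} and Corollary~\ref{cor:y-hat_spokes} (the paper likewise treats the $\gb$-computation as routine and omits it). However, you materially underestimate the combinatorial content of step~(1). The extra loop does \emph{not} behave like ``an extra block of spokes appended in chronological order'': because order ideals of $\calP_1$ can now be supported simultaneously on the $\sigma$-chain and on the $\eta$-loop at $q$, and likewise for $\calP_2$, and because the target $\calP_{\tau^{(p,q)}}$ is a doubly-looped poset with extremal elements $\tau^-$ and $\tau^+$ imposing their own constraints, the paper's general case ($w>1$, $v>1$) requires a partition into \emph{ten} blocks $A_1,\dots,A_9,B$ with conditions that interleave all four chains/loops (e.g.\ ``if $\eta_h\in I_1$ then $\sigma_0^{(2)}\in I_1$ or $\sigma_1\in I_2$''), plus separately worked-out partitions for the degenerate windings $w=1,v>1$ and $w=v=1$. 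So the bookkeeping is a genuinely new elaboration, not three-or-four subcases read off from Proposition~\ref{prop:typeII_singlytagged_winding_1}; your sketch does not yet exhibit the bijection, and this is exactly where the proof's substance lies. Finally, your closing worry is a red herring: the isotopy $\gamma_2\circ\gamma_1^{-1}\simeq\tau$ is precisely why the first resolution term is $x_{\tau^{(p,q)}}x_{\gamma_5}$; no contractible kink and no sign twist from Subsection~\ref{subsec:Kissing} arise here. (Also, $\gb_{\tau^{(p,q)}}=-\eb_\tau$ since the whole poset is regarded as a loop, so $\mathbf a=\mathbf b=0$; your proposed $\mathbf a,\mathbf r$ for it are not the paper's convention, though this does not affect the argument's shape.)
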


\begin{proof}
In this case, $\calP_4=\mathcal{P}_{\tau^{(p,q)}}$, which is shown below using the notation from this section. 

    \begin{center}
    \begin{tikzpicture}[scale = 0.7]
        \node[](taubot) at (0,0) {$\tau^{-}$};
        \node[](sigma1) at (-1,1) {$\sigma_1$};
        \node[](dots1) at (-1,2.15) {$\vdots$};
        \node[](sigman) at (-1,3) {$\sigma_m$};
        \node[](eta1) at (1,1) {$\eta_1$};
        \node[](dots2) at (1,2.15) {$\vdots$};
        \node[](etam) at (1,3) {$\eta_h$};
        \node[](tautop) at (0,4) {$\tau^{+}$};
        \draw (taubot) to (sigma1);
        \draw (taubot) to (eta1);
        \draw (tautop) to (sigman);
        \draw (tautop) to (etam);
        \draw (sigman) to (eta1);
        \draw (etam) to (sigma1);
        \draw (sigman) to (-1,2.35);
        \draw (dots1) to (sigma1);
        \draw (etam) to (1,2.35);
        \draw (dots2) to (eta1);
    \end{tikzpicture}
    \end{center}

    We set $\sigma_0 = \eta_0 = \tau$. We will sometimes denote the minimal element $\tau$ in $\calP_1$ as $\tau^{(1)}$ or $\sigma_0^{(1)}$ and similarly the maximal element $\tau$ in $\calP_2$ as $\tau^{(v+1)}$ or $\eta_0^{(v+1)}$.
    
    The partitions we use will depend on whether $w = 1$ or $w > 1$ and similarly for $v$. We begin with the most general case, following the usual proof strategy.

    \textbf{Case i) $w > 1, v> 1$.}

    We partition $J(\calP_1) \times J(\calP_2)$ and define maps from each part to $J(\calP_3) \times J(\calP_4)$ or $ J(\calP_5) \times J(\calP_6)$ as follows. 
    
    \begin{enumerate}
        \item $A_1$ consists of pairs $(I_1,I_2) \in J(\calP_1) \times J(\calP_2)$ such that if $\eta_h \in I_1$, then $\sigma_0^{(2)} \in I_1$ or $\sigma_1 \in I_2$; if $y=m$, then $c \neq 0$; if $c = h, y \neq 0$; $\tau^{(v+1)} \notin I_2$; $d \geq 2$; and, $z \leq w-1$.  Given $(I_1,I_2) \in A_1$, $\Phi(\langle \sigma_y^{(z)} \rangle \cup \langle \eta_f \rangle, \langle \eta_c^{(d)} \rangle \cup \langle \sigma_x \rangle) = (\langle \sigma_x^{(z)} \rangle \cup \langle \eta_c \rangle, \langle \eta_f^{(d)} \rangle \cup \langle \sigma_y \rangle)$, so that $\Phi(A_1)$ consists of all $(I_3,I_4) \in J(\calP_3) \times J(\calP_4)$ satisfying if $c = h$, then $\sigma_1 \in I_4$ or $\sigma_0^{(2)} \in I_3$; if $I_4 = \emptyset$, then $z = 1$; if $d = v$ and $f = h$, then $y \neq 0$; $\tau^+ \notin I_4$; $d \geq 2$; and, $z \leq w-1$. 
        \item $A_2$ consists of pairs $(I_1,I_2)$ satisfying $y = m$; $c = 0$; and, $z \leq w-1$ with equality only if $x \leq k$ and this latter is only equality if $\calP_1(a) \in I_1$. Given $(I_1,I_2) \in A_2$, $\Phi(\langle \sigma_m^{(z)} \rangle \cup \langle \eta_f \rangle, \langle \sigma_x \rangle \cup \eta_0^{(d)} \rangle) = (\langle \sigma_x^{(z)} \rangle \cup \langle \eta_f^{(d)} \rangle)$ so that $\Phi(A_2)$ consists of $(I_3,I_4)$ such that $I_4 = \emptyset$ and $z \geq 2$. 
        \item $A_3$ consists of pairs $(I_1,I_2)$ satisfying $d = 1$; $z \leq w-1$; if $f = h$, then $y \neq 0$; and, if $y = m$, then $f \neq 0$. Given $(I_1,I_2) \in A_3$, $\Phi(\langle \sigma_y^{(z)} \rangle \cup \langle \eta_f \rangle, \langle \sigma_x \rangle \cup \eta_c^{(1)} \rangle) = (\langle \sigma_x^{(z)} \rangle \cup \langle \eta_c^{(1)} \rangle, \langle \sigma_y \rangle \cup \langle \eta_f \rangle)$ so that $\Phi(A_3)$ consists of $(I_3,I_4)$ satisfying $d = 1$; $z \leq w-1$; and, $I_4 = \emptyset$. 
        \item $A_4$ consists of pairs $(I_1,I_2)$ satisfying $d \geq 2$; $z = w$; if $x = w$, then $c \neq 0$; and, if $c = h$, then $x \neq 0$. Given $(I_1,I_2) \in A_4$, $\Phi(\langle \sigma_y^{(w)} \rangle \cup \langle \eta_f \rangle, \langle \sigma_x \rangle \cup \langle \eta_c^{(d)} \rangle) = (\langle \sigma_y^{(w)} \rangle \cup \langle \eta_f^{(d)} \rangle, \langle \sigma_x \rangle \cup \langle \eta_c \rangle)$ so that $\Phi(A_4)$ consists of $(I_3,I_4)$ satisfying $d \geq 2$; $z = w$; and, $I_4 \neq 0$. 
        \item $A_5$ consists of pairs $(I_1,I_2)$ such that $\sigma_y^{(z)} \succeq \sigma_m^{(w-1)}$ and $\eta_c^{(d)} \preceq \eta_0^{(2)}$ where (1) if $\sigma_y^{(z)} = \sigma_m^{(w-1)}$ and $d =1$, then $\sigma_x \preceq \sigma_k$ with equality only if $\calP_1(a) \in I_1$, (2) if $\sigma_y^{(z)} = \sigma_m^{(w-1)}$ and $\eta_c^{(d)} = \eta_0^{(2)}$, then $\sigma_x \succeq \sigma_k$, which is strict if $\calP_1(a) \in I_1$, and (3) if $z = w$ and $d = 1$, then $x = m$ only if $\eta_1 \in I_2$, $f = h$ only if $\sigma_1 \in I_1$, and $y =0$ and $c = h$ only if  $\sigma_x \preceq \sigma_k$ with equality only if $\calP_1(a) \in I_1$, and (4) we do not allow $d = 2$ and $z = w$. Given $(I_1,I_2) \in A_5$, we have \[
        \Phi(\langle \sigma_y^{(z)} \rangle \cup \langle \eta_f \rangle, \langle \eta_c^{(d)} \rangle \cup \langle \sigma_x \rangle) = \begin{cases} 
        (\langle \sigma_x^{(w)} \rangle \cup \langle \eta_c^{(1)} \rangle, \emptyset) & z = w-1 \text{ and } d = 1\\
        (\langle \sigma_0^{(w)} \rangle \cup \langle \eta_h^{(1)} \rangle, \langle \sigma_x \rangle) & z = w-1 \text{ and } d = 2\\
        (\langle \sigma_y^{(w)} \rangle \cup \langle \eta_c^{(1)}\rangle, \langle  \eta_f \rangle \cup \langle \sigma_x \rangle) & z = w \text{ and } d = 1.\\
        \end{cases}
        \]
        We have $\Phi(A_5)$ consists of $(I_3,I_4)$ satisfying $d = 1$; $z = w$; and, $\tau^+ \notin I_4$.
        \item $A_6$ consists of pairs $(I_1,I_2)$ satisfying $y = 0$; if $d = 1$, then $f  = h$; and, if $d > 1$, then $c = h$. Given $(I_1,I_2) \in A_6$, we have \[
            \Phi(\langle \sigma_0^{(z)} \rangle \cup \langle \eta_f \rangle, \langle \eta_c^{(d)} \rangle \cup \langle \sigma_x \rangle) = \begin{cases}
            (\langle \eta_c^{(1)} \rangle \cup \langle \sigma_x^{(z-1)} \rangle, \langle \tau^+ \rangle) & d = 1 \\
            (\langle \eta_f^{(d)} \rangle \cup \langle \sigma_x^{(z-1)} \rangle, \langle \tau^+ \rangle) & d > 1. \\
        \end{cases}\]
        We have $\Phi(A_6)$ consists of $(I_3,I_4)$ such that $\tau^+ \in I_4$; $z < w$; and, if $d = v$, then $y \neq 0$. 
        \item $A_7$ consists of pairs $(I_1,I_2)$ such that $x = m$; $z = w$; $c = 0$; and, if $d = 2$, then $f \geq \ell -1$, which is strict if $\calP_2(b) \in I_2$. Given $(I_1,I_2) \in A_7$, we have $\Phi(\langle \sigma_y^{(w)} \rangle \cup \langle \eta_f \rangle, \langle \eta_0^{(d)} \rangle \cup \langle \sigma_m \rangle) = (\langle \eta_f^{(d-1)} \rangle \cup \langle \sigma_y^{(w)} \rangle, \langle \tau^+ \rangle)$ so $\Phi(A_7)$ consists of $(I_3,I_4)$ satisfying $\tau^+ \in I_4$; $z = w$; and, $d < v$.
        \item $A_8$ consists of pairs $(I_1,I_2)$ such that $\tau^{(v+1)} \in I_2$. Given $(I_1,I_2) \in A_8$, we have \[
        \Phi(\langle \sigma_y^{(z)} \rangle \cup \langle \eta_f \rangle, \langle \tau^{(v+1)} \rangle) = \begin{cases} 
        (\langle \eta_f^{(v)} \rangle \cup \langle \sigma_y^{(z)} \rangle, \langle \tau^+ \rangle) & y =0 \text{ or } z = w\\
        (\langle \eta_f^{(v)} \rangle \cup \langle \sigma_0^{(z+1)} \rangle, \langle \sigma_y \rangle \cup \langle \eta_h \rangle) & y \neq 0 \text{ and } z<w. \end{cases}
        \]
        We have $\Phi(A_8)$ consists of $(I_3,I_4)$ such that ($\tau^+ \in I_4$; $d = v$; and, $y = 0$), ($\tau^+ \in I_4$; $d = v$; and, $z = w$), or ($d = v$; $f = h$; and, $y = 0$).
        \item $A_9$ consists of pairs $(I_1,I_2)$ such that $c =h$ and $\sigma_1^{(1)} \notin I_1$. Given $(I_1,I_2) \in A_9$, $\Phi( \langle \eta_f \rangle, \langle \eta_h^{(d)} \rangle \cup \langle \sigma_x \rangle) = (\langle \eta_h^{(d)} \rangle \cup \langle \sigma_x^{(1)} \rangle, \langle \eta_f \rangle)$ so that $\Phi(A_9)$ consists of $(I_3,I_4)$ with $c = h$; $\sigma_0^{(2)} \notin I_3$; and, $\sigma_1 \notin I_4$. 
        \item $B$ consists of pairs $(I_1,I_2)$ such that  $x \geq k$ which is strict if $\calP_1(a) \in I_1$; $\eta_c^{(d)} \preceq \eta_0^{(2)}$ which is equality only if $\sigma_0^{(w)} \in I_1$; $f \leq \ell - 1$ which is equality only if $\calP_2(b) \in I_2$;  $\sigma_y^{(z)} \succeq \sigma_m^{(w-1)}$ which is only strict if $\sigma_m \in I_2$ or $\eta_h^{(1)} \in I_2$; $\sigma_1^{(w)} \in I_1$ only if $\sigma_m \in I_2$; $\eta_1 \in I_1$ only if $\eta_h^{(1)} \in I_2$; and, $\{\sigma_1^{(w)}, \eta_1\} \subset I_1$ only if $\eta_0^{(2)} \in I_2$. There is a straightforward bijection between this set and $J(\calP_5) \times J(\calP_6)$ where the content of the image is the same as the content of $(I_1 \backslash \langle \sigma_m^{(w-1)} \rangle) \cup (I_2 \backslash \langle \sigma_k \rangle)$. 
    \end{enumerate}

    \textbf{Case ii) $w = 1, v > 1$.}

We now describe how to adjust the sets and maps in the general case to this case. Let $A_1$ be the analogue of $A_4$ and $A_2$ be the analogue of $A_7$ from the general case, and define $\Phi$ in a similar way on these.  Here, $\Phi(A_1)$ consists of all $(I_3,I_4) \in J(\calP_3) \times J(\calP_4)$ such that $\tau^+ \notin I_4$; if $\sigma_1^{(1)} \in I_3$ then $\tau^- \in I_4$; and, if $c = h$, then $\sigma_1^{(1)} \in I_3$, and $\Phi(A_2)$ consists of all $(I_3,I_4)$ such that $\tau^+ \in I_4$. The set $B$ in this case is obtained by interpreting the set from the general case in an appropriate way for $w = 1$. The remaining parts of $J(\calP_1) \times J(\calP_2)$ can be divided in a simpler way than in the general case.
    
    \begin{enumerate}
        \item[3.] $A_3$ consists of pairs $(I_1,I_2)$ such that $\tau^{(1)} \notin I_1$; $\sigma_x$ exists; and, $x \leq k$ with equality only if $\calP_1(a) \in I_1$. Given such a pair, we set $\Phi(\emptyset, \langle \sigma_x \rangle \cup \langle \eta_c^{(d)} \rangle) = (\langle \sigma_x^{(1)} \rangle \cup \langle \eta_c^{(d)} \rangle, \emptyset)$ so that the image of $A_3$ is all pairs $(I_3,I_4)$ with $\sigma_1^{(1)} \in I_3$ and $I_4 = \emptyset$. 
        \item[4.] $A_4$ consists of pairs $(I_1,I_2)$ such that $\tau^{(1)} \notin I_1$; $x \geq k$ which is strict if $\calP_1(a) \in I_1$; and, $d > 1$. Given such a pair, we set $\Phi(\emptyset, \langle \sigma_x \rangle \cup \langle \eta_c^{(d)} \rangle) = (\langle \eta_h^{(d-1)} \rangle, \langle \eta_f \rangle \cup \langle \sigma_x \rangle)$ so that the image of $A_4$ is all pairs $(I_3,I_4)$ such that $c = h$; $\sigma_1^{(1)} \notin I_3$; and, $x \geq k $ which is strict if $\calP_1(a) \in I_3$.
        \item[5.] $A_5$ consists of pairs $(I_1,I_2)$ such that $\sigma_x$ does not exist and $c = h$. Given such a pair, we set $\Phi(\langle \eta_f \rangle \cup \langle \sigma_x^{(1)} \rangle, \langle \eta_h^{(d)} \rangle) = (\langle \eta_h^{(d)} \rangle, \langle \eta_f \rangle \cup \langle \sigma_x \rangle)$ so that the image of $A_5$ is all pairs $(I_3,I_4)$ such that $c = h$; $\sigma_1^{(1)} \notin I_3$; and, $x \leq k $ which is only equal if $\calP_1(a) \in I_3$.
    \end{enumerate}

    \textbf{All other cases)}

    Adjusting to $w > 1$ and $v =1$ can be done similarly to adjusting from Case i to Case ii. When $w = 1$ and $v = 1$, one can use a simpler partitioning, which we omit here. One can see a similar strategy used in the proof of Proposition \ref{prop:IntArcFromTDoubleTag} where the full details are provided.

    The $\gb$-vector computations are straightforward and need not be broken into these cases, so we omit them.
\end{proof}

\subsection{Intersection with arcs in the triangulation}\label{subsec:IntersectWithT}

In this section, we deal with intersections between $\gamma \notin T$ and $\tau$ such that $\tau^0 \in T$. We can recognize these intersections by the presence of $\tau^0$ in the poset $\calP_\gamma^0$; if $\tau^0$ appears in the loop portion of $\calP_\gamma$, then this is an incompatibility which was dealt with in Section \ref{sec:puncture_incompatibility}. 

The resolution of $\gamma$ and $\tau$ will result in two multicurves, each containing two curves. If we ignore possible notchings, then we can describe the posets of these four curves as the following. 

Let $\calP_{\low}^{<i} = \calP_{\gamma}[<i] \backslash \calP_{\gamma}^{\succeq \calP_{\gamma}(i)}$ and $\calP_{\low}^{>i} = \calP_{\gamma}[>i] \backslash \calP_{\gamma}^{\succeq \calP_{\gamma}(i)}$. 
Define $\calP_{\upper}^{<i}$ and $\calP_{\upper}^{>i}$ similarly, using ``$\preceq \calP_{\gamma}(i)$'' instead in the superscripts. If $\{\gamma_5,\gamma_6\}$ is the multicurve with a $Y$-coefficient in the expansion of $x_\gamma x_\tau$, we will either have $\calP_{\gamma_3} = \calP_{\low}^{<i}, \calP_{\gamma_4} = \calP_{\low}^{>i}, \calP_{\gamma_5} = \calP_{\upper}^{<i}$ and $\calP_{\gamma_6} = \calP_{\upper}^{>i}$ or the roles of lower and upper will be swapped. This will depend on the notching on $\tau$ and the local configuration of $\gamma$ near its intersection point with $\tau$.

Throughout this section, let $i$ be such that $\calP_\gamma(i) = \tau^0$. We define a set $F$ which will determine most of the $y$-monomial in the relations presented here. Begin with $F = \emptyset$. For both possible signs, we decide whether to add certain sets to $F$. There are several cases of $\calP_\tau$ based on the possible notchings. 

\begin{itemize}
    \item If $\calP_\gamma(i \pm 1) \prec \calP_\gamma(i)$, $\calP_\gamma(i\pm 1) \in \calP_\gamma^0$, and $\calP_\gamma(i \pm 1) \notin \calP_\tau$, include the order ideal generated by $\calP_\gamma(i \pm 1)$ to $F$.
    \item  If $\calP_\gamma(i \pm 1) \succ \calP_\gamma(i)$, $\calP_\gamma(i\pm 1) \in \calP_\gamma^0$, and $\calP_\gamma(i \pm 1) \in \calP_\tau$, include the order filter generated by $\calP_\gamma(i \pm 1)$ to $F$.
\end{itemize}

The following Lemma is used implicitly throughout this section.

\begin{lemma}
For any pair $(\gamma,\tau)$ such that $e(\gamma,\tau) > 0$, $F$ is either empty, an order ideal, or an order filter.
\end{lemma}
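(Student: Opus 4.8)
The claim is purely combinatorial and local: $F$ is built from at most two pieces, one for the sign $+$ (i.e.\ looking at $\calP_\gamma(i+1)$) and one for the sign $-$ (looking at $\calP_\gamma(i-1)$), and each piece is either empty, a principal order ideal $\langle \calP_\gamma(i\pm1)\rangle$, or a principal order filter $\langle \calP_\gamma(i\pm1)\rangle^{\text{filt}}$. The plan is to show that (a) each nonempty piece, on its own, is on the bottom of $\calP_\gamma$ (in the ``bottom'' sense of the paragraph before Definition~\ref{def:Overlaps}) when it is an order ideal, and on the top when it is an order filter, and then (b) when both pieces are nonempty they are ``compatible'' in the sense that their union is again a single order ideal or order filter, and moreover they cannot be one of each type.

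First I would reduce to the structure of a fence poset near $\calP_\gamma(i)$. By the construction of $\calP_\gamma$, the element $\calP_\gamma(i)=\tau^0$ has at most two neighbors in the Hasse diagram, $\calP_\gamma(i-1)$ and $\calP_\gamma(i+1)$, and each is either above or below $\calP_\gamma(i)$. The two rules defining $F$ only fire under opposite relations: the first rule needs $\calP_\gamma(i\pm1)\prec\calP_\gamma(i)$ and the second needs $\calP_\gamma(i\pm1)\succ\calP_\gamma(i)$. So I would split into cases on the local profile of $\calP_\gamma$ at $i$: (i) $\calP_\gamma(i)$ is a local minimum (both neighbors above), (ii) $\calP_\gamma(i)$ is a local maximum (both neighbors below), (iii) $\calP_\gamma(i)$ is ``increasing'' ($\calP_\gamma(i-1)\prec\calP_\gamma(i)\prec\calP_\gamma(i+1)$ or the mirror). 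In case (i), only the order-filter rule can fire, and it fires for a neighbor $\calP_\gamma(i\pm1)$ exactly when that neighbor lies in $\calP_\tau$; in case (ii), only the order-ideal rule can fire. In the mixed case (iii), at most one of the two rules can fire for each neighbor, and they are of opposite type to each other only if both fire — which is the case I need to rule out.

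The key step is therefore the mixed case: suppose $\calP_\gamma(i-1)\prec\calP_\gamma(i)\prec\calP_\gamma(i+1)$, and suppose the order-ideal rule fires for $i-1$ (so $\calP_\gamma(i-1)\in\calP_\gamma^0$, $\calP_\gamma(i-1)\notin\calP_\tau$) and the order-filter rule fires for $i+1$ (so $\calP_\gamma(i+1)\in\calP_\gamma^0$, $\calP_\gamma(i+1)\in\calP_\tau$). I would show that $\langle\calP_\gamma(i-1)\rangle$ and $\langle\calP_\gamma(i+1)\rangle^{\text{filt}}$ are either disjoint, forcing $F=\langle\calP_\gamma(i-1)\rangle\cup\langle\calP_\gamma(i+1)\rangle^{\text{filt}}$ to be neither an ideal nor a filter — which would contradict the Lemma — so I must instead argue that this configuration \emph{cannot occur}. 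This is where I expect the real content to be: the condition ``$\calP_\gamma(i+1)\in\calP_\tau$ but $\calP_\gamma(i-1)\notin\calP_\tau$'' has to be incompatible with the geometry, given that $\tau^0\in T$. Concretely, $\calP_\tau$ is (up to decoration and loops) the chain of spokes at the puncture(s) where $\tau$ is notched, or empty when $\tau$ is plain; membership of $\calP_\gamma(i\pm1)$ in $\calP_\tau$ says the corresponding arc of $T$ is a spoke at the notched end of $\tau$. I would unwind, using the local picture of $\gamma$ crossing $\tau$ inside the two triangles adjacent to $\tau$, that if one of $\calP_\gamma(i\pm1)$ is such a spoke then so is the other (both lie in one of those two triangles), which kills the mixed case; simultaneously this shows that in cases (i) and (ii) the two firings (when both occur) are of the \emph{same} type, so $F$ is a union of two nested or adjacent principal ideals (resp.\ filters), hence again a principal ideal (resp.\ filter) — using that in a fence poset a union of order ideals is an order ideal and, when both are principal and comparable through $\calP_\gamma(i)$'s neighborhood, it is again principal. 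The main obstacle is precisely this geometric incompatibility argument for the mixed case; everything else is bookkeeping on fence posets, and the notched/plain subcases of $\calP_\tau$ only change which of the two rules is eligible to fire, not the structure of the conclusion.
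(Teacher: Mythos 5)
Your proposal is correct and follows essentially the same route as the paper: the paper's proof likewise observes that $F$ can only fail to be an ideal or filter in the mixed case $\calP_\gamma(i-1)\prec\calP_\gamma(i)\prec\calP_\gamma(i+1)$ with $\calP_\gamma(i-1)\notin\calP_\tau$ and $\calP_\gamma(i+1)\in\calP_\tau$, and then rules that configuration out geometrically. The precise form of the incompatibility is that the two monotone cover relations force all three arcs to share a vertex $p$, so $\calP_\gamma(i+1)\in\calP_\tau$ means $\tau$ is notched at $p$ and hence $\calP_\gamma(i-1)\in\calP_\tau$ as well --- this shared-vertex observation is the cleaner version of your ``if one neighbor is such a spoke then so is the other'' step (note they lie in the two different triangles adjacent to $\tau$, not the same one).
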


\begin{proof}
Assume $F \neq \emptyset$. Up to reindexing the chronological ordering,  $F$ would fail to be an order ideal or order filter  if we have $\calP_\gamma(i-1) \prec \calP_\gamma(i) \prec \calP_\gamma(i+1)$, $\calP_\gamma(i-1) \notin \calP_\tau$, and $\calP_\gamma(i+1) \in \calP_\tau$. Clearly, the three arcs $\calP_\gamma(i-1),\calP_\gamma(i),\calP_\gamma(i+1)$ must share a vertex, call it $p$. Since $\calP_\gamma(i+1) \in \calP_\tau$, it must be that $\tau$ is notched at $p$. However, then $\calP_\gamma(i-1) \in \calP_\tau$ as well, a contradiction. 
\end{proof}

First, let $\tau = \tau^0$ so that $\tau$ is a plain arc. This type of intersection was also studied by Pilaud, Reading, and Schroll in \cite{pilaud2023posets}. Here $\calP_\tau = \emptyset$ and $F \cup \{\calP_\gamma(i)\} = \langle \calP_{\gamma}(i) \rangle$. In this case, we have that $\calP_{\gamma_3} = \calP_{\low}^{<i}, \calP_{\gamma_4} = \calP_{\low}^{>i}, \calP_{\gamma_5} = \calP_{\upper}^{<i}$ and $\calP_{\gamma_6} = \calP_{\upper}^{>i}$. Throughout, we set $Y_F = \prod_{\calP_\gamma(j) \in F} y_{\calP_\gamma(j)}$.

\begin{prop}\label{prop:IntArcFromTSingleTag}
Let $\tau \in T$ and let $\gamma$ be an arc such that $e(\gamma,\tau) > 0$. Pick $i$ such that $\calP_{\gamma}(i) = \tau$. Let $Y_{\langle \calP_\gamma(i) \rangle} = \prod_{\calP_\gamma(j) \in \langle \calP_\gamma(i)\rangle} y_{\calP_\gamma(j)}$. Then,\[
x_\gamma x_\tau = x_{\gamma_3}x_{\gamma_4} + Y_{F}y_\tau x_{\gamma_5} x_{\gamma_6} = x_{\gamma_3}x_{\gamma_4} + Y_{\langle \calP_\gamma(i) \rangle} x_{\gamma_5} x_{\gamma_6}.
\]
\end{prop}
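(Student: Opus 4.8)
The plan is to follow the proof-strategy template laid out in Subsection~\ref{subsec:proofStrategy}, specializing to the case $x_1 x_2 = x_3 x_4 + Z\, x_5 x_6$ with $\gamma_1 = \gamma$, $\gamma_2 = \tau$, where $\tau \in T$ so that $\calP_\tau = \emptyset$ and $J(\calP_\tau)$ is the one-element poset $\{\emptyset\}$. Thus $J(\calP_1)\times J(\calP_2)$ is canonically identified with $J(\calP_\gamma)$, and we must construct a partition $J(\calP_\gamma) = A \cup B$ together with a bijection $\Phi\colon J(\calP_\gamma) \to \big(J(\calP_{\gamma_3})\times J(\calP_{\gamma_4})\big) \cup \big(J(\calP_{\gamma_5})\times J(\calP_{\gamma_6})\big)$ such that $\emptyset \in A$, $\Phi|_A$ is weight-preserving, and $\Phi|_B$ is weight-preserving up to the fixed monomial $Z = \hat Y_{\langle \calP_\gamma(i)\rangle}$ (equivalently, $\hat Y_F \hat y_\tau$). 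The natural guess, dictated by the descriptions $\calP_{\gamma_3} = \calP_{\low}^{<i}$, $\calP_{\gamma_4} = \calP_{\low}^{>i}$, $\calP_{\gamma_5} = \calP_{\upper}^{<i}$, $\calP_{\gamma_6} = \calP_{\upper}^{>i}$, is: set $A = \{I \in J(\calP_\gamma) : \calP_\gamma(i) \notin I\}$ and $B = \{I : \calP_\gamma(i) \in I\}$. For $I \in A$, since $\tau = \calP_\gamma(i) \notin I$ and $I$ is an order ideal, $I$ cannot contain any element $\succeq \calP_\gamma(i)$; hence $I \subseteq \calP_\gamma[<i] \cup \calP_\gamma[>i]$ with all elements lying in the ``lower'' halves, so $I$ splits uniquely as $(I \cap \calP_{\low}^{<i},\ I \cap \calP_{\low}^{>i}) \in J(\calP_{\gamma_3}) \times J(\calP_{\gamma_4})$; this is clearly a weight-preserving bijection onto all of $J(\calP_{\gamma_3}) \times J(\calP_{\gamma_4})$. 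Dually, for $I \in B$ we have $\langle \calP_\gamma(i)\rangle \subseteq I$, so $I \setminus \langle \calP_\gamma(i)\rangle$ contains only elements $\not\succeq \calP_\gamma(i)$, i.e.\ elements of $\calP_\gamma[<i] \cup \calP_\gamma[>i]$ lying in the ``upper'' halves, and splits uniquely as an element of $J(\calP_{\gamma_5}) \times J(\calP_{\gamma_6})$; this bijection is weight-preserving after dividing by $\prod_{\calP_\gamma(j) \in \langle \calP_\gamma(i)\rangle} \hat y_{\calP_\gamma(j)} = \hat Y_{\langle \calP_\gamma(i)\rangle}$, which is exactly the required $Z$.

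The first substantive step is therefore to verify carefully that the two maps above are well-defined bijections onto the full product posets; this requires checking that the decomposition of an order ideal of $\calP_\gamma$ that avoids (resp.\ contains) the principal ideal $\langle \calP_\gamma(i)\rangle$ really does restrict to a pair of order ideals of the subposets $\calP_{\low}^{<i}, \calP_{\low}^{>i}$ (resp.\ $\calP_{\upper}^{<i}, \calP_{\upper}^{>i}$), and conversely that any pair of such order ideals glues back to a legal order ideal of $\calP_\gamma$ — the subtlety being the cover relations across the position $i$, which is where the ``upper/lower'' bookkeeping and the local geometry of the triangulation (determining which of $\calP_\gamma(i\pm1)$ sits above or below $\calP_\gamma(i)$) enter. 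The second step is the $\mathbf{g}$-vector computation: show $\gb_\gamma + \gb_\tau = \gb_{\gamma_3} + \gb_{\gamma_4}$ and $\gb_\gamma + \gb_\tau + \degx(Z) = \gb_{\gamma_5} + \gb_{\gamma_6}$. Here $\gb_\tau = \eb_\tau$ since $\tau$ is a plain arc in $T$. I would decompose $\gb_\gamma = -\mathbf{a}_\gamma + \mathbf{b}_\gamma + \mathbf{r}_\gamma$ into contributions from $\calP_\gamma[<i]$, the element $\calP_\gamma(i)$ itself, and $\calP_\gamma[>i]$, and track how splitting at $i$ redistributes minimal/maximal/strict elements; the contribution of $\degx(\hat Y_{\langle \calP_\gamma(i)\rangle})$ is then computed via Lemma~\ref{lem:y-hat} applied to the chain $\langle \calP_\gamma(i)\rangle$ (a principal order ideal in a fence poset is itself a fence poset), and one checks the cancellation term by term, as in the analogous computations in the proofs of Propositions~\ref{prop:GraftingType0} and~\ref{prop:GraftingType1}.

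The main obstacle I anticipate is not the bijection itself — which is essentially the ``grafting'' or ``self-grafting'' move from snake-graph calculus, transparent in the poset language — but rather the bookkeeping around the endpoints of $\gamma$ and the possibility that $\gamma$ is tagged at one or both ends, i.e.\ that $\calP_\gamma$ carries loops. When $\calP_\gamma(i)$ is near the start or end of $\gamma$ (so that $i = 1$, $i = d$, or $\calP_\gamma(i\pm1)$ does not exist or lies in a loop), the subposets $\calP_{\low}^{<i}$ etc.\ may be empty or may need a decoration, exactly the phenomenon handled in the Type~0 analysis; the definition of $F$ via order ideals/filters generated by $\calP_\gamma(i\pm1)$ (conditioned on membership in $\calP_\tau$, which is empty here) is designed to absorb precisely these boundary cases, and the preceding Lemma guarantees $F$ is well-behaved. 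I would first write the proof assuming $\gamma$ is an honest (plain) arc with $i \neq 1, d$, verify everything cleanly, and then remark that the tagged and boundary sub-cases follow by the same argument after replacing $\calP_\gamma$ by its decorated version and invoking the decoration conventions, reducing the $\mathbf{g}$-vector check to the same cancellation with $\mathbf{j}$-vector terms included. The identity $Y_F y_\tau = Y_{\langle \calP_\gamma(i)\rangle}$ (the final equality in the statement) is then immediate from the definition of $F$: $F \cup \{\calP_\gamma(i)\}$ is exactly the principal order ideal $\langle \calP_\gamma(i)\rangle$ when $\calP_\tau = \emptyset$, since the only elements of $\calP_\gamma$ that lie below $\calP_\gamma(i)$ propagate downward through the two neighbors recorded in $F$.
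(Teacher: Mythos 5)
Your proposal is correct and follows essentially the same route as the paper's proof: partition $J(\calP_\gamma)$ according to whether $\calP_\gamma(i)$ lies in the order ideal, map one part by restriction to $J(\calP_{\gamma_3})\times J(\calP_{\gamma_4})$ and the other by deleting $\langle\calP_\gamma(i)\rangle$ and restricting to $J(\calP_{\gamma_5})\times J(\calP_{\gamma_6})$, with weight discrepancy exactly $\hat Y_{\langle\calP_\gamma(i)\rangle}$, and then verify the $\gb$-vector identities case by case via Lemma~\ref{lem:y-hat}. The only divergence is notational (which of the two subposet pairs you call ``lower'' versus ``upper'' under the paper's order-ideal convention), which does not affect the argument.
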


\begin{proof}
We describe a bijection $\Phi: J(\calP_\gamma) \to (J(\calP_{\gamma_3}) \times J(\calP_{\gamma_4})) \cup (J(\calP_{\gamma_5}) \times J(\calP_{\gamma_6}))$ as follows. Let $A$ be the subset of $J(\calP_\gamma)$ given by all order ideals which do not contain $\calP_\gamma(i)$ and let $B$ be the subset of all order ideals containing $\calP_{\gamma}(i)$. We see that an element of $A$ restricts to order ideals of  $\calP_{\gamma_3}$ and $\calP_{\gamma_4}$ while if $I \in B$, $I \backslash \langle \calP_\gamma(i) \rangle$ restricts to an order ideal of $\calP_{\gamma_5}$ and $\calP_{\gamma_6}$. Combining these maps gives the desired bijection, with a clear inverse map. 

There are several cases to consider when comparing the $\gb$-vectors. We provide one case in-depth; the others are similar or simpler, and many follow from skein relations in the unpunctured case. Suppose that $\calP_\gamma(1) \prec \calP_\gamma(i) \prec \calP_\gamma(i+1)$ so that in particular $\calP_\gamma(i)$ is neither minimal nor maximal. Suppose moreover that $s(\gamma)$ is notched, so that there is a loop $\calP_\gamma(1) \succ \calP_\gamma(0) \prec \calP_\gamma(-1) \prec \cdots \prec \calP_\gamma(-m) \succ \calP_\gamma(1)$.  

Recall the $\gb$-vector for $\tau \in T$ is $\mathbf{e}_\tau$. We verify that $\gb_{\gamma_3} + \gb_{\gamma_4} = \gb_\gamma + \gb_\tau$. We have a summand $\mathbf{e}_\tau$ in the vector $\mathbf{r}_{\gamma_3}$. The maximal element closest to $\calP_{\gamma}(i)$, $\calP_\gamma(j)$, does not appear in either $\calP_{\gamma_3}$ or $\calP_{\gamma_4}$, but we have a summand $\mathbf{e}_{\calP_\gamma(j)}$ in $\mathbf{r}_{\gamma_4}$. All other terms appear for the same reasons in $\mathbf{g}_\gamma$ and $\mathbf{g}_{\gamma_3} + \mathbf{g}_{\gamma_4}$. 

Now we compare $\gb_\gamma + \gb_\tau$ and $\gb_{\gamma_5} + \gb_{\gamma_6}$. The arc $\gamma_5$ as defined will be $\calP_\gamma(0)^{(s(\gamma))}$. Therefore, $\gb_{\gamma_5} = \mathbf{e}_\alpha - \mathbf{e}_{\calP_\gamma(-1)}$ where $\alpha$ is the third arc in the triangle formed by $\calP_{\gamma}(0)$ and $\calP_{\gamma}(-1)$. Let $\beta$ be the third arc in the triangle formed by $\calP_{\gamma}(i)$ and $\calP_{\gamma}(i+1)$. Then, $\gb_{\gamma_6} = \mathbf{e}_{\beta} - \mathbf{e}_{\calP_{\gamma}(i+1)} + \gb'$ where $\mathbf{g'}$ consists of all contributions to $\mathbf{g}$ from values $\calP_\gamma[>i]$. There are implicitly two cases: $\calP_\gamma(i+1) \succ \calP_\gamma(i+2)$ and $\calP_\gamma(i+1) \prec \calP_\gamma(i+2)$. 

From Lemma \ref{lem:y-hat}, we know that \[
\degx(\hat{Y}_{\langle \calP_{\gamma}(i) \rangle}) = \degx(\prod_{j=0}^{i-1} \hat{y}_{\calP_\gamma(j)}) =\mathbf{e}_{\alpha} + \mathbf{e}_\beta - \mathbf{e}_{\calP_{\gamma(-1)}} - \mathbf{e}_{\calP_{\gamma(i+1)}}  + \mathbf{e}_{\calP_{\gamma}(0)}  - \mathbf{e}_{\calP_{\gamma(i)}}, 
\] and we can conclude that this agrees with $(\gb_{\gamma_5} + \gb_{\gamma_6}) - (\gb_\gamma + \gb_\tau)$.
\end{proof}

Now we consider an intersection with $\gamma$ and $\tau = \tau^{(p)}$ singly-notched. Let $\alpha,\beta,\sigma_1,$ and $\sigma_m$ be the arcs in the quadrilateral around $\tau$ in $T$, written in clockwise order such that $\sigma_1$ and $\sigma_m$ are incident to the notched endpoint $p$. The elements in $\calP_\tau$ will be $\{\sigma_1,\ldots,\sigma_m\}$.

\begin{center}
\begin{tikzpicture}[scale = 1.5]
\draw (0,0) to node[left]{$\beta$} (1,1)  to node[right]{$\alpha$}  (2,0) to node[right]{$\sigma_m$} (1,-1) to node[left]{$\sigma_1$} (0,0);
\draw (1,1) to node[right]{$\tau$} node[pos=0.85,sloped,rotate=90]{$\bowtie$} (1,-1);
\node[below] at (1,-1){$p$};
\end{tikzpicture}
\end{center}

In order to describe the posets that arise in the various cases here, we introduce a term. Suppose there is a puncture $p$ with spokes $\sigma_1,\ldots,\sigma_m$, and let $\gamma$ be an arc such that the first arc crossed by $\gamma$ forms a triangle with two consecutive spokes $\sigma_k$ and $\sigma_{k+1}$. Then, we say that $\calP_\gamma$ has \emph{a loop at the beginning based at $p$}. We could similarly say a poset has \emph{a loop at the end based at $p$}. Trivially, a poset that only consists of a loop both begins and ends at a loop.

Orient $\tau$ from its notched endpoint to its plain endpoint. We know that, up to a choice of orientation, either $\gamma$ crosses $\sigma_1$ immediately before crossing $\tau$, $\gamma$ crosses $\beta$ immediately before crossing $\tau$, or $\tau$ is the first arc which $\gamma$ crosses, so that $i = 1$. If we are in the first case or $\tau$ is the first arc which $\gamma$ crosses and $s(\gamma)$ is plain, we set $\gamma_3 = \gamma \circ \tau$, $\gamma_4 = \tau\circ \gamma$, $\gamma_5 = \gamma \circ \tau^{-1}$ and $\gamma_6 = \tau^{-1} \circ \gamma$ where the resolution is  $\{\gamma_3,\gamma_4\} \cup \{\gamma_5,\gamma_6\}$. The corresponding posets are as follows.
\begin{itemize}
    \item $\calP_{\gamma_3} = \calP_{\upper}^{<i}$.
    \item $\calP_{\gamma_4}^0 = (\calP_{\upper}^{>i})^0$ and $\calP_{\gamma_4}$ has a loop at the beginning based at $p$. 
    \item $\calP_{\gamma_5}^0 = (\calP_\low^{<i})^0$ and $\calP_{\gamma_5}$ has a loop at the end based at $p$. 
    \item $\calP_{\gamma_6} = \calP_\low^{>i}$.
\end{itemize}

It is possible for $(\calP_{\upper}^{>i})^0$ or $(\calP_\low^{<i})^0$ to be empty, in which case $\gamma_4$ or $\gamma_5$ would be notched versions of arcs from $T$ and $\calP_{\gamma_4}$ or $\calP_{\gamma_5}$ will be decorated posets.

If $\gamma$ crosses $\beta$ immediately before crossing $\tau$, or $\tau$ is the first arc which $\gamma$ crosses and $s(\gamma)$ is notched, then we flip the definition of $\gamma_3$ and $\gamma_5$ and flip $\gamma_4$ and $\gamma_6$. This is to reflect which term in the expansion is multiplied by a monomial in $y$-variables.

In this case, $F$ will either be an order ideal or an order filter based on which arcs near $\tau$ $\gamma$ crosses. The $y$-monomial will also possibly have an extra factor, depending on whether $\gamma_5^0$ or $\gamma_6^0$ is in $T$ and is notched at $p$. 

\begin{prop}
Let $\tau \in T$ and let $\gamma$ be an arc such that $e(\gamma,\tau) > 0$. Pick $i$ such that $\calP_{\gamma}(i) = \tau$. Set $Y_F = \prod_{\calP_{\gamma}(j) \in F} y_{\calP_\gamma(j)}$. If $\calP_{\gamma_5}^0 = (\calP_{\low}^{<i})^0$ and this set is empty, set $\delta_5 = 1$. Similarly if $\calP_{\gamma_6}^0 = (\calP_\upper^2)^0$ and this set is empty, set $\delta_6 = 1$. If $\delta_i$ is not yet specified, set $\delta_i = 0$. Then,
\[
x_\gamma x_{\tau^{(p)}} = x_{\gamma_3}x_{\gamma_4} + Y_F y_{\gamma_5^0}^{\delta_5} y_{\gamma_6^0} ^{\delta_6} x_{\gamma_5}x_{\gamma_6}
\]
\end{prop}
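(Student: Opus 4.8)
The plan is to follow exactly the proof strategy of Proposition~\ref{prop:IntArcFromTSingleTag}, upgrading the bijection and the $\gb$-vector bookkeeping to account for the loop that $\tau^{(p)}$ contributes. First I would recall that, by Proposition~\ref{prop:PosetExpansion}, $x_\gamma x_{\tau^{(p)}} = \mathbf{x}^{\gb_\gamma + \gb_{\tau^{(p)}}}\sum_{(I,J)} wt(I)wt(J)$ where $I$ ranges over $J(\calP_\gamma)$ and $J$ over $J(\calP_{\tau^{(p)}})$, and $\calP_{\tau^{(p)}}$ is the chain $\sigma_1 \prec \cdots \prec \sigma_m$ with decoration $\tau^-$. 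So $J(\calP_{\tau^{(p)}})$ consists of the $m+1$ principal order filters (including $\emptyset$) $\langle \sigma_j\rangle$. The core combinatorial claim is a weight-respecting partition $J(\calP_\gamma)\times J(\calP_{\tau^{(p)}}) = A \sqcup B$ with a bijection $\Phi$ restricting to $A \to J(\calP_{\gamma_3})\times J(\calP_{\gamma_4})$ (weight-preserving) and $B \to J(\calP_{\gamma_5})\times J(\calP_{\gamma_6})$ (weight-preserving up to the monomial $Z$ whose $y$-part is $Y_F y_{\gamma_5^0}^{\delta_5} y_{\gamma_6^0}^{\delta_6}$).

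The key steps, in order: (1) Set up the partition. In the case where $\gamma$ crosses $\sigma_1$ immediately before $\tau$ (so $\gamma_3 = \gamma\circ\tau$, etc.), let $A$ be the pairs $(I,J)$ with $\calP_\gamma(i)=\tau^0\notin I$, and $B$ the pairs with $\tau^0 \in I$ — but now the second coordinate $J = \langle\sigma_x\rangle$ interacts with the loop that $\calP_{\gamma_4}$ (resp.\ $\calP_{\gamma_5}$) has at the beginning (resp.\ end) based at $p$. Concretely, an order ideal of $\calP_\gamma$ not containing $\tau^0$ restricts to order ideals of $(\calP^{<i}_{\upper})$ and $(\calP^{>i}_{\upper})^0$; the spoke data $\sigma_x$ from $J$ fills in the loop of $\calP_{\gamma_4}$, exactly as in the winding constructions of Sections~\ref{subsec:Type1}--\ref{subsec:Type2}. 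Dually, an ideal containing $\tau^0$, with $\langle\calP_\gamma(i)\rangle$ removed and combined with $\sigma_x$, restricts to order ideals of $\calP_{\gamma_5}$ (which has a loop at the end) and $\calP_{\gamma_6}$. (2) Verify $\Phi$ is a bijection — the inverse is immediate once the images are identified, just as in Proposition~\ref{prop:IntArcFromTSingleTag}. (3) Handle the degenerate subcases $\delta_5 = 1$ or $\delta_6 = 1$, where $\gamma_5$ or $\gamma_6$ is a notched arc from $T$ and its poset is a decorated poset $(\emptyset, \bullet^-)$ or $(\calP_{\ell_p},\bullet^-)$; here one uses Corollary~\ref{cor:PlainTimesNotchedEqualsLoop}-style reasoning and the decorated-poset conventions of Section~\ref{sec:expansion_formula}. (4) Handle the mirror case ($\gamma$ crosses $\beta$ before $\tau$, or $\tau$ is the first arc crossed and $s(\gamma)$ is notched), where the roles of $\{\gamma_3,\gamma_6\}$ and $\{\gamma_4,\gamma_5\}$ flip; this is symmetric.

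The final step is the $\gb$-vector identities $\gb_\gamma + \gb_{\tau^{(p)}} = \gb_{\gamma_3} + \gb_{\gamma_4}$ and $\gb_\gamma + \gb_{\tau^{(p)}} + \degx(Z) = \gb_{\gamma_5} + \gb_{\gamma_6}$, where $Z = \hat{Y}_F \hat{y}_{\gamma_5^0}^{\delta_5}\hat{y}_{\gamma_6^0}^{\delta_6}$. Recall $\gb_{\tau^{(p)}} = \mathbf{r}_{\tau^{(p)}}$, which by the decoration $\tau^-$ and the quadrilateral $\alpha,\beta,\sigma_1,\sigma_m$ around $\tau$ equals $\eb_\alpha - \eb_\beta$ (or the appropriate truncation if some neighbor is a boundary arc). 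One then expands $\gb_\gamma$ around $\calP_\gamma(i) = \tau^0$ exactly as in the in-depth case of Proposition~\ref{prop:IntArcFromTSingleTag}, tracking the contributions of $\calP_\gamma(i\pm1)$ to $\mathbf{a},\mathbf{b},\mathbf{r}$; the new contributions are those coming from the loop at $p$ in $\calP_{\gamma_4}$ and $\calP_{\gamma_5}$, and $\degx$ of the extra factors $\hat{y}_{\gamma_5^0},\hat{y}_{\gamma_6^0}$ is computed via Lemma~\ref{lem:y-hat} and Corollary~\ref{cor:y-hat_spokes}. The main obstacle will be step~(1): carefully describing how the single spoke-generator $\sigma_x$ from $J(\calP_{\tau^{(p)}})$ simultaneously populates the loop of $\calP_{\gamma_4}$ and interacts with whether $\tau^0\in I$, since the loop of $\tau^{(p)}$ sits at one end of the quadrilateral and the images in $\calP_{\gamma_3},\calP_{\gamma_4},\calP_{\gamma_5},\calP_{\gamma_6}$ depend on the precise local configuration of $\gamma$ at its intersection with $\tau$ and on the notchings of $s(\gamma)$ and $t(\gamma)$ — this is precisely the bookkeeping that the statement encodes via $F$, $\delta_5$, $\delta_6$, and the upper/lower poset truncations, and getting the case division to match the claimed monomial is where the real work lies. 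By contrast, once that is set up, the $\gb$-vector computation is routine, as the paper repeatedly notes ("straightforward", "similar or simpler"), so I would state it compactly rather than grinding through every subcase.
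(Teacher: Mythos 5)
Your overall strategy is the paper's: partition $J(\calP_\gamma)\times J(\calP_{\tau^{(p)}})$, build a content-preserving bijection onto $(J(\calP_{\gamma_3})\times J(\calP_{\gamma_4}))\cup(J(\calP_{\gamma_5})\times J(\calP_{\gamma_6}))$, and finish with the $\gb$-vector identities. But the one concrete combinatorial claim you make — that $A$ consists of the pairs with $\tau^0\notin I$ and $B$ of those with $\tau^0\in I$ — is false, and it is exactly the point where the plain case (Proposition~\ref{prop:IntArcFromTSingleTag}) does not carry over. Because $\calP_{\tau^{(p)}}$ is now the nontrivial chain $\sigma_1\prec\cdots\prec\sigma_m$, the partition must cut across the $\tau^0\in I$ divide: for instance, in the configuration where $\gamma$ crosses $\beta$ immediately before $\tau$ and $\tau$ is the last arc crossed, the correct $B$ contains pairs with $\tau^0\notin I_1$ (those with $\beta\in I_1$ and $\sigma_1\notin I_2$) as well as pairs with $\tau^0\in I_1$ (those with $\sigma_m\notin I_2$), while $A$ picks up the pairs with $\tau^0\in I_1$ and $\sigma_m\in I_2$. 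The conditions coupling $I_1$ and $I_2$ near the quadrilateral $\alpha,\beta,\sigma_1,\sigma_m$ are forced on you by which edges of $\mathcal{G}$ survive into $\calP_{\gamma_3},\ldots,\calP_{\gamma_6}$, and they change in each of the three main configurations ($\gamma$ crosses $\beta$ before $\tau$, $\gamma$ crosses $\sigma_1$ before $\tau$, or $\tau$ is the first arc crossed) and with the notchings of $s(\gamma)$, $t(\gamma)$. In the $\sigma_1$-configuration one additionally needs the full winding-chain indexing $\sigma_y^{(z)}$ (with winding number $w$) to state the partition at all; you cite the right sections but do not set this up. You acknowledge at the end that "getting the case division to match the claimed monomial is where the real work lies" — that is correct, but it means the proposal defers precisely the content of the proof rather than supplying it.

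Two smaller points. First, your formula $\gb_{\tau^{(p)}}=\eb_\alpha-\eb_\beta$ does not match the conventions here: with $\alpha,\beta,\sigma_1,\sigma_m$ the quadrilateral around $\tau$ in clockwise order and $\sigma_1,\sigma_m$ at the notched end, the decorated poset $(\calP_{\tau^{(p)}},\tau^-)$ gives $\gb_{\tau^{(p)}}=-\eb_{\sigma_1}+\eb_\beta$; getting this sign and label right matters because it is what makes $\gb_\gamma+\gb_{\tau^{(p)}}=\gb_{\gamma_3}+\gb_{\gamma_4}$ come out. Second, the degenerate subcases $\delta_5=1$ or $\delta_6=1$ are not handled by invoking Corollary~\ref{cor:PlainTimesNotchedEqualsLoop}; they are where the extra factors $y_{\gamma_5^0}^{\delta_5}y_{\gamma_6^0}^{\delta_6}$ in the statement come from, and one must verify directly that the surviving bijection (e.g.\ onto $J(\calP_{\sigma_m^{(p)}})$ or $J(\calP_{\sigma_m^{(p,t(\gamma))}})$) is weight-preserving up to a monomial whose $\degx$ accounts for the decorated $\gb$-vector of the notched arc from $T$.
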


\begin{proof}

Throughout let $\calP_1:= \calP_\gamma, \calP_2:= \calP_{\tau^{(p)}}$, and for $3 \leq i \leq 6$, $\calP_i:= \calP_{\gamma_i}$.

\textbf{Case i) $\gamma$ crosses $\beta$ immediately before crossing $\tau$}.

In this case, there will never be $j$ such that $\calP_\gamma(j) \succ \calP_\gamma(i)$ and $\calP_\gamma(j) \in \calP_\tau$, so $F$ will be $F = \langle \beta \rangle$. 

\textbf{Subcase a) $\gamma$ crosses $\beta$; $\tau$ is the last arc crossed by $\gamma$; and, $t(\gamma)$ is plain.} 
 
Here $\gamma_6 = \sigma_m^{(p)}$. In this case, $\calP_{6}^0 = (\calP_\upper^{>i})^0 = \emptyset$, so that $\delta_6 = 1$. We partition $J(\calP_1) \times J(\calP_2) = A_1 \cup A_2 \cup B$ where  $A_1$ is the set of pairs $(I_1,I_2)$ such that $\tau \notin I_1$ and $\beta \in I_1$ only if $\sigma_1 \in I_2$, $A_2$ is the set of pairs such that $\tau \in I_1$ and $\sigma_m \in I_2$, and $B$ is the set of pairs such that $\beta \in I_1$, $\sigma_m \notin I_2$, and $\sigma_1 \in I_2$ only if $\tau \in I_1$.

We see that $A_1 \cup A_2$ is in bijection with $J(\calP_{3})$, where we send each element from $\calP_1 \times \calP_2$ to its image in $\calP_{\gamma_3}$ (note that $\widetilde{\calP}_4 = (\emptyset,\alpha)$ here), and $B$ is in bijection with $J(\calP_5) \times J(\calP_6)$ where $(I_1 \backslash \langle \beta \rangle) \cup I_2$ restricts to order ideals of each.

For the $\gb$-vectors, suppose $n = \vert \calP_1^0 \vert$, so that $\beta = \calP_1(n-1)$. Set $\gb_i$ to correspond to $\calP_i$. We have $\gb_1 = \eb_\alpha + \delta_{\calP_1(n-1) \succ \calP_1(d-2)}\eb_\beta + \gb_1'$, $\gb_2 = -\eb_{\sigma_1} + \eb_\beta$, $\gb_3 = \delta_{\calP_1(n-1) \succ \calP_1(d-2)} \eb_\beta - \eb_{\sigma_1} + \gb_1'$ and $\gb_4 = \eb_\alpha$ where $\gb_1'$ records the contributions of $\calP_1[<n-1]$ to $\gb_1$. We immediately see $\gb_1 + \gb_2 = \gb_3 + \gb_4$.

Let $x$ be the largest integer such that $\calP_1(x+1)$ is minimal in $\calP_1$. If $x+1 > 1$, let $\mu$ be the third arc in the triangle formed by $\calP_1(x)$ and $\calP_1(x+1)$. Then we have $\degx(\hat{Y}_F) = \eb_{\calP_1(x+1)} - \eb_{\calP_1(x)} + \eb_\mu - \eb_\beta - \eb_\tau + \eb_{\sigma_1}$. If $x+1 = 1$, replace $\mu$ ($\calP_1(x)$) with the counterclockwise (clockwise) neighbor of $\calP_1(x+1)$ in the first triangle $\gamma$ passes through.  We have $\gb_{\gamma_5} = -\eb_{\calP_1(x)} + \eb_\mu + \gb_1''$ where $\gb_1''$ now refers to the contribution of $\calP_1[\leq x]$ to $\gb$, and $\gb_{\gamma_6} = \gb_{\sigma_m^{(p)}} = \eb_\alpha - \eb_{\tau}$. If we rewrite $\gb_1$ where we only set aside $\gb_1''$,  we can see that $\gb_1 + \gb_2 + \degx(\hat{Y}_F) = \gb_5 + \gb_6$.

 \textbf{Subcase b) $\gamma$ crosses $\beta$; $\tau$ is the last arc crossed by $\gamma$; and, $t(\gamma)$ is notched.} 

Notice here we have $\gamma_4 = \alpha^{(t(\gamma))}$ and $\gamma_6 = \sigma_m^{(p,t(\gamma))}$. We already know $\sigma_m$ and $\alpha$ are spokes at $t(\gamma)$. Let the remaining spokes be labeled $\eta_1,\ldots,\eta_h$. In particular, the loop at the end of $\calP_1$ is $\tau \succ \sigma_m \prec \eta_1 \prec \cdots \prec \eta_h \prec \alpha \succ \tau$ where $\tau \in (\calP_1)^0$, and the poset $\calP_4$ is the chain $\sigma_m \prec \eta_1 \prec \cdots \prec \eta_h$.  

\begin{enumerate}
\item We define $A_1$ in the same way as the previous case, and given $(I_1,I_2) \in A_1$, we map these to a pair $(I_3,I_4) \in J(\calP_3) \times J(\calP_4)$ where elements from the loop in $\calP_1$ are sent to the loop in $\calP_4$  elements from the loop in $\calP_2$ are sent to the loop in $\calP_3$. The image of $A_1$ is all pairs $(I_3,I_4) \in J(\calP_3) \times J(\calP_4)$ such that $\tau \notin I_3$. 
\item Let $A_2$ denote the subset of $J(\calP_1) \times J(\calP_2)$ such that $\tau \in I_1$; $\alpha \notin I_1$; $\sigma_{m-1} \in I_2$; and, $\eta_1 \in I_1$ only if $\sigma_m \in I_2$. Pairs $(I_1,I_2) \in A_2$ can be naturally mapped to pairs $(I_3,I_4) \in J(\calP_3) \times J(\calP_4)$ such that $\tau \in I_3$. This completes our bijection between $A_1 \cup A_2$ and $J(\calP_3) \times J(\calP_4)$.
\item Let $B$ be the complement of $A_1 \cup A_2$ in $J(\calP_1) \times J(\calP_2)$ and notice that $B$ can be characterized as all pairs $(I_1,I_2)$ such that $\langle \beta \rangle \subseteq I_1$; $\sigma_1 \in I_2$ only if $\tau \in I_1$; $\sigma_{m-1} \in I_2$ only if $\eta_1 \in I_1$; and, $\sigma_m \in I_2$ only if $\alpha \in I_1$. There is a natural bijection between $B$ and $J(\calP_5) \times J(\calP_6)$ which completes our overall bijection. 
The $\gb$-vector computation is similar enough to the plain case that we suppress it. 
\end{enumerate}

\textbf{Subcase c) $\gamma$ crosses $\beta$ immediately before and $\alpha$ immediately after crossing $\tau$}. 

We will assume $\delta_6 = 0$; if this is not the case, one can combine the following bijections with the ideas from this first case. We also automatically know $\delta_5 = 0$ because $\calP_{5} = \calP_\upper^{<i}$.

To be consistent with our description of this case, we choose a chronological ordering such that $\calP_1(i+1) = \alpha$ and $\calP_1(i-1) = \beta$. Therefore, the set $F$ in this case is $\langle \calP_1(i-1) \rangle = \langle \beta \rangle$. Even though $\gamma$ could cross $\beta$ and $\alpha$ multiple times, below we write $\beta$ and $\alpha$ to signify the entries $\calP_1(i \pm 1)$.

\begin{enumerate}
    \item $A_1$ is the set of tuples $(I_1,I_2) \in J(\calP_1) \times J(\calP_2)$ such that $\beta \notin I_1$.
    \item $A_2$ is the set of tuples $(I_1,I_2)$ such that $\beta \in I_1$; $\tau \notin I_1$; and, $\sigma_1 \in I_2$. 
    \item $A_3$ is the set of tuples $(I_1,I_2)$ such that $\tau \in I_1$; $\alpha \notin I_1$; and, $\sigma_m \in I_2$.
    \item $B$ is the set of tuples $(I_1,I_2)$ such that $\beta \in I_1$; $\sigma_1 \in I_2$ only if $\tau \in I_1$; and, $\sigma_m \in I_2$ only if $\alpha \in I_1$.
\end{enumerate}

Then, we can map any tuple $(I_1,I_2) \in A_1 \cup A_2 \cup A_3$ to a tuple $(I_3,I_4) \in J(\calP_{\gamma_3}) \times J(\calP_{\gamma_4})$, where the map in each case is clear since each element of $\calP_1$ and $\calP_2$ has a clear image in $\calP_{3}$ or $\calP_{4}$. Similarly, for any $(I_1,I_2) \in B$, there is a natural way to partition $(I_1 \backslash \langle \beta \rangle) \cup I_2$ into order ideals  of $\calP_5$ and $\calP_6$ In each case, these operations are clearly invertible. 
The $\gb$-vector calculations follow similarly from the previous case. 

\textbf{Subcase d) $\gamma$ crosses $\beta$ immediately before and $\sigma_m$ immediately after crossing $\tau$}. 

We again specify the chronological ordering such that $\beta = \calP_1(i-1)$ and $\sigma_m = \calP_1(i+1)$. Let $k \in [m]$ be such that the last spoke that $\gamma$ crosses as it winds around $p$ immediately after crossing $\tau$ is $\sigma_k$. If this spoke is $\tau$, set $k = 0$. 

If $k \neq 0$, let $i' \geq i$ be the largest value such that $\calP_1(i') = \tau$ and $\calP_1(i') \preceq \calP_1(i)$; it is possible that $i' = i$. If $k = 0$, let $i'$ be the second largest value. By our assumption that $\gamma$ crosses $\sigma_m$ immediately after crossing $\tau = \sigma_0$, this value will exist.

\begin{enumerate}
    \item $A_1$ is the set of tuples $(I_1,I_2) \in J(\calP_1) \times J(\calP_2)$ such that $\beta  \notin I_1$.
    \item $A_2$ is the set of tuples such that $\beta \in I_1$; $\sigma_1 \in I_2$; and, $\tau = \calP_1(i) \in I_1$ only if $\sigma_m \in I_2$. 
    \item $B$ is the set of tuples such that $\beta \in I_1$; $\sigma_1 \in I_2$ only if $\calP_1(i') \in I_1$; $\sigma_k \notin I_2$; and, if $i' > i$, $\calP_1(i'-1) \notin I_1$. 
\end{enumerate}

The bijections and the $\gb$-vector calculations are similar to the first case, so we again omit them. We stress that again, given $(I_1,I_2) \in B$, we partition $(I_1 \backslash \langle \beta \rangle) \cup I_2$ into order ideals of $\calP_5$ and $\calP_6$.

\textbf{Case ii) $\mathbf{\gamma}$ crosses $\mathbf{\sigma_1}$ immediately before crossing $\tau$}.

In this case, $F$ will be the order filter generated by $\sigma_1 = \calP_1(i-1)$. 

Some parts of this case resemble Type 1 grafting (Section \ref{subsec:Type1}), so we will streamline some sections, including $\gb$-vectors.

We index the chain $\calP_1[<i]^{\succ \calP_1(i)}$ as in previous sections (see for example Figure \ref{fig:Indexing}): $\sigma_k^{(w)} \succ \sigma_{k-1}^{(w)} \succ \cdots \succ \sigma_1^{(w)} \succ \tau^{(w)} \succ \sigma_m^{(w-1)} \succ \cdots \succ \sigma_1^{(1)}$ for some $w \geq 1$ and $0 \leq k \leq m$. Again, we set $\sigma_0:= \tau$, and if $k =0$, then the maximal element would be $\sigma_0^{(w+1)}$ with our convention for changing superscripts. We extend the indexing and set  $\calP(i) = \sigma_0^{(1)}$. As in previous sections, we let $\sigma_y^{(z)}$ be the largest element from the chain $\calP_1[<i]^{\succ \calP_1(i)}$ in $I_1$, if it exists, and let $\sigma_x$ be the largest element in $I_2$, if it exists. We also let these symbols refer to the largest elements in chain of spokes and loop in the posets from the resolution.  Finally, we let $a<i$ be the largest integer such that $\calP_1(a) \not\succ \calP_1(i)$, if it exists. 

\textbf{Subcase a) $\gamma$ crosses $\sigma_1$ immediately before and $\alpha$ immediately after crossing $\tau$}.

We partition $J(\calP_1) \times J(\calP_2)$ and describe appropriate maps.  In the following, we assume $\delta_5 = \delta_6 = 0$; this is equivalent to assuming $\calP_1(a)$ exists.

\begin{enumerate}
    \item $A_1$ is the set of tuples $(I_1,I_2) \in J(\calP_1) \times J(\calP_2)$ such that we do not have both $y = m$ and $\alpha \in I_1$ and if $\sigma_y^{(z)} \succeq \sigma_m^{(w-1)}$, then $\sigma_x \preceq \sigma_k$ with equality only if $\calP_1(a) \in I_1$. Given $(I_1,I_2) \in A_1$, define \[
    \Phi(\langle \sigma_y^{(z)} \rangle, \langle \sigma_x \rangle) = \begin{cases}
    (\langle \sigma_x^{(z)} \rangle, \langle \sigma_y \rangle) & y \neq m \\
    (\langle \sigma_x^{(z+1)} \rangle, \emptyset) & y = m,
    \end{cases}
    \]
    where if $\sigma_x$ does not exist, we set $\sigma_x^{(z)}:= \sigma_0^{(z)}$. 
    For any other element of $\calP_1 \cup \calP_2$, there is a clear corresponding element in $\calP_3 \cup \calP_4$ so we define $\Phi$ to map between these. The set $\Phi(A_1)$ is the collection of all $(I_3,I_4) \in J(\calP_3) \times J(\calP_4)$ such that $x < m$ and if $z = w$, then $x \leq k $ with equality only if $\calP_1(a) \in I_3$. 
   \item $A_2$ is the set of tuples $(I_1, I_2)$ such that $y = m$; $\alpha \in I_1$; and, $z \leq w-1$. Given $(I_1,I_2) \in A_2$, we set $\Phi(\langle \sigma_m^{(z)} \rangle, \langle \sigma_x \rangle) = (\langle \sigma_x^{(z)} \rangle, \langle \sigma_m \rangle)$. The set $\Phi(A_2)$ is all pairs $(I_3,I_4)$ such that $\sigma_y^{(z)} \preceq \sigma_m^{(w-1)}$ and 
   $\sigma_m \in I_4$.
    \item $A_3$ is the set of tuples $(I_1,I_2)$ such that $z = w$; $x \geq k$ with equality only if $\calP_1(a) \notin I_1$; and, $x = m$ only if $\alpha \in I_1$. Given $(I_1,I_2) \in A_3$, we set $\Phi(\langle \sigma_y^{(z)} \rangle, \langle \sigma_x \rangle) = (\langle \sigma_y^{(z)} \rangle, \langle \sigma_x \rangle)$. The set $\Phi(A_3)$ is all pairs $(I_3,I_4)$ such that $z = w$, and $x \geq k$ with equality only if $\calP_1(a) \notin I_3$, which is the complement of $A_1 \cup A_2$ in $J(\calP_3) \times J(\calP_4)$.
    \item $B$ is the set of tuples $(I_1,I_2)$ such that $\sigma_y^{(z)} \succeq \sigma_m^{(w-1)}$, which is only strict if $\sigma_m \in I_2$; $x \geq k$, which is strict if $\calP_1(a) \in I_1$; and, $\alpha \notin I_1$. Given $(I_1,I_2) \in B$, there is a clear way to partition $(I_1 \backslash \langle \sigma_m^{(w-1)} \rangle) \cup (I_2 \backslash \langle \sigma_k \rangle)$ into order ideals of $\calP_5$ and $\calP_6$ and this map is bijective. 
\end{enumerate}

 Notice that as sets $\langle \sigma_m^{(w-1)} \rangle \cup  \langle \sigma_k \rangle$ has the same content as $F$, which in this case is the principal order filter generated by $ \sigma_1^{(1)}$.

 \textbf{Subcase b) $\gamma$ crosses $\sigma_1$; $\tau$ is the last arc crossed by $\gamma$; and, $t(\gamma)$ is plain.} 

 This case follows from the previous (Case ii.a) by ignoring $A_2$ and any conditions involving $\alpha$. 

 \textbf{Subcase c) $\gamma$ crosses $\sigma_1$; $\tau$ is the last arc crossed by $\gamma$; and, $t(\gamma)$ is notched.} 

In this case, $\calP_1$ has a loop at its end consisting of the chain $\sigma_m \prec \eta_1 \prec \cdots \prec \eta_h \prec \alpha$, attached to $\tau$, and $\gamma_4 = \sigma_m^{(p , t(\gamma))}$. As in other places, we let $\sigma_m^-$ and $\sigma_m^+$ denote the minimum and maximum respectively of $\calP_4$. 

\begin{enumerate}
    \item We use the same set $A_1$ as in Case ii.a with the added condition that if $y = m-1$, then $\eta_1 \in I_1$. This allows us to use the same map. Now, $\Phi(A_1)$ consists of all pairs $(I_3,I_4)$ such that $\sigma_m^+ \notin I_4$; if $z = w$, then $ x \leq k$, with equality only if $\calP_1(a) \in I_3$; and, if $z > 1$,  then $I_4 \neq \emptyset$. 
    \item We can nearly use the same set $A_2$, but we include a special case where $\sigma_y^{(z)}$ is the minimal element $\sigma_m$ in the loop in $\calP_1$. In this case, we send $(\langle \eta_f \rangle,\langle \sigma_x \rangle)$ to $(\emptyset, \langle \sigma_x \rangle \cup \langle \eta_f \rangle)$. The image $\Phi(A_2)$ consists of all pairs $(I_3,I_4)$ such that $\sigma_m^+ \in I_4$ and $\sigma_y^{(z)} \preceq \sigma_m^{(w-1)}$. 
    \item We modify $A_3$ to include the stipulation that if $\sigma_x \preceq \sigma_{m-1}$, then $\eta_1 \in I_1$. The map and image of this set are the same as in Case ii.a. 
    \item We define $A_4$ to be the set of tuples $(I_1,I_2) \in J(\calP_1) \times J(\calP_2)$ satisfying $y = m-1$, and if $z = w-1$, then $x \leq k$, with equality only if $\calP_1(a) \in I_1$. We set $\Phi(\langle \sigma_{m-1}^{(z)} \rangle, \langle \sigma_x \rangle) = (\langle \sigma_x^{(z+1)} \rangle,\emptyset)$. The image of $A_4$ is the set of $(I_3,I_4)$ where $I_4 = \emptyset$ and $z > 1$. 
\end{enumerate}

The complement of $A_1 \cup A_2 \cup A_3 \cup A_4$ will be called $B$ and this can be characterized by $(I_1,I_2)$ such that $\alpha \notin I_1$; $\sigma_y^{(z)} \succeq \sigma_{m-1}^{(w-1)}$, which is strict if $\eta_1 \in I_1$; $z = w$ only if $\sigma_m \in I_2$; and, $x \geq k$, which is strict if $\calP_1(a) \in I_1$. As before, there is a way to partition $(I_1 \backslash \langle \sigma_m^{(w-1)} \rangle) \cup (I_2 \backslash \langle \sigma_k \rangle)$ into order ideals of $\calP_5$ and $\calP_6$ and this map is bijective.

\textbf{Subcase d) $\gamma$ crosses $\sigma_1$ immediately before and $\alpha$ immediately after crossing $\tau$}.

This entire situation is nearly identical to the calculations for Type 1 resolutions in Section \ref{subsec:Type1}. One can for example see the similarity in the definition of the $y$-monomial in the relation.

\textbf{Case iii) $\tau$ is the first arc crossed by $\gamma$}.

The labeling of the resolution in this case depends on the notching of $s(\gamma)$. In particular, $F = \emptyset$ regardless of the notching, but it is possible $\delta_5 = 1$ or $\delta_6 = 1$. The $\gb$-vector computations are all straight forward in this case since the $y$-monomial is degree 1, so we will not address them.

\textbf{Subcase a) $\tau$ is the first arc crossed by $\gamma$; $\alpha$ is the second arc; and, $s(\tau)$ is plain.}

In this case as well as the next, $\delta_6 = 1$, so the $y$-monomial will be $y_{\gamma_6^0} = y_{\sigma_1}$.
This case is solved by setting $A = (I_1,I_2)$ such that $\sigma_1 \in I_2$ only if $\tau \in I_1$ and $\sigma_m \in I_2$ only if $\alpha \in I_2$. Then, the complement is characterized by $(I_1,I_2)$ such that $\sigma_1 \in I_2$; $\tau \in I_1$ only if $\sigma_m \in I_2$; and, $\alpha \notin I_1$.

\textbf{Subcase b) $\tau$ is the first arc crossed by $\gamma$; $\sigma_m$ is the second arc; and, $s(\tau)$ is plain.}

We set $A$ to be the set of pairs $(I_1,I_2)$ such that $\sigma_1 \in I_2$ only if $\sigma_0^{(2)} \in I_1$ and $\sigma_x \leq \sigma_{k-1}$ with equality only if $\calP_1(a) \in I_2$. The map from $A$ to $J(\calP_4)$ is clear; note that $\calP_3 = (\emptyset, \beta^+)$.

We will divide the complement of $A$ into three smaller sets. Note that in order to have $(I_1,I_2) \notin A$, it must be $\sigma_1 \in I_2$.

\begin{enumerate}
    \item Let $B_1$ consist of pairs $(I_1,I_2)$ such that $\sigma_y^{(z)} \prec \sigma_0^{(1)}$. We define $\Phi(\langle \sigma_y^{(z)} \rangle, \langle \sigma_x \rangle) = (\langle \sigma_y^{(z)} \rangle, \langle \sigma_x \rangle)$, so that the image consists of $(I_5,I_6)$ with $\sigma_y^{(z)} \prec \sigma_0^{(1)}$ and $\sigma_x \prec \tau$. 
    \item Let $B_2$ consist of pairs $(I_1,I_2)$ where $\sigma_y^{(z)} = \sigma_0^{(1)}$ and $\sigma_x \succeq \sigma_{k-1}$, which is strict if $\calP_1(a) \in I_1$. For such a pair, we define $\Phi(\langle \sigma_0^{(1)} \rangle, \langle \sigma_x \rangle) = (\langle \tau \rangle, \langle \sigma_x^{(1)} \rangle)$ so that the image consists of $(I_5,I_6)$ with $\sigma_y^{(z)} \prec \sigma_0^{(1)}$ and $\sigma_x = \tau$.
    \item Let $B_3$ consist of pairs $(I_1,I_2)$ where $\sigma_y^{(z)} \succeq \sigma_1^{(2)}$. If $y \neq 1$, we set $\Phi(\langle \sigma_y^{(z)} \rangle, \langle \sigma_x \rangle) = (\langle \sigma_y \rangle, \langle \sigma_x^{(z)}\rangle)$ and if $y = 1$ we set $\Phi(\langle \sigma_1^{(z)} \rangle, \langle \sigma_x \rangle) = (\emptyset, \langle \sigma_x^{(z)} \rangle)$. The image consists of all $(I_5,I_6)$ with $\sigma_y^{(z)} \succeq \sigma_1^{(2)}$, which completes our bijection. 
\end{enumerate}

\textbf{Subcase c) $\tau$ is the first arc crossed by $\gamma$; $\alpha$ is the second arc; and, $s(\tau)$ is notched.}

In this case and in the next case, $\delta_5 = 1$, so the $y$-monomial will be $y_{\gamma_6^0} = y_\beta$. Here, we set $A = (I_1,I_2)$ such that $\beta \in I_1$ only if $\sigma_1 \in I_2$; $\tau \in I_1$ only if $\sigma_m \in I_2$; and,  $\alpha \notin I_1$. The complement, $B$, can be characterized as pairs $(I_1,I_2)$ such that $\beta \in I_1$; $\sigma_1 \in I_2$ only if $\tau \in I_1$; and, $\sigma_m \in I_2$ only if $\alpha \in I_1$. In each case, we see that there is a clear bijection between the subset of $J(\calP_1) \times J(\calP_2)$ and the appropriate product of lattices from the resolution. 

\textbf{Subcase d) $\tau$ is the first arc crossed by $\gamma$; $\sigma_m$  is the second arc; and, $s(\tau)$ is notched.}

We let $A = A_1 \cup A_2 \cup A_3$, as described below. 

\begin{enumerate}
    \item Let $A_1$ consist of pairs $(I_1,I_2)$ such that $f = h$; $y = 1$; $z > 1$; and, if $z = 2$, then $x \geq k-1$ which is strict if $\calP_1(a) \in I_1$. We set $\Phi(\langle \sigma_1^{(z)} \rangle \cup \langle \eta_h \rangle, \langle \sigma_x \rangle) = (\langle \sigma_1^+ \rangle, \sigma_x^{(z)})$ where if $\sigma_x$ does not exist we set $\sigma_x^{(z)} = \sigma_0^{(z)}$. The image of $A_1$ is all pairs $(I_3,I_4)$ such that $\sigma_1^+ \in I_3$. 
    \item Let $A_2$ consist of pairs $(I_1,I_2)$ such that $\sigma_y^{(z)} \succeq \sigma_0^{(2)}$ where $y \neq 1$ if $f = h$ and $y \neq 0$ if $\beta \in I_1$. We define \[
    \Phi(\langle \sigma_y^{(z)} \rangle \cup \langle \eta_f \rangle, \langle \sigma_x \rangle) = \begin{cases} (\langle \sigma_y \rangle \cup \langle \eta_f \rangle, \langle \sigma_x^{(z)} \rangle & y \neq 0\\ (\langle \eta_f \rangle, \langle \sigma_x^{(z)} \rangle) & y = 0\\ \end{cases}
    \]
    where as in other cases we set $\sigma_x^{(z)} = \sigma_0^{(z)}$ if $\sigma_x$ does not exist. The image of $A_2$ is all pairs $(I_3,I_4)$ such that $\sigma_1^+ \notin I_3$ and $z > 1$.
    \item Let $A_3$ consist of pairs $(I_1,I_2)$ such that $z = 1$;if $\beta \in I_1$, then $\sigma_x$ exists; and, if $\eta_h \in I_1$, then $x \geq 2$. We set $\Phi(\langle \sigma_y^{(1)} \rangle \cup \langle \eta_f \rangle, \langle \sigma_x \rangle) = (\langle \sigma_x \rangle \cup \langle \eta_f \rangle, \langle \sigma_y^{(z)} \rangle)$.  The image of $A_3$ is all pairs $(I_3,I_4)$ such that $\sigma_1^+ \notin I_3$ and $z = 1$.
\end{enumerate}

The complement of $A$ can be characterized as pairs $(I_1,I_2)$ such that $\beta \in I_1$; $\sigma_1 \in I_2$ only if $\eta_h \in I_1$ or $\sigma_0^{(2)} \in I_1$; $\sigma_2 \in I_2$ only if $\sigma_0^{(2)} \in I_1$; $\sigma_{k-1} \in I_2$ only if $\calP_1(a) \in I_1$; $\sigma_y^{(z)} = \sigma_1^{(2)}$ only if $\eta_h \in I_1$; $\sigma_2^{(2)} \notin I_1$; and, $\sigma_k \notin I_2$. The map in this case is fairly clear, but since there are two elements associated to $\sigma_1$, a little care is needed to be consistent. We set $\Phi(\langle \eta_h \rangle \cup \langle \sigma_1^{(2)} \rangle, \emptyset) = (\langle \sigma_1 \rangle,\langle \tau \rangle)$ and $\Phi(\langle \eta_h \rangle \cup \langle \sigma_0^{(2)} \rangle, \emptyset) = (\langle \eta_h \rangle,\langle \sigma_1 \rangle)$. 

\end{proof}

Now, we turn to an intersection between $\gamma \notin T$ and $\tau^{(p,q)}$ such that $\tau^0 \in T$. We now have that the endpoints of $\tau$ are punctures $p$ and $q$; these could possibly be equal but it suffices to consider the case where they are distinct.  As before, we will have the spokes at $p$ being $\sigma_1,\ldots,\sigma_m$, and now let the spokes at $q$ be $\eta_1,\ldots,\eta_h$, as below. In this case, $\{\calP_\gamma(j): \calP_\gamma(j) \prec \calP_\gamma(i) \text{ and } \calP_\gamma(j) \notin \calP_\tau \cup \{\tau^0\} \}$ is empty since all elements comparable to $\calP_\gamma(i)$ will also correspond to elements in $ \calP_\tau \cup \{\tau^0\}$, so $F$ is an order filter. 

\begin{center}
\begin{tikzpicture}[scale = 1.5]
\draw (0,0) to node[left]{$\eta_h$} (1,1)  to node[right]{$\eta_1$}  (2,0) to node[right]{$\sigma_m$} (1,-1) to node[left]{$\sigma_1$} (0,0);
\draw (1,1) to node[right]{$\tau$} (1,-1);
\node[below] at (1,-1){$p$};
\node[] at  (1,0.8){$\bowtie$};
\node[] at (1,-0.8){$\bowtie$};
\end{tikzpicture}
\end{center}

Since the set $F \cup \{\tau\}$ is always an order filter, there is no ambiguity in labeling the posets in the resolution such as there was in the case for $\tau^{(p)}$. We set $\calP_{\gamma_3}^0 = \calP_{\upper}^{<i}$ with a loop at the end based at $q$, $\calP_{\gamma_4}^0 = \calP_{\upper}^{>i}$ with a loop at the beginning based at $p$, $\calP_{\gamma_5}^0 = \calP_{\low}^{<i}$ with a loop at the end based at $p$, and $\calP_{\gamma_6}^0 = \calP_{\low}^{>i}$ with a loop at the beginning based at $p$.

\begin{prop}\label{prop:IntArcFromTDoubleTag}
Let $\tau \in T$ and let $\gamma$ be an arc such that $e(\gamma,\tau) > 0$. Pick $i$ such that $\calP_{\gamma}(i) = \tau$. If $\gamma_5^0 \in T$, let $\delta_5 = 1$ and similarly for $\gamma_6^0$ and $\delta_6$. If $\delta_i$ is not yet specified, set $\delta_i = 0$. Then, , \[
x_{\gamma} x_{\tau^{(p,q)}} = x_{\gamma_3} x_{\gamma_4} + Y_F y_\tau y_{\gamma_5^0}^{\delta_5}y_{\gamma_6^0}^{\delta_6} x_{\gamma_5} x_{\gamma_6}
\]
\end{prop}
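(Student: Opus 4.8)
The plan is to follow the same case-analysis strategy used throughout Section \ref{subsec:IntersectWithT}, adapting the argument from Proposition \ref{prop:IntArcFromTSingleTag} (for $\tau^{(p)}$) to the doubly-notched arc $\tau^{(p,q)}$. The key structural simplification compared to the $\tau^{(p)}$ case is that here $F \cup \{\tau\}$ is always an order filter, so there is no ambiguity in how the posets $\calP_{\gamma_3}, \calP_{\gamma_4}, \calP_{\gamma_5}, \calP_{\gamma_6}$ are assigned: $\calP_{\gamma_3}, \calP_{\gamma_4}$ form the $x_{\gamma_3}x_{\gamma_4}$ term (no $y$-monomial), and $\calP_{\gamma_5}, \calP_{\gamma_6}$ carry $Y_Fy_\tau y_{\gamma_5^0}^{\delta_5}y_{\gamma_6^0}^{\delta_6}$. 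First I would recall that $\calP_{\tau^{(p,q)}}$ is the doubly-looped poset on $\{\tau^-, \sigma_1,\ldots,\sigma_m, \eta_1,\ldots,\eta_h, \tau^+\}$ from Section \ref{subsec:snakegraph}, with $\tau^-$ minimum, $\tau^+$ maximum, and $\degx$-contribution coming from the decoration. Then I would split into the same cases as in the $\tau^{(p)}$ proof according to how $\gamma$ behaves near its intersection with $\tau$: (i) $\gamma$ crosses an arc not in $\calP_\tau$ immediately before $\tau$; (ii) $\gamma$ crosses a spoke $\sigma_j$ (an element of $\calP_\tau$) immediately before $\tau$; (iii) $\tau$ is the first arc crossed by $\gamma$; together with the dual/symmetric sub-cases at the other end of $\gamma$ and sub-cases for whether $\gamma$'s ends are plain or notched and whether $\gamma_5^0$ or $\gamma_6^0$ lands in $T$ (the $\delta_5,\delta_6$ bookkeeping).

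In each sub-case I would: (1) partition $J(\calP_\gamma) \times J(\calP_{\tau^{(p,q)}})$ as $A \cup B$ (or a finer partition $A_1 \cup \cdots \cup A_k \cup B$) with $(\emptyset,\emptyset) \in A$; (2) build a weight-preserving bijection $A \to J(\calP_{\gamma_3}) \times J(\calP_{\gamma_4})$ and a bijection $B \to J(\calP_{\gamma_5}) \times J(\calP_{\gamma_6})$ that is weight-preserving up to the fixed monomial $Z$, where $\cont$ of the image of each $(I_1,I_2) \in B$ matches $\cont\big((I_1 \setminus \langle \text{top-of-}\calP_\tau\text{-loop at }q\rangle)\cup(I_2\setminus\langle\text{bottom-of-}\calP_\tau\text{-loop at }p\rangle)\big)$, so that the excised content is exactly the multiset indexing $Y_Fy_\tau y_{\gamma_5^0}^{\delta_5}y_{\gamma_6^0}^{\delta_6}$; (3) verify the $\gb$-vector identities $\gb_\gamma + \gb_{\tau^{(p,q)}} = \gb_{\gamma_3}+\gb_{\gamma_4}$ and $\gb_\gamma + \gb_{\tau^{(p,q)}} + \degx(Z) = \gb_{\gamma_5}+\gb_{\gamma_6}$, using Lemma \ref{lem:g-vectorMin}, Lemma \ref{lem:y-hat}, and Corollary \ref{cor:y-hat_spokes} (the $\hat{Y}_p = Y_p$-type identities) to compute $\degx$ of the $\hat{y}$-monomial, exactly as in the $\tau^{(p)}$ proof. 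Since the $y$-monomial here has degree essentially $|F| + 1$ plus at most two extra linear factors, and the loops at $p$ and $q$ are disjoint and attached at the top/bottom of $\calP_{\tau^{(p,q)}}$ respectively, the $\gb$-vector computations decouple across the two punctures and mirror the singly-notched calculations twice over; as noted in analogous places in the paper, these are routine and I would only spell out the most involved sub-case.

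A cleaner route — which I would also present, following the device used in Proposition \ref{prop:typeII_singlytagged_winding_2} — is to deduce the identity from Proposition \ref{prop:IntArcFromTSingleTag} by a reduction rather than redoing all bijections. Using Corollary \ref{cor:PlainTimesNotchedEqualsLoop} and Corollary \ref{cor:MSWThm12.9}, $x_{\tau^{(p,q)}}$ can be expressed algebraically in terms of $x_{\tau^{(p)}}, x_{\tau^{(q)}}, x_\tau$ and spoke $y$-monomials; multiplying the known relation for $x_\gamma x_{\tau^{(p)}}$ (and its dual for $x_\gamma x_{\tau^{(q)}}$) and resolving, the four resulting curves are precisely $\gamma_3,\gamma_4,\gamma_5,\gamma_6$, and the $y$-monomials assemble into $Y_Fy_\tau y_{\gamma_5^0}^{\delta_5}y_{\gamma_6^0}^{\delta_6}$ after applying $\hat{Y}_p=Y_p$ and $\hat{Y}_q=Y_q$ to cancel the spoke contributions at the notched ends, just as in the proof of Proposition \ref{prop:PosetExpansion} for doubly-notched arcs. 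The main obstacle in the direct approach is getting the partition of $J(\calP_\gamma)\times J(\calP_{\tau^{(p,q)}})$ exactly right in the sub-cases where $\gamma$ winds around $p$ (or $q$) so that $\tau$ appears multiple times in $\calP_\gamma$: one must track which copy of $\tau$ governs the switch, coordinate the two loops (one from $\calP_{\tau^{(p,q)}}$ at $p$, one at $q$) with the chain of repeated spokes in $\calP_\gamma$, and handle the boundary $\delta_5,\delta_6$ degenerations where $\gamma_5^0$ or $\gamma_6^0$ becomes a notched arc in $T$ and the corresponding poset acquires a decoration. I expect the reduction route to make the $y$-monomial and $\gb$-vector verifications essentially automatic, so I would lead with the direct bijection for the generic sub-case to establish the combinatorial content and then invoke the reduction to dispatch the remaining sub-cases and the $\gb$-vector identities uniformly.
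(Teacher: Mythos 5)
Your primary (direct) route is the one the paper actually takes: the proof of Proposition \ref{prop:IntArcFromTDoubleTag} is a case analysis on how $\gamma$ enters and leaves the quadrilateral around $\tau$, with the main labor concentrated in the case where $\gamma$ crosses $\eta_h$ and $\sigma_m$ immediately before and after $\tau$; there the authors give an explicit partition $A\cup B_1\cup\cdots\cup B_6$ of $J(\calP_\gamma)\times J(\calP_{\tau^{(p,q)}})$, split further into winding subcases $v,w>1$, $w>1,v=1$, and $w=v=1$, and then verify the $\gb$-vector identities exactly as you describe (with the observation that, by Corollary \ref{cor:y-hat_spokes}, these do not depend on the winding numbers). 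Two features of the paper's proof that your outline does not anticipate: the case where $\gamma$ crosses $\sigma_1$ and $\eta_1$ on either side of $\tau$ is dispatched by the order-ideal/order-filter duality of Proposition \ref{prop:typeII_singlytagged_winding_2} applied to the first case, not by a fresh bijection; and the cases where $\gamma$ crosses a spoke only on one side are referred to the Type 1 machinery of Section \ref{subsec:Type1}. The real content of the proposition is the explicit partition and maps, which your plan defers.

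Your proposed ``cleaner'' reduction via Corollaries \ref{cor:PlainTimesNotchedEqualsLoop} and \ref{cor:MSWThm12.9} is not what the paper does, and as sketched it has a genuine gap. Writing $x_{\tau^{(p,q)}}=\bigl(x_{\tau^{(p)}}x_{\tau^{(q)}}+(1-Y_p)(1-Y_q)Y_R\bigr)/x_\tau$ forces you to resolve the triple product $x_\gamma x_{\tau^{(p)}}x_{\tau^{(q)}}$ by two successive applications of Proposition \ref{prop:IntArcFromTSingleTag}; the curves produced by the first resolution are generally still incompatible with $\tau^{(q)}$ (at the puncture $q$, or transversally if $\gamma$ winds), so each must be resolved again, and one must then prove that the resulting sum, together with the $x_\gamma(1-Y_p)(1-Y_q)Y_R$ term, is divisible by $x_\tau$ with quotient exactly $x_{\gamma_3}x_{\gamma_4}+Y_Fy_\tau y_{\gamma_5^0}^{\delta_5}y_{\gamma_6^0}^{\delta_6}x_{\gamma_5}x_{\gamma_6}$. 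That divisibility and cancellation is essentially equivalent in difficulty to the statement being proved and is not automatic from the $\hat Y_p=Y_p$ identities. If you retain the reduction, you must either justify the division step or treat it only as a consistency check; the load-bearing argument has to be the direct bijection.
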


\begin{proof}
Throughout let $\calP_1:= \calP_\gamma, \calP_2:= \calP_{\tau^{(p)}}$, and for $3 \leq i \leq 6$, $\calP_i:= \calP_{\gamma_i}$.

\textbf{Case i) $\gamma$ crosses $\eta_h$ and $\sigma_m$ immediately before and after crossing $\tau$}.

Throughout this case, we will have $F =  \emptyset$ and $\delta_5 = \delta_6 = 0$, so the $y$-monomial in the skein relation will always be $y_\tau$. We draw diagrams for $\calP_1, \calP_{3}$, and $\calP_{5}$ below, using similar labeling to many previous proofs. In particular, $\sigma_0 := \tau$ and $\eta_0:= \tau$, and the superscripts change between $\sigma_0$ and $\sigma_m$, $\sigma_0^{(i)} \succ \sigma_m^{(i-1)}$ and similarly in the chain of $\eta_c^{(d)}$. 

\begin{center}
\begin{tabular}{ccc}
\highlight{$\calP_1$} & \highlight{$\calP_{3}$}& \highlight{$\calP_5$}\\
\begin{tikzpicture}
\node(a) at (4,0){$\calP_1(b)$};
\node(sk) at (3.5,-1){$\sigma_k^{(1)}$};
\node[rotate=-30](dots1) at (3,0){$\ddots$};
\node(s1) at (2.5,1){$\sigma_m^{(w)}$};
\node(t) at (2,2){$\tau^{top}$};
\node(n1) at (1.5,1){$\eta_h^{(v)}$};
\node[rotate=30](dots2) at (1,0){$\iddots$};
\node(nl) at (0.5,-1){$\eta_\ell^{(1)}$};
\node(b) at (0,0){$\calP_1(a)$};
\draw (a) -- (sk);
\draw(sk) -- (dots1);
\draw (dots1) -- (s1);
\draw(s1) -- (t);
\draw (t) --(n1);
\draw(n1) -- (dots2);
\draw (dots2) -- (nl);
\draw(nl) -- (b);
\end{tikzpicture}&
\begin{tikzpicture}
\node(a) at (0,0){$\calP_1(a)$};
\node(sk) at (0.75,0.75){$\eta_{\ell-1}$};
\node(sk1) at (0.75,-0.75){$\eta_{\ell}$};
\draw(a) -- (sk);
\draw(a) -- (sk1);
\node(dots) at (0.75,0){$\vdots$};
\draw(sk) -- (dots);
\draw(sk1) -- (dots);
\end{tikzpicture}&
\begin{tikzpicture}
\node(a) at (0,0){$\calP_1(a)$};
\node(sk) at (0.5,-1){$\eta_\ell^{(1)}$};
\node[rotate=30](dots1) at (1,0){$\iddots$};
\node(s1) at (1.5,1){$\eta_h^{(v)}$};
\node(t) at (2.25,1.75){$\tau^{top}$};
\node(nn) at (2.25,0.25){$\sigma_1$};
\node(dots2) at (2.25,1){$\vdots$};
\draw (a) -- (sk);
\draw(sk) -- (dots1);
\draw (dots1) -- (s1);
\draw(s1) -- (t);
\draw(s1) -- (nn);
\draw(dots2) -- (nn);
\draw(dots2) -- (t);
\end{tikzpicture}
\end{tabular}
\end{center}

The poset $\calP_2$ is exactly $\calP_{\tau^{(p,q)}}$ and $\calP_4$ and $\calP_6$ are similar to $\calP_3$ and $\calP_5$ respectively, with the roles of $\sigma$'s and $\eta$'s reversed. Notice that we have $\tau^{top}$ in $\calP_1,\calP_5,\calP_6$. In $\calP_1$, $\tau^{top}$ can also be called $\eta_0^{(v+1)}$ or $\sigma_0^{(w+1)}$, and we have similar options for labeling in $\calP_5$ and $\calP_6$.

Note that $\calP_1(a)$ and $\calP_1(b)$ need not exist. If it is the case that one of these does not exist, then throughout we ignore any condition involving them. 

As in previous proofs,  let a generic element $I_1 \in J(\calP_1)$ with $\tau^{top} \notin I_1$ be $\langle \eta_c^{(d)} \rangle \cup \langle \sigma_y^{(z)} \rangle$ where we as always ignore elements outside of the loop and chain of spokes since it is clear how $\Phi$ should act on them. If there is no element in $I_1$ from the chain from $q$, we write $\eta_c^{(d)} = \eta_{\ell-1}^{(1)}$ and similarly for the chain from $p$ we write $\sigma_y^{(z)} = \sigma_{k-1}^{(1)}$. We let a generic element $I_2 \in J(\calP_2)$ be $\langle \eta_f \rangle \cup \langle \sigma_x \rangle$, where if $I_2 \neq \emptyset$, we set $\eta_v = \eta_0$ if $\eta_1 \notin I_2$ and similarly for $\sigma$. By abuse of notation, we also let the support of an arbitrary set  $I_5$ on the chain of spokes and loop be denoted $\langle \eta_c^{(d)} \rangle \cup \langle \sigma_x \rangle$ and similarly for $I_6$, $\langle \sigma_y^{(z)} \rangle \cup \langle \eta_f \rangle$.  Recall $F = \{\tau\}$, so each map $\Phi: B_i \to J(\calP_5) \times J(\calP_6)$ will be such that $(I_1,I_2) \in B_i$ has the same content as $\Phi(I_1,I_2) \cup \{\tau\}$. 

The proof becomes simpler when we  this case into more cases based on how much $\gamma$ winds around each puncture. 

\textbf{Subcase) $v > 1$ and $w > 1$}.

\begin{enumerate}
    \item $A$ is the set of $(I_1,I_2) \in J(\calP_1) \times J(\calP_2)$ such that $I_2 = \emptyset$; $\eta_c^{(d)} \preceq \eta_{\ell-1}^{(2)}$, with equality only if $\calP_1(a) \in I_1$; and, $\sigma_y^{(z)} \preceq \sigma_{k-1}^{(2)}$ with equality only if $\calP_1(b) \in I_1$. There is a clear bijection between $A$ and $J(\calP_3) \times J(\calP_4)$. 
    \item $B_1$ is the set of $(I_1,I_2)$ such that $I_2 = \emptyset$; $\tau^{top} \notin I_1$; and, $(\sigma_y^{(z)} \succeq \sigma_{k-1}^{(2)}$, strict if $\calP_1(b) \in I_1$) or $(\eta_c^{(d)} \succeq \eta_{\ell-1}^{(2)}$, strict if $\calP_1(a) \in I_1$). If it is the case that $\sigma_y^{(z)} \succeq \sigma_{k-1}^{(2)}$ with strict inequality if $\calP_1(b) \in I_1$, then we set $\Phi( \langle \eta_c^{(d)} \rangle \cup \langle \sigma_y^{(z)} \rangle, \emptyset) = (\langle \eta_c^{(d)} \rangle \cup \langle \sigma_m \rangle, \langle \sigma_y^{(z-1)} \rangle)$. If the above is not true, then it must be that $\eta_c^{(d)} \succeq \eta_{\ell-1}^{(2)}$ with strict inequality if $\calP_1(a) \in I_1$, and  we set $\Phi( \langle \eta_c^{(d)} \rangle \cup \langle \sigma_y^{(z)} \rangle, \emptyset) = (\langle \eta_c^{(d-1)} \rangle, \langle \sigma_y^{(z)} \rangle \cup \langle \eta_h \rangle)$.  The set $\Phi(B_1)$ consists of pairs $(I_5,I_6)$ such that $\tau^{top} \notin I_5,I_6$, and either ($\sigma_m \in I_5$; $\eta_1 \notin I_6$; and, $\sigma_y^{(z)} \prec \sigma_0^{(w)}$)  or ($\eta_h \in I_6$; $\sigma_1 \notin I_5$; $\eta_c^{(d)} \prec \eta_0^{(v)}$; and, $\sigma_y^{(z)} \preceq \sigma_{k-1}^{(2)}$ with equality only if $\calP_1(b) \in I_6 $). 
    \item $B_2$ is the set of $(I_1,I_2)$ such that $I_2 \neq \emptyset$; $\tau^{top} \notin I_1$; $\tau^+ \notin I_2$; $\eta_h^{(v)} \in I_1$ only if $\sigma_1 \in I_2$; and, $\sigma_m^{(w)} \in I_1$ only if $\eta_1 \in I_2$. If $(I_1,I_2) \in B_2$, then we set $\Phi(\langle \eta_c^{(d)} \rangle \cup \langle \sigma_y^{(z)} \rangle, \langle \sigma_x \rangle \cup \langle \eta_f \rangle) = (\langle \eta_c^{(d)} \rangle \cup \langle \sigma_x \rangle, \langle \sigma_y^{(z)} \rangle \cup \langle \eta_f \rangle)$. The set $\Phi(B_2)$ consists of all pairs $(I_5,I_6)$ such that $\tau^{top} \notin I_5$; $\tau^{top} \notin I_6$; $\sigma_m \in I_5$ only if $\eta_1 \in I_6$; and, $\eta_h \in I_6$ only if $\sigma_1 \in I_5$. 
    \item $B_3$ is the set of $(I_1,I_2)$ such that $I_2 \neq \emptyset$; $\tau^{top} \notin I_1$; $\tau^+ \notin I_2$; and, $(\sigma_m^{(w)} \in I_1$ and $\eta_1 \notin I_2)$ or $(\eta_h^{(v)} \in I_1$ and $\sigma_1 \notin I_2$). If $\sigma_m^{(w)} \in I_1$ and $\eta_1 \notin I_2$, then we set $\Phi(\langle \sigma_m^{(w)} \rangle \cup \langle \eta_c^{(d)} \rangle, \langle \sigma_x \rangle) = (\langle \eta_c^{(d)} \rangle \cup \langle \sigma_m \rangle, \langle \sigma_x^{(w)} \rangle)$. Otherwise, if that is not true, then it must be that $\eta_h^{(v)} \in I_1$ and $\sigma_1 \notin I_2$, and we set $\Phi(\langle \eta_h^{(v)} \rangle \cup \langle \sigma_y^{(z)} \rangle, \langle \eta_f \rangle ) = (\langle \eta_f^{(d)} \rangle, \langle \sigma_y^{(z)} \rangle \cup \langle \eta_h \rangle)$.  The set $\Phi(B_3)$ consists $(I_5,I_6)$ such that $\tau^{top} \notin I_5,I_6$ and either ( $\sigma_m \in I_5$; $\eta_1 \notin I_6$; and, $\sigma_y^{(z)} \succeq \sigma_0^{(w)}$) or ($\eta_h \in I_6$; $\sigma_1 \notin I_5$; and, $\eta_c^{(d)} \succeq \eta_0^{(v)}$, which is strict if $\sigma_y^{(z)} = \sigma_m^{(w)}$). 
    \item $B_4$ is the set of $(I_1,I_2)$ such that $\tau^+ \in I_2$; $\eta_c^{(d)} \prec \eta_0^{(v)}$; and, $\sigma_y^{(z)} \prec \sigma_0^{(w)}$. Given $(I_1,I_2) \in B_4$, we set $\Phi(\langle \eta_c^{(d)} \rangle \cup \langle \sigma_y^{(z)} \rangle, \langle \sigma_x \rangle \cup \langle \eta_f \rangle) = (\langle \eta_c^{(d)} \rangle, \langle \sigma_y^{(z+1)} \rangle \cup \langle \eta_h \rangle)$. The set $\Phi(B_4)$ consists of all pairs $(I_5,I_6)$ such that $\tau^{top} \notin I_5,I_6$; $\eta_h \in I_6$; $\sigma_1 \notin I_5$; $\sigma_y^{(z)} \succeq \sigma_{k-1}^{(2)}$ with strict inequality if $\calP_1(b) \in I_5$; and, $\eta_c^{(d)} \prec \eta_0^{(v)}$.
    \item $B_5$ is the set of $(I_1,I_2)$ such that $\tau^+ \in I_2$; $\tau^{top} \notin I_1$; and $(\eta_c^{(d)} \succeq \eta_0^{(v)}$, strict if $\sigma_y^{(z)} = \sigma_m^{(w)})$ or $(\sigma_y^{(z)} \succeq \sigma_0^{(w)}$, strict if $\eta_c^{(d)} = \eta_h^{(v)})$. Given $(I_1,I_2) \in B_5$, if we have $\eta_c^{(d)} \succeq \eta_0^{(v)}$, strict if $\sigma_y^{(z)} = \sigma_m^{(w)}$,  then we set $\Phi(\langle \eta_c^{(v)} \rangle \cup \langle \sigma_y^{(w)} \rangle, \langle \tau^+ \rangle) = (\langle \tau^{top} \rangle, (\langle \sigma_y^{(z)} \rangle \cup \langle \eta_c \rangle)$. Otherwise, we have $\eta_c^{(d)} \preceq \eta_0^{(v)}$, only equal if  $\sigma_y^{(z)} = \sigma_m^{(w)}$, and $\sigma_y^{(z)} \succeq \sigma_0^{(w)}$ and here we set $\Phi(\langle \eta_c^{(d)} \rangle \cup \langle \sigma_y^{(w)} \rangle, \langle \tau^+ \rangle) = (\langle \eta_c^{(d)} \rangle \cup \langle \sigma_y \rangle \langle \tau^{top} \rangle)$. The set $\Phi(B_5)$ consists of pairs $(I_5,I_6)$ such that ($\tau^{top} \in I_5$ and  $\tau^{top} \notin I_6$) or ($\tau^{top} \in I_6$ and $\eta_c^{(d)} \preceq \eta_0^{(v)}$ which is only equal if $\sigma_m \in I_5$). 
    \item $B_6$ is the set of $(I_1,I_2)$ such that $\tau^{top} \in I_1$. For the special cases $I_2 = \emptyset$ and $I_2 = \langle \tau^+ \rangle$, we set $\Phi(\langle \tau^{top} \rangle, \emptyset) = (\langle \eta_0^{(v)} \rangle, \langle \sigma_m^{(w)} \rangle \cup \langle \eta_h \rangle)$ and $\Phi(\langle \tau^{top} \rangle, \langle \tau^+ \rangle) = (\langle \tau^{top} \rangle, \langle \tau^{top} \rangle)$. For any other nonempty $I_2 = \langle \eta_f \rangle \cup \langle \sigma_z \rangle$, we have $\Phi(I_1,I_2) = (\langle \eta_f^{(v)} \rangle \cup \langle \sigma_x \rangle \langle \tau^{top} \rangle, )$. The image $\Phi(B_6)$ contains $(\langle \eta_0^{(v)} \rangle, \langle \sigma_m^{(w)} \rangle \cup \langle \eta_h \rangle)$  and pairs $(I_5,I_6)$ such that $\tau^{top} \in I_6$ and $\eta_c^{(d)} \succeq \eta_0^{(v)}$ with strict inequality if $x = m$. 
\end{enumerate}

\textbf{Subcase) $w > 1$ and $v = 1$}.

Now, we consider the case where the arc $\gamma$ only winds multiple times around one of the punctures, which is without  loss of generality chosen to be $p$. We explain how we can modify the partition and maps from the previous case. 

\begin{enumerate}
    \item $A$ is now the set of $(I_1,I_2) \in J(\calP_1) \times J(\calP_2)$ such that $\sigma_y^{(z)} \preceq \sigma_{k-1}^{(2)}$ with equality only if $\calP_1(b) \in I_1$; $\eta_f \preceq \eta_{\ell-1}$ with equality only if $\calP_1(a) \in I_1$; and,  $\tau^- \in I_2$ only if $\eta_h^{(1)} \in I_1$. The bijection between $A$ and $J(\calP_{\gamma_3}) \times J(\calP_{\gamma_4})$ is obvious.
    \item To define $B_1$, we only use the condition involving $\sigma_y^{(z)}$. The image, using the same map, is the set of $(I_5,I_6)$ such that $ \tau^{top} \notin I_5$; $\sigma_m \in I_5$; $\eta_1 \notin I_6$; and, $\sigma_y^{(z)} \prec \sigma_0^{(w)}$.
    \item $B_2$ and $\Phi \vert_{B_2}$ are defined the same way as the general case, and so the image is also described identically.
    \item We set $B_3$ to be the set of pairs $(I_1,I_2)$ such that $I_2 \neq \emptyset$; $\tau^{top} \notin I_1$; $\tau^+ \notin I_2$; and one of the following is true:
    \begin{enumerate}
        \item[(a)] $\sigma_m^{(w)} \in I_1$ and $\eta_1 \notin I_2$;
        \item[(b)] $\eta_c^{(1)} = \eta_h^{(1)}$; $\sigma_1 \notin I_2$; and, $\sigma_y^{(z)} \succeq \sigma_{k-1}^{(2)}$, with strict inequality if $\calP_1(b) \in I_1$;
        \item[(c)] $\eta_c^{(1)} = \eta_h^{(1)}$; $\sigma_1 \notin I_2$; and, $\eta_f \succeq \eta_{\ell-1}$, with strict inequality if $\calP_1(a) \in I_1$.
    \end{enumerate}
    If a set can be described by property (a), then we use a map as in the general case, and the image of this subset is the set of $(I_5,I_6)$ such that $\sigma_m \in I_5$; $\tau^{top} \notin I_5$; $\eta_1 \notin I_6$; and, $\sigma_y^{(z)} \succeq \sigma_0^{(w)}$. 
    If a set cannot be described by (a) but can be described by (b), then we set the image to be $(\langle \tau^{top} \rangle , \langle \sigma_y^{(z-1)} \rangle \cup \langle \eta_f \rangle)$, so that the image is the set of $(I_5,I_6)$ where $\tau^{top} \in I_5$; $\sigma_y^{(z)} \preceq \sigma_m^{(w-1)}$ with equality only if $\eta_1 \in I_6$; and, $\eta_f \prec \eta_h$. If a set cannot be described by (a) or (b) but can be described by (c), then we set the image to be $(\langle \eta_f^{(1)} \rangle, \langle \eta_h \rangle \cup \langle \sigma_y^{(z)} \rangle)$, so that the image consists of $(I_5,I_6)$ such that $\tau^{top} \notin I_5$; $\tau^{top} \notin I_6$; $\eta_h \in I_6$; $\sigma_1 \notin I_5$; and, $\sigma_y^{(z)} \preceq \sigma_{k-1}^{(2)}$ with equality only if $\calP_1(b) \in I_6$.
    \item To define $B_4$, we replace the condition involving $\eta_c^{(d)}$ with $\eta_c^{(1)} \prec \eta_h^{(1)}$. We define the map the same way, so that the image can be described also in the same way except with no condition on $\eta_c^{(d)}$. 
    \item $B_5$ is defined by now using the conditions ($\sigma_y^{(z)} \preceq \sigma_0^{(w)}$ and  $\eta_c^{(d)} = \eta_h^{(1)}$) or ( $\sigma_y^{(z)} \succeq \sigma_0^{(w)}$ which is strict if $\eta_c^{(1)}$). If we are in the first case, we set the image to be $(\langle \tau^{top} \rangle, \langle \sigma_c^{(d)} \rangle \cup \langle \eta_h \rangle)$, so that the image of this portion can be described as all pairs $(I_5,I_6)$ such that $\tau^{top} \in I_5$;  $\eta_f = \eta_h$; and, $\sigma_y^{(z)} \preceq \sigma_0^{(w)}$. If we are in the second  case, then we use the same map as in the general case, and the image of this subset is all pairs $(I_5,I_6)$ such that $\tau^{top} \in I_6$ and $\tau^{top} \notin I_5$. 
    \item $B_6$ is defined the same way as the general case. For the special case where $I_2 = \emptyset$, we define $\Phi(\langle \tau^{top} \rangle, \emptyset) = (\langle \tau^{top} \rangle, \langle \sigma_m^{(w-1)} \rangle)$ and for all other cases we use the same map as the general case. Thus, the set $\Phi(B_6)$ consists of pairs $(I_5,I_6)$ such that $\tau^{top} \in I_5$; $\sigma_y^{(z)} \succeq \sigma_m^{(w-1)}$ with equality only if $\eta_f$ does not exist; and, $\sigma_y^{(z)} \succ \sigma_0^{(w)}$ if $\eta_f = \eta_h$. 
\end{enumerate}

\textbf{Subcase) $w = v = 1$}. 

Since this subcase is much more specialized, not every set from the general case will have a clear analogue here.

\begin{enumerate}
    \item $A$ is the set of $(I_1,I_2)$ such that $\tau^- \in I_2$ only if $\eta_h^{(1)}$ or $\sigma_m^{(1)}$; $\sigma_1 \in I_2$ only if $\sigma_m^{(1)} \in I_1$; $\eta_1 \in I_2$ only if $\eta_h^{(1)} \in I_1$; $\{\sigma_1,\eta_1\} \subset I_2$ only if $\tau^{top} \in I_1$; $f \leq \ell - 1$ with equality only if $\calP_1(a) \in I_1$; and, $x \leq k-1$, with equality only if $\calP_1(b) \in I_1$. There is a small ambiguity here between the images of $(\langle \tau^{top} \rangle, \emptyset)$ and $(\langle \eta_h^{(1)} \rangle \cup \langle \sigma_m^{(1)} \rangle, \langle \tau^- \rangle)$; we arbitrarily set the former to be ($\langle \eta_h \rangle, \langle \sigma_0 \rangle)$ and the latter to be $(\langle \eta_0 \rangle, \langle \sigma_m \rangle)$.
    \item  Here, we set $B_2$ to be the set of pairs $(I_1,I_2)$ such that $\tau^- \in I_2$; $\tau^{top} \notin I_1$; $\tau^+ \notin I_2$; $\eta_h^{(1)} \in I_2$ only if $\sigma_1 \in I_2 $; and, $\eta_h^{(1)}$ and $\sigma_m^{(1)}$ are not in $I_1$. We use a map as before, so that the image is all pairs $(I_5,I_6)$ such that $\eta_h \in I_6$ only if $\sigma_1 \in I_5$ and $\sigma_m \in I_5$ only if $\eta_1 \in I_6$.
    \item We let $B_3$ be all pairs $(I_1,I_2)$ such that on the following is true: 
    \begin{enumerate}
        \item[(a)] $\eta_c^{(1)} = \eta_h^{(1)}$; $\sigma_1 \notin I_2$; and, $f \geq \ell - 1$, with strict inequality if $\calP_1(a) \in I_1$;
        \item[(b)] $\sigma_y^{(1)} = \sigma_m^{(1)}$; $\eta_1 \notin I_2$; and, $x \geq k -1$, with strict inequality $\calP_1(b) \in I_1$.
    \end{enumerate}
    If (a) describes a pair $(I_1,I_2)$, then we set the image to be $(\langle \eta_f^{(1)} \rangle, \langle \sigma_y^{(z)} \rangle \cup \langle \eta_h \rangle)$, so that the image is all pairs $(I_5,I_6)$ such that $\eta_h \in I_6$; $\tau^{top} \notin I_6$; and, $\sigma_1 \notin I_5$. The image is defined dually for a pair described by (b); notice that no pair can be described by both (a) and (b). 
    \item Let $B_{5}$ be the set of pairs $(I_1,I_2)$ such that $\tau^+ \in I_2$ and $\tau^{top} \notin I_1$. If $\eta_\ell^{(1)} \in I_1$, we set the image of such a pair to be $(\langle \tau^{top} \rangle, \langle \eta_c \rangle \cup \langle \sigma_y^{(1)} \rangle)$ and otherwise we set the image to be $(\langle \tau^{top} \rangle, \langle \eta_{\ell-1} \rangle \cup \langle \sigma_y^{(1)} \rangle) $so that the image is the set of pairs $(I_5,I_6)$ such that $\tau^{top} \in I_5$ and $f\geq \ell - 1$, with strict inequality if $\calP_1(a) \in I_5$.
    \item Let $B_6$ be the set of pairs $(I_1,I_2)$ such that $\tau^{top} \in I_5$ and ($f \geq \ell - 1$, with strict inequality if $\calP_1(a) \in I_1)$ or ($x \geq k-1$, with strict inequality if $\calP_1(b) \in I_1$). If the first condition is true, then we set the image to be $(\langle \eta_f^{(1)} \rangle \cup \langle \sigma_x \rangle, \langle \tau^{top} \rangle)$ and if only the second condition is true then we set the image to be $(\langle \tau^{top} \rangle, \langle \sigma_x^{(1)} \rangle \cup \langle \eta_f \rangle)$. Thus, the image is pairs $(I_5,I_6)$ such that $\tau^{top} \in I_6$ or $\tau^{top} \in I_5$ and $f \leq \ell - 1$, with equality only if $\calP_1(a) \in I_5$. 
\end{enumerate}

Now, we consider the $\gb$-vectors, which by Corollary \ref{cor:y-hat_spokes} will not depend on the winding numbers. We have $\gb_{\gamma} = \eb_\tau - \eb_{\sigma_k} - \eb_{\eta_\ell} + \delta_{\calP_1(a) \succ \calP_1(a-1)} \eb_{\calP_1(a)} + \delta_{\calP_1(b) \succ \calP_1(b+1)} \eb_{\calP_1(b)} + \gb'$ where $\gb'$ is the contribution of elements in $\calP_1[<a] \cup \calP_1[>b]$ to $\gb_\gamma$. Then, noting $\gb_{\tau^{(p,q)}} = -\eb_\tau$, we can quickly see $\gb_{\gamma} + \gb_{\tau^{(p,q)}} = \gb_{\gamma_3} + \gb_{\gamma_4}$. 
Similarly, $Y_F = y_\tau$ and $\degx(\hat{y}_\tau) = \eb_{\sigma_m} + \eb_{\eta_h} - \eb_{\sigma_1} - \eb_{\eta_1}$, and we can see $\gb_{\gamma} + \gb_{\tau^{(p,q)}} + \degx(\hat{y}_\tau) = \gb_{\gamma_5} + \gb_{\gamma_6}$. 

\textbf{Cases ii and iii) $\gamma$ crosses $\sigma_1$ immediately before crossing $\tau$ and $\tau$ is not the last arc crossed by $\gamma$}. 
 
There are two cases here, based on which arc $\gamma$ crosses after crossing $\tau$, and each strongly resembles Type 1 resolutions (Section \ref{subsec:Type1}).

\textbf{Case iv) $\gamma$ crosses $\sigma_1$ and $\eta_1$ immediately before and after crossing $\tau$}.
 
This can be derived from Case i in the same way that we could prove Proposition \ref{prop:typeII_singlytagged_winding_2} from Proposition \ref{prop:typeII_singlytagged_winding_1}. That is, the posets involved here are duals of the posets for the first case, and one can check that the dual version of the bijections follow the desired format to show the skein relation, as is detailed in Section \ref{subsec:proofStrategy}. 
  
\textbf{Case v) $\tau$ is the first or last arc crossed by $\gamma$}. 

There are a variety of subcases within this case. However, each can be derived from a more general case in the same manner as we dealt with such a configuration in the proof of Proposition \ref{prop:IntArcFromTSingleTag}.
\end{proof}

\section{Self-Crossings}
\label{sec:self_crossings}

We consider here skein relations of the form $x_\gamma = x_{C^+} + Y x_{C^-}$ which correspond to to an arc which is incompatible with itself. Proposition \ref{prop:incompatible_puncture_singly_tagged} dealt with arcs $\gamma$ with incompatible tagging, so here we only need to consider arcs with a self-intersection. An arc can have a self-intersection of Type 0, 1, or 2. 

If $\gamma$ has a self-intersection of Type 1, let $p$ be the starting or ending point of $\gamma$ near this point of intersection, and suppose $\gamma$ winds counterclockwise around $p$ at this point of intersection. Then, Proposition \ref{prop:GraftingType1} holds when we label the resolution $\{\gamma_{34}\} \cup \{\gamma_5,\gamma_6\}$ and replace $x_{\gamma_3}x_{\gamma_4}$ with $x_{\gamma_{34}}$. 

If $\gamma$ has a self-intersection of Type 2, then let $\gamma_3$ be the closed curve. Then, the results from Section \ref{subsec:Type2} hold when we label the resolution $\{\gamma_3,\gamma_4\} \cup \{\gamma_{56}\}$ and replace $\gamma_5 \gamma_6$ with $\gamma_{56}$. The untagged version of these two cases come from Theorem 7.4 in \cite{snakegraphcalculus2}. 

There are multiple ways in which an arc can have a self-intersection of Type 0. Recall this will mean that the poset will have  a pair of isomorphic subposets, such that one lies at the bottom and one lies at the top of the whole poset. Because these are now subposets of the same poset, there are added complications that can arise. One such scenario is when, given a chronological ordering of the poset $\calP$ and a pair of isomorphic subposets $\{\calP(s),\ldots,\calP(t)\}$ and $\{\calP(s'),\ldots,\calP(t')\}$, the isomorphism between these posets reverses them, sending $\calP(s+i)$ to $\calP(t'-i)$.We call this a ``reverse overlap''. It is also possible for the subposets to ``kiss'' or intersect, so that for some $i$, $\calP_\gamma(i)$ is in one subposet and $\calP_\gamma(i+1)$ is in the other.   It is not geometrically possible for a reverse overlap to be kissing or intersecting as well, so these complications will thankfully not compound. 

In Section \ref{subsec:OrdinarySelfInt}, we deal with overlaps which do not kiss or intersect, including reverse overlaps. In these cases, we can use methods similar to those for two distinct intersecting arcs. In Section \ref{subsec:Kissing} we address a self-intersection which results in a kissing overlap. We do not address overlaps which intersect as taggings on arcs do not affect these. That is, even in a punctured surface, Theorem 4.12 from \cite{snakegraphcalculus2} holds as stated.

In general, the resolution of a self-intersection results in one set containing an arc and a closed curve and another set containing a single arc. We will refer to these as $\{\gamma_3,\gamma_4\} \cup \{\gamma_{56}\}$ or $\{\gamma_{34}\} \cup \{\gamma_3,\gamma_4\}$ depending on which scenario we are in and which term gets a $y$-monomial in the expansion.

\subsection{Non-intersecting and non-kissing overlaps}\label{subsec:OrdinarySelfInt}

Suppose $\gamma$ has a self-intersection resulting in $R_1 = \calP_\gamma[s,t]$ and $R_2 = \calP_\gamma[s',t']$, isomorphic subsets of $\calP_\gamma$ which have a crossing overlap. Here, we suppose that $R_1 \cap R_2 = \emptyset$ and that $t \neq s'-1$ nor $t' = s-1$. We have two cases, based on whether, with a chronological ordering which has $s<t$, we have $s'<t'$ (\emph{a forward overlap}) or $t' < s'$ (\emph{a reverse overlap}).

Suppose first that $\calP_\gamma$ has a forward overlap, so that with the same chronological ordering we have $s<t$ and $s'<t'$. Then, we set $C^+ = \{\gamma_3,\gamma_4\}$ where $\gamma_4$ is a closed curve and $C^- = \{\gamma_{56}\}$. Constructing the corresponding posets is similar to the two arc case, detailed in Section \ref{subsec:Type0}. For instance, to describe $\calP_{\gamma_4}$, one takes $\calP_{\gamma}[\leq t'] \cap \calP_{\gamma}[>t]$, so that one has $\calP_{\gamma}[t+1,t']$ with added relation $\calP_{\gamma}(t') \succ \calP_\gamma(t+1)$, resulting in a circular fence poset. One can similarly combine the descriptions of $\calP_{\gamma_5}$ and $\calP_{\gamma_6}$ in Section \ref{subsec:Type0} to describe $\calP_{\gamma_{56}}$ here.

\begin{prop}\label{prop:Relations:ForwardOverlap}
Let $\gamma$ be an arc with a self-intersection which appears as a  forward, non-intersecting, and non-kissing crossing overlap in $\calP_\gamma$. Let $\{\gamma_3,\gamma_4\} \cup \{\gamma_{56}\}$ be the resolution of this intersection. Then, $x_\gamma = x_{\gamma_3} x_{\gamma_4} + Y x_{\gamma_{56}}$ where $Y$ is defined as follows ,\[
Y = \begin{cases} Y_R & (s > 1 \textbf{ or } s=1 \textbf{ and } s(\gamma)\text{ is notched}) \textbf{ and } (t<n \textbf{ or } t=n \textbf{ and }t(\gamma) \text{ is plain})\\
 Y_R Y^{\leq (s'-1)}_{\preceq (s'-1)}& s = 1 \textbf{ and }s(\gamma) \text{ is plain} \textbf{ and } (t<n \textbf{ or } t=n \textbf{ and }t(\gamma) \text{ is plain})\\
 Y_R Y^{\geq(t+1)}_{\succeq (t'+1)} & (s > 1 \textbf{ or } s=1 \textbf{ and } s(\gamma)\text{ is notched}) \textbf{ and } t=n \textbf{ and }t(\gamma) \text{ is notched}\\
Y_R Y^{\leq (s'-1)}_{\preceq (s'-1)} Y^{\geq(t+1)}_{\succeq (t'+1)} & s = 1 \textbf{ and }s(\gamma) \text{ is plain} \textbf{ and } t=n \textbf{ and }t(\gamma) \text{ is notched}\\
\end{cases}
\]
where 
\[
Y_R = \prod_{\calP_\gamma(i) \in R_1} y_{\calP_\gamma(i)}\qquad Y^{\leq (s'-1)}_{\preceq(s'-1)} = \prod_{\substack{\calP_\gamma(i) \preceq \calP_\gamma(s'-1)\\ i \leq s'-1}} y_{\calP_\gamma(i)} \quad \text{and} \quad Y^{\geq (t+1)}_{\succeq(t'+1)} = \prod_{\substack{\calP_\gamma(i) \succeq \calP_\gamma(t'+1) \\ i \geq t+1}} y_{\calP_\gamma(i)}.
\]
\end{prop}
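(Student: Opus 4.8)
The plan is to follow the general proof strategy of Subsection~\ref{subsec:proofStrategy}, adapting the Type~0 two-arc argument of Proposition~\ref{prop:GraftingType0} to the single-poset setting. First I would fix a chronological ordering on $\calP_\gamma$ in which $s<t$ and $s'<t'$, so that $R_1=\calP_\gamma[s,t]$ sits on top of $\calP_\gamma^0$ and $R_2=\calP_\gamma[s',t']$ sits on the bottom (the forward, non-kissing, non-intersecting hypothesis guarantees $t<s'$ in the ordering, so the two regions are genuinely disjoint blocks with ordinary poset elements strictly between them and possibly before $R_2$ / after $R_1$). I would then produce a partition $J(\calP_\gamma)=A\cup B$ together with a bijection $\Phi\colon J(\calP_\gamma)\to \bigl(J(\calP_{\gamma_3})\times J(\calP_{\gamma_4})\bigr)\cup J(\calP_{\gamma_{56}})$, where $\Phi|_A$ is weight-preserving onto $J(\calP_{\gamma_3})\times J(\calP_{\gamma_4})$ and $\Phi|_B$ is weight-preserving up to the fixed monomial $Z$ (with $Z|_{x_i=1}=Y$) onto $J(\calP_{\gamma_{56}})$, and $\emptyset\in A$. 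The combinatorial heart is identical in spirit to Case~i of Proposition~\ref{prop:GraftingType0}: an order ideal $I$ of $\calP_\gamma$ either restricts compatibly to the two halves after a ``switching'' at the first agreement position between $I|_{R_1}$ and $I|_{R_2}$ (these go into $A$, using Lemma~\ref{lem:SwitchingExists}), or it has $R_2\subseteq I$, $R_1\cap I=\emptyset$, with the boundary conditions $\calP_\gamma(s-1)\in I$ only if $\calP_\gamma(s'-1)\in I$ and $\calP_\gamma(t+1)\in I$ only if $\calP_\gamma(t'+1)\in I$, in which case $I\setminus R_2$ naturally becomes an order ideal of the circular fence poset $\calP_{\gamma_{56}}$ (these go into $B$). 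The four sub-bijections $A_1,\dots,A_4$ handling the switching cases and the ``frontier'' elements carry over verbatim from the two-arc proof since they never touch the loop/endpoint data.

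The new features come from the endpoints of $\gamma$ lying near the overlap, i.e.\ the four cases in the statement. When $s>1$ (or $s=1$ with $s(\gamma)$ notched) and $t<n$ (or $t=n$ with $t(\gamma)$ plain), no loop interacts with the overlap, so $Z=\hat Y_R$ and $Y=Y_R$; this is essentially the two-arc Case~i computation with ``$\calP_1$'' and ``$\calP_2$'' replaced by the two ends of a single poset. When $s=1$ and $s(\gamma)$ is plain, $\calP_{\gamma_{56}}$ loses the initial segment $\calP_\gamma^{\preceq(s'-1)}$, exactly as $\calP_{\gamma_5}$ did in Case~ii/iii of Proposition~\ref{prop:GraftingType0}, and the monomial picks up the extra factor $\hat Y^{\leq(s'-1)}_{\preceq(s'-1)}$; I would reuse the refined partition with the spoke-chain indexing of Figure~\ref{fig:Indexing} when $s(\gamma)$ is notched there (but here it is plain, which is the simpler sub-situation, so only the deletion of $\calP_\gamma^{\preceq(s'-1)}$ and the corresponding bookkeeping is needed). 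The case $t=n$, $t(\gamma)$ notched is the mirror image, contributing $\hat Y^{\geq(t+1)}_{\succeq(t'+1)}$, and the fourth case is the combination of both, handled by composing the two finer partitions (as in ``All other cases'' at the end of Proposition~\ref{prop:GraftingType0}). In every case I would then verify the $\gb$-vector identities $\gb_\gamma=\gb_{\gamma_3}+\gb_{\gamma_4}$ and $\gb_\gamma+\degx(Z)=\gb_{\gamma_{56}}$ using the explicit descriptions of $\mathbf a,\mathbf b,\mathbf r,\mathbf j$ from Subsection~\ref{subsec:g-vec}, Lemma~\ref{lem:y-hat}, and Corollary~\ref{cor:y-hat_spokes}; these computations are the direct single-poset analogues of those in Proposition~\ref{prop:GraftingType0}, with the key point being that $\calP_\gamma(s-1)$ and $\calP_\gamma(s')$ correspond to the same arc in $T$ (and likewise $\calP_\gamma(t)$ and $\calP_\gamma(t'+1)$, etc.), forcing the $x$-degree cancellations.

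I expect the main obstacle to be \emph{not} the individual case analysis — each case is a mild variation on material already developed — but rather checking that the disjointness hypothesis ($R_1\cap R_2=\emptyset$, $t\neq s'-1$, $t'\neq s-1$) together with the forward-overlap orientation genuinely rules out the degenerate interactions, so that the two-arc bijections apply without modification. Concretely, one must confirm that when we excise $R_2$ from an order ideal and re-glue, the relations $\calP_\gamma(t)\prec\calP_\gamma(t'+1)$ and $\calP_\gamma(t+1)\succ\calP_\gamma(t)$ (inherited from the geometry of the triangulation exactly as in Section~\ref{subsec:Type0}) hold, and that the element strictly between the two regions behaves as a single ``frontier'' element rather than being shared — this is where the geometric input, that a forward overlap cannot simultaneously be a reverse overlap or kiss (noted in the prose introducing Section~\ref{sec:self_crossings}), is used. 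Once that is in place, the bijection construction and the $\gb$-vector verification are routine, so I would present the $s>1$, $t<n$ case in full detail and indicate the modifications for the remaining three cases by reference to the corresponding sub-bijections in Proposition~\ref{prop:GraftingType0}.
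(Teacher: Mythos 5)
Your proposal is correct and takes essentially the same approach as the paper: the paper's entire proof of this proposition is the single sentence that it ``follows exactly the format of the proof of Proposition~\ref{prop:GraftingType0},'' and your adaptation of the Type~0 two-arc partition, switching-position bijection, endpoint case analysis, and $\gb$-vector verification to the single-poset setting is precisely what is intended. (One minor slip: the elements identified by the overlap isomorphism are $\calP_\gamma(s)$ with $\calP_\gamma(s')$ and $\calP_\gamma(t)$ with $\calP_\gamma(t')$, not the shifted indices you wrote in your parenthetical, but this does not affect the argument.)
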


The proof of these relations follows exactly the format of the proof of Proposition \ref{prop:GraftingType0}.

Now, we turn to reverse overlaps. While the type of arcs which appear will be the same, we will see that the $y$-monomial will now appear with the product of an arc and closed curve.

We again let  $R_1 = \calP_\gamma[s,t]$ and $R_2 = \calP_\gamma[s',t']$ be isomorphic posets with chronological ordering and indexing chosen so that $s < t$, $R_1$ is on top, and $R_2$ is on bottom. We also assume $s<s'-1$; if we have the opposite case, the construction would be similar. Since this is a reverse overlap, we have $t' < s'$. When the overlap is in reverse, one can check that it is impossible to have $R_1 \cap R_2$ be nonempty. Abstractly, we could have the case $t' = t+1$, but since we consider triangulations without self-folded triangles, we would never have an arc cross the same arc in the triangulation two times consecutively. Therefore, we assume that these overlaps do not kiss; that is, $t+1 < t'$.

We define $\calP_{\gamma_{34}}$ to be $\calP_\gamma$ with the chronological ordering on $\calP_{\gamma}[t+1,t'-1]$ reversed. That is, when listing the elements using the chronological ordering, this has the form $\cdots \calP_\gamma(t) \prec \calP_\gamma(t'-1) \cdots \calP_\gamma(t+1) \prec \calP_\gamma(t') \cdots$. Notice this poset has the same content as $\calP_\gamma$, but the sets $R_1$ and $R_2$ no longer form a crossing overlap. 

The poset $\calP_{\gamma_5}$ will be the circular fence poset on $\calP_\gamma[t+1,t'-1]$ where, since we know $\calP_\gamma(t),\calP_\gamma(t+1)$ and $\calP_\gamma(t'-1)$ form a triangle, we know there is a relation $\calP_\gamma(t+1) \succ \calP_\gamma(t'-1)$. The poset $\calP_{\gamma_6}$ can be defined similarly to how the poset $\calP_{\gamma_6}$ was defined in Section \ref{subsec:Type0}. 

\begin{prop}\label{prop:Relations:ReverseSelfOverlap}
Let $\gamma$ be an arc with a self-intersection which appears as a reverse crossing overlap in $\calP_\gamma$. Let $\{\gamma_{34} \} \cup \{\gamma_5,\gamma_6\}$ be the resolution of this intersection. Then, $x_\gamma = x_{\gamma_{34}} + Yx_{\gamma_5} x_{\gamma_6}$ where $Y$ is defined as follows, using analogous definitions to Proposition \ref{prop:Relations:ForwardOverlap},
\[
Y = \begin{cases} Y_R & (s > 1 \textbf{ or } s=1 \textbf{ and } s(\gamma)\text{ is notched}) \textbf{ and } (t<n \textbf{ or } t=n \textbf{ and }t(\gamma) \text{ is plain})\\
 Y_R Y^{\geq (s'+1)}_{\preceq (s'+1)}& s = 1 \textbf{ and }s(\gamma) \text{ is plain} \textbf{ and } (t<n \textbf{ or } t=n \textbf{ and }t(\gamma) \text{ is plain})\\
 Y_R Y^{\leq(s-1)}_{\succeq(s-1)} & (s > 1 \textbf{ or } s=1 \textbf{ and } s(\gamma)\text{ is notched}) \textbf{ and } t=n \textbf{ and }t(\gamma) \text{ is notched}\\
Y_R Y^{\geq (s'+1)}_{\preceq (s'+1)} Y^{\leq(s-1)}_{\succeq(s-1)} & s = 1 \textbf{ and }s(\gamma) \text{ is plain} \textbf{ and } t=n \textbf{ and }t(\gamma) \text{ is notched.}\\
\end{cases}
\]
\end{prop}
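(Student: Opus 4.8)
\textbf{Proof plan for Proposition \ref{prop:Relations:ReverseSelfOverlap}.}

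The plan is to mirror the strategy of Proposition \ref{prop:GraftingType0} and Proposition \ref{prop:Relations:ForwardOverlap}, but accounting for the fact that a reverse overlap produces only a single arc $\gamma_{34}$ in the ``$+$'' term and the pair $\{\gamma_5,\gamma_6\}$ (arc and closed curve) in the ``$Y$'' term, with the $y$-monomial attached to the latter. Following Subsection \ref{subsec:proofStrategy}, I would first write $x_\gamma = \mathbf{x}^{\gb_\gamma}\sum_{I \in J(\calP_\gamma)} wt(I)$, then construct a partition $J(\calP_\gamma) = A \cup B$ together with bijections $\Phi|_A : A \to J(\calP_{\gamma_{34}})$ and $\Phi|_B : B \to J(\calP_{\gamma_5}) \times J(\calP_{\gamma_6})$, with $\emptyset \in A$, $\Phi|_A$ weight-preserving, and $\Phi|_B$ weight-preserving up to the fixed monomial $Z$ with $Z|_{x_i=1} = Y$. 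The final step is to verify the $\gb$-vector identities $\gb_\gamma = \gb_{\gamma_{34}}$ and $\gb_\gamma + \degx(Z) = \gb_{\gamma_5} + \gb_{\gamma_6}$, from which the relation follows exactly as in the displayed computation in Subsection \ref{subsec:proofStrategy}.

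For the bijection, the key observation is that $\calP_{\gamma_{34}}$ is obtained from $\calP_\gamma$ by reversing the chronological ordering on $\calP_\gamma[t+1, t'-1]$, so the underlying set and content agree; what changes is which subsets are order ideals. I would use the switching-position machinery (Lemma \ref{lem:SwitchingExists}): let $A$ consist of those $I$ for which there is a switching position between $I|_{R_1}$ and $I|_{R_2}$ (after identifying $R_1 \cong R_2$ via the reversing isomorphism), and define $\Phi|_A$ by sending the portion before the switching position as-is and swapping the rest, exactly as in cases (1)--(1) of Proposition \ref{prop:GraftingType0}; this recovers all of $J(\calP_{\gamma_{34}})$ except for a ``degenerate'' family where $R_1$ is filled and $R_2$ is empty (or vice versa). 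The complement $B$ — order ideals with $I \cap R_1 = \emptyset$, $R_2 \subseteq I$, and appropriate boundary conditions at $\calP_\gamma(s-1)$ and $\calP_\gamma(t'+1)$ matching the interface of $\calP_{\gamma_5}$ and $\calP_{\gamma_6}$ — maps bijectively onto $J(\calP_{\gamma_5}) \times J(\calP_{\gamma_6})$ by the natural splitting of $I \cup (I_2 \setminus R_2)$, with the content of $R_2$ together with the swept spokes at a tagged/plain endpoint (when $s=1$, $t=n$) accounting for the extra factors $Y^{\geq (s'+1)}_{\preceq (s'+1)}$ and $Y^{\leq(s-1)}_{\succeq(s-1)}$ in $Y$. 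The four cases in the statement are exactly the four cases of Proposition \ref{prop:GraftingType0} transported to this setting, so the same case analysis (and the same use of Lemma \ref{lem:y-hat} and Corollary \ref{cor:y-hat_spokes} to compute $\degx(Z)$) applies, with the sole structural change that $\gamma_{34}$ replaces the pair $\gamma_3,\gamma_4$ and the monomial migrates onto the $\{\gamma_5,\gamma_6\}$ term.

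The main obstacle I anticipate is the bookkeeping at the interface where the reversal happens: because $\calP_\gamma(t), \calP_\gamma(t+1), \calP_\gamma(t'-1)$ form a triangle and the circular fence poset $\calP_{\gamma_5}$ on $\calP_\gamma[t+1,t'-1]$ inherits the cover relation $\calP_\gamma(t+1) \succ \calP_\gamma(t'-1)$, one must check carefully that the boundary conditions defining $B$ are precisely those that make the splitting into $J(\calP_{\gamma_5}) \times J(\calP_{\gamma_6})$ a bijection — in particular that no order ideal is double-counted or missed at the ``seam'' of the cycle. A second, smaller subtlety is that since $R_1$ and $R_2$ live in the same poset $\calP_\gamma$, one must invoke the geometric fact (already used to set up the proposition) that a reverse overlap is automatically non-intersecting and non-kissing, so the two regions and their neighboring elements $\calP_\gamma(s-1), \calP_\gamma(t+1), \calP_\gamma(t'-1), \calP_\gamma(s'+1)$ are genuinely distinct; this prevents the degenerate collisions that would otherwise break the weight-preservation count. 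Once the interface is handled, the $\gb$-vector verification is routine and parallels Case iii of Proposition \ref{prop:GraftingType0} essentially verbatim.
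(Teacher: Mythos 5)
Your plan is essentially the paper's own proof: the paper simply states that Proposition \ref{prop:Relations:ReverseSelfOverlap} "directly follows the ideas in the proofs of Proposition \ref{prop:GraftingType0}," and your partition of $J(\calP_\gamma)$ via switching positions (under the reversing identification of $R_1$ with $R_2$), the handling of the degenerate non-switching families, the splitting of $B$ onto $J(\calP_{\gamma_5}) \times J(\calP_{\gamma_6})$, and the $\gb$-vector checks via Lemma \ref{lem:y-hat} are exactly that transported argument. The only slip is notational — in the self-crossing setting there is a single order ideal $I$, so "$I \cup (I_2 \setminus R_2)$" should read $I \setminus R_2$ — which does not affect the argument.
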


The proof of Proposition \ref{prop:Relations:ReverseSelfOverlap} also directly follows the ideas in the proofs of Proposition \ref{prop:GraftingType0}. 

It is also possible for $\gamma$ to have a self-intersection which resembles the Type 1 and Type 2 crossings discussed in Sections \ref{subsec:Type1} and \ref{subsec:Type2} respectively.

If $\gamma$ has a self-intersection of Type 1, let $p$ be the starting or ending point of $\gamma$ near this point of intersection, and suppose $\gamma$ winds counterclockwise around $p$ at this point of intersection. Then, Proposition \ref{prop:GraftingType1} holds when we label the resolution $\{\gamma_{34}\} \cup \{\gamma_5,\gamma_6\}$ and replace $x_{\gamma_3}x_{\gamma_4}$ with $x_{\gamma_{34}}$. 

If $\gamma$ has a self-intersection of Type 2, then let $\gamma_3$ be the closed curve. Then, the results from Section \ref{subsec:Type2} hold when we label the resolution $\{\gamma_3,\gamma_4\} \cup \{\gamma_{56}\}$ and replace $\gamma_5 \gamma_6$ with $\gamma_{56}$. The untagged version of these two cases come from Theorem 7.4 in \cite{snakegraphcalculus2}.

\subsection{Kissing Overlaps}\label{subsec:Kissing}

In this section, we consider an arc $\gamma$ with self-intersection such that its poset $\calP_\gamma$ has a kissing overlap. Precisely, $\calP_\gamma$ has isomorphic subposets $R_1$ and $R_2$ such that   $R_1 = \calP_\gamma[s,t]$, $R_2 = \calP_\gamma[s',t']$, and $t+1 = s'$. Without loss of generality, we suppose $R_1$ is on top and $R_2$ is on bottom, so that $\calP_{\gamma}(t) \succ \calP_{\gamma}(s')$. Let $\alpha$ denote the third arc in the triangle formed by $\calP_{\gamma}(t)$ and $\calP_{\gamma}(s')$. We necessarily have that $s > 1$ and $t' < n = \vert \calP_\gamma^0 \vert$; otherwise, this intersection would be covered by the Type 1 or Type 2 intersections, and the proof would follow immediately from the two arc case. Moreover,  notice that $\calP_\gamma(s-1)$ and $\calP_\gamma(t'+1)$ both correspond to $\alpha$. 

Let $b \geq 1$ be the largest value such that $\calP_\gamma(s-b)$ and $\calP_\gamma(t'+b)$ correspond to the same arc in $T$; call this arc $\beta$. Set $\beta^1 = \calP_\gamma(s-b), \beta^2 = \calP_\gamma(t'+b), \alpha^1 = \calP_\gamma(s-1)$ and $\alpha^2 = \calP_\gamma(t'+1)$. If $\calP_\gamma(s-b-1)$ and $\calP_\gamma(t'+b+1)$ exist, call these $\epsilon_1$ and $\epsilon_2$ respectively. We emphasize that $\epsilon_1$ and $\epsilon_2$ are distinct arcs while $\alpha^1$ and $\alpha^2$ stand for two copies of the same arc in $\calP_\gamma$ and similarly for $\beta^1,\beta^2$. Moreover, there are two options for the configuration of $\epsilon_1$ and $\epsilon_2$, depending on $\gamma$. We draw below a diagram of the relevant arcs from $T$, as well as the two ways that $\gamma$ can achieve this kissing overlap. 

\begin{center}
\begin{tikzpicture}[scale = 1]
\draw (0,0) circle (1);
\draw (0,0) circle (2);
\draw (2*-.94, 2*.34) -- (-1,0) -- (2*-.94,2*-.34);
\draw (2*-.94, 2*.34) -- (-5,.68) -- (-5,-.68) -- (-1.88, -.68);
\draw (-5,.68) --(-6,0) --  (-5,-.68);
\node[right] at (-5,0){$\beta$};
\node[left] at (-2,0){$\alpha$};
\node[above] at (-1.3,0.5){$\calP_\gamma(t)$};
\node[below] at (-1.3,-0.5){$\calP_\gamma(s)$};
\node[] at (-5.7,0.6){$\epsilon_i$};
\node[] at (-5.7,-0.6){$\epsilon_j$};
\end{tikzpicture}
\end{center}

\begin{center}
\begin{tabular}{cc}
\begin{tikzpicture}[scale = 0.75]
\draw (0,0) circle (1);
\draw (0,0) circle (2);
\draw (2*-.94, 2*.34) -- (-1,0) -- (2*-.94,2*-.34);
\draw (2*-.94, 2*.34) -- (-5,.68) -- (-5,-.68) -- (-1.88, -.68);
\draw (-5,.68) --(-6,0) --  (-5,-.68);
\draw[thick, orange, ->] (-5.7,-.45) to (-4.8,-.45);
\draw[thick, orange] (-4.8,-.45) to [out = 0, in = 160, looseness = 0.8] (-1.6,-.5);
\draw[thick, orange] (-1.6,-.5) to [out = 300, in = 180] (0,-1.8) to [out = 0, in = 270] (1.7,0) to [out = 90, in = 0] (0,1.6) to [out = 180, in = 90] (-1.5,0) to [out = 270, in = 180] (0,-1.4) to [out = 0, in = 270] (1.3,0) to [out = 90, in = 0] (0,1.2);
\draw[thick, orange] (0,1.2) to [out = 180, in = 0] (-2.5,0.45);
\draw[thick, orange, ->] (-2.5,0.45) to (-5.7,0.45);
\end{tikzpicture}
&
\begin{tikzpicture}[scale = 0.75]
\draw (0,0) circle (1);
\draw (0,0) circle (2);
\draw (2*-.94, 2*.34) -- (-1,0) -- (2*-.94,2*-.34);
\draw (2*-.94, 2*.34) -- (-5,.68) -- (-5,-.68) -- (-1.88, -.68);
\draw (-5,.68) --(-6,0) --  (-5,-.68);
\draw[thick, orange, ->] (-5.7,.45) to (-2.5,-.45);
\draw[thick, orange] (-2.5,-.45) to [out = -20, in = 160, looseness = 0.8] (-1.6,-.5);
\draw[thick, orange] (-1.6,-.5) to [out = 300, in = 180] (0,-1.8) to [out = 0, in = 270] (1.7,0) to [out = 90, in = 0] (0,1.6) to [out = 180, in = 90] (-1.5,0) to [out = 270, in = 180] (0,-1.4) to [out = 0, in = 270] (1.3,0) to [out = 90, in = 0] (0,1.2);
\draw[thick, orange] (0,1.2) to [out = 180, in = 0] (-2.5,0.45);
\draw[thick, orange, ->] (-2.5,0.45) to [out = 180, in = 45] (-5.7,-0.45);
\end{tikzpicture}\\
\end{tabular}
\end{center}

Set $M_1 = \calP_\gamma[s-b,s-1]$ and $M_2 = \calP_\gamma[t'+1,t'+b]$. The two choices for the orientation of $\gamma$ correspond to whether $M_1$ and $M_2$ are non-intersecting or intersecting overlaps, as below. 

\begin{center}
\begin{tabular}{c|c}
   \begin{tikzpicture}[scale=0.6]
   \node(e1) at (0,3){$\epsilon_1$};
    \node(m1) at (1.5,1.5){$\boxed{M_1}$};
    \node(r1) at (3,0){$\boxed{R_1}$};
    \node(r2) at (4.5,1.5){$\boxed{R_2}$};
    \node(m2) at (6,0){$\boxed{M_2}$};
    \node(e2) at (7.5,-1.5){$\epsilon_2$};
    \draw (m1) -- (r1);
    \draw(r1) -- (r2);
    \draw (r2) -- (m2);
    \draw (e1) -- (m1);
    \draw (m2) -- (e2);
   \end{tikzpicture}  &  
      \begin{tikzpicture}[scale = 0.6]
    \node(e1) at (0,0){$\epsilon_1$};
    \node(m1) at (1.5,1.5){$\boxed{M_1}$};
    \node(r1) at (3,0){$\boxed{R_1}$};
    \node(r2) at (4.5,1.5){$\boxed{R_2}$};
    \node(m2) at (6,0){$\boxed{M_2}$};
    \node(e2) at (7.5,1.5){$\epsilon_2$};
    \draw (m1) -- (r1);
    \draw(r1) -- (r2);
    \draw (r2) -- (m2);
    \draw (e1) -- (m1);
    \draw (m2) -- (e2);
   \end{tikzpicture}\\
\end{tabular}
\end{center}

Since either end of $\gamma$ may be notched, let $n' \leq 1$ and $n'' \geq d$ be such that $\calP_\gamma = \calP_\gamma[n',n'']$.

We draw below the resolution in the case where $M_1$ and $M_2$ are non-intersecting (i.e. the left picture above), with $s-b > 1$, and $t'+b < n$. The intersecting case is identical except for the fact that $\gamma_{56}$ now has a contractible kink. Recall from Definition \ref{def:contandkink} that we define $x_{\gamma_{56}}$ to be negative of the expression associated to $\gamma_{56}$ with the kink removed. 

\begin{center}
\begin{tabular}{cc}
\begin{tikzpicture}[scale = 0.75]
\draw (0,0) circle (1);
\draw (0,0) circle (2);
\draw (2*-.94, 2*.34) -- (-1,0) -- (2*-.94,2*-.34);
\draw (2*-.94, 2*.34) -- (-5,.68) -- (-5,-.68) -- (-1.88, -.68);
\draw (-5,.68) --(-6,0) --  (-5,-.68);
\draw[thick, orange, ->] (-5.7,-.45) to (-4.8,-.45);
\draw[thick, orange] (-4.8,-.45) to [out = 0, in = 160, looseness = 0.8] (-1.6,-.5);
\draw[thick, orange] (-1.6,-.5) to [out = 300, in = 180] (0,-1.8) to [out = 0, in = 270] (1.7,0) to [out = 90, in = 0] (0,1.6);
\draw[thick, orange] (0,1.6) to [out = 180, in = 0] (-2.5,0.45);
\draw[thick, orange, ->] (-2.5,0.45) to (-5.7,0.45);
\draw[thick, blue] (0,0) circle (1.35);
\end{tikzpicture}
&
\begin{tikzpicture}[scale = 0.75]
\draw (0,0) circle (1);
\draw (0,0) circle (2);
\draw (2*-.94, 2*.34) -- (-1,0) -- (2*-.94,2*-.34);
\draw (2*-.94, 2*.34) -- (-5,.68) -- (-5,-.68) -- (-1.88, -.68);
\draw (-5,.68) --(-6,0) --  (-5,-.68);
\draw[thick, orange, ->] (-5.7,-.45) to [out = 75, in = 285] (-5.7,0.45);
\end{tikzpicture}\\
\end{tabular}
\end{center}

Let the resolution of $\gamma$ be denoted $\{\gamma_3,\gamma_4\} \cup \{\gamma_{56}\}$ where $\gamma_3$ and $\gamma_{56}$ are arcs and $\gamma_4$ is a closed curve, as in the figures above. Then, $\calP_{\gamma_3}$ is the poset $\calP_\gamma[\leq t] \cup \calP_{\gamma}[> t']$, where we have the relation $\calP_\gamma(t) \succ \calP_\gamma(t'+1)$, which matches the orientation of the associated arcs. The poset $\calP_{\gamma_4}:= \calP_{\gamma_4}$ is the circular fence poset on $\calP_\gamma[s,t]$ with added relation $\calP_1(s)\succ\calP_1(t)$. 

The arc $\gamma_{56}$, with poset $\calP_{\gamma_{56}}$ can vary based on $s-b$ and $t'+n$ as well as the tagging at each endpoint of $\gamma$.

\begin{enumerate}
    \item If $s-b > 1$ and $t' + b < n$, then $\calP_{\gamma_{56}}$ is the poset on $\calP_\gamma[<s-b] \cup \calP_{\gamma}[>t'+b]$  where the relation between $\calP_1(s-b)$ and $\calP_1(t'+b)$ follows from the configuration of $\epsilon_1$ and $\epsilon_2$. 
    \item If $s-b = 1$ and $t'+b < n$, let $b< a \leq n-t'$ be the largest integer such that $\calP_\gamma[t'+b+1,t'+a]$ correspond to a set of arcs which are all incident to $s(\gamma)$. If $t'+a = n$ and $\calP_\gamma(t'+b) \succ \calP_\gamma(t'+b+1)$ ($\calP_\gamma(t'+b) \prec \calP_\gamma(t'+b+1)$), let $\zeta \in T$ be the arc which is clockwise (counterclockwise) of $\calP_\gamma(d)$ in the last triangle $\gamma$ passes through. 
    \begin{enumerate}
        \item If $t'+a < n$, then $\calP_{\gamma_{56}}$ is $\calP_\gamma[> t'+a]$.
        \item Suppose $t'+a = n$ and $s(\gamma)$ is plain.  If $t(\gamma)$ is plain, then $\calP_{\gamma_{56}}$ is the poset associated $\zeta$ and if $t(\gamma)$ is notched, then $\calP_{\gamma_{56}}$ is the poset associated to $\zeta^{(t(\gamma))}$.
        \item Suppose $t'+a = n$ and $s(\gamma)$ is notched. If $t(\gamma)$ is plain, then $\calP_{\gamma_{56}}$ is the poset for $\zeta^{(s(\gamma))}$ and if $t(\gamma)$ is notched, then $\calP_{\gamma_{56}}$ is the poset for $\zeta^{(s(\gamma), t(\gamma))}$ .
    \end{enumerate}
    \item If $s-b>1$ and $t'+b=n$, we have a dual set of conditions to above to describe $\calP_{\gamma_{56}}$. 
    \item If $s-b = 1$ and $t'+b = n$, then $\gamma_{56}$ is contractible and we do not associate a poset to $\gamma_{56}$. If the taggings on $s(\gamma)$ and $t(\gamma)$ are the same, then  $x_{\gamma_{56}} = 0$. If the taggings are different, then $x_{\gamma_{56}}$ is given by Definition \ref{def:ContractibleSinglyTaggedLoop}.
\end{enumerate}

We will use the letters $a$ and $b$ as described here in the following Proposition and proof.

\begin{prop}
Let $\gamma$ be an arc such that $\calP_{\gamma}$ has a crossing overlap in regions $R_1 = \calP_\gamma[s,t]$ and $R_2 = \calP_\gamma[s',t']$ with $t+1 = s'$. Let $M_1$ and $M_2$ be the overlaps which come before $R_1$ and after $R_2$. Let $Y_R = \prod_{i=s}^t y_{\calP_\gamma(i)}$ and $Y_M = \prod_{i = s-b}^{s-1} y_{\calP_\gamma(i)}$. 
Then, $x_\gamma = x_{\gamma_3}x_{\gamma_4} \pm Y x_{\gamma_{56}}$ where the sign is $+$ when $M_1$ and $M_2$ are non-intersecting and is $-$ when $M_1$ and $M_2$ are intersecting, and the monomial $Y$ is determined by the following conditions.
\begin{enumerate}
    \item If $s-b > 1$ and $t'+b <n$, $Y = Y_R Y_M$.
    \item If $s-b = 1$ and $t'+b < n$ and
    \begin{enumerate}
        \item $M_1$ and $M_2$ are non-intersecting and $s(\gamma)$ is plain, then $Y = Y_R Y_M$.
        \item $M_1$ and $M_2$ are intersecting and $s(\gamma)$ is plain, then if $t(\gamma)$ is plain, $Y = Y_R Y_M\prod_{i=t'+b+1}^{t'+a} y_{\calP_\gamma(i)}$ and if $t(\gamma)$ is notched, $Y = Y_R Y_M\prod_{i=t'+b+1}^{t'+a+1} y_{\calP_\gamma(i)}$.
        \item $M_1$ and $M_2$ are non-intersecting and $s(\gamma)$ is notched, then $Y = Y_R Y_M \prod_{i=t'+b+1}^{t'+a} y_{\calP_\gamma(i)}$ .
        \item $M_1$ and $M_2$ are intersecting and $s(\gamma)$ is notched, then $Y = Y_R Y_M$.
    \end{enumerate}
    \item If $s-b  > 1$ and $t'+b = n$, $Y$ is defined in a parallel way to part 2. 
    \item If $s-b = 1$ and $t'+b = n$, $Y = Y_R Y_M$.
\end{enumerate}
\end{prop}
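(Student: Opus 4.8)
The argument follows the template laid out in Subsection~\ref{subsec:proofStrategy} and, more specifically, mirrors the combinatorial maps constructed in Proposition~\ref{prop:GraftingType0} and Proposition~\ref{prop:Relations:ForwardOverlap}, with the two new wrinkles being (i) that $R_1$ and $R_2$ kiss, so that the ``middle'' region $R_1\cup R_2$ must be handled as a single block, and (ii) that in the intersecting case $\gamma_{56}$ acquires a contractible kink, which by Definition~\ref{def:contandkink} contributes the sign $-1$. So the plan is: first write $x_\gamma = \mathbf{x}^{\gb_\gamma}\sum_{I\in J(\calP_\gamma)} wt(I)$ via Proposition~\ref{prop:PosetExpansion}; then partition $J(\calP_\gamma) = A\sqcup B$ where $A$ consists of the order ideals $I$ that do \emph{not} contain the element $\calP_\gamma(t)$ together with the ``switching'' ideals that can be split across $R_1$ and $R_2$, and $B$ consists of the ideals that contain all of $R_1\cup R_2$ (plus the appropriate boundary conditions on $M_1,M_2$ and on $\epsilon_1,\epsilon_2$); then construct a weight-preserving bijection $\Phi|_A\colon A\to J(\calP_{\gamma_3})\times J(\calP_{\gamma_4})$ and a bijection $\Phi|_B\colon B\to J(\calP_{\gamma_{56}})$ (or to the appropriate finite set when $\gamma_{56}$ is contractible) that is weight-preserving up to the monomial $Z$ with $Z|_{x_i=1}=Y$.

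The key steps, in order: (1) Recall the description of the four posets $\calP_{\gamma_3},\calP_{\gamma_4},\calP_{\gamma_{56}}$ given just before the statement, and fix a compatible chronological ordering on all of them so that Lemma~\ref{lem:SwitchingExists} applies to the restrictions to $R_1$ and $R_2$. (2) On $A$: since $R_1$ sits on top of $\calP_\gamma^0$ and $R_2$ on the bottom while $\calP_{\gamma_3}$ is $\calP_\gamma[\le t]\cup\calP_\gamma[>t']$ with the relation $\calP_\gamma(t)\succ\calP_\gamma(t'+1)$, the bijection is the standard ``swap at the first switching position'' map between $(I\cap R_1, I\cap R_2)$ and an order ideal of the fence region of $\calP_{\gamma_3}$ together with the circular region $\calP_{\gamma_4}$; this is essentially identical to the $A_1,\dots,A_4$ construction in Case~i of Proposition~\ref{prop:GraftingType0}, and being on-the-nose weight-preserving it fixes $\gb_\gamma = \gb_{\gamma_3}+\gb_{\gamma_4}$ once one checks the maximal element nearest the kiss contributes $\eb_{\calP_\gamma(t)}$ to $\mathbf{r}_{\gamma_4}$ and $\eb_{\tau^0}$-type terms to $\mathbf{r}_{\gamma_3}$, exactly as in that proof. (3) On $B$: remove $R_1\cup R_2$ from each ideal, then glue the left piece $\calP_\gamma[<s-b]$ and right piece $\calP_\gamma[>t'+b]$ (truncated or decorated according to which of cases (1)--(4) in the statement we are in) to form an order ideal of $\calP_{\gamma_{56}}$; since $R_1$, $R_2$, $M_1$, $M_2$ all lie in the ideal this is a bijection, and the content removed is exactly $R_1\cup M_1$ (reading off $Y_RY_M$ and, in cases 2(b),2(c),3, the extra spoke factors $\prod y_{\calP_\gamma(i)}$ coming from the loop attached at the tagged endpoint). (4) Verify $\gb_\gamma + \degx(\hat Y) = \gb_{\gamma_{56}}$ by the same $\hat y$-bookkeeping as in Proposition~\ref{prop:GraftingType0}, invoking Lemma~\ref{lem:y-hat} and Corollary~\ref{cor:y-hat_spokes} for the spoke products, and Lemma~\ref{lem:g-vectorMin} and Definition~\ref{def:ContractibleSinglyTaggedLoop} for the degenerate sub-cases. (5) Finally, observe that when $M_1$ and $M_2$ are intersecting overlaps the resulting curve $\gamma_{56}$ has a contractible kink, so $x_{\gamma_{56}} = -x_{\gamma_{56}'}$ by Definition~\ref{def:contandkink}; all of steps (1)--(4) go through verbatim with $\gamma_{56}'$ in place of $\gamma_{56}$, which accounts for the $\pm$ in the statement.

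The main obstacle I expect is step (3) in the boundary sub-cases 2(a)--(d) and their duals: when $s-b=1$ the region $M_1$ abuts a loop in $\calP_\gamma$ (since $s(\gamma)$ may be notched), and one must carefully track whether the leftover elements $\calP_\gamma[t'+b+1,t'+a]$ form a genuine fence segment, a loop segment, or vanish entirely, because this is precisely what toggles $\gamma_{56}$ between an honest arc, a once-tagged arc $\zeta^{(s(\gamma))}$, a doubly-tagged arc, or a contractible loop (handled by Definition~\ref{def:ContractibleSinglyTaggedLoop}), and hence what determines the last factor in $Y$. The bookkeeping here is entirely analogous to situations 3.a--3.c in Section~\ref{subsec:Type0} and the $\delta$-corrections in the proof of Proposition~\ref{prop:IntArcFromTSingleTag}, so no new idea is needed — but it is where the case analysis is heaviest and where one must be most careful that the partition of $B$ is exhaustive and that $\emptyset\in B$ maps to $\emptyset$ (or to the constant term of the contractible-loop expression), which is what guarantees the $y$-monomial sits on the correct term. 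I would present the generic case (1) in full detail and then indicate the modifications for (2)--(4) by reference to the earlier proofs, as the paper does elsewhere.
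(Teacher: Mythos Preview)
Your plan has the right overall architecture, but two of the load-bearing steps are wrong as stated.

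\textbf{The set $B$ is mis-characterized.} You write that $B$ consists of order ideals containing all of $R_1\cup R_2$ (and later all of $R_1,R_2,M_1,M_2$), and that one removes $R_1\cup R_2$. This is not possible: since $R_1$ is on top, $R_2$ on bottom, and they kiss via $\calP_\gamma(t)\succ\calP_\gamma(s')$, an order ideal containing $R_1$ and disjoint from $R_2$ is exactly the configuration that survives after the switching steps, not one containing both. The correct $B$ (in the generic non-intersecting case) is the set of $I$ with $R_1\cup M_1\subseteq I$, $(R_2\cup M_2)\cap I=\emptyset$, and $\epsilon_2\in I$ only if $\epsilon_1\in I$; one then deletes $R_1\cup M_1$, which is why the monomial is $Y_RY_M$. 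Your own sentence ``the content removed is exactly $R_1\cup M_1$'' is right but contradicts what you said two lines earlier. More importantly, you are missing the key mechanism that handles the leftover ideals with $R_1\subseteq I$ and $R_2\cap I=\emptyset$ but which are \emph{not} in $B$: the paper uses a \emph{second} switching-position argument, this time between $M_1$ and $M_2$ (mapping into $M_3$ and $M_4$ inside $\calP_{\gamma_3}$), to route these into the remaining part of $J(\calP_{\gamma_3})\times J(\calP_{\gamma_4})$. This two-tier switching---first on $R_1/R_2$, then on $M_1/M_2$---is the new idea specific to the kissing case, and your plan does not mention it.

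\textbf{The intersecting case does not ``go through verbatim.''} When $M_1$ and $M_2$ are intersecting, the relation reads $x_\gamma + Yx_{\gamma_{56}'} = x_{\gamma_3}x_{\gamma_4}$, so at the level of cardinalities $|J(\calP_\gamma)| < |J(\calP_{\gamma_3})|\cdot|J(\calP_{\gamma_4})|$. Hence there is no partition $J(\calP_\gamma)=A\sqcup B$ with $A$ surjecting onto $J(\calP_{\gamma_3})\times J(\calP_{\gamma_4})$; the map from $A$ now has nontrivial cokernel. The paper handles this by building an injection $J(\calP_\gamma)\hookrightarrow J(\calP_{\gamma_3})\times J(\calP_{\gamma_4})$ (again via the two-tier switching) and then a second injection $J(\calP_{\gamma_{56}'})\hookrightarrow J(\calP_{\gamma_3})\times J(\calP_{\gamma_4})$ whose image is exactly the cokernel of the first. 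Your step~(5), which proposes simply to replace $\gamma_{56}$ by $\gamma_{56}'$ and rerun steps~(1)--(4), cannot work for this cardinality reason; the direction of the bijection must change.
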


\begin{proof}

Set $\calP:= \calP_\gamma$ and for $i = 3,4, 56$, $\calP_i:= \calP_{\gamma_i}$. For shorthand, we will refer to the set $\calP_\gamma[s,t]$ in $\calP_{3}$ as $R_3$, the set $\calP_\gamma[s-b,s-1]$ as $M_3$, and the set $\calP_\gamma[t'+1,t'+b]$ in $\calP_{\gamma_3}$ as $M_4$. The posets $\calP_{\gamma_3}$ for $M_1$ and $M_2$ non-intersecting (left) and intersecting (right) are drawn below.

\begin{center}
\begin{tabular}{c|c}
    \begin{tikzpicture}[scale = 0.7]
    \node(e1) at (0,2){$\epsilon_1$};
    \node(m3) at (1,1){$\boxed{M_3}$};
    \node(r3) at (2,-0.5){$\boxed{R_3}$};
    \node(m4) at (3,-2){$\boxed{M_4}$};
    \node(e2) at (4,-1){$\epsilon_2$};
    \draw (m3) -- (r3);
    \draw (e1) -- (m3);
    \draw(r3) -- (m4);
    \draw (m4) -- (e2);
   \end{tikzpicture} & 
      \begin{tikzpicture}[scale = 0.7]
    \node(e1) at (0,0.5){$\epsilon_1$};
    \node(m3) at (1,1.5){$\boxed{M_3}$};
    \node(r3) at (2,0){$\boxed{R_3}$};
    \node(m4) at (3,-1.5){$\boxed{M_4}$};
    \node(e2) at (4,-0.5){$\epsilon_2$};
    \draw (m3) -- (r3);
    \draw (e1) -- (m3);
    \draw(r3) -- (m4);
    \draw (m4) -- (e2);
   \end{tikzpicture}
\end{tabular}
\end{center}

We will begin with part 1. Even when $\gamma$ is notched at one or both endpoints, this case follows from translating the proof mechanics in Section 4.5 of \cite{snakegraphcalculus2} to our poset setting. We will still go into detail for this case as it will streamline our remaining cases, which are distinct from the work in \cite{snakegraphcalculus2}.

\textbf{Case i) $s-b>1$; $t'+b < n$; and, $M_1$ and $M_2$ are non-intersecting.}

Let $A_1 \subset J(\calP)$ be the set of all $I_1$ such that we do not have $R_1 \subseteq I_1$ and  $(R_2 \cup \{\alpha^2\}) \cap I_1 = \emptyset$. Given $I_1 \in A_1$, $\Phi$ sends the elements from $R_1$ to the corresponding elements in $R_3$ and the elements from $R_2$ to $\calP_{4}$ until the first switching position, if it exists, working from $\calP(s)$ and $\calP(s')$ to $\calP(t)$ and $\calP(t')$ and then it swaps where these go. If this switching position does not exist, it must have been $R_1 \cup \{\alpha^2\} \subseteq I_1$ and $R_2 \cap I_1 = \emptyset$. In this case, we send $R_1$ to $R_3$. The elements of $M_1$ go to the corresponding elements in $M_3$ and similarly for $M_2$ and $M_4$, and all other elements have clear images in $\calP_{3}$. The image of this map is all tuples $(I_3,I_4) \in J(\calP_{3}) \times J(\calP_{4})$ except those of the form $(I_3,I_4)$ with $(R_3 \cup \{\alpha^2\}) \cap I_3 = \emptyset$ and $I_4 = \calP_4$. There is a clear inverse map from this subset of $J(\calP_3) \times J(\calP_4)$ back to $A_1$, which sends elements in $R_3$ to $R_1$ and in $\calP_{4}$ to $R_2$ until the first switching position (using the same ordering on these as we did for $R_1$ and $R_2$). 

Now, let $A_2 \subset J(\calP)$ be the set of all $I_1$ such that that $R_1 \subseteq I_1$; $(R_2 \cup \{\alpha^2\}) \cap I_1 = \emptyset$; and, if we have both $M_1 \subseteq I_1$ and $M_2 \cap I_1 = \emptyset$, then $\epsilon_2 \in I_1$ and $\epsilon_1 \notin I_1$.  The map $\Phi \vert_{A_2}$ acts as follows. If there is a switching position between $M_1$ and $M_2$, $\Phi \vert_{A_2}$ sends the elements of $M_1$ to $M_3$ and the elements of $M_2$ to $M_4$ until the first switching position, working from $\calP(s-b)$ to $\calP(s-1)$ and from  $\calP(t'+b)$ to $\calP(t'+1)$. If $M_1 \subseteq I_1$ and $M_2 \cap I_1 = \emptyset$, $\Phi$ send the elements of $M_1$ to $M_4$. We set $\Phi(R_1) = \calP_{4}$, and the other elements have clear images. The image of this subset of $J(\calP)$ is tuples of the form $(I_3,I_4) \in J(\calP_{3}) \times J(\calP_{4})$ where $R_3 \cap I_3 = \emptyset$ and $I_4 = \calP_4$. By instead considering the first switching position between $M_3$ and $M_4$, we can form an inverse map from this subset of $J(\calP_{3}) \times J(\calP_{4})$ to $A_2$.

Finally, let $B = J(\calP) \backslash (A_1 \cup A_2)$, which can be characterized as the set of $I_1$ such that $R_1 \cup M_1 \subseteq I_1$; $(R_2 \cup M_2) \cap I_1 = \emptyset$; and, $\epsilon_2 \in I_1$ only if $\epsilon_1 \in I_1$. Then, $\Phi(I_1)$ sends $I_1$ to the order ideal $I_1 \backslash (R_1 \cup M_1)$ in $J(\calP_{\gamma_{56}})$.  The inverse map is clear in this case.

Next, we check the $\gb$-vectors. By comparing the shape of $\calP_\gamma$ with the expansion in Lemma \ref{lem:y-hat}, we see that

\begin{align}
\begin{split}
\gb_\gamma &= 2\sum_{\substack{x \in (s,t)\\ \calP(x) \text{ maximal }}} \eb_{\calP(x)} - 2 \sum_{\substack{x \in (s,t)\\ \calP(x) \text{ minimal }}} + 2\sum_{\substack{x \in (s-b,s-1)\\ \calP(x) \text{ maximal }}} \eb_{\calP(x)} - 2\sum_{\substack{x \in (s-b,s-1)\\ \calP(x) \text{ minimal }}} \eb_{\calP(x)} \\
&+ (\delta_{\calP(s) \succ \calP(s+1)} - \delta_{\calP(s) \prec \calP(s+1)}) \eb_{\calP(s)} + (\delta_{\calP(t) \succ \calP(t-1)} - \delta_{\calP(t) \prec \calP(t-1)}) \eb_{\calP(t)}\\
&+ (\delta_{\calP(s-1) \succ \calP(s-2)} - \delta_{\calP(s-1) \prec \calP(s-2)}) \eb_{\alpha} + (\delta_{\calP(s-b) \succ \calP(s-b-1)} - \delta_{\calP(s-b) \prec \calP(s-b-1)}) \eb_{\beta}\\
& + \delta_{\calP(s-b-1) \succ \calP(s-b-2)}\eb_{\epsilon_1} - (1-\delta_{\calP(t'+b+1) \succ \calP(t'+b+2)}) \eb_{\epsilon_2} + \gb_\gamma'
\end{split}\label{eqn:self-int-gvec}
\end{align}

where $\gb_\gamma'$ reflects the contributions of elements in $\calP[<s-b-1] \cup \calP[>t'+b+1]$.

Similarly, we have 
\begin{align*}
\gb_{\gamma_3} &= \sum_{\substack{x \in (s,t)\\ \calP(x) \text{ maximal }}} \eb_{\calP(x)} -  \sum_{\substack{x \in (s,t)\\ \calP(x) \text{ minimal }}} + 2\sum_{\substack{x \in (s-b,s-1)\\ \calP(x) \text{ maximal }}} \eb_{\calP(x)} - 2\sum_{\substack{x \in (s-b,s-1)\\ \calP(x) \text{ minimal }}} \eb_{\calP(x)} \\
& - \delta_{\calP(s) \prec \calP(s+1)} \eb_{\calP(s)} + \delta_{\calP(t) \succ \calP(t-1)} \eb_{\calP(t)}\\
&+ (\delta_{\calP(s-1) \succ \calP(s-2)} - \delta_{\calP(s-1) \prec \calP(s-2)}) \eb_{\alpha} + (\delta_{\calP(s-b) \succ \calP(s-b-1)} - \delta_{\calP(s-b) \prec \calP(s-b-1)}) \eb_{\beta}\\
& + \delta_{\calP(s-b-1) \succ \calP(s-b-2)}\eb_{\epsilon_1} - (1-\delta_{\calP(t'+b+1) \succ \calP(t'+b+2)}) \eb_{\epsilon_2} + \gb_\gamma'
\end{align*}\label{eq:g-vector}
and
\[
\mathbf{g}_{\gamma_4} = \delta_{\calP(s) \succ \calP(s+1)} \eb_{\calP(s)} - \delta_{\calP(t') \prec \calP(t'-1)}\eb_{\calP(t')} + \sum_{\substack{x \in (s,t)\\ \calP(x) \text{ maximal }}} \eb_{\calP(x)} -  \sum_{\substack{x \in (s,t)\\ \calP(x) \text{ minimal }}} \eb_{\calP(x)}
\]

so we have $\gb_\gamma = \gb_{\gamma_3} + \gb_{\gamma_4}$. 

From Lemma \ref{lem:y-hat}, the first three lines of Equation \ref{eq:g-vector} are $\deg_x(\hat{Y}_R \hat{Y}_M)$. Since $\gb_{\gamma_{56}} = \delta_{\calP(s-b-1) \prec \calP(s-b-2)}\eb_{\epsilon_1} - (1-\delta_{\calP(t'+b+1) \succ \calP(t'+b+2)}) \eb_{\epsilon_2} + \gb_\gamma'$, we can see that $\gb_{\gamma} + \degx(\hat{Y}_R\hat{Y}_M) = \gb_{\gamma_{56}}$.

\textbf{Case ii) $s-b > 1$; $t'+b < n$; and, $M_1$ and $M_2$ are intersecting.} 

Since our relation is now of the form $x_\gamma = x_{\gamma_3}x_{\gamma_4} - Yx_{\gamma_{56}}$, we construct a bijection $\Phi: J(\calP) \cup J(\calP_{56}) \to J(\calP_3) \times J(\calP_4)$ such that $\Phi$ restricted to $\calP$ preserves content, and given $I_{56} \in \calP_{56}$, $\cont(I_{56}) = \cont(\Phi(I_{56})) \cup R_1 \cup M_1$.

\begin{enumerate}
\item Let $A_1 \subseteq J(\calP)$ and the definition of $\Phi \vert_{A_1}$ be as in case i. 
\item Let $A_2 \subseteq J(\calP)$ be the set of $I_1 \in J(\calP)$ such that $R_1 \subseteq I_1$; $(R_2 \cup \{\alpha^2\}) \cap I_1 = \emptyset$; and, such that we do not have both $M_1 \subset I_1$ and $M_2 \cap I_1 = \emptyset$. We describe $\Phi\vert_{A_2}$. The set $R_1$ is sent to $\calP_{\gamma_4}$. If we have a switching position between $I_1\vert_{M_1}$ and $I_1 \vert_{M_2}$, then we map the elements of $M_1$ to $M_4$ and the elements of $M_2$ to $M_3$ until the first switching position where we switch, working from $\calP(s-b)$ to $\calP(s-1)$ and from $\calP(t'+b)$ to $\calP(t'+1)$. The only way we cannot have a switching position is when $(M_1 \cup M_2) \cap I_1 = \emptyset$, in which case the image is clear.  The image of $\Phi\vert_{A_2}$ is all $(I_3,I_4) \in J(\calP_{3}) \times J(\calP_{4})$ such that $I_4 = \calP_{4}$; $I_3 \cap R_3 = \emptyset$; and, we do not have both $M_4 \subseteq I_3$ and $M_3 \cap I_3 = \emptyset$.
\item Let $A_3 \subseteq J(\calP)$ be the set of $I_1 \in J(\calP)$ such that $(R_1 \cup M_1) \subseteq I_1$ and $(R_2 \cup M_2) \cap I_1 = \emptyset$. Here, we send $M_1$ to $M_4$ and $R_1$ to $\calP_{4}$. The image of $\Phi\vert_{A_3}$ is all $(I_3,I_4) \in J(\calP_{3}) \times J(\calP_{4})$ such that $I_4 = \calP_{4}$; $(M_3 \cup R_3) \cap I_3 = \emptyset$; $M_4 \subset I_3$; $\epsilon_1 \in I_3$; and, $\epsilon_2 \notin I_3$.
\item Finally, we construct a map $J(\calP_{56}) \to J(\calP_{3}) \times J(\calP_{4})$. Given $I_{56} \in J(\calP_{56})$, we send $I_{56}$ to $(I_{56} \cup M_4, \calP_{4})$. The image of this map is all $(I_3,I_4) \in J(\calP_{3}) \times J(\calP_{4})$ such that $I_4 = \calP_{4}$; $(M_3 \cup R_3) \cap I_3 = \emptyset$; $M_4 \subset I_3$;  and $\epsilon_1 \in I_3$ only if $\epsilon_2 \in I_3$.
\end{enumerate}

Each described map is bijective, with clear inverse from the described image. The combinatorics of the $\gb$-vector computation is similar to the non-intersecting case, with the roles of $\calP(s-b-1)$ and $\calP(t'+b+1)$ flipped. 

\textbf{Case iii) $s-b = 1$; $t'+b < n$; and, $s(\gamma)$ is plain.}

If $M_1$ and $M_2$ are non-intersecting, the computations are essentially the same as in case i. So we suppose that $M_1$ and $M_2$ are intersecting. Recall from the exposition beforehand that $a>b$ is the largest integer such that $\calP_\gamma(t'+b+1),\ldots,\calP_\gamma(t'+a)$ correspond to a set of arcs from $T$ incident to $s(\gamma)$.  For shorthand, in the diagram below we denote $\calP_\gamma(t'+i) = \lambda_i$; in particular, $\lambda_{b+1}$ corresponds to $\epsilon_2$ from the exposition above the Proposition, and $\lambda_{a+1}$ is not in this set of arcs with common endpoint $s(\gamma)$. It is possible that $\lambda_{a+1}$ does not exist or is part of a loop. We have a diagram of $\calP$ on the left, $\calP_{3}$ in the middle, and $\calP_{56}$ on the right; $\calP_{4}$ looks the same in all our cases.

\begin{center}
\begin{tabular}{cc} 
      \begin{tikzpicture}[scale = 0.67]
    \node(m1) at (1,1.5){$\boxed{M_1}$};
    \node(r1) at (2,0){$\boxed{R_1}$};
    \node(r2) at (3,1.5){$\boxed{R_2}$};
    \node(m2) at (4,0){$\boxed{M_2}$};
    \node(q1) at (5,1.5){$\lambda_{b+1}$};
    \node(ddots) at (6,0){$\ddots$};
    \node(qk) at (7,-1){$\lambda_{a}$};
    \node[](next) at (8,0){$\lambda_{a+1}$};
    \node[] at (9,0){$\cdots$};
    \draw (m1) -- (r1);
    \draw(r1) -- (r2);
    \draw (r2) -- (m2);
    \draw (m2) -- (q1);
    \draw (q1) -- (ddots);
    \draw (ddots) -- (qk);
    \draw (qk) -- (next);
   \end{tikzpicture} & 
      \begin{tikzpicture}[scale = 0.67]
    \node(m3) at (1,1.5){$\boxed{M_3}$};
    \node(r3) at (2,0){$\boxed{R_3}$};
    \node(m4) at (3,-1.5){$\boxed{M_4}$};
    \node(q1) at (4,0){$\lambda_{b+1}$};
    \node(ddots) at (5,-1.5){$\ddots$};
    \node(qk) at (6,-2.5){$\lambda_{a}$};
    \node[](next) at (7,-1.5){$\lambda_{a+1}$};
    \node[] at (8,-1){$\cdots$};
    \draw (m3) -- (r3);
    \draw(r3) -- (m4);
    \draw (m4) -- (q1);
    \draw (q1) -- (ddots);
    \draw (ddots) -- (qk);
    \draw (qk) -- (next);
    \node[] at (11.5,-1){$\lambda_{a+1}$};
    \node[] at (12.5,-1){$\cdots$};
   \end{tikzpicture}
\end{tabular}
\end{center}

\textbf{Subcase) $t'+a<n$}

In this case, $\lambda_{a+1}$ is in $\calP_\gamma^0$.
 We use the same map from $J(\calP_1)$ to $J(\calP_{\gamma_3}) \times J(\calP_{\gamma_4})$ as in the general case for $M_1$ and $M_2$ intersecting. The cokernel of the map in this case consists of all $(I_3,I_4) \in J(\calP_{3}) \times J(\calP_{4})$ such that $(M_3 \cup R_3) \cap I_3 = \emptyset$; $I_4 = \calP_4$; and $(M_4 \cup \calP_1[t'+b+1, t'+a]) \subseteq I_3$. These sets are in bijection with $J(\calP_{56})$, where we send $I_{56}$ to $(I_{56} \cup (M_4 \cup \calP_1[t'+b+1, t'+a]), \calP_{4})$.

 We check the $\gb$-vectors in this case. It is clear that $\mathbf{g}_\gamma = \mathbf{g}_{\gamma_3} + \mathbf{g}_{\gamma_4}$ as in the general case, so we check only that $\gb_{\gamma} + \degx(\hat{Y}_R\hat{Y}_M\prod_{i=b+1}^{a}\hat{y}_{\lambda_i}) = \gb_{\gamma_{56}}$.

 The expression for $\gb_\gamma$ can be derived by taking Equation \ref{eq:g-vector} and replacing the coefficient of $\eb_\beta$ with $-2\delta_{\calP(1) \prec \calP(2)}$, replacing the coefficient of $\eb_{\epsilon_2}$ with 1, and deleting the term $\eb_{\epsilon_1}$. We can then replace $\gb_\gamma'$ with $-\eb_{\lambda_a} + \delta_{\calP(t'+a+1) \succ \calP(t'+a+2)} \eb_{\lambda_{a+1}} + \gb_\gamma''$ where $\gb_\gamma''$ reflects the contributions of elements in $\calP[>t'+a+1]$. 

 From Corollary \ref{cor:y-hat_spokes}, we compute $\degx(\prod_{i=b+1}^a \hat{y}_{\lambda_i}) = -\eb_{\epsilon_2} + \eb_{\lambda_a} - \eb_{\lambda_{a+1}} +\eb_\beta -\eb_{\epsilon_1} + \eb_{\zeta}$ where $\zeta$ is the counterclockwise neighbor of $\lambda_a$ in the triangle which is also bordered by $\lambda_{a+1}$ and $\epsilon_1$ is as in the general case.  Now we have  $\gb_{\gamma_{56}}=  \eb_\zeta + (-1 + \delta_{\calP(t'+a+1) \succ \calP(t'+a+2)}) \eb_{\lambda_{a+1}} + \gb_\gamma''$ and one can check $\gb_\gamma + \degx(\hat{Y}_R \hat{Y}_M \prod_{i=b+1}^a \hat{y}_{\lambda_i}) = \gb_{\gamma_{56}}$.

\textbf{Subcase) $t'+a = n$ and $t(\gamma)$ is plain.}

If we use the same map as in the previous case, the cokernel has only one element. The poset $J(\calP_{56})$ is $\emptyset$ with decoration  $\{\zeta\}$. So we map the unique element of $J(\calP_{56})$ to the unique element in the cokernel of $\Phi$. The $\gb$-vector computation is very similar to the previous. 

\textbf{Subcase) $t'+a = n$ and $t(\gamma)$ is notched.}

Let $\sigma_1,\ldots,\sigma_m$ be the spokes at $t(\gamma)$, such that $\sigma_1$ and $\sigma_m$ bound the last triangle $\gamma$ passes through and $\sigma_1$ is the counterclockwise neighbor of $\lambda_a$ in the last triangle $\gamma$ passes through. Then, $\calP_1$ and $\calP_{3}$ are as below. By our chronological ordering, $\calP[t'+a+1] = \sigma_1$. Since $\gamma_{56} = \sigma_1^{(t(\gamma))}$, we immediately know that the poset $\calP_{56}$ is the chain $\sigma_2 \prec \sigma_3 \prec \cdots \prec \sigma_m$. 

\begin{center}
\begin{tabular}{cc}
      \begin{tikzpicture}[scale = 0.7]
    \node(m1) at (1,1.5){$\boxed{M_1}$};
    \node(r1) at (2,0){$\boxed{R_1}$};
    \node(r2) at (3,1.5){$\boxed{R_2}$};
    \node(m2) at (4,0){$\boxed{M_2}$};
    \node(q1) at (5,1.5){$\lambda_{b+1}$};
    \node(ddots) at (6,0){$\ddots$};
    \node(qk) at (7,-1){$\lambda_{a}$};
    \node(z) at (11,-2){$\sigma_1$};
    \node(sq) at (8.5,0.5){$\sigma_m$};
    \node(s1) at (10,-1){$\sigma_2$};
    \node[xshift = -3pt](ddots2) at (9.5,0){$\ddots$};
    \draw (m1) -- (r1);
    \draw(r1) -- (r2);
    \draw (r2) -- (m2);
    \draw (m2) -- (q1);
    \draw (q1) -- (ddots);
    \draw (ddots) -- (qk);
    \draw (qk) -- (z);
    \draw(qk) -- (sq);
    \draw (sq) -- (ddots2);
    \draw (ddots2) -- (s1);
    \draw(s1) -- (z);
   \end{tikzpicture}&
      \begin{tikzpicture}[scale = 0.7]
    \node(r1) at (2,3){$\boxed{M_3}$};
    \node(r2) at (3,1.5){$\boxed{R_3}$};
    \node(m2) at (4,0){$\boxed{M_4}$};
    \node(q1) at (5,1){$\lambda_{b+1}$};
    \node(ddots) at (6,0){$\ddots$};
    \node(qk) at (7,-1){$\lambda_{a}$};
    \node(z) at (11,-2){$\sigma_1$};
    \node(sq) at (8.5,0.5){$\sigma_m$};
    \node(s1) at (10,-1){$\sigma_2$};
    \node[xshift = -3pt](ddots2) at (9.5,0){$\ddots$};
    \draw(r1) -- (r2);
    \draw (r2) -- (m2);
    \draw (m2) -- (q1);
    \draw (q1) -- (ddots);
    \draw (ddots) -- (qk);
    \draw (qk) -- (z);
    \draw(qk) -- (sq);
    \draw (sq) -- (ddots2);
    \draw (ddots2) -- (s1);
    \draw(s1) -- (z);
   \end{tikzpicture}
\end{tabular}   
\end{center}

In this case, when we use the same map $J(\calP) \to J(\calP_{3}) \times J(\calP_{4})$ as in the other subcases of this case. The images of the $\sigma_i$ are clear. The cokernel consists of sets $(I_3,I_4)$ where $(M_3 \cup R_3) \cap I_3 = \emptyset$; $I_4 = \calP_4$; and $M_4 \cup \{\lambda_{b+1},\ldots, \lambda_a,\sigma_1\} \subseteq I_3$. This set is in bijection with order ideals of $\calP_{56}$.

We check that $\gb_\gamma+ \degx(\hat{Y}_R\hat{Y}_M \hat{y}_{\sigma_1}\prod_{i=b+1}^{a} \hat{y}_{\lambda_i}) = \gb_{\gamma_{56}}$, noting that the equality $\gb_\gamma = \gb_{\gamma_3} + \gb_{\gamma_4}$ is straightforward. The monomial $\hat{Y}_R\hat{Y}_M$ is the same as before, and $ \degx(\hat{y}_{\sigma_1}\prod_{i=b+1}^a \hat{y}_{\lambda_i}) = \eb_{\sigma_1} - \eb_{\lambda_{b+1}} +\eb_\beta - \eb_{\epsilon_1} - \eb_{\sigma_2} + \eb_{[\sigma_1]}$ where $[\sigma_1]$ is the third arc in the triangle formed by $\sigma_1$ and $\sigma_2$. The vector $\gb_\gamma$ is the result of taking Equation \ref{eq:g-vector}, replacing the coefficient of $\eb_\beta$ with $-2\delta_{\calP(1) \prec \calP(2)}$ and the coefficient of $\eb_{\epsilon_2}$ with 1, and adding $-\eb_{\epsilon_1} - \gb_{\gamma'}- \eb_{\sigma_1}$.

Therefore, we have  
\[
\gb_{\gamma} +\degx(\hat{Y}_R) + \degx(\hat{Y}_M) + \degx(\hat{y}_{\sigma_1}\prod_{i=b+1}^a \hat{y}_{\lambda_i})= \eb_{[\sigma_1]} - \eb_{\sigma_2} = \gb_{\gamma_{56}}.
\]

\textbf{Case iv) $s-b = 1$; $t'+b < n$; $s(\gamma)$ is notched; and, $M_1$ and $M_2$ are non-intersecting.}

Let $\epsilon_1,\ldots,\epsilon_h$ be the set of spokes at $s(\gamma)$ ordered in counterclockwise direction, where $\epsilon_1$ and $\epsilon_2$ are as before. The chain  $\calP[t'+b+1,t'+a]$ is of the form  $\epsilon_2^{(1)} \prec \epsilon_3^{(1)} \prec \cdots \prec \epsilon_m^{(1)} \prec \epsilon_1^{(1)} \prec \epsilon_2^{(2)} \prec \cdots \prec \epsilon_k^{(w)}$. We first assume $t'+a < d$, so that the element $\lambda_{a+1}$ exists in $\calP^0$. For this more general case, we draw $\calP$ and $\calP_{56}$ below. The poset $\calP_3$ is obtained by replacing the set $M_1 \cup R_1 \cup R_2 \cup M_2$ with $M_3 \cup R_3 \cup M_4$, and $\calP_{\gamma_4}$ is the same as always.

\begin{center}
\begin{tabular}{cc}
\begin{tikzpicture}[scale = 0.7]
    \node(e1) at (-3,2){$\epsilon_1$};
    \node(eq) at (-2,1){$\epsilon_h$};
    \node(ddots0) at (-1,0){$\ddots$};
    \node(e2) at (0,-1){$\epsilon_2$};
    \node(m1) at (1,1){$\boxed{M_1}$};
    \node(r1) at (2,-0.5){$\boxed{R_1}$};
    \node(r2) at (3,1){$\boxed{R_2}$};
    \node(m2) at (4,-0.5){$\boxed{M_2}$}; 
    \node(e21) at (5,-2){$\epsilon_2^{(1)}$};
    \node(ddots2) at (6,0){$\iddots$};
    \node(ek) at (7,1){$\epsilon_k^{(w)}$};
    \node(ptc) at (8.5,0){$\lambda_{a+1}$};
    \node(moredots) at (9.5,0){$\cdots$};
    \draw (m1) -- (e1);
    \draw (e1) -- (eq);
    \draw (eq) -- (ddots0);
    \draw (ddots0) -- (e2);
    \draw (m1) -- (e2);
    \draw (m1) -- (r1);
    \draw(r1) -- (r2);
    \draw (r2) -- (m2);
    \draw(m2) -- (e21);
    \draw (e21) -- (ddots2);
    \draw (ddots2) -- (ek);
    \draw (ek) -- (ptc);
\end{tikzpicture}
&
\begin{tikzpicture}
\node(ptc) at (0.5,0){$\lambda_{a+1}$};
\node(ek1) at (-1,-1){$\epsilon_{k+1}$};
\node(iddots) at (-2,0){$\ddots$};
\node(ek) at (-3,1){$\epsilon_k$};
\node[] at (1.25,0){$\cdots$};
\draw(ptc) -- (ek1);
\draw(ptc) -- (ek);
\draw (ek) -- (iddots);
\draw (iddots) -- (ek1);
\end{tikzpicture}
\end{tabular}
\end{center}

\textbf{Subcase) If $t'+a = n$, then $t(\gamma)$ is plain.} 

In the following, if $t'+a = n$, we replace mentions of $\epsilon_k$ with $\epsilon_{k+1}$ and ignore any condition involving $\lambda_{a+1}$. Given $I_1 \in J(\calP)$, let $\epsilon_y^{(z)}$ and $\epsilon_x$ be the maximal elements of $I_1$ on the chain of spokes and loop respectively.

\begin{enumerate}
\item Let $A_1$ be the set of all $I_1 \in J(\calP)$ such that we do not have $(M_1 \cup R_1) \subseteq I_1$ and  $(M_2 \cup R_2) \cap I_1 = \emptyset$. Given $I_1 \in A_1$, we define $\Phi(I_1)$ as in case i.  The set $\Phi(A_1)$ is all tuples $(I_3,I_4) \in J(\calP_3) \times J(\calP_4)$ such that, if $R_3 \cap I_3 = \emptyset$ and $I_4 = \calP_4$, then we do not have $M_4 \subseteq I_3$ and $M_3 \cap I_3 = \emptyset$.
\item Let $A_2$ denote the set of $I_1 \in J(\calP)$ such that $(M_1 \cup R_1) \subseteq I_1$; $(M_2 \cup R_2) \cap I_1 = \emptyset$; and, if $\epsilon_y^{(z)} \succeq  \epsilon_1^{(w-1)}$, then $\lambda_{a+1} \in I_1$ and $\epsilon_x \preceq \epsilon_{k}$. If $w = 1$, we set $\langle \epsilon_1^{(w-1)} \rangle = \emptyset$.  Then, we define 
\[
\Phi(M_1 \cup R_1 \cup \langle \epsilon_x \rangle \cup \langle \epsilon_y^{(z)} \rangle) = \begin{cases}
(M_4 \cup \langle \epsilon_y \rangle \cup \langle \epsilon_x^{(z)} \rangle, \calP_4) & y \neq 1\\
(M_4 \cup \langle \epsilon_x^{(z+1)} \rangle, \calP_4)  & y = 1.\\
\end{cases}
\]
The image of this subset of $J(\calP)$ is pairs $(I_3,I_4)$ such that $M_4 \subseteq I_3$; $R_3 \cap I_3 = \emptyset$; $I_4 = \calP_4$; and, if $\epsilon_2^{(w)} \in I_3$, then $\epsilon_x \preceq \epsilon_k$, with equality  only if $\lambda_{a+1} \in I_3$, where here $\epsilon_x$ and $\epsilon_y^{(z)}$ now signify the maximal elements on the loop and chain of spokes respectively in $I_3$.
\item Let $A_3$ be the set of $I_1 \in J(\calP)$ such that $(M_1 \cup R_1) \subseteq I_1$; $(M_2 \cup R_2) \cap I_1 = \emptyset$; $\epsilon_1 \succ \epsilon_x \succeq \epsilon_k$, where the latter is strict inequality if $\lambda_{a+1} \in I_1$; and, $\epsilon_y^{(z)} \succeq \epsilon_2^{(w)}$. 
Then, we define $\Phi(I_1) = (M_4 \cup \langle \epsilon_x \rangle \cup \langle \epsilon_{y}^{(z)} \rangle, \calP_{\gamma_4})$.  The image of $A_3$ is pairs $(I_3,I_4)$ such that $M_4 \in I_3$; $R_3 \cap I_3 = \emptyset$; $I_4 = \calP_4$; $\epsilon_y^{(z)} \succeq \epsilon_1^{(w-1)} \in I_3$; and, $\epsilon_x \succeq \epsilon_k \in I_3$, with strict inequality  if $\lambda_{a+1} \in I_3$. 
\item Let $B \subseteq J(\calP)$ be the set of all order ideals $I_1$ such that $(M_1 \cup R_1) \subseteq I_1$; $(M_2 \cup R_2) \cap I_1 = \emptyset$; $\epsilon_x \succeq \epsilon_k$, which is strict if $\lambda_{a+1} \in I_1$; and, $\epsilon_y^{(z)} \succeq \epsilon_1^{(w-1)}$, which is only strict if $\epsilon_x = \epsilon_1$.  Given $I_1 \in B$, the set $I_1 \backslash (M_1 \cup R_1 \cup \langle \epsilon_k \rangle \cup \langle \epsilon_1^{(w-1)}\rangle)$ naturally forms an order ideal of $\calP_{56}$, and we see this map is bijective. 
\end{enumerate}

In the special case where $k = 1$, we remove the conditions involving $\epsilon_{k+1}$.

\textbf{Subcase) $t'+a = n$ and $t(\gamma)$ is notched.}

The partition and map for this case can be derived from the $t'+a < n$ case in a parallel way to how we compared such cases in previous proofs, such as the proof of Proposition \ref{prop:IntArcFromTSingleTag}, so we do not provide the details here.

\textbf{Case v) $s-b = 1$; $t'+b < n$; $s(\gamma)$ is notched; and, $M_1$ and $M_2$ are intersecting.}

Now, we assume $M_1$ and $M_2$ are intersecting. Recall we change our indexing of the $\epsilon_i$ in this case, as depicted below in $\calP_\gamma$. The chain of $\epsilon_y^{(z)}$ now has the form $\epsilon_2^{(w)} \succ \epsilon_h^{(w)} \succ \cdots \succ \epsilon_3^{(w)} \succ \epsilon_1^{(w)} \succ \epsilon_2^{(w-1)} \succ \cdots \succ \epsilon_k^{(1)}$, changing superscripts between $\epsilon_1$ and $\epsilon_2$. We draw below $\calP$ for the general case $t' + c < n$. 

\begin{center}
\begin{tikzpicture}[scale = 0.7]
    \node(e1) at (-3,2){$\epsilon_2$};
    \node(eq) at (-2,1){$\epsilon_h$};
    \node(ddots0) at (-1,0){$\ddots$};
    \node(e2) at (0,-1){$\epsilon_1$};
    \node(m1) at (1,1){$\boxed{M_1}$};
    \node(r1) at (2,-0.5){$\boxed{R_1}$};
    \node(r2) at (3,1){$\boxed{R_2}$};
    \node(m2) at (4,-0.5){$\boxed{M_2}$}; 
    \node(e21) at (5,1){$\epsilon_2^{(w)}$};
    \node(ddots2) at (6,0){$\ddots$};
    \node(ek) at (7,-1){$\epsilon_k^{(1)}$};
    \node(ptc) at (8,0){$\lambda_{a+1}$};
    \draw (m1) -- (e1);
    \draw (e1) -- (eq);
    \draw (eq) -- (ddots0);
    \draw (ddots0) -- (e2);
    \draw (m1) -- (e2);
    \draw (m1) -- (r1);
    \draw(r1) -- (r2);
    \draw (r2) -- (m2);
    \draw(m2) -- (e21);
    \draw (ek) -- (ptc);
    \draw (e21) -- (ddots2);
    \draw(ddots2) -- (ek);
\end{tikzpicture}
\end{center}

\textbf{Subcase) If $t'+a = n$, then $t(\gamma)$ is plain.} 

If $t'+a = n$, we ignore all conditions involving $\lambda_{a+1}$. 

We define $A_1$ and $A_2$ and the bijection $\Phi$ restricted to these sets as in case ii.  Recall $A_1 \cup A_2$ consists of all $I_1 \in J(\calP)$ except those such that $(R_1 \cup M_1) \subseteq I_1$ and $(R_2 \cup M_2) \cap I_1 = \emptyset$, and $\Phi(A_1 \cup A_2)$ consists of all pairs $(I_3,I_4)$ except those such that $(R_3 \cup M_3) \cap I_3 = \emptyset$; $M_4 \subseteq I_3$; and, $I_4 = \calP_4$.

Let $A_3 \subseteq J(\calP)$ denote all sets $I_1$ such that $(M_1 \cup R_1) \subseteq I_1$; $(M_2 \cup R_2) \cap I_1 = \emptyset$; and, $\epsilon_y^{(z)} \succeq \epsilon_2^{(1)}$. The map $\Phi$ is defined analogously to the map $\Phi \vert_{A_2}$ in case iv, but where the cases are now $y \neq 2$ and $y = 2$. The set $\Phi(A_3)$ is all pairs $(I_3,I_4)$ such that $(R_3 \cup M_3) \cap I_3 = \emptyset$; $M_4 \subseteq I_3$; $I_4 = \calP_4$; and, $\epsilon_y^{(z)} \succ \epsilon_2^{(1)}$. 

Let $A_4 \subseteq J(\calP)$ denote all sets $I_1$ such that $(M_1 \cup R_1) \subseteq I_1$; $(M_2 \cup R_2) \cap I_1 = \emptyset$; and, $\epsilon_y^{(z)} \prec \epsilon_2^{(1)}$. The map $\Phi$ is defined by \[
\Phi(M_1 \cup R_1 \cup \langle \epsilon_x \rangle \cup \langle \epsilon_y^{(1)} \rangle) = \begin{cases}
(M_4 \cup \langle \epsilon_x \rangle \cup \langle \epsilon_y^{(1)} \rangle, \calP_4) & x \neq 2 \\
(M_4 \cup \langle \epsilon_y \rangle \cup \langle \epsilon_2^{(1)} \rangle, \calP_4) & x = 2
\end{cases}
\]
where we set $\langle \epsilon_y \rangle = \langle \epsilon_{k-1} \rangle$ when $\epsilon_y^{(z)}$ does not exist. The set $\Phi(A_4)$ is all pairs $(I_3,I_4)$ such that $(R_3 \cup M_3) \cap I_3 = \emptyset$; $M_4 \subseteq I_3$; $I_4 = \calP_4$; and, $\epsilon_y^{(z)} \preceq \epsilon_2^{(1)}$, with equality only if $\epsilon_x \succeq \epsilon_{k-1}$, and moreover if $\epsilon_y^{(z)}= \epsilon_2^{(1)}$ and $\lambda_{a+1} \in I_3$, then $\epsilon_x \succ \epsilon_{k-1}$. 

We have that $A_1 \cup A_2 \cup A_3 \cup A_4 = J(\calP_1)$, and the cokernel of $\Phi: J(\calP_1) \to J(\calP_{3}) \times J(\calP_{4})$ consists of pairs $(I_3,I_4)$ with  $I_4 = \calP_{4}$; $M_4 \subseteq I_3$; $(M_3 \cup R_3) \cap I_3 = \emptyset$; $\epsilon_y^{(z)} \preceq \epsilon_2^{(1)}$ with equality only if $\epsilon_1 \in I_3$; and, $\epsilon_x \preceq \epsilon_{k-1}$, with equality only if $\lambda_{a+1} \in I_3$. These sets are in natural bijection with $J(\calP_{56})$. 

In the special case $k = 1$, we interpret statements ``$\epsilon_x \preceq \epsilon_{k-1}$'' regarding $\calP_3$ to mean  ``$\epsilon_x$ does not exist''.

\textbf{Subcase) $t'+a = n$ and $t(\gamma)$ is notched.}

We can use the same partitioning $A_1 \cup A_2 \cup A_3 \cup A_4 = J(\calP)$ and same map $\Phi: J(\calP) \to J(\calP_3) \times J(\calP_4)$. The cokernel of this map has a slightly different description, but it will again correspond bijectively to elements of $J(\calP_{56})$.

The $\gb$-vector computations for all of case v follows quickly from previous cases. 

\textbf{Case vi) $s-b > 1$ and  $t'+b = n$}.

We can switch the chronological ordering on $\calP$ to return to the $s-b = 1, t'+b < n$ case. 

\textbf{Case vii) $s-b = 1$ and $t'+b = n$}.

If $s-b=1$ and $t'+b = n$, then $\gamma_{56}$ is contractible. If $s(\gamma)$ and $t(\gamma)$ are both plain, then the ideas from case i give a direct bijection between $J(\calP)$ and $J(\calP_{3}) \times J(\calP_{4})$. If $s(\gamma)$ and $t(\gamma)$ are both notched, then we construct a similar bijection; in the special case where $(R_1 \cup M_1) \subseteq I_1, (R_2 \cup M_2) \cap I_1 = \emptyset$, it will be necessary to send the portion of $I_1$ at the beginning loop and send it to the end loop in $\calP_{3}$ and similarly take the portion of $I_1$ on the end loop and send it to the first loop in $\calP_{3}$. 

If the taggings at $s(\gamma)$ and $t(\gamma)$ do not agree, then $\gamma_{56}$ is a contractible singly-notched monogon. The expression $x_{\gamma_{56}}$ is defined in Definition \ref{def:ContractibleSinglyTaggedLoop}. Since $M_1$ and $M_2$ cannot be crossing in this case, we always have $x_\gamma = x_{\gamma_3}x_{\gamma_4} + Y_RY_M x_{\gamma_{56}} = x_{\gamma_3}x_{\gamma_4} + Y_RY_M(\pm 1 \mp Y_{s(\gamma)})$.

Suppose $s(\gamma)$ is notched and $t(\gamma)$ is plain.  Let the spokes at $s(\gamma)$ again be $\epsilon_1,\epsilon_2,\ldots,\epsilon_h$. Following the same ideas as the general case, we see there is a bijection between $J(\calP_1) \backslash \{M_1 \cup R_1 \cup \{\epsilon_1,\ldots,\epsilon_h\}\}$ and $(J(\calP_{\gamma_3}) \times J(\calP_{\gamma_4}))\backslash \{(M_4,\calP_{\gamma_4})\}\}$, but there is no possible image for $M_1 \cup R_1 \cup \{\epsilon_1,\ldots,\epsilon_h\}$, and there is no preimage to $(M_4,\calP_{4})$  Thus, we conclude $x_\gamma + Y_RY_M = x_{\gamma_3}x_{\gamma_4} + Y_RY_MY_{s(\gamma)}$, implying $x_\gamma = x_{\gamma_3}x_{\gamma_4} + Y_RY_M x_{\gamma_{56}} $. The case where $s(\gamma)$ is plain and $t(\gamma)$ is notched follows from a similar, dual argument.
\end{proof}

\section{Closed Curves}
\label{sec:closed_curves}

We consider intersections involving a closed curve. The presence of punctures does not affect the intersection of two closed curves, so these can be computed using methods from \cite{snakegraphcalculus3} or \cite{musiker2013}.

Intersections between a closed curve $\xi$ and an arc $\gamma$  can be characterized into Type 0 and Type 1, as in Section \ref{sec:transverse_crossings} based on whether there exists a representative from the isotopy class of $\gamma$ such that the point of intersection lies in the first or last triangle $\gamma$ passes through. There is no analogue for a Type 2 crossing since $\xi$ does not have a well-defined first or last triangle it passes through. 

When the intersection is of Type 0, we orient $\xi$ so that in the region corresponding to the crossing overlap, the orientation of $\gamma$ and $\xi$ agree. When the intersection is of Type 1, suppose the point of intersection is in the first triangle $\gamma$ passes through, and orient $\xi$ so that near the point of intersection, $s(\gamma)$ lies to its right. Then, we label the resolution of the intersection between $\xi$ and $\gamma$ by $\{\gamma_{34}\} \cup \{\gamma_{56}\}$ where $\gamma_{34} = \gamma \circ \xi \circ \gamma$ and $\gamma_{56} = \gamma \circ \xi^{-1} \circ \gamma$. This closely resembles the resolution $\{\gamma_3,\gamma_4\} \cup \{\gamma_5,\gamma_6\}$ for Type 0 and 1 intersections of a pair of arcs respectively.

\begin{prop}
Let $\xi$ be a closed curve and let $\gamma$ be an arc.
\begin{enumerate}
    \item Suppose $\xi$ and $\gamma$ intersect in a Type 0 crossing and let $R_1 = \calP_{\xi}[s,t]$ and $R_2 = \calP_\gamma[s',t']$ be the corresponding crossing overlap in the posets. Then, \[
    x_\xi x_\gamma = x_{\gamma_{34}} + Y_R Y_{S_s} Y_{S_t} x_{\gamma_{56}}
    \]
    where $Y_R = \prod_{i=s}^t y_{P_\xi(i)}$,\[
    Y_{S_s} = \begin{cases}
    \displaystyle \prod_{\substack{\calP_\xi(i) \succeq \calP_\xi(s-1)\\ i \leq s-1}} & s' = 1; s(\gamma) \text{ is notched; and, } \calP_\gamma[s',t'] \text{ is on bottom}\\
    \displaystyle \prod_{\substack{\calP_\xi(i) \preceq \calP_\xi(s-1)\\ i \leq s-1}} & s' = 1; s(\gamma) \text{ is plain; and, } \calP_\gamma[s',t'] \text{ is on top}\\
    1 & \text{otherwise}
    \end{cases}
    \]
    and for $n = \vert \calP_\gamma^0 \vert$,
    \[
    Y_{S_t} = \begin{cases}
    \displaystyle \prod_{\substack{\calP_\xi(i) \succeq \calP_\xi(t+1)\\ i \geq t+1}} & t' = d; t(\gamma) \text{ is notched; and, } \calP_\gamma[s',t'] \text{ is on bottom}\\
    \displaystyle \prod_{\substack{\calP_\xi(i) \preceq \calP_\xi(t+1)\\ i \geq t+1}} & t' = d; t(\gamma) \text{ is plain; and, } \calP_\gamma[s',t'] \text{ is on top}\\
    1 & \text{otherwise}.
    \end{cases}
    \]
    \item Suppose $\xi$ and $\gamma$ intersect in a Type 1 crossing near $s(\gamma)$. Let $i$ be such that this point of intersection occurs between the crossings between $\xi$ and $T$ corresponding to $\calP_\xi(i)$ and $\calP_\xi(i+1)$. Then,\[
    x_\xi x_\gamma = Y_{\preceq \calP_{\xi(i)}} x_{\gamma_{34}} + x_{\gamma_{56}}
    \]
    and
    \[
    x_\xi x_{\gamma^{(s(\gamma))}} = x_{\gamma_{34}} +  Y_{\succeq \calP_{\xi(i+1)}} x_{\gamma_{56}}
    \]
where $Y_{\preceq \calP_{\xi(i)}} = \prod_{\calP_\xi(j) \preceq \calP_\xi(i)} y_{\calP_\xi(j)}$ and $Y_{\succeq \calP_{\xi(i+1)}} = \prod_{\calP_\xi(j) \succeq \calP_\xi(i+1)} y_{\calP_\xi(j)}$.
\end{enumerate}
\end{prop}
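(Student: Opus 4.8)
The plan is to treat this proposition as essentially a direct application of the machinery already developed for two intersecting arcs, recognizing that a closed curve $\xi$ interacts with an arc $\gamma$ in the same two local ways that a second arc would. For Part (1), the Type 0 crossing between $\xi$ and $\gamma$ produces a crossing overlap of $\calP_\xi$ and $\calP_\gamma$ in exactly the sense of Definition~\ref{def:Overlaps}, with $R_1 = \calP_\xi[s,t]$ on one side and $R_2 = \calP_\gamma[s',t']$ on the other. First I would write $x_\xi x_\gamma = \mathbf{x}^{\gb_\xi + \gb_\gamma}\sum_{(I_1,I_2) \in J(\calP_\xi)\times J(\calP_\gamma)} wt(I_1)wt(I_2)$ via Proposition~\ref{prop:PosetExpansion}, which applies to closed curves as well as arcs. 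Then I would run the same partition $J(\calP_\xi) \times J(\calP_\gamma) = A \cup B$ and the same bijection $\Phi$ used in the proof of Proposition~\ref{prop:GraftingType0}: the key combinatorial content (switching positions, sending $R_1$ to $R_3$, $R_2$ to $R_4$, and so on) does not depend on which of $\calP_\xi, \calP_\gamma$ happens to be a circular fence poset. The resulting posets $\calP_{\gamma_{34}}$ and $\calP_{\gamma_{56}}$ are formed by splicing: splicing a circular fence poset with a linear one at the overlap again yields a linear fence poset in one case and a circular one in the other, and one checks the local cover relations at the splice points are as claimed. The $y$-monomial $Y_R$ records the overlap region, and $Y_{S_s}, Y_{S_t}$ arise exactly as the sweep contributions $Y_{S_s}, Y_{S_t}$ did before, governed by whether an endpoint of $\gamma$ sits at the extreme of its poset, the tagging there, and whether $\calP_\gamma$'s overlap is on top or bottom. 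The $\gb$-vector identities $\gb_\xi + \gb_\gamma = \gb_{\gamma_{34}}$ (interpreting the left as a sum and the product $x_{\gamma_{34}}$ as a single curve) and $\gb_\xi + \gb_\gamma + \degx(\hat Y_R \hat Y_{S_s}\hat Y_{S_t}) = \gb_{\gamma_{56}}$ follow from Lemma~\ref{lem:y-hat}, applied to the closed curve $\xi$ using the band-graph version noted in that lemma's proof.

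For Part (2), the Type 1 crossing is handled by transporting the proof of Proposition~\ref{prop:GraftingType1}. Here $\xi$ plays the role that $\gamma_1$ (the arc that winds around the puncture) played there, except that $\xi$ has no endpoints, so one uses the circular fence poset $\calP_\xi$ in place of the loop fence poset; the point of intersection sitting between the crossings labeled $\calP_\xi(i)$ and $\calP_\xi(i+1)$ determines where we cut. The arc $\gamma$ plays the role of $\gamma_2$: when $\gamma$ is plain at $s(\gamma)$ the $y$-monomial $Y_{\preceq \calP_\xi(i)}$ attaches to the $x_{\gamma_{34}}$ term, and when $\gamma$ is notched at $s(\gamma)$ the monomial $Y_{\succeq \calP_\xi(i+1)}$ attaches to $x_{\gamma_{56}}$ instead — mirroring the two formulas in Proposition~\ref{prop:GraftingType1} and reflecting which resolution component sweeps past spokes. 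I would set up the partition $A_1 \cup A_2 \cup B_1 \cup B_2 \cup B_3$ (and the $r=1$ variant) exactly as in that proof, with the conventions for $\sigma_y^{(z)}$, $\sigma_x$, and the circular indexing carried over verbatim, then verify the weight-preserving property and the two $\gb$-vector equalities using Corollary~\ref{cor:y-hat_spokes}.

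The main obstacle I anticipate is bookkeeping rather than conceptual: making precise the statement ``$\calP_{\gamma_{34}}$ is a circular fence poset obtained by splicing'' and correctly tracking the cover relations at the two splice points when one of the posets is circular. In particular, in Part (1) the curve $\gamma_{34} = \gamma \circ \xi \circ \gamma$ traverses $\xi$ in a loop, so $\calP_{\gamma_{34}}$ should itself be a loop/band-type poset; I need to confirm that the same gluing rule ($\calP_\xi(t) \prec \calP_\gamma(t'+1)$ at one splice, and the dual at the other) produces a genuine circular fence poset and that its $\gb$-vector computation via Lemma~\ref{lem:y-hat} matches. There is also a minor subtlety in Part (2) that when $\xi$ is a closed curve there is no ``$\calP_1(a)$'' or ``$\calP_1(b)$'' distinguished endpoint-neighbor; instead the chain of spokes is embedded in the cyclic order of $\calP_\xi$, so the boundary conditions in the definitions of $A_1, A_2, B_i$ simplify (there is no ``$\calP_1(a) \in I_1$'' caveat), which I would note explicitly. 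Once these structural points are pinned down, the verification of weight-preservation and the $\gb$-vector identities is routine and parallels the cited propositions, so I would state that the remaining details are ``similar to the proof of Proposition~\ref{prop:GraftingType0}'' and ``similar to the proof of Proposition~\ref{prop:GraftingType1}'' respectively rather than reproducing them.
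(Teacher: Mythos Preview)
Your approach is correct and matches the paper's own proof, which simply says that Part 1 follows from the proof of Proposition~\ref{prop:GraftingType0} and Part 2 from Proposition~\ref{prop:GraftingType1}. One small correction to your anticipated obstacle: $\gamma_{34} = \gamma \circ \xi \circ \gamma$ is an \emph{arc} (it begins at $s(\gamma)$, winds once around $\xi$, and ends at $t(\gamma)$), so $\calP_{\gamma_{34}}$ is an ordinary (loop) fence poset, not a circular one; the two splice points attach the two halves of $\calP_\gamma$ to the cut-open circular poset $\calP_\xi$, yielding a single linear fence, and the bookkeeping you worried about is exactly the same as in the two-arc case.
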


The proof of part 1 follows from the proof of Proposition \ref{prop:GraftingType0} while part 2 follows from Proposition \ref{prop:GraftingType1}. By our assumption about the orientation of $\xi$, in part 2 $\calP_\xi(i) \prec \calP_\xi(i+1)$.

\section{Implications}
\label{sec:implication}

\subsection{Main Theorem}

We have now shown Theorem \ref{thm:main} by showing it is true for any possible pair of incompatible curves as well as for any self-incompatible curve. One can compare our list of cases with the list found in Section 8.4 of \cite{musiker2013bases} to show that all possible configurations are discussed. Moreover, in each case, one can see that the description of the $y$-monomial matches the more general description provided in the introduction. In the unpunctured case, our description matches that of \cite{musiker2013}.

\subsection{Bangles and Bracelets}

Having explicit skein relations for punctured surfaces has immediate corollaries regarding the analogues of the bracelet and bangles bases from \cite{musiker2013bases}. We first recall the definitions of \emph{bangles} and \emph{bracelets}.

\begin{definition}[Definition 3.16, \cite{musiker2013bases}]
    Given an essential loop $\xi$ on $(S,M)$, the \emph{bangle} $\bang{k}{\xi}$ is the union of $k$ loops that are isotopic to $\xi$, with no self-crossings.The \emph{bracelet} $\brac{k}{\xi}$ is constructed by concatenating $\xi$ exactly $k$ times, producing a closed loop with $k-1$ self-crossings.
\end{definition}

\begin{definition}[Definition 8.2 \cite{musiker2013bases}]
\begin{itemize}
    \item  A collection $C$ of tagged arcs and essential loops on $(S,M)$ is called \emph{$\mathcal{C}^{\circ}$-compatible} if its tagged arcs are pairwise compatible. Let $\mathcal{C}^{\circ}(S,M)$ to denote the set of all $\mathcal{C}^{\circ}$-compatible collections on $(S,M)$.
    \item A collection $C$ of tagged arcs and bracelets is \emph{$C$-compatible} if its tagged arcs are pairwise compatible; no two elements of $C$ cross each other except for self-crossings of bracelets; and, for any essential loop $\gamma$ in $(S,M)$, there exists at most one $k \geq 1$ such that $\textrm{Brac}_{k}\gamma$ lies in $C$ (and there is at most one copy of this bracelet in $C$).
\end{itemize}
\end{definition}

From sets of $\mathcal{C}^{\circ}$ and $\mathcal{C}$-compatible curves, we define two sets of monomials in the cluster algebra $\mathcal{A}(S,M)$ associated to $(S,M)$. The sets $\calBcirc$ and $\calB$ are then defined as
\begin{align*}
    \calBcirc &:= \left\{ \prod_{c \in C} x_c : C \in \mathcal{C}^{\circ}(S,M) \right\}, \\
    \calB &:= \left\{ \prod_{c \in C} x_c : C \in \mathcal{C}(S,M) \right\}. \\
\end{align*}
In the case of an unpunctured surface (in which case one omits the adjective ``tagged'' in the description), these two sets of monomials are shown to be bases of $\mathcal{A}(S,M)$ in Theorem 1.1 of \cite{musiker2013bases}. The basis $\calBcirc$ is called the bangles basis and $\calB$ is called the bracelet basis.

Having skein relations allows us to quickly make progress towards showing that these sets also provide bases for a cluster algebra from a punctured surface. The most immediate corollary shows that these sets span $\mathcal{A}(S,M)$.

\begin{cor}\label{cor:Span}
Given a possibly punctured surface $(S,M)$, the sets $\calBcirc$ and $\calB$ span $\mathcal{A}(S,M)$.
\end{cor}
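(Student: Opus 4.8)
The plan is to show that $\mathcal{A}(S,M)$ is spanned by the set of all cluster monomials together with the elements $x_\gamma$ for $\gamma$ a generalized arc or closed curve (and their products over compatible multicurves), and then to use the skein relations of Theorem~\ref{thm:main} to rewrite any such product in terms of $\mathcal{B}^\circ$ (respectively $\mathcal{B}$). The starting point is the standard fact (see \cite{musiker2013bases}, whose argument is insensitive to punctures once one has the tagged-arc formalism) that $\mathcal{A}(S,M)$ is spanned by products $\prod_{c\in C} x_c$ where $C$ ranges over arbitrary finite multisets of tagged arcs and closed curves on $(S,M)$; this follows because the cluster monomials already span and every cluster monomial is such a product. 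So it suffices to express each such product as a $\mathbb{Z}[y^{\pm}]$-linear (in fact $\mathbb{Z}_{\geq 0}[y]$-linear, but linearity is all we need for spanning) combination of elements of $\mathcal{B}^\circ$, respectively $\mathcal{B}$.

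First I would set up an induction on a suitable complexity measure of the multicurve $C$. For the bangles case $\mathcal{B}^\circ$, the obstruction to $C$ being $\mathcal{C}^\circ$-compatible is the presence of a pair of tagged arcs that are incompatible, or a tagged arc that is self-incompatible; closed curves and their repetitions are always allowed in a $\mathcal{C}^\circ$-compatible collection, so no further reduction among loops is needed. The complexity measure should be, for instance, the total number of crossings (counting incompatibilities at punctures as in Definition~\ref{def:CompatibleTaggedArcs}, and self-crossings) among the tagged-arc part of $C$, refined lexicographically by something like total crossing-with-$T$ weight to guarantee well-foundedness. If $C$ has a pair of incompatible arcs $\gamma_1,\gamma_2$ (or a self-incompatible arc $\gamma$), apply the appropriate part of Theorem~\ref{thm:main}: $x_{\gamma_1}x_{\gamma_2} = x_{C^+} + Y_R Y_S\, x_{C^-}$. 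Crucially, each of the multicurves $C^+$ and $C^-$ has strictly smaller complexity than $\{\gamma_1,\gamma_2\}$ — this is exactly the content of ``resolution reduces crossings'' which is built into the constructions of Sections~\ref{sec:puncture_incompatibility}--\ref{sec:closed_curves} — so multiplying through by the (unchanged) rest of $C$ and invoking the induction hypothesis on $C \cup C^+$ and $C\cup C^-$ finishes the step. For the bracelet case $\mathcal{B}$, the same induction works but with an extra layer: a $\mathcal{C}$-compatible collection forbids bangles $\mathrm{Bang}_k\xi$ with $k\geq 2$ (one must trade them for bracelets) and forbids distinct powers of the same loop; so one additionally needs the identity rewriting $\mathrm{Bang}_k \xi$ as $\mathrm{Brac}_k\xi$ plus lower terms (and $\mathrm{Brac}_j\xi\cdot\mathrm{Brac}_k\xi$ as $\mathrm{Brac}_{j+k}\xi$ plus lower bracelet terms), which is the classical Chebyshev-type recursion and follows from the closed-curve skein relations of Section~\ref{sec:closed_curves} together with Definition~\ref{def:contandkink}.

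The key steps, in order, would be: (i) reduce to showing every product $\prod_{c\in C}x_c$ over a finite multicurve lies in the span; (ii) fix the complexity measure and verify it is well-founded and strictly decreased by every resolution appearing in Theorem~\ref{thm:main}; (iii) run the induction, case by case according to the four categories of incompatibility enumerated before Theorem~\ref{thm:Bases}, at each stage citing the relevant proposition from Sections~\ref{sec:puncture_incompatibility}--\ref{sec:closed_curves} for the explicit relation and the fact that $C^\pm$ are simpler; (iv) for $\mathcal{B}$ only, append the bangle-to-bracelet and bracelet-product reductions and check they too decrease an (augmented) complexity measure. The main obstacle is step (ii): one must pin down a single complexity statistic that is simultaneously decreased by \emph{all} the resolution types — transverse Type~0/1/2 crossings, puncture incompatibilities, self-crossings including kissing and reverse overlaps, and closed-curve crossings — and this is delicate because, e.g., resolving a puncture incompatibility can increase the number of crossings with $T$ even as it removes the incompatibility, so the refinement must be ordered correctly. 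In practice this is exactly the bookkeeping already carried out (implicitly) across Sections~\ref{sec:puncture_incompatibility}--\ref{sec:closed_curves} when verifying that each resolution ``simplifies'' the multicurve, so the proof of the corollary amounts to assembling those observations into one termination argument and then quoting the skein relations; the argument parallels Section~8.4 of \cite{musiker2013bases} with the tagged cases supplied by Theorem~\ref{thm:main}.
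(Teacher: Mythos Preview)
Your proposal is correct and follows essentially the same route as the paper, which simply cites Lemma~4.7 of \cite{musiker2013bases} (with Theorem~\ref{thm:main} supplying the tagged skein relations) together with the Chebyshev relation between bangles and bracelets from Proposition~4.2 of that reference. One small slip: ``the cluster monomials already span'' should read ``products of cluster variables span,'' which is immediate since $\mathcal{A}(S,M)$ is by definition generated by its cluster variables.
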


From Theorem \ref{thm:main}, the proof of Corollary \ref{cor:Span} follows the proof of the result in the unpunctured case, which appears as Lemma 4.7 in \cite{musiker2013bases}. The relationship between bracelets and bangles via Chebyshev polynomials, given in Proposition 4.2, of the same article still holds in a punctured surface.

We have a second immediate corollary, which results from the description of the $y$-monomial appearing in the skein relations. This result would be a first step towards showing linear independence of $\calBcirc$ and $\calB$ if one were to follow the same strategy as in \cite{musiker2013bases}. 

\begin{cor}\label{cor:KeyLemmaStillHolds}
Lemma 6.3 from \cite{musiker2013bases} holds for a punctured surface as well.
\end{cor}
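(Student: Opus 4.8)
The plan is to reduce Corollary \ref{cor:KeyLemmaStillHolds} to a direct inspection of the $y$-monomials produced in Theorem \ref{thm:main}, exactly mirroring the role that Lemma 6.3 of \cite{musiker2013bases} plays in the unpunctured argument. Lemma 6.3 there records the structural fact that when one resolves a crossing, precisely one of the two resolved terms acquires a nontrivial monomial $Y$ in the $\hat y$- (equivalently $y$-) variables, and that this $Y$ is a genuine nonempty product $\hat Y_R\hat Y_S$ whose support is determined by the geometry of the crossing near the designated intersection point; in particular, the resolution splits the single crossing term $x_{C^+}+Yx_{C^-}$ so that the $y$-free term $x_{C^+}$ is the one whose curves cross all of $R$. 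So the first step is to state the punctured analogue of Lemma 6.3 in the same language: for every incompatible pair $\{\gamma_1,\gamma_2\}$ (or self-incompatible $\gamma$), writing the skein relation from Theorem \ref{thm:main} as $x_{\gamma_1}x_{\gamma_2}=x_{C^+}+Y_RY_Sx_{C^-}$, we have $R\cup S\neq\emptyset$, $Y_RY_S=\hat Y_R\hat Y_S\big|_{x_i=1}$ is a nontrivial monomial in the $y_i$, and the term $x_{C^+}$ carries no $y$-factor.

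The second step is to verify these three assertions uniformly by walking through the four families of cases established in Sections \ref{sec:puncture_incompatibility}--\ref{sec:closed_curves}, but only at the level of the already-proven statements, not re-doing any bijections. For the incompatibility-at-a-puncture cases (Propositions \ref{prop:incompatible_puncture_singly_tagged}, \ref{prop:TwiceIncDoublyNotched}, \ref{prop:two_punctures_1}) one reads off from the explicit tables that the $y$-monomials are products of $y_{\sigma_j}$ or $y_{\calP_1(i)}$ over nonempty index ranges and that exactly one resolution term is $y$-free — the tables were designed with $X_{C^+}$ in the $y$-free column. For the transverse-crossing cases (Propositions \ref{prop:GraftingType0}--\ref{prop:IntArcFromTDoubleTag}) the statements already display $Y_R$, $Y_{S_s}$, $Y_{S_t}$, $Y_F$, etc. explicitly, each a product over a nonempty chronological interval, multiplied onto the $\gamma_5\gamma_6$ (or $\gamma_{56}$) term only. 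For self-crossings (Section \ref{sec:self_crossings}), including the kissing case with its contractible-kink sign, one notes that even there the monomial $Y_RY_M$ is nonempty since $R_1\neq\emptyset$; the sign change is irrelevant to the $y$-support. For closed curves, the corresponding proposition in Section \ref{sec:closed_curves} has the same shape. The key glue is the identification $\hat Y_R\hat Y_S\big|_{x=1}=Y_RY_S$, which follows from Corollary \ref{cor:y-hat_spokes} (the statement $\hat Y_p=Y_p$ and its interval refinements) and from the $\degx$ bookkeeping already carried out inside each proof's $\gb$-vector verification; this is what guarantees the $y$-monomial appearing in Theorem \ref{thm:main} is literally $Y_RY_S$ and not some $x$-contaminated expression.

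The third step is to translate this uniform structural statement into whatever precise form Lemma 6.3 of \cite{musiker2013bases} takes — typically a statement that a certain ``$y$-dominance'' partial order on the Laurent monomials appearing in $x_\gamma$ is compatible with the skein expansion, used later to run a triangularity/leading-term argument for linear independence. Since Theorem \ref{thm:main} gives the expansion with the same one-sided $y$-monomial structure as in the unpunctured setting, the proof of Lemma 6.3 carries over verbatim once one substitutes our Theorem \ref{thm:main} for Musiker--Williams's skein relations and our Proposition \ref{prop:PosetExpansion} for the snake-graph expansion; no new combinatorial input is needed beyond the observation that $R\cup S\neq\emptyset$ (stated already in the introduction and reconfirmed case-by-case above). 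I expect the only real obstacle to be a bookkeeping one: making sure that in the handful of boundary situations — a resolution term equal to a cluster variable of an arc already in $T$, a contractible or once-punctured-monogon closed curve handled by Definition \ref{def:contandkink} or Definition \ref{def:ContractibleSinglyTaggedLoop}, or the intersecting-$M_1,M_2$ kissing case where a sign appears — the ``exactly one $y$-free term'' claim is read correctly, since there $x_{C^-}$ may itself be $\pm1$ or $1\pm Y_p$ and one must check the convention that the monomial $Y_R Y_S$ still multiplies the intended term. This is a finite check against the tables and definitions in Sections \ref{sec:puncture_incompatibility} and \ref{subsec:Kissing}, and introduces no genuine difficulty.
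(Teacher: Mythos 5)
Your proposal is correct and takes essentially the same approach as the paper's (much terser) proof: both rest on the observation from Theorem \ref{thm:main} that exactly one term of each skein relation carries a $y$-monomial, combined with the fact that each individual expansion has a unique $y$-free leading term, after which the argument of Lemma 6.3 of \cite{musiker2013bases} carries over. The paper phrases the second ingredient as homogeneity of elements of $\mathcal{B}^{\circ}$ via Wilson's lattice structure on good matchings of loop graphs, which is exactly what your appeal to Proposition \ref{prop:PosetExpansion} encodes.
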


\begin{proof}
We can follow the same proof strategy as for the original lemma, combining the following two observations. From Theorem \ref{thm:main}, we know that each skein relation has one term without a $y$-monomial. The homogeneity of elements of $\calBcirc$ follows in the punctured case can be shown in the same way as in the unpunctured case using Wilson's lattice structure on good matchings of loop graphs \cite{wilson2020surface}. 
\end{proof}

Armed with Corollary \ref{cor:KeyLemmaStillHolds}, it is straightforward to follow the recipe in Section 8.5 of \cite{musiker2013bases} to show that the elements of $\calBcirc$ are linearly independent and similarly for $\calB$. This argument uses skein relations to show that we can realize the $\gb$-vectors as giving an injection from $\calBcirc$ or $\calB$ to $\mathbb{Z}^n$; if the surface is not once-punctured and closed, this injection will in fact be a bijection. If we have a surface with at least two marked points on the boundary or a closed surface of genus 0, we can then use the suggestions in Section 8.6 of the same article to show that $\calBcirc$ and $\calB$ are both subsets of $\mathcal{A}(S,M)$. Recall in the latter setting, we require there to be at least four marked points, as is standard. Combining these suggestions with Corollary \ref{cor:Span} allows us to conclude that $\calBcirc$ and $\calB$ are bases in such settings.

If our surface has only one marked point on the boundary, we must use another method. Recall that the \emph{upper cluster algebra} $\mathcal{U}$ of a cluster algebra $\mathcal{A}$ consists of all elements which can be expressed as a Laurent polynomial in any cluster of the cluster algebra $\mathcal{A}$. The snake graph construction ensures that all elements of $\calBcirc$ and $\calB$ are indeed in $\mathcal{U}(S,M)$.  Proposition 12 of \cite{canakci2015cluster}, shows that, if $(S,M)$ is a surface with exactly one marked point on the boundary, then $\mathcal{A}(S,M) = \mathcal{U}(S,M)$. Therefore, we can again conclude $\calBcirc \subset \mathcal{A}(S,M)$ and $\calB \subset \mathcal{A}(S,M)$. With these ingredients, we can that each set is indeed a basis in a wide array of settings.

\begin{upstatement}{Theorem 2}
Let $(S,M)$ be such that $S$ has a non-empty boundary, or $S$ has genus 0 and $\vert M \vert > 3$. Then, $\calBcirc$ is a basis for $\mathcal{A}(S,M)$ and similarly, $\calB$ is a basis for $\mathcal{A}(S,M)$.
\end{upstatement}

As mentioned in the introduction, this general statement of the theorem includes some cases where the result was already known. The fact that the bracelets $\mathcal{B}$ form bases for surfaces with at least two marked points on the boundary follows from results of  Mandel-Qin \cite{Mandel2023} and Muller \cite{Muller2016}. Gei{\ss}, Labardini-Fragoso, and Wilson~\cite{geiss2023bangle} showed that the bangles $\mathcal{B}^{\circ}$ form bases in the coefficient-free case for surfaces with non-empty boundary.

The only obstruction to extending the main result of \cite{musiker2013bases} to surfaces not covered in the hypothesis of Theorem \ref{thm:Bases} is showing that the expressions associated to essential loops are elements of $\mathcal{A}(S,M)$ when $(S,M)$ is closed and has nonzero genus. For example, recall that the Markov cluster algebra is the cluster algebra associated to a once-punctured torus. Bernstein, Fomin, and Zelevinsky show that the Markov cluster algebra is properly contained in its upper cluster algebra by exhibiting a specific element in the complement \cite{berenstein2005cluster}.  This proof can be slightly modified to show that the bracelet function (equivalently, bangle function) associated to a certain essential loop on the torus does not lie in the cluster algebra; as previously noted, it will  lie in the upper cluster algebra.

We note that in general cluster algebras from closed surfaces of positive genus are known to differ from their upper cluster algebras (shown in the once-punctured case by Ladkani \cite{ladkani2013cluster} and by Moon and Wong for a general number of punctures \cite{moon2022consequences}). Conversely, for surfaces with boundary, Theorem \ref{thm:Bases} applies to all cases where the cluster algebra equals its upper cluster algebra (for proofs see \cite{ccanakci2015cluster, muller2016skein}). 

\subsection{Punctured Orbifolds}

Analogous to cluster algebras of surface type, a subset of Chekhov and Shapiro's \emph{generalized cluster algebras} arise from triangulated orbifolds \cite{Chekhov-Shapiro}. In fact, the definition of a generalized cluster algebra was inspired by how lambda lengths in an orbifold mutate. In \cite{banaian2020snake}, the first and third authors extended the snake graph construction of Musiker, Schiffler, and Williams for generalized cluster algebras from \emph{unpunctured} orbifolds and further follow \cite{musiker2013} to provide matrix formulas and skein relations in this setting. 

Our study of skein relations for punctured orbifolds was originally motivated by a desire to extend the generalized snake graphs of \cite{banaian2020snake} to the punctured setting. While ordinary tagged arcs on a triangulated orbifold behave similarly to the surface case, generalized tagged arcs which wind nontrivially around an orbifold point require new and intricate constructions. Proving that the snake graphs corresponding to generalized arcs produce ``sensible'' expansions in the generalized cluster algebra requires an understanding of skein relations in the presence of punctures.

In upcoming work, we will collaborate with the authors of \cite{ouguz2024cluster} to use the proof methods demonstrated in this paper to generalize their poset expansion formulae to the orbifold setting. This work will also allow us to give matrix formulae for generalized cluster algebras in the \cite{ouguz2023rank}, which we emphasize are distinct from those found in \cite{banaian2020snake}. 

\subsection*{Acknowledgements}
This project benefited from conversations with {\.I}lke {\c{C}}anak{\c{c}}{\i}, Philippe Di Francesco, Rinat Kedem, Gregg Musiker, Michael Tsironis, and Jon Wilson and helpful comments on the initial preprint from Nathan Reading and Ralf Schiffler. E.B. was supported by Research Project 2 from the Independent Research Fund Denmark (grant no. 1026-00050B). W.K. was partially supported by the Susan C. Morisato and R.H. Schark Scholarships from the University of Illinois Urbana Champaign Department of Mathematics. E.K. was supported by NSF Grant No. DMS-1937241.

\nocite{*} % to test all bib entrys

\end{document}